\documentclass{article}
\usepackage[a4paper,margin=1in]{geometry}
\usepackage{graphicx,amsmath,amsfonts,amsthm,amssymb,mathtools,soul} 
\usepackage{xcolor} 
\usepackage{dsfont}
\usepackage{bm} 
\usepackage{enumerate} 
\usepackage{comment}
\usepackage{todonotes}
\usepackage{bbm}
\usepackage{graphicx}
\usepackage{hyperref}
\usepackage{accents}
\hypersetup{
    colorlinks=true,
    linkcolor=blue,
	linktocpage=true
}

\DeclareMathOperator*{\essinf}{ess\,inf}
\DeclareMathOperator*{\esssup}{ess\,sup}

\newtheorem{theorem}{Theorem}[section]
\newtheorem{definition}[theorem]{Definition}

\newtheorem{proposition}[theorem]{Proposition}
\newtheorem{lemma}[theorem]{Lemma}
\newtheorem{assumption}{Assumption}
\newtheorem{remark}[theorem]{Remark}
\newtheorem{corollary}[theorem]{Corollary}

\counterwithin{equation}{section}

\allowdisplaybreaks

\title{A new probabilistic approach for\\ mean field games of optimal stopping}

\author{
  Andrea Cosso\thanks{Università degli Studi di Milano, Milan, Italy; andrea.cosso@unimi.it.}
  \and
  Laura D'Andolfi\thanks{ENSAE Paris, CREST, Institut Polytechnique de Paris, Palaiseau, France; laura.dandolfi@ensae.fr.}
  \and
  Roxana Dumitrescu\thanks{ENSAE Paris, CREST, Institut Polytechnique de Paris, Palaiseau, France; roxana.dumitrescu@ensae.fr.}
}

\date{\today\thanks{A.~Cosso acknowledges support from GNAMPA-INdAM. R.~Dumitrescu and L.~D'Andolfi gratefully acknowledge financial support from the FIME Research Initiative. L.~D'Andolfi receives financial support for her PhD research from the Région Île-de-France.}}

\begin{document}

\maketitle

\abstract{We propose a novel probabilistic formulation for optimal stopping mean field games (OS-MFGs) with randomized strategies. We characterize mean field equilibria through a new class of coupled forward-backward systems, termed coupled reflected forward-backward McKean--Vlasov stochastic differential equations (MKV-RFBSDEs). An equilibrium is represented by a quintuple $(X,Y,Z,A,L)$, where $L$ is an adapted, $[0,1]$-valued, non-increasing càdlàg process representing the randomized stopping strategy. The optimality of randomized stopping strategies is characterized through two novel Skorokhod-type conditions involving $L$. This characterization is new even for classical optimal stopping problems without mean field interactions. We rigorously prove an equivalence between solutions of the MKV-RFBSDE system and OS-MFG equilibria in randomized strategies. We establish the existence of equilibria by applying the Kakutani--Fan--Glicksberg fixed-point theorem to a set-valued best-response correspondence, relying on new stability, compactness, and continuity results for the coupled MKV-RFBSDE system. We also prove uniqueness under suitable conditions. Under alternative monotonicity assumptions, we develop a new order-theoretic approach based on Tarski's fixed-point theorem, yielding the existence of extremal equilibria and constructive schemes for the minimal and maximal solutions. We further show that a mean field equilibrium induces an approximate Nash equilibrium for the associated $N$-player stopping game. Finally, we connect our probabilistic formulation with the analytical approach characterized by a coupled system of constrained partial differential equations.}

\vspace{5mm}

\noindent {\bf Keywords:} mean field games of optimal stopping; reflected backward stochastic differential equations; McKean-Vlasov equations; randomized stopping strategies.

\vspace{5mm}

\noindent {\bf Mathematics Subject Classification (2020):} 91A16, 60G40, 60H10.

\section{Introduction}

The aim of this paper is to develop a new probabilistic approach to
optimal stopping mean field games (OS-MFG), based on a novel coupled
system of reflected forward-backward McKean--Vlasov stochastic differential equations
(MKV-RFBSDEs).

Let us first consider a game with a large, but finite population of
$N$ players. For a finite measure $\nu$ and an integrable function
$h$, we use the notation
\[
\langle h,\nu\rangle
:=
\int h(x)\,\nu(dx).
\]
For each $i\in\{1,\ldots,N\}$, the state process
$X^{i,N}=(X_t^{i,N})_{t\in[0,T]}$ evolves according to
\[
\begin{aligned}
dX_t^{i,N}
={}&
b\big(
t,X_t^{i,N},
\big\langle\bar b(t,\cdot),m_t^N\big\rangle
\big)dt
+
\sigma\big(
t,X_t^{i,N},
\big\langle\bar\sigma(t,\cdot),m_t^N\big\rangle
\big)dW_t^i;\,\, \qquad X^{i,N}_0=\xi^{i},
\end{aligned}
\]
where $(\xi^1,\xi^2,\ldots,\xi^N)$ are independent and identically distributed random variables, and $(W^1,\ldots,W^N)$ are independent Brownian motions. The flow of empirical measures $(m^N_t)_{t \in [0,T]}$ given by
\[
m_t^N(dx)
=
\frac1N
\sum_{k=1}^N
\delta_{X_t^{k,N}}(dx)\mathbf 1_{t<\tau^k}
\]
is the empirical occupation measure of the players who have not yet
stopped. In particular,
\[
\big\langle\bar b(t,\cdot),m_t^N\big\rangle
=
\frac1N
\sum_{k=1}^N
\bar b(t,X_t^{k,N})\mathbf 1_{t<\tau^k},
\]
and similarly for the interaction entering the diffusion coefficient.

Each player $i$ chooses an admissible stopping time $\tau^i$ with
values in $[0,T]$ in order to maximize
\[
\mathbb E\bigg[
\int_0^{\tau^i}
f\big(
t,X_t^{i,N},
\big\langle\bar f(t,\cdot),m_t^N\big\rangle
\big)dt
+
g\big(
\tau^i,
X_{\tau^i}^{i,N},
\big\langle\bar g(\tau^{i},\cdot),\mu^N\big\rangle
\big)
\bigg],
\]
where
\[
\mu^N(dt,dx)
=
\frac1N
\sum_{k=1}^N
\delta_{(\tau^k,X_{\tau^k}^{k,N})}(dt,dx)
\]
is the empirical joint distribution of the stopping times and exit
states, and
\[
\big\langle\bar g(t,\cdot),\mu^N\big\rangle
=
\frac1N
\sum_{k=1}^N
\bar g(t,\tau^k,X_{\tau^k}^{k,N}).
\]
Since the dynamics and objective
functionals are coupled through the empirical measures
$(m_t^N)_{t\in[0,T]}$ and $\mu^N$, it is natural to look for a Nash
equilibrium.
As the number of players tends to infinity, we expect, by a
\textit{propagation-of-chaos} argument, that the empirical occupation measures
converge to a deterministic flow of subprobability measures
$(m_t)_{t\in[0,T]}$, while the empirical joint distributions of the
stopping times and exit states converge to a deterministic probability
measure $\mu$.
The limiting MFG problem may first be formulated in pure strategies as follows.
Let $\mathcal T$ denote the set of admissible stopping times with
values in $[0,T]$.  Given a deterministic mean field environment
$\bigl((m_t)_{t\in[0,T]},\mu\bigr)$, the state process of the
representative player satisfies
\[
\begin{aligned}
dX_t^m
={}&
b\left(
t,X_t^m,
\left\langle\bar b(t,\cdot),m_t\right\rangle
\right)dt
+
\sigma\left(
t,X_t^m,
\left\langle\bar\sigma(t,\cdot),m_t\right\rangle
\right)dW_t;\,\,\qquad X_0^{m}=\xi,
\end{aligned}
\]
and the representative player solves
\begin{equation}
\label{eq:SDE limit stopping}
\sup_{\tau\in\mathcal T}
\mathbb E\left[
\int_0^\tau
f\left(
t,X_t^m,
\left\langle\bar f(t,\cdot),m_t\right\rangle
\right)dt
+
g\left(
\tau,X_\tau^m,
\left\langle\bar g(\tau,\cdot),\mu\right\rangle
\right)
\right].
\end{equation}
A mean field equilibrium in pure strategies is a stopping time
$\tau^\star\in\mathcal T$, together with an environment
$\bigl((m_t^\star)_{t\in[0,T]},\mu^\star\bigr)$, such that
$\tau^\star$ solves \eqref{eq:SDE limit stopping} under
$(m^\star,\mu^\star)$ and
\[
m_t^\star(B)
=
\mathbb P\big(
X_t^{m^\star}\in B,\,
t<\tau^\star
\big),
\qquad
B\in\mathcal B(\mathbb R),
\quad t\in[0,T],
\]
and
\[
\mu^\star
=
\mathcal L\big(
\tau^\star,
X_{\tau^\star}^{m^\star}
\big).
\]
Thus, at equilibrium, the mean field environment faced by the
representative player is generated by the optimal stopping time
itself. Since the equilibrium strategy is associated to an ordinary stopping time,
this formulation describes an equilibrium in pure strategies.
Such pure-strategy equilibria, however, need not exist under general
assumptions. It is therefore natural to enlarge the set of admissible
strategies by allowing randomized stopping strategies. In this paper,
a randomized stopping strategy is represented by an adapted,
right-continuous and non-increasing survival process
$L=\{L_t\}_{t\in[0,T]}$, taking values in $[0,1]$ and satisfying
\[
L_{0^-}=1,
\qquad
L_T=0.
\]
More precisely, on an extension of the probability space carrying an
independent uniform random variable $U$, the process $L$ induces the
randomized stopping time
\[
\tau^L
=
\inf\{t\in[0,T]:L_t\leq U\},
\]
and
\[
L_t
=
\mathbb P\big(\tau^L>t\,\big|\,\mathcal F_t\big),
\]
where $\{\mathcal{F}_t\}_{t \in [0,T]}$ is the filtration generated by $\xi$ and $W$.
Hence, $L_t$ represents the conditional survival probability of the
representative player at time $t$. At the population level, the aggregate surviving mass is
$\mathbb E[L_t]$, and occupation-type mean field quantities are
obtained by weighting the state of the representative player by
$L_t$. An ordinary stopping time $\tau$ is recovered as the pure
strategy
\[
L_t^\tau
=
\mathbf 1_{t<\tau}.
\]
Randomized stopping strategies have been extensively studied in the
optimal stopping literature (see, for instance,
\cite{meyer2006convergence,el1992probabilistic,
touzi2002continuous,bismut1979temps}).

The main novelty of our approach
consists in introducing the following new coupled system of reflected forward-backward McKean--Vlasov stochastic equations, in which the
equilibrium survival process is determined directly as part of the
solution:
\begin{align}
\label{sys}
\begin{cases}
\displaystyle
X_t
=
X_0
+
\int_0^t
b\!\left(
s,X_s,
\mathbb E\!\left[\bar b(s,X_s)L_s\right]
\right)ds
+
\int_0^t
\sigma\!\left(
s,X_s,
\mathbb E\!\left[\bar\sigma(s,X_s)L_s\right]
\right)dW_s,
\\[3mm]
\displaystyle
Y_t
=
\zeta
+
\int_t^T
f\!\left(
s,X_s,
\mathbb E\!\left[\bar f(s,X_s)L_s\right]
\right)ds
+
A_T-A_t
-
\int_t^T Z_s\,dW_s,
\qquad 0\leq t\leq T,
\\[3mm]
Y_t\geq\xi_t,
\qquad 0\leq t\leq T,
\\[2mm]
\displaystyle
\int_0^T (Y_t-\xi_t)\,dA_t=0,
\\[3mm]
\displaystyle
\int_{0^-}^T (Y_t-\xi_t)\,dL_t=0,
\\[3mm]
\displaystyle
\int_{0^-}^T A_t\,dL_t=0,
\end{cases}
\end{align}
with $\xi_t:=g(t,X_t, \mathbb{E}\left[\int_{0^-}^T\bar{g}(t,s,X_s)d(-L_s)\right] )$ and $\zeta:=\xi_T$. In this formulation, the equilibrium survival process $L$ is computed
jointly with the state process $X$ and the reflected backward
components $(Y,Z,A)$, rather than being recovered indirectly from a
fixed point for the associated flow of measures. The full list of
advantages of this formulation, together with our detailed results,
is presented and discussed below, after a brief literature review.
\paragraph{Literature review.} Mean field game (MFG) theory was independently introduced by Lasry and Lions in \cite{lasry2007mean} and Huang, Malhamé and Caines in \cite{huang2006large}, to provide a tractable framework for analyzing Nash equilibria in large-scale multi-agent systems. The theory addresses the limiting case where the number of players $N$ tends to infinity and agents interact symmetrically through their empirical distribution.\\
In the standard MFG model, given a flow of probability measures $\mu=(\mu_t)_{t \in [0,T]}$ representing the spatial distribution of the population, the representative player faces a classical stochastic control problem. The existence and uniqueness of the resulting equilibrium have been extensively investigated through two primary approaches. In the analytical theory, the system is characterized by a coupling of nonlinear partial differential equations (PDEs): a backward Hamilton--Jacobi--Bellman (HJB) equation, which determines the value function and the agent's optimal strategy, and a forward Kolmogorov-type (or Fokker--Planck, FP) equation, which describes the evolution of the population distribution. For a comprehensive overview of this approach, we refer the reader to the notes by Cardaliaguet \cite{cardaliaguet2010notes}, based on the lectures of P.-L.~Lions at the Collège de France \cite{lions2007theorie}. Alternatively, the probabilistic approach consists in describing the MFG equilibria through a coupled Forward-Backward Stochastic Differential Equation (FBSDE) system of McKean--Vlasov type (see \cite{carmona2018probabilisticI, carmona2018probabilisticII} for a detailed account).  Driven by the development of mean field game theory, considerable attention has also been devoted to McKean--Vlasov stochastic differential equations. These equations are characterized by coefficients that depend not only on the current state of the system, but also on its probability law. Originally introduced by McKean in \cite{mckean1966class}, they provide a natural probabilistic description of the limiting behavior of large systems of weakly interacting particles, in which the dynamics of each individual are influenced by the collective distribution of the population. We refer to \cite{bensoussan2013mean} for a comprehensive treatment of this class of equations.

A distinct, although closely related, line of research concerns the control of McKean--Vlasov dynamics, commonly referred to as Mean Field Control (MFC). In this framework, a central planner chooses a control in order to optimize a criterion that depends on the state process and its distribution. This differs from the mean field game setting, where each agent independently optimizes against a given population distribution (see \cite{carmona2013control} for a detailed comparison between MFGs and control problems for McKean--Vlasov SDEs).

More recently, the theory has been extended to models in which the coefficients and objective functionals depend not only on the distribution of the state, but also on the distribution of the controls. Such extended mean field control problems were first studied in the linear--quadratic setting in \cite{basei2017linear,graber2016linear}. A more general formulation was subsequently developed in \cite{acciaio2019extended}, where necessary and sufficient conditions for a Pontryagin maximum principle were established, together with a connection between the weak formulation of the control problem and optimal transport on path space.

While many results have been proved in the case of MFG with regular controls, OS-MFG represents a new trend in the literature. In this setting, the interaction is mediated
by the distribution of the residual population, namely the agents
who have not yet stopped, and possibly by the distribution of their
exit times and exit states. Despite the apparent simplicity of the
individual decision problem, the equilibrium analysis is delicate.
In particular, equilibria in pure stopping strategies may fail to
exist, and the flow of occupation measures may be discontinuous in
time. As documented in game theory, the existence of pure Nash equilibria (i.e., equilibria where all agents use non-randomized strategies) is generally not guaranteed. Consequently, players are typically forced to adopt randomized (or mixed) stopping strategies to ensure the existence of an equilibrium.\\
The literature on OS-MFG has primarily evolved along various approaches. Notably, \cite{bertucci2018optimal} makes several significant contributions to this field. By considering a state process with constant coefficients evolving in a bounded domain, the author characterizes the equilibrium through an analytical approach which consists in solving a coupled system of a variational inequality with a Fokker--Planck equation. In particular, he provides an example of the non-existence of Nash equilibria in pure strategies and he introduces the formal notion of mixed solutions within the framework. It is important to emphasize that, in the context of OS-MFG, the flow of measures may exhibit discontinuities, which makes the analytical treatment, and in particular, the proofs of existence more challenging.\\
In \cite{nutz2018mean} a tractable model for OS-MFG is formulated to directly investigate the properties of equilibria. In this framework, to prevent the possibility of the entire population stopping at the same time, a parameter is incorporated representing the heterogeneity in subjective risk perception, for which an explicit form of the equilibria is obtained. In a more general framework, \cite{carmona2017mean}
considers MFGs of timing, whose formulation is motivated by a dynamic model of bank runs in a continuous-time setting. In this paper, the authors adopt a purely probabilistic approach, but their techniques are more closely related to the weak convergence and compactness arguments \cite{carmona2016mean, lacker2016general, lacker2015mean} than to the probabilistic approach originated in \cite{carmona2013probabilistic}. 
Recent research has further advanced the field by developing linear programming approaches to mean field games involving regular control and optimal stopping (see, in particular, \cite{dumitrescu2021control,bouveret2020mean,dumitrescu2023linear,guo2024mf,guo2025continuous}). Relaxed controls are widely used to compactify stochastic control problems and thereby facilitate the proof of existence of solutions. Two main approaches to such relaxed formulations have been developed. The first, based on the controlled martingale problem, was introduced for mean field games with regular controls in \cite{lacker2015mean}. The second, initially developed in the context of optimal stopping mean field games in \cite{bouveret2020mean}, relies on a formulation in terms of the occupation measure of a process killed at the stopping time. Using this occupation-measure formulation, the authors of \cite{bouveret2020mean} establish a rigorous connection between relaxed Nash equilibria and the notion of mixed solutions introduced in \cite{bertucci2018optimal}.
Further important contributions to the study of OS-MFGs using alternative frameworks can be found in \cite{ferrari2025existence,he2025mean,dianetti2023unifying,dianetti2025entropy,possamai2025mean}.\\
Finally, although our work focuses on the non-cooperative game framework, we mention for completeness that the cooperative counterpart, known as Mean Field Control of Optimal Stopping (MFC-OS), has also recently started to attract attention (see, e.g., \cite{cardaliaguet2026mean, cosso2025mean,talbi2023dynamic, djehiche2019mean, djehiche2025propagation,djehiche2026zero}).
\paragraph{Our contributions.} 
In classical optimal-stopping theory, it is well known that the value process of an optimal stopping problem can be characterized as the solution of a reflected backward stochastic differential equation (RBSDE) (see, for instance, \cite{el1981aspects}). Despite the substantial progress made in the theory of mean field games with optimal stopping (OS-MFG), no analogous probabilistic characterization has so far been established for mean field equilibria in randomized stopping strategies. In particular, the connection between such equilibria and an appropriate class of reflected BSDEs remains unexplored. More generally, a characterization of randomized OS-MFG equilibria through a coupled forward-backward stochastic differential system is still missing.

Such a formulation would extend to mean field games with optimal stopping the probabilistic approach initiated in \cite{carmona2013probabilistic} for mean field games with regular controls. In the classical MFG framework, the correspondence between the analytical Hamilton--Jacobi--Bellman--Fokker--Planck system and its probabilistic FBSDE counterpart is well established and provides complementary perspectives for both theoretical analysis and numerical approximation. The first contribution of this paper fills this gap by introducing a probabilistic formulation of OS-MFG equilibria in randomized stopping strategies based on a new McKean--Vlasov reflected BSDE system.

Our contributions are the following:

\begin{itemize}

\item[(i)] We introduce a new probabilistic formulation of mean field games with optimal stopping based on a McKean--Vlasov reflected backward stochastic differential equation  coupled with the forward state dynamics (see \eqref{sys}). A solution to this system is defined as a quintuple
\[
(X,Y,Z,A,L),
\]
satisfying the coupled forward-backward system given by \eqref{MFG SDE} and \eqref{MFRBSDE}. Here, \(X\) denotes the state process, \(Y\) is the backward value process, \(Z\) is the martingale-representation component, \(A\) is the continuous non-decreasing reflection process that keeps \(Y\) above the obstacle \(\xi\), and \(L\) is a non-increasing process representing a randomized stopping strategy.

A distinctive feature of this formulation is that the randomized stopping strategy \(L\) is an endogenous component of the solution. Consequently, the fixed-point problem associated with the mean field interaction is formulated directly on the space of randomized stopping strategies. We establish the existence of solutions to the resulting MKV-RFBSDE system through two distinct approaches (via \textit{Kakutani} and \textit{Tarski's Theorems}), each applying under a different set of assumptions. The proofs of existence rely on new technical developments using BSDE theory and functional analysis tools.

Within the first set of assumptions allowing us to apply Kakutani's fixed-point theorem,  we also prove uniqueness under a Lasry--Lions monotonicity condition. Under the second set of monotonicity assumptions, which allows us to apply Tarski’s fixed-point theorem, we construct iterative learning schemes that converge to the minimal and maximal solutions of the system. We then provide sufficient conditions ensuring the uniqueness of the process $L$ satisfying the Skorokhod-type conditions. Under these conditions, the set of solutions to the system also enjoys a complete lattice structure.

\item[(ii)] We derive new necessary and sufficient optimality conditions for randomized stopping strategies. More precisely, optimality is characterized by the following two Skorokhod-type conditions:
\[
\int_{0^-}^T (Y_t-\xi_t)\,dL_t=0,
\qquad
\int_{0^-}^T A_t\,dL_t=0.
\]
Since \(Y\geq \xi\) and \(L\) is non-increasing, the first condition implies that, for almost every \(\omega\), the measure \(-dL(\omega)\) is supported on the contact set
\[
\left\{
t\in[0,T]:
Y_t(\omega)=\xi_t(\omega)
\right\}.
\]
Thus, stopping probability can be assigned only at times when the
value of continuing coincides with the stopping payoff.

Similarly, the second condition implies that \(-dL(\omega)\) is supported on
\[
\left\{
t\in[0,T]:
A_t(\omega)=0
\right\}.
\]
Hence, no stopping probability can be assigned once the reflection process has become active. Equivalently, the representative player can no longer stop strictly after the maximal stopping time
\[
\tau^{\max}
:=
\inf\left\{
t\geq 0:A_t>0
\right\}.
\]

These conditions also yield new results for classical optimal stopping problems without mean field interactions. While optimality conditions for stopping times, corresponding to pure strategies, are well established (see, for instance, \cite{el2006aspects}), analogous conditions for randomized stopping strategies appear to be missing from the literature. We further establish additional structural properties of optimal randomized stopping strategies.

\item[(iii)] We establish a complete equivalence between equilibria of the optimal-stopping mean field game in randomized strategies and solutions of the probabilistic MKV-RFBSDE system \eqref{sys}. In particular, every randomized mean field equilibrium induces a solution of the system, and conversely, every solution of the system yields a randomized mean field equilibrium. This result shows that the proposed MKV-RFBSDE system provides a full probabilistic characterization of the equilibrium problem.

\item[(iv)] We establish a rigorous connection between the probabilistic formulation developed in this paper and the analytic formulation based on mixed solutions introduced in \cite{bertucci2018optimal}. This result provides a bridge between the MKV-RFBSDE characterization and the corresponding analytic approach, thereby clarifying the relationship between the two notions of solution.

\item[(v)] Starting from a solution of the mean field system \eqref{sys}, we construct an approximate equilibrium for the associated \(N\)-player optimal-stopping game. This result provides a rigorous justification of the mean field model as an approximation of strategic interactions in large but finite populations.

\end{itemize}

This probabilistic approach offers several advantages over existing  formulations. First, in contrast to the analytical approach proposed in \cite{bertucci2018optimal} and the linear-programming formulations developed in \cite{bouveret2020mean}, \cite{dumitrescu2021control}, and \cite{dumitrescu2023linear}, our approach directly identifies an equilibrium optimal stopping strategy through the process $L$, which is itself part of the solution. Second, it naturally accommodates degenerate diffusion coefficients and extends to path-dependent, non-Markovian settings, thereby allowing the representative player’s strategy to depend on the entire history of the state process (see Remark~\ref{R:non-Markov}). Finally, when existence is established by means of Tarski’s fixed-point theorem, the properties of reflected BSDEs provide an alternative route for proving the monotonicity of the relevant correspondences, without relying on Topkis’s theorem as in the existing literature (see Lemma~\ref{gamma2}).
\paragraph{Organization of the paper.} The paper is organized as follows. In Section \ref{S:Kakutani}, we establish the existence of an equilibrium by applying the Kakutani--Fan--Glicksberg fixed-point theorem to the set of randomized strategies; we also address the uniqueness of an equilibrium. Section \ref{S:ProcessL} is devoted to deriving key properties of a stopping strategy $L$ satisfying the two novel Skorokhod-type conditions. In Section \ref{S:Tarski}, we introduce an alternative set of assumptions to prove existence via Tarski's fixed-point theorem, and provide sufficient conditions to ensure the uniqueness of the optimal best response. This approach is based on the lattice structure of the set of processes $L$. Furthermore, we construct learning algorithms for the \textit{minimal} and \textit{maximal} solution of the MKV-RFBSDE system \eqref{sys} for which rigorous convergence proofs are provided. Section \ref{S:Connection} is devoted to exploring the connection between the solutions of our new system and equilibria for OS-MFG in randomized strategies. In Section \ref{S:Approximate}, we prove the approximate Nash property for the $N$-player game. Finally, Section \ref{S:PDE} establishes the formal equivalence between this novel probabilistic formulation based on MKV-RFBSDEs and the analytical PDE framework introduced in \cite{bertucci2018optimal}.

\section{Existence via the Kakutani--Fan--Glicksberg fixed-point theorem}
\label{S:Kakutani}
In this section, we establish our first existence result for a solution to the MKV-RFBSDE system \eqref{MFG SDE}-\eqref{MFRBSDE}. To this end, we first introduce the probabilistic framework, notation, and assumptions. 
\paragraph{Probabilistic setting, notation and assumptions.} Let $W=\{W_t\}_{ t \in [0,T]}$ be a standard $m$-dimensional Brownian motion defined on a complete probability space $(\Omega, \mathcal {F}, \mathbb{P})$. Let also $X_0\colon\Omega\rightarrow\mathbb R^d$ be a random variable independent of $W$ and satisfying $\mathbb E[|X_0|^q]<\infty$, for some $q\geq4$. We denote by $\{\mathcal{F}_t\}_{ t \in [0,T]}$ the filtration generated by $X_0$ and by the Brownian motion $W$, augmented with all $\mathbb{P}$-null sets of $\mathcal {F}$. We then introduce the following sets.
\begin{itemize}
    \item $\mathcal{L}^p(\mathcal{F}_t)$, for all $t\in[0,T]$, $p\in[1,\infty)$, is the set of (equivalence classes of) $\mathcal{F}_t$-measurable real-valued random variables $\zeta$ such that $\mathbb{E}[|\zeta|^p]<\infty$;
    \item $\mathbb{H}^2(\mathbb R^k)$, or simply $\mathbb H^2$, is the set of (equivalence classes of) progressively measurable $\mathbb R^k$-valued stochastic processes $Z=\{Z_t\}_{t \in [0,T]}$ such that $\mathbb{E}\big[\int_0^T|Z_t|^2dt\big]<\infty$;
    \item $\mathbb{S}^2(\mathbb R^k)$, or simply $\mathbb S^2$, is the set of (equivalence classes of) adapted $\mathbb R^k$-valued continuous stochastic processes $Y=\{Y_t\}_{t \in [0,T]}$  such that $\mathbb{E}\big[\sup_{t \in [0,T]} |Y_t|^2\big]<\infty$;
    \item $\mathbb{A}^2\subset\mathbb S^2(\mathbb R)$ is the set of (equivalence classes of) adapted real-valued non-decreasing continuous stochastic processes $A=\{A_t\}_{t\in[0,T]}$  such that $A_0=0$ almost surely;
    \item $\mathcal{V} \subset \mathbb H^2(\mathbb R)$ is the set of equivalence classes of stochastic processes for which there exists a representative process $L = \{L_t\}_{t \in [0,T]}$ which is $[0,1]$-valued, adapted, non-increasing, c\`adl\`ag, and $L_T=0$; we also set $L_{0^-}:=1$. Throughout the paper, when we consider $L\in\mathcal V$ we always refer to such a representative process.
\end{itemize}
We consider measurable functions $b:[0,T]\times\mathbb{R}^d\times\mathbb R^k\to \mathbb{R}^d$, $\bar b\colon[0,T]\times\mathbb R^d\rightarrow\mathbb R^k$, $\sigma:[0,T]\times\mathbb{R}^d\times\mathbb R^k\to \mathbb{R}^{d\times m}$, $\bar\sigma\colon[0,T]\times\mathbb R^d\rightarrow\mathbb R^k$, $f\colon[0,T]\times\mathbb{R}^d\times\mathbb{R}^k \to \mathbb{R}$, $\bar f\colon[0,T]\times\mathbb R^d\rightarrow\mathbb R^k$, $g\colon\mathbb R^d\times\mathbb R\rightarrow\mathbb R$, $h\colon[0,T]\times\mathbb R^d\times\mathbb R\rightarrow\mathbb R$, $\phi\colon[-T,T]\rightarrow\mathbb R$, with $g(x,w)=h(T,x,w)$, for every $(x,w)\in\mathbb R^d\times\mathbb R$, on which we impose the following assumptions.
\begin{assumption}
\label{assumption:1A}\quad
\begin{enumerate}[1)]
    \item There exists a constant $K\geq0$ such that
    \begin{align*}
        |b(t,x,m)-b(t,x',m')| + |\sigma(t,x,m)-\sigma(t,x',m')|&\leq K\big(|x-x'|+|m-m'|\big), \\
        |\bar b(t,x)-\bar b(t,x')| + |\bar \sigma(t,x)-\bar \sigma(t,x')|&\leq K|x-x'|, \\
        |b(t,0,0)| + |\sigma(t,0,0)| + |\bar b(t,0)| + |\bar\sigma(t,0)| &\leq K,       
    \end{align*}
    for all $t\in[0,T]$, $(x,m),(x',m')\in\mathbb R^d\times\mathbb R^k$.
\end{enumerate}
\end{assumption}

\begin{assumption}
\label{assumption:1B}\quad
\begin{enumerate}[1)]
    \item There exist constants $K\geq0$ and $r \in [1,q/2)$ such that
    \begin{align*}
        |f(t, x, m')-f(t,x, m)|&\leq K|m'-m|, \\
        |f(t,x,0)| + |h(t,x,0)| + |\bar f(t,x)| &\leq K(1+|x|^r), \\
        |h(t,x,w) - h(t,x,w')| &\leq K |w-w'|, 
    \end{align*}
        for all $(t,x) \in [0,T]\times\mathbb R^d$, $(m,w),(m',w') \in \mathbb{R}^k\times\mathbb R$.
    \item $h$ is a jointly continuous function; $\phi$ is a function of class $\mathcal C^1([-T,T])$.
\end{enumerate}
\end{assumption}
\begin{assumption}\quad 
    \label{assumption: 1C}\begin{enumerate}
  \item $\bar b$, $\bar\sigma$, $\bar{f}$ are functions of class $\mathcal C^{1,2}([0,T] \times \mathbb R^d)$, whose derivatives satisfy the following polynomial growth condition: there exist constants $K\geq0$ and $p'\in[2,q/2]$ such that (here we write $\psi$ for $\bar b$, $\bar\sigma$, $\bar f$)
    \begin{equation}\label{PolGrowthCond}
    \begin{split}
        &|\partial_t \psi(t,x)|\leq K(1 + |x|^{p'}),  \\
        &|\partial_x \psi(t,x)|\leq K(1 + |x|^{p'-1}), \qquad |\partial_{xx}^2 \psi(t,x)| \leq K(1 + |x|^{p'-2}),
    \end{split}
    \end{equation}
    for all $(t,x)\in[0,T]\times\mathbb R^d$.
    \item $f=f(t,x,m)$ is continuous in $x$, uniformly with respect to $(t,m) \in [0,T] \times \mathbb R^k$.\end{enumerate}
\end{assumption}
\paragraph{Well-posedness of the forward McKean-Vlasov stochastic differential equation and the RBSDE.}
For a given $L\in\mathcal V$, we recall here the standard well-posedness results for the forward McKean--Vlasov SDE and the RBSDE involved in the system. To be precise,   consider the following McKean--Vlasov stochastic differential equation on $[0,T]$:
\begin{equation}
\label{MFG SDE}
X_t=X_0+\int_0^tb\big(s,X_s,\mathbb E\big[\bar b(s,X_s)L_s\big]\big)ds+\int_0^t\sigma\big(s,X_s,\mathbb E\big[\bar\sigma(s,X_s)L_s\big]\big)dW_s.
\end{equation}
We have the following proposition.
\begin{proposition}[\textit{Well-posedness of the forward McKean--Vlasov stochastic differential equation}]\label{P:SDE}
Suppose that Assumption \ref{assumption:1A} holds. Given $L\in\mathcal V$, there exists a unique $\mathbb{R}^d$-valued continuous stochastic process $X=(X_t)_{t \in [0,T]}$ solving equation \eqref{MFG SDE}. Moreover, $X$ satisfies, for every $2\leq p\leq q$,
\begin{equation}\label{EstimateSDE}
\mathbb E\Big[\sup_{0\leq t\leq T}|X_t|^p\Big] \leq C_p\big(1+\mathbb E[|X_0|^p]\big),
\end{equation}
for some constant $C_p\geq0$, depending on $p$, $T$, $K$, but independent of $L$. Finally, let $L'\in\mathcal V$ and denote by $X'$ the solution of equation \eqref{MFG SDE} with $L'$ in place of $L$. Then, it holds that, for every $2\leq p\leq q$,
\begin{align}\label{EstimateLL'}
\mathbb E\Big[\sup_{0\leq t\leq T}|X_t-X_t'|^p\Big] &\leq C_p\int_0^T\Big|\mathbb E\big[\bar b(t,X_t)L_t\big] - \mathbb E\big[\bar b(t,X_t)L_t'\big]\Big|^pdt \\
&\quad + C_p\int_0^T\Big|\mathbb E\big[\bar\sigma(t,X_t)L_t\big] - \mathbb E\big[\bar\sigma(t,X_t)L_t'\big]\Big|^pdt, \notag
\end{align}
for some constant $C_p\geq0$, depending on $p$, $T$, $K$, but independent of $L$ and $L'$.
\end{proposition}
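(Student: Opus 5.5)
The plan is to treat \eqref{MFG SDE} as a McKean--Vlasov equation whose coefficients are Lipschitz in the state and depend on the law of the pair $(X_s,L_s)$ only through $m\mapsto\mathbb E[\bar b(s,X_s)L_s]$ and $\mathbb E[\bar\sigma(s,X_s)L_s]$. Existence and uniqueness will follow from a Picard fixed-point argument on $\mathbb S^2(\mathbb R^d)$. Fix $L\in\mathcal V$. For a continuous adapted process $U$ with $\mathbb E[\sup_t|U_t|^2]<\infty$, define the deterministic measurable functions
\[
\beta^U_s:=\mathbb E[\bar b(s,U_s)L_s],\qquad \gamma^U_s:=\mathbb E[\bar\sigma(s,U_s)L_s].
\]
They are well defined because $0\le L\le 1$ and $\bar b,\bar\sigma$ have linear growth. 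Let $\Phi(U)$ be the unique strong solution of the classical SDE with coefficients $b(s,x,\beta^U_s)$ and $\sigma(s,x,\gamma^U_s)$. These coefficients are Lipschitz in $x$ with linear growth uniform in $s$, since $|\beta^U_s|\le K(1+\mathbb E|U_s|)$.

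The key contraction estimate uses $|L_s|\le1$ and the Lipschitz property of $\bar b$:
\[
|\beta^U_s-\beta^{V}_s|\le \mathbb E\big[|\bar b(s,U_s)-\bar b(s,V_s)|\,L_s\big]\le K\,\mathbb E|U_s-V_s|,
\]
and likewise for $\gamma$. Applying BDG and Gronwall to $\Phi(U)-\Phi(V)$ gives
\[
\mathbb E\big[\sup_{r\le t}|\Phi(U)_r-\Phi(V)_r|^2\big]\le C\int_0^t\mathbb E\big[\sup_{r\le s}|U_r-V_r|^2\big]ds .
\]
Iterating this bound, some power of $\Phi$ is a contraction; alternatively, one can use a weighted norm $e^{-\lambda t}$. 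This yields a unique fixed point, which gives existence and uniqueness. Any continuous solution with finite second moment is a fixed point, and we obtain the a priori second moment via a localization and Gronwall argument.

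For \eqref{EstimateSDE}, apply BDG to $|X_t|^p$ with $p\le q$. The growth bounds give
\[
|b(s,X_s,\beta_s)|\le C(1+|X_s|+|\beta_s|),\qquad |\beta_s|\le K(1+\mathbb E|X_s|)\le K(1+(\mathbb E|X_s|^p)^{1/p}),
\]
so
\[
\mathbb E\sup_{r\le t}|X_r|^p\le C\big(1+\mathbb E|X_0|^p\big)+C\int_0^t\mathbb E\sup_{r\le s}|X_r|^p\,ds,
\]
with $C$ depending only on $p,T,K$. In particular, $C$ is independent of $L$, because only $|L|\le1$ is used. Gronwall concludes, after first localizing to ensure finiteness (or using finiteness inherited from the Picard iterates).

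For the stability estimate \eqref{EstimateLL'}, set $\Delta=X-X'$. The key step, and the one needing care, is to split the difference of the mean-field terms so that the $L$-dependence is isolated with $X$ frozen, which is how the right-hand side of \eqref{EstimateLL'} is phrased:
\[
\mathbb E[\bar b(s,X_s)L_s]-\mathbb E[\bar b(s,X'_s)L'_s]=\big(\mathbb E[\bar b(s,X_s)L_s]-\mathbb E[\bar b(s,X_s)L'_s]\big)+\mathbb E\big[(\bar b(s,X_s)-\bar b(s,X'_s))L'_s\big].
\]
The second term is bounded by $K\mathbb E|\Delta_s|\le K(\mathbb E\sup_{r\le s}|\Delta_r|^p)^{1/p}$, again using $0\le L'\le1$. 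Then BDG, Hölder ($|\int_0^t\cdot\,ds|^p\le T^{p-1}\int_0^t|\cdot|^p ds$) and the Lipschitz property of $b,\sigma$ in $(x,m)$ give
\[
\mathbb E\sup_{r\le t}|\Delta_r|^p\le C\int_0^t\mathbb E\sup_{r\le s}|\Delta_r|^p ds+C\int_0^T\big(|\delta^b_s|^p+|\delta^\sigma_s|^p\big)ds,
\]
where $\delta^b,\delta^\sigma$ are the frozen-$X$ differences. Gronwall yields the claim with a constant independent of $L,L'$. I expect no real obstacle; the only subtlety is keeping all constants uniform in $L$, which relies solely on $L\in[0,1]$.
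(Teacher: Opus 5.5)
Your proposal is correct and follows essentially the same route as the paper. The paper cites Carmona--Delarue for existence and uniqueness and simply states that both estimates follow from Burkholder--Davis--Gundy and Gronwall, exactly as for classical SDEs. You make this explicit in two ways: a Picard contraction in $\mathbb S^2$, which has the small advantage of directly covering the dependence on the joint law of $(X_s,L_s)$ through $\mathbb E[\bar b(s,X_s)L_s]$, and the splitting of the mean-field term with $X$ frozen, which is precisely what produces the right-hand side of \eqref{EstimateLL'}.
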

\begin{proof}
Existence and uniqueness follow from \cite{carmona2018probabilisticI}, while estimates \eqref{EstimateSDE} and \eqref{EstimateLL'} can be proved proceeding along the same lines as for classical non-McKean--Vlasov stochastic differential equations, using Burkholder--Davis--Gundy and Gronwall inequalities.
\end{proof}

\noindent We are interested in \eqref{MFG SDE} and the following reflected forward-backward McKean--Vlasov stochastic differential equation on $[0,T]$ in the unknowns $(X,Y,Z,A,L)$:
\begin{equation}\label{MFRBSDE}
\begin{cases}
\vspace{2mm}Y_t=\zeta+\int_t^T f\big(s,X_s,\mathbb E\big[\bar f(s,X_s)L_s\big]\big)ds + A_T-A_t - \int_t^T(Z_s,dW_s), \quad 0\leq t\leq T, \\
\vspace{2mm}Y_t \geq \xi_t, \quad 0\leq t\leq T, \\
\vspace{2mm}\int_{0}^T (Y_t-\xi_t) dA_t=0, \\
\vspace{2mm}\int_{0^-}^T(Y_t-\xi_t) dL_t=0, \\
\int_{0^-}^T A_t dL_t=0,
\end{cases}
\end{equation}
where 
\[
\zeta:=g\bigg(X_T,\mathbb E\bigg[\int_{0^-}^T\phi(T-s)dL_s\bigg]\bigg), \qquad \xi_t := h\bigg(t,X_t,\mathbb E\bigg[\int_{0^-}^T\phi(t-s)dL_s\bigg]\bigg), \quad 0\leq t\leq T.
\]
For integrals from $0^-$ to $T$ with respect to the process $L$, we assume that each integrand process takes the same value at $0^-$ as it does at $0$.
\begin{remark}
    Since $L$ is a non-increasing survival process, the signed measure $dL$ is non-positive. For notational convenience, throughout the remainder of the paper, we write integrals with respect to $dL$ rather than with respect to the positive measure $d(-L)$. Accordingly, the corresponding sign convention is understood to be incorporated into the definition of $\phi$.
\end{remark}
Our aim is to study the existence of a quintuple $(X,Y,Z,A,L)\in\mathbb S^2\times\mathbb S^2\times\mathbb H^2\times\mathbb A^2\times\mathcal V$ solution to \eqref{MFG SDE} and to the above McKean--Vlasov reflected backward stochastic differential equation \eqref{MFRBSDE} (Theorem \ref{thm:main}). To this end, for every fixed $L\in\mathcal V$, consider the following classical reflected backward stochastic differential equation:
\begin{equation}\label{RBSDE}
\begin{cases}
\vspace{2mm}Y_t=\zeta+\int_t^Tf\big(s,X_s,\mathbb{E}\big[\bar f(s,X_s)L_s\big]\big)ds+A_T-A_t-\int_t^T(Z_s, dW_s), \quad 0 \leq t\le T,\\
\vspace{2mm}Y_t \ge \xi_t, \quad 0 \leq t\le T,\\
\int_0^T(Y_t-\xi_t)dA_t=0.
\end{cases}
\end{equation}

\begin{proposition}[\textit{Well-posedness of the RBSDE}]\label{P:ExistUniq}
Suppose that Assumptions \ref{assumption:1A} and \ref{assumption:1B} hold. For every $L\in\mathcal V$, there exists a unique triplet $(Y,Z,A)\in\mathbb S^2\times\mathbb H^2\times\mathbb A^2$ satisfying \eqref{RBSDE}. 
\end{proposition}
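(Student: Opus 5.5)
With $L\in\mathcal V$ fixed, the mean field terms in \eqref{RBSDE} are frozen. The plan is therefore to reduce \eqref{RBSDE} to a classical reflected BSDE, with a given driver independent of $(Y,Z)$ and a given obstacle, and then invoke the existence and uniqueness theorem of El Karoui, Kapoudjian, Pardoux, Peng and Quenez (Lipschitz driver, $L^2$ terminal condition, continuous obstacle with $\mathbb E[\sup_t(\xi_t^+)^2]<\infty$ and $\xi_T\le\zeta$).

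\textbf{Step 1: freeze the data.} By Proposition \ref{P:SDE} there is a unique continuous solution $X$ of \eqref{MFG SDE}, and it satisfies \eqref{EstimateSDE} for every $p\le q$. Set
\[
m_s:=\mathbb E[\bar f(s,X_s)L_s], \qquad w(t):=\mathbb E\Big[\int_{0^-}^T\phi(t-s)\,dL_s\Big].
\]
The integral in $w$ is pathwise over a measure of total mass at most $1$, and $\phi$ is bounded on $[-T,T]$. Hence $|w(t)|\le\|\phi\|_\infty$.

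Since $\phi$ is continuous, dominated convergence shows that $w$ is continuous in $t$. The bound $|\bar f(s,x)|\le K(1+|x|^r)$ with $r<q/2$, together with \eqref{EstimateSDE}, shows that $m$ is bounded, and $m$ is measurable in $s$. Thus the driver $F(s,\omega):=f(s,X_s,m_s)$ is progressively measurable. Using $|f(s,x,m)|\le|f(s,x,0)|+K|m|\le K(1+|x|^r)+K|m|$ and $2r<q$, we get
\[
\mathbb E\Big[\int_0^T|F_s|^2\,ds\Big]<\infty.
\]

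\textbf{Step 2: the obstacle.} Define $\xi_t=h(t,X_t,w(t))$. Joint continuity of $h$, continuity of $X$ and continuity of $w$ make $\xi$ continuous and adapted. Lipschitz continuity of $h$ in $w$ together with the growth bound give
\[
|\xi_t|\le K(1+|X_t|^r)+K\|\phi\|_\infty,
\]
so $\mathbb E[\sup_t|\xi_t|^2]<\infty$, again by $2r<q$ and \eqref{EstimateSDE}.

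At the terminal time, since $g(x,w)=h(T,x,w)$, we have $\zeta=\xi_T$. So the compatibility condition $\xi_T\le\zeta$ holds with equality, and $\zeta\in\mathcal L^2(\mathcal F_T)$.

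\textbf{Step 3: conclude.} The driver does not depend on $(y,z)$, so it is trivially Lipschitz. The classical theorem therefore yields a unique triple $(Y,Z,A)\in\mathbb S^2\times\mathbb H^2\times\mathbb A^2$, with $A$ continuous, non-decreasing and $A_0=0$, solving \eqref{RBSDE}.

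A constructive alternative is available because the driver is independent of $(Y,Z)$. Set
\[
Y_t=\operatorname*{ess\,sup}_{\tau\in\mathcal T_t}\mathbb E\Big[\int_t^\tau F_s\,ds+\xi_\tau\,\Big|\,\mathcal F_t\Big],
\]
and take the Doob--Meyer decomposition of the Snell envelope of $\int_0^\cdot F_s\,ds+\xi$. Continuity of $A$ follows from continuity of the obstacle, which gives regularity of the Snell envelope.

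\textbf{Uniqueness.} This can be obtained directly. Apply Itô's formula to $|Y-Y'|^2$ for two solutions. The cross term $\int(Y-Y')\,d(A-A')$ is $\le0$ by the Skorokhod conditions and $Y,Y'\ge\xi$. Hence $Y=Y'$ and $Z=Z'$, and then $A=A'$ from the equation.

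The main point needing care is the measurability and continuity of the frozen mean field inputs, in particular continuity of $w$ given the possible jump of $L$ at $0$ (recall $L_{0^-}=1$). The measure $-dL$ is a finite measure on $[0,T]$, with an atom at $0$ allowed, and $\phi$ is continuous. Continuity of $w$ therefore holds pathwise in $t$, uniformly bounded, and dominated convergence transfers it to the expectation.
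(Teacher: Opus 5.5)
Your proposal is correct and follows the same route as the paper, whose proof simply invokes Theorem 5.2 of \cite{el1981aspects} for the reflected BSDE obtained by freezing the mean field terms for fixed $L\in\mathcal V$. You also verify that theorem's hypotheses explicitly (square-integrable frozen driver, continuous obstacle with $\mathbb E[\sup_t|\xi_t|^2]<\infty$ and $\xi_T=\zeta$) and give a direct uniqueness argument, both of which the paper leaves implicit.
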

\begin{proof}
    See Theorem 5.2 in \cite{el1981aspects}.
\end{proof}
\subsection{Existence of a solution of the MKV-RFBSDE system \eqref{MFG SDE}-\eqref{MFRBSDE}}
We now introduce the set-valued map $\Gamma\colon\mathcal V\rightarrow 2^{\mathcal V}$, which is defined as follows: given $L\in\mathcal V$ then $\hat L\in\Gamma(L)$ if $\hat L\in\mathcal V$ and it holds that
\begin{equation}\label{hatL}
\int_{0^-}^T (Y^L_t-\xi^L_t)d\hat L_t=0 \qquad \text{ and } \qquad \int_{0^-}^T A^L_td\hat L_t=0,
\end{equation}
where $(Y^L,Z^L,A^L)\in\mathbb S^2\times\mathbb H^2\times\mathbb A^2$ is the unique solution of equation \eqref{RBSDE} corresponding to process $L$ (recall that the existence and uniqueness of $(Y^L,Z^L,A^L)$ follow from Proposition \ref{P:ExistUniq}). In the remainder of this section, for notational simplicity, we denote the solution of the reflected BSDE by $(Y,Z,A)$, with the understanding that it is associated with a given $L \in \mathcal{V}$. Our aim is to apply the Kakutani--Fan--Glicksberg fixed-point theorem (see e.g. Corollary 17.55 in \cite{AliprantisBorder2006}) to the set-valued map $\Gamma$. Such a theorem can be applied thanks to the following properties, whose proof is reported below:
\begin{itemize}
    \item $\mathbb H^2(\mathbb R)$, endowed with the weak topology, is a Hausdorff locally convex topological vector space;
    \item $\mathcal V$ is a non-empty, compact, and convex subset of $\mathbb H^2(\mathbb R)$;
    \item for every $L\in\mathcal V$ the set $\Gamma(L)$ is non-empty and convex;
    \item the graph of the set-valued map $\Gamma$ is closed.
\end{itemize}
If all these properties hold true, then there exists a fixed point $L\in\mathcal V$: $L\in\Gamma(L)$. This yields the existence of a quintuple $(X,Y,Z,A,L)\in\mathbb S^2\times\mathbb S^2\times\mathbb H^2\times\mathbb A^2\times\mathcal V$ solution to system \eqref{MFG SDE}-\eqref{MFRBSDE} (Proposition \ref{P:SDE} and Theorem \ref{thm:main}).

\paragraph{Properties of $\mathcal{V}$ and $\Gamma(L)$.}
In this part, we show some properties of the sets $\mathcal{V}$ and $\Gamma(L)$, for each $L \in \mathcal{V}$.

First note that the fact that $\mathbb H^2(\mathbb R)$, endowed with the weak topology, is a Hausdorff locally convex topological vector space follows from the fact that $\mathbb H^2(\mathbb R)$ is a Hilbert space. On the other hand, $\mathcal V$ is clearly non-empty and convex, while the compactness of $\mathcal V$ follows from the following lemma.
\begin{lemma}[\textit{Compactness of $\mathcal V$}]
\label{L:Compact}
The subset $\mathcal V\subset\mathbb H^2(\mathbb R)$ is compact when $\mathbb H^2(\mathbb R)$ is endowed with the weak topology.    
\end{lemma}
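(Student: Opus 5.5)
Since $\mathcal V$ is bounded in the Hilbert space $\mathbb H^2(\mathbb R)$ (every $L\in\mathcal V$ satisfies $\|L\|_{\mathbb H^2}^2\le T$), and closed bounded convex subsets of a Hilbert space are weakly compact, it suffices to prove that $\mathcal V$ is closed in the strong topology of $\mathbb H^2$. By Mazur's lemma, a convex set that is strongly closed is also weakly closed. The set is clearly convex, because a convex combination of $[0,1]$-valued, adapted, non-increasing, càdlàg processes vanishing at $T$ has the same properties. So the plan is: (i) boundedness, (ii) convexity, (iii) strong closedness, and then conclude via the Banach--Alaoglu theorem in its Hilbert space form (weak compactness of the closed ball) together with Mazur.

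For strong closedness, take $L^n\in\mathcal V$ with $L^n\to L$ in $\mathbb H^2$. Passing to a subsequence, $L^n_t(\omega)\to L_t(\omega)$ for $d t\otimes d\mathbb P$-a.e.\ $(t,\omega)$. By Fubini, for a.e.\ $\omega$ the convergence holds for $t$ in a full-measure set $D_\omega\subset[0,T]$. On $D_\omega$ the limit $t\mapsto L_t(\omega)$ is $[0,1]$-valued and non-increasing, being a pointwise limit of such functions. We then define the representative
\[
\tilde L_t(\omega):=\lim_{s\downarrow t,\ s\in D_\omega} L_s(\omega) \quad (t<T), \qquad \tilde L_T:=0.
\]
This is a non-increasing, càdlàg function with values in $[0,1]$. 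It coincides with $L_t(\omega)$ except at the (at most countably many) jump points and on the null set $[0,T]\setminus D_\omega$, so $\tilde L=L$ in $\mathbb H^2$. Redefining at the single time $T$ is harmless for the same reason.

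The main obstacle is adaptedness of $\tilde L$, because the set $D_\omega$ depends on $\omega$. I would avoid this dependence as follows. Since each $L^n$ is non-increasing, convergence for a.e.\ $t$ can be upgraded: define $\bar L_t:=\limsup_n L^n_t$, which is $\mathcal F_t$-measurable for every $t$ and non-increasing in $t$. Then set $\tilde L_t:=\lim_{s\downarrow t,\, s\in\mathbb Q}\bar L_s$ for $t<T$ and $\tilde L_T:=0$. This is $\mathcal F_{t+}=\mathcal F_t$-measurable, because the Brownian filtration augmented by null sets (with the initial condition $X_0$) is right-continuous. It is also càdlàg and non-increasing, and $\tilde L=\bar L=L$ for $dt\otimes d\mathbb P$-a.e.\ $(t,\omega)$, since monotone functions differ from their right-continuous regularizations only on countable sets. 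Progressive measurability then follows from right-continuity together with adaptedness. Hence $L\in\mathcal V$, so $\mathcal V$ is strongly closed, therefore weakly closed, and being bounded it is weakly compact.
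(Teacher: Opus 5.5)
Your proposal is correct and follows essentially the same route as the paper: boundedness plus reflexivity (weak compactness of balls), convexity plus strong closedness giving weak closedness, and strong closedness via an a.e.-convergent subsequence followed by right-continuous regularization along the rationals. Your one addition is to take $\bar L_t=\limsup_k L^{n_k}_t$ and invoke the right-continuity of the augmented filtration, which makes the adaptedness step explicit. The paper instead only asserts that a monotone version exists in the equivalence class and cites Karatzas--Shreve for adaptedness.
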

\begin{proof}
For every $L\in\mathcal V$, $\|L\|_{\mathbb H^2(\mathbb R)}^2
=
\mathbb E[\int_0^T |L_t|^2dt]
\le T$, hence $\mathcal V$ is bounded. Since $\mathbb H^2(\mathbb R)$ is a Hilbert space, it is reflexive. Therefore, it is enough to prove that $\mathcal V$ is weakly closed. As $\mathcal V$ is convex, it is enough to prove that $\mathcal V$ is strongly closed. Let us verify the strong closedness of $\mathcal V$. Let $\{L^n\}_n \subset \mathcal{V}$ be a sequence converging strongly to $L \in \mathbb H^2(\mathbb R)$. Strong convergence implies the existence of a subsequence converging pointwise almost everywhere to $L$. Since each $L^n$ is non-increasing, there exists $\tilde L$, in the same equivalence class of $L$, which has almost surely non-increasing paths. We now define a representative $\hat L$ by right-continuous regularization on $[0, T)$:
\[
\hat L_t(\omega) := \lim_{s \downarrow t, s \in \mathbb{Q}} \tilde{L}_s(\omega), \quad t \in [0, T).
\]
Set $\hat L_{0^-}=1$ and $\hat L_T=0$. This process is c\`adl\`ag on $[0, T]$ and adapted (see Theorem 3.13 in \cite{karatzas2014brownian}), so that $\hat L\in\mathcal V$, which shows that $\mathcal V$ is strongly closed.
\end{proof}
\begin{remark}
    \label{remark: V metrizable}
    By Lemma A 3.8 in \cite{kallenberg1997foundations}, $\mathbb H^2(\mathbb R)$ is separable. Consequently, the weak topology restricted to any weakly compact subset of $\mathbb H^2(\mathbb R)$ is metrizable (see, e.g., Theorem 3.28 in \cite{brezis2011functional}).
\end{remark}
Now, notice that for every $L\in\mathcal V$ the set $\Gamma(L)$ is convex, as it follows easily from \eqref{hatL}. Moreover, $\Gamma(L)$ is non-empty, as a consequence of the following result.

\begin{lemma}[\textit{Non-emptiness of $\Gamma(L)$}]\label{L:Non-Empty}
Suppose that Assumptions \ref{assumption:1A} and \ref{assumption:1B} hold. For every $L\in\mathcal V$ the set $\Gamma(L)$ is non-empty.
\end{lemma}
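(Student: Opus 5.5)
We exhibit an explicit element of $\Gamma(L)$ given by a pure strategy. Fix $L\in\mathcal V$ and let $(Y,Z,A)=(Y^L,Z^L,A^L)$ be the solution of \eqref{RBSDE} from Proposition \ref{P:ExistUniq}. The natural candidate is the indicator of survival before the \emph{first} contact time,
\[
\tau^\ast:=\inf\{t\in[0,T]:Y_t=\xi_t\}\wedge T,\qquad \hat L_t:=\mathbf 1_{\{t<\tau^\ast\}},\quad t\in[0,T],\qquad \hat L_{0^-}:=1.
\]
Then $-d\hat L=\delta_{\tau^\ast}$, and the two conditions in \eqref{hatL} reduce to
\[
Y_{\tau^\ast}=\xi_{\tau^\ast}\quad\text{a.s.},\qquad A_{\tau^\ast}=0\quad\text{a.s.}
\]

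\textbf{Step 1: $\hat L\in\mathcal V$.} Continuity of $Y$ and of $\xi$ gives this. $Y\in\mathbb S^2$, and $\xi_t=h(t,X_t,w(t))$ with $w(t)=\mathbb E[\int_{0^-}^T\phi(t-s)dL_s]$. Since $\phi\in\mathcal C^1$ and $L$ has total variation at most one, $w$ is continuous. Together with joint continuity of $h$ and continuity of $X$, this makes $\xi$ continuous and adapted. Hence $Y-\xi\ge0$ is continuous and adapted, and $\tau^\ast$ is a stopping time, being the hitting time of a closed set by a continuous adapted process in a usual filtration. It follows that $\hat L$ is adapted, càdlàg, non-increasing, $\{0,1\}$-valued, with $\hat L_T=0$. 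The last point holds because $\tau^\ast\le T$ and $\hat L_T=\mathbf 1_{\{T<\tau^\ast\}}=0$. It also belongs to $\mathbb H^2$.

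\textbf{Step 2: first condition.} On $\{\tau^\ast<T\}$, continuity of $Y-\xi$ gives $Y_{\tau^\ast}=\xi_{\tau^\ast}$. On $\{\tau^\ast=T\}$, we use the terminal condition $Y_T=\zeta=g(X_T,w(T))=h(T,X_T,w(T))=\xi_T$, so the contact holds as well. (If $\tau^\ast=0$, the mass sits at $0$, and the convention that integrands at $0^-$ equal their value at $0$ gives the same conclusion.)

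\textbf{Step 3: second condition.} This is the step I expect to be the only real point. On $[0,\tau^\ast)$ we have $Y_t>\xi_t$, so the Skorokhod condition $\int_0^T(Y_t-\xi_t)dA_t=0$ forces $dA=0$ on the open random interval $(0,\tau^\ast)$. More precisely, for each rational $r$, $\int_0^{r\wedge\tau^\ast}(Y-\xi)dA=0$ with a strictly positive continuous integrand on $[0,r\wedge\tau^\ast)$ gives $A_{s}=0$ for $s<\tau^\ast$. By continuity of $A$ and $A_0=0$, we get $A_{\tau^\ast}=\lim_{s\uparrow\tau^\ast}A_s=0$. When $\tau^\ast=0$, $A_0=0$ directly. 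Hence
\[
\int_{0^-}^TA_t\,d\hat L_t=-A_{\tau^\ast}=0
\qquad\text{and}\qquad
\int_{0^-}^T(Y_t-\xi_t)\,d\hat L_t=-(Y_{\tau^\ast}-\xi_{\tau^\ast})=0,
\]
so $\hat L\in\Gamma(L)$.

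The main care needed is the regularity of the obstacle $\xi$ through the mean-field term $w$. We must verify that $w$ is continuous, using dominated convergence with $\phi\in\mathcal C^1$ on the compact $[-T,T]$, so that the hitting time is a genuine stopping time. We must also check that $A$ cannot charge $[0,\tau^\ast)$.
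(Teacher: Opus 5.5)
Your proposal is correct and follows the paper's proof exactly. The paper also takes $\hat L_t=\mathbf 1_{t<\tau}$ with $\tau$ the first contact time of $Y$ with $\xi$. It uses continuity together with $Y_T=\zeta=\xi_T$ for the first condition, and the Skorokhod condition on $A$ together with $A_0=0$ and continuity to get $A_\tau=0$. Your extra checks — continuity of $t\mapsto\mathbb E[\int_{0^-}^T\phi(t-s)dL_s]$ and hence of $\xi$, and the stopping-time property of $\tau^\ast$ — fill in details the paper dismisses as ``easy to see''.
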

\begin{proof}
Given $L\in\mathcal V$, we know from Proposition \ref{P:ExistUniq} that there exists a unique triplet $(Y,Z,A)\in\mathbb S^2\times\mathbb H^2\times\mathbb A^2$ satisfying \eqref{RBSDE}. Define the stopping time $\tau$ as
\[
\tau := \inf \big\{ t \in [0, T] \colon Y_t = \xi_t \big\},
\]
with the convention $\inf \emptyset = T$. Since $\xi_T=\zeta$ and $\zeta=Y_T$, we see that $\tau\leq T$. We construct the candidate process $\hat{L}$ as follows:
\[
\hat{L}_t := \mathbf{1}_{t<\tau}.
\]
It is easy to see that $\hat L\in\mathcal V$. It remains to verify the validity of conditions \eqref{hatL}. It holds that
\[
\int_{0^-}^T (Y_t-\xi_t)d\hat L_t=-(Y_\tau-\xi_\tau)=0,
\]
where the last equality follows from the definition of $\tau$ and the continuity of the processes $Y$ and $\xi$. Finally, we have
\[
\int_{0^-}^T A_td\hat L_t=-A_\tau.
\]
By \eqref{RBSDE}, we know that the Skorokhod condition holds: $\int_0^T(Y_t-\xi_t)dA_t=0$. Since $A_0=0$ and $A$ is continuous, this condition implies that $A_t=0$ for every $t\leq\tau$, so that $A_\tau=0$.
\end{proof}
\paragraph{Technical results.}
 To prove the closure of the graph of the set-valued map $\Gamma$, we need to establish several technical results. We begin by establishing the continuous dependence of the solution $(Y,Z,A)$ to equation \eqref{RBSDE} with respect to the process $L\in\mathcal V$, as detailed in Proposition \ref{P:Stability} and Corollary \ref{C:Gamma^1}. To this end, we require the following three technical results.

\begin{lemma}\label{L:UnifConv_phi}
Suppose that Assumption \ref{assumption:1B}.2 holds. Let $\{L^n\}_n\subset\mathcal V$ be a sequence weakly converging in $\mathbb H^2(\mathbb R)$ to some $L\in\mathcal V$. Then, it holds that
\[
\lim_{n\rightarrow+\infty}\sup_{0\leq t\leq T}\bigg|\mathbb E\bigg[\int_{0^-}^T\phi(t-s)dL_s^n\bigg] - \mathbb E\bigg[\int_{0^-}^T\phi(t-s)dL_s\bigg]\bigg| = 0.
\]
\end{lemma}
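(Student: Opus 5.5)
Since $\phi\in\mathcal C^1([-T,T])$ and the integrand $s\mapsto\phi(t-s)$ is deterministic, I would convert the Stieltjes integral into a $ds$-integral of $L$ itself by integration by parts; weak convergence in $\mathbb H^2(\mathbb R)$ can then be used directly. Fix $t\in[0,T]$ and a path of $L\in\mathcal V$. Using the convention $L_{0^-}=1$, $L_T=0$ and the fact that $\phi(t-\cdot)$ is continuous and of bounded variation, the integration-by-parts formula for Stieltjes integrals gives
\[
\int_{0^-}^T\phi(t-s)\,dL_s
=
\phi(t-T)L_T-\phi(t)L_{0^-}-\int_0^T L_s\,\partial_s\big[\phi(t-s)\big]ds
=
-\phi(t)+\int_0^T\phi'(t-s)L_s\,ds .
\]
Here I use that $\phi(t-\cdot)$ is continuous, so no jump corrections appear, and that the value at $0^-$ accounts for a possible jump at time $0$. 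Taking expectations and using Fubini (everything is bounded),
\[
\mathbb E\bigg[\int_{0^-}^T\phi(t-s)\,dL^n_s\bigg]-\mathbb E\bigg[\int_{0^-}^T\phi(t-s)\,dL_s\bigg]
=
\int_0^T\phi'(t-s)\big(\ell^n_s-\ell_s\big)\,ds,
\]
where $\ell^n_s:=\mathbb E[L^n_s]$ and $\ell_s:=\mathbb E[L_s]$ are deterministic functions in $L^2([0,T])$, bounded by $1$.

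Next I pass from weak convergence in $\mathbb H^2$ to convergence of these deterministic functions. For any deterministic $\psi\in L^2([0,T])$, the process $(\omega,s)\mapsto\psi(s)$ belongs to $\mathbb H^2(\mathbb R)$. Weak convergence therefore gives $\int_0^T\psi(s)\,\ell^n_s\,ds\to\int_0^T\psi(s)\,\ell_s\,ds$, that is, $\ell^n\rightharpoonup\ell$ weakly in $L^2([0,T])$. In particular, $F_n(t):=\int_0^T\phi'(t-s)(\ell^n_s-\ell_s)\,ds\to0$ for each fixed $t$.

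The main step is upgrading pointwise convergence to uniform convergence in $t$. I would use equicontinuity. Since $\phi'$ is uniformly continuous on $[-T,T]$ and $|\ell^n-\ell|\le 2$, we have
\[
|F_n(t)-F_n(t')|\le 2T\sup_{|u-u'|\le|t-t'|}|\phi'(u)-\phi'(u')|,
\]
uniformly in $n$. Fix $\varepsilon>0$, take a finite $\delta$-net $\{t_1,\dots,t_J\}$ of $[0,T]$ for a suitable $\delta$, and choose $n$ large enough that $|F_n(t_j)|<\varepsilon$ for all $j$. Then $\sup_t|F_n(t)|\le 2\varepsilon$, which proves the claim. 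This is just Arzelà–Ascoli, or equivalently compactness of the family $\{\phi'(t-\cdot)\}_{t\in[0,T]}$ in $L^2([0,T])$ combined with boundedness of the weakly convergent sequence.

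The only delicate point is making sure the integration-by-parts identity correctly handles the atom at $0^-$ and the endpoint $T$ under the paper's conventions.
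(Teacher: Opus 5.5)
Your proposal is correct and follows essentially the same route as the paper. Both use integration by parts to reduce the expression to $-\phi(t)+\int_0^T\phi'(t-s)L_s\,ds$, obtain pointwise convergence in $t$ from weak convergence in $\mathbb H^2$, and upgrade to uniform convergence via equicontinuity coming from the uniform continuity of $\phi'$. The differences are cosmetic: you pass to the deterministic means $\mathbb E[L^n_s]$ and close with an explicit finite-net argument, whereas the paper applies Arzel\`a--Ascoli together with a sub-subsequence argument.
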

\begin{proof}
By integration by parts, we obtain
\[
\int_{0^-}^T \phi(t-s)dL_s = \phi(t-T)L_T - \phi(t)L_{0^-} + \int_0^T L_s\phi'(t-s)ds = -\phi(t) + \int_0^T L_s\phi'(t-s)ds,
\]
where the last equality follows from $L_{0^-}=1$ and $L_T=0$. Since the same result holds with $L^n$ in place of $L$, we find
\begin{align*}
&\mathbb E\bigg[\int_{0^-}^T\phi(t-s)dL_s^n\bigg] - \mathbb E\bigg[\int_{0^-}^T\phi(t-s)dL_s\bigg] \\
&= \mathbb E\bigg[\int_0^T L_s^n\phi'(t-s)ds\bigg] - \mathbb E\bigg[\int_0^T L_s\phi'(t-s)ds\bigg].
\end{align*}
Recalling that the sequence $\{L^n\}_n$ weakly converges in $\mathbb H^2(\mathbb R)$ to $L$, we deduce that, for every $t\in[0,T]$,
\[
\lim_{n\rightarrow+\infty}\mathbb E\bigg[\int_0^T L_s^n\phi'(t-s)ds\bigg] = \mathbb E\bigg[\int_0^T L_s\phi'(t-s)ds\bigg].
\]
Let $v_n,v\colon[0,T]\rightarrow\mathbb R$ be defined respectively as $v_n(t) = \mathbb E[\int_0^T L_s^n\phi'(t-s)ds]$ and $v(t) = \mathbb E[\int_0^T L_s\phi'(t-s)ds]$. Notice that $\{v_n\}_n$ is a sequence of continuous and uniformly bounded functions on $[0,T]$, which converges pointwise to $v$. Let us prove that the convergence is uniform. Since the sequence $\{v_n\}_n$ is uniformly bounded, this follows if we prove that $\{v_n\}_n$ is uniformly equicontinuous. It holds that
\[
|v_n(t) - v_n(t')| \leq \mathbb E\bigg[\int_0^T L_s^n|\phi'(t-s) - \phi'(t'-s)|ds\bigg].
\]
Recalling that $|L^n|\leq1$, moreover $\phi'$ is continuous on the compact set $[-T,T]$, so that $\phi'$ is uniformly continuous with some modulus $\eta\colon[0,+\infty)\rightarrow[0,+\infty)$, we find
\[
|v_n(t) - v_n(t')| \leq T\eta(|t-t'|).
\]
This shows that $v_n$ is uniformly continuous on $[0,T]$, uniformly with respect to $n$.
Then, by the Arzel\`a--Ascoli theorem, given a subsequence $\{v_{n_k}\}_k$ there exists a sub-subsequence $\{v_{n_{k_h}}\}_h$ which converges uniformly to $v$. This implies that the entire sequence $\{v_n\}_n$ converges uniformly to $v$.
\end{proof}

\begin{lemma}\label{L:m_BV}
Suppose that Assumption \ref{assumption:1A} holds. Let $L'\in\mathcal V$ and let $X$ be the solution to equation \eqref{MFG SDE} with input process $L\in\mathcal V$. Define $m\colon[0,T]\rightarrow\mathbb R^k$ as follows:
    \[
    m(t)=\mathbb E[\psi(t,X_t)L_t'], \qquad 0\leq t\leq T,
    \]
    where $\psi\colon[0,T]\times\mathbb R^d\rightarrow\mathbb R^k$, with derivatives satisfying \eqref{PolGrowthCond}, and such that there exist constants $K\geq0$, $r\in[1,q/2)$,
    \[
        |\psi(t,x)| \leq K(1+|x|^r), \qquad (t,x)\in[0,T]\times\mathbb R^d.
    \]
    Then, every component of the function $m=(m_1,\ldots,m_k)$ has bounded variation.
\end{lemma}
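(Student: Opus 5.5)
We prove that each component $m_j(t)=\mathbb E[\psi_j(t,X_t)L'_t]$ has bounded variation by splitting it into a smooth part and a monotone part. The idea is to apply Itô's formula to $\psi_j(t,X_t)$ and then integrate by parts against the finite-variation process $L'$. Fix a component $j$ and write $\psi$ for $\psi_j$. By Assumption \ref{assumption: 1C}, $\psi\in\mathcal C^{1,2}$, so Itô's formula gives
\[
\psi(t,X_t)=\psi(0,X_0)+\int_0^t \mathcal{G}_s\,ds+M_t,
\]
where
\[
\mathcal G_s:=\partial_t\psi(s,X_s)+\partial_x\psi(s,X_s)\,b_s+\tfrac12\mathrm{tr}\big(\sigma_s\sigma_s^\top\partial_{xx}^2\psi(s,X_s)\big)
\]
and $M_t=\int_0^t\partial_x\psi(s,X_s)\sigma_s\,dW_s$. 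Here $b_s,\sigma_s$ denote the coefficients evaluated along the solution.

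The next step is to multiply by $L'$ using the product rule for semimartingales. Since $L'$ has finite variation and $\psi(\cdot,X_\cdot)$ is continuous, there is no bracket term, and
\[
\psi(t,X_t)L'_t=\psi(0,X_0)L'_{0^-}+\int_0^t L'_s\,d\psi(s,X_s)+\int_{0^-}^t\psi(s,X_s)\,dL'_s.
\]
We then take expectations, which gives
\[
m(t)=\mathbb E[\psi(0,X_0)]+\int_0^t\mathbb E[L'_s\mathcal G_s]\,ds+\mathbb E\Big[\int_{0^-}^t\psi(s,X_s)\,dL'_s\Big].
\]
This requires that $\int_0^\cdot L'_s\,dM_s$ is a true martingale, which holds once $\mathbb E\int_0^T|\partial_x\psi\,\sigma_s|^2ds<\infty$.

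The remaining work is to show that each term has bounded variation on $[0,T]$.

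For the drift term, the map $t\mapsto\int_0^t\mathbb E[L'_s\mathcal G_s]ds$ is absolutely continuous provided $\mathbb E\int_0^T|\mathcal G_s|ds<\infty$. By \eqref{PolGrowthCond} and the linear growth of $b,\sigma$ (Assumption \ref{assumption:1A}; the mean-field arguments are bounded by $\mathbb E[|\bar b(s,X_s)|]$, which is uniformly bounded by \eqref{EstimateSDE}), we have $|\mathcal G_s|\le C(1+|X_s|^{p'})$. Since $p'\le q/2\le q$, this is integrable by \eqref{EstimateSDE}. The same estimate bounds $|\partial_x\psi\,\sigma_s|^2\le C(1+|X_s|^{2p'-2}\cdot|X_s|^2)=C(1+|X_s|^{2p'})$, and $2p'\le q$ gives square-integrability. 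This is exactly where the constraint $p'\le q/2$ enters.

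For the $dL'$ term, the variation of $t\mapsto\mathbb E[\int_{0^-}^t\psi(s,X_s)dL'_s]$ over any partition is at most
\[
\mathbb E\Big[\int_{0^-}^T|\psi(s,X_s)|\,d(-L'_s)\Big]\le \mathbb E\Big[\sup_s|\psi(s,X_s)|\cdot(L'_{0^-}-L'_T)\Big]\le K\Big(1+\mathbb E\big[\sup_s|X_s|^r\big]\Big),
\]
which is finite because $r<q/2$ and by \eqref{EstimateSDE}. This bound uses the fact that $-dL'$ is a positive measure of total mass $1$.

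The main technical point is justifying the product formula with the jump at $0^-$ and the martingale property. We handle both by localization with stopping times $\tau_N=\inf\{t:|X_t|\ge N\}$, establishing the identity up to $t\wedge\tau_N$, and then passing to the limit using dominated convergence together with the moment bounds above.

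Combining the three terms, each component $m_j$ is the sum of a constant, an absolutely continuous function and a function of bounded variation, and therefore has bounded variation.
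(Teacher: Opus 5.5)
Your proof is correct and follows essentially the same route as the paper. The paper likewise applies Itô's formula and integration by parts to $\psi(t,X_t)L'_t$ and uses the polynomial growth bounds with \eqref{EstimateSDE} to make the stochastic integral a true martingale. It then bounds the variation of the $dL'$ term by $\mathbb E[\sup_s|\psi(s,X_s)|]$, using that $-dL'$ has total mass one. The localization you mention is harmless but unnecessary, since your square-integrability estimate already gives the martingale property directly.
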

\begin{proof}
Without loss of generality, we suppose that $k=1$, so that our aim is to prove that $m$ has bounded variation. Recalling that $|L_t'|\leq1$, we find
\begin{equation}\label{UniformBound}
\sup_{0\leq t\leq T}|m(t)|=\sup_{0\leq t\leq T}|\mathbb{E}[\psi(t,X_t)L_t']| \le K\Big(1+\mathbb{E}\Big[\sup_{0\leq t\leq T}|X_t|^{r}\Big]\Big) < \infty,
\end{equation}
where the last inequality follows from the standard estimate \eqref{EstimateSDE}. We proceed by applying the integration by parts formula to $\psi(t,X_t)L_t'$:
\begin{align*}
\psi(t,X_t)L_t'&=\psi(0,X_0)L_0'+\int_0^tL_{s^-}'d\psi(s,X_s)+\int_0^t\psi(s,X_s)dL_s'\\
&=\psi(0,X_0)L_0'+\int_0^tL_s'\partial_t \psi(s,X_s)ds + \int_0^t L_s'\partial_x\psi(s,X_s)b(s,X_s,\mathbb E[\bar b(s,X_s)L_s])ds \\
&\quad + \frac{1}{2}\int_0^t L_s'\text{tr}\big[(\sigma\sigma^{\scriptscriptstyle\intercal})(s,X_s,\mathbb E[\bar\sigma(s,X_s)L_s])\partial_{xx}^2\psi(s,X_s)\big]ds \\
&\quad + \int_0^tL_{s^-}'\partial_x\psi(s,X_s)\sigma(s,X_s,\mathbb E[\bar\sigma(s,X_s)L_s])dW_s + \int_0^t\psi(s,X_s)dL_s'.
\end{align*}
Since $L'$ is bounded, $\partial_x\psi$ satisfies a polynomial growth condition, $\sigma,\bar\sigma$ satisfy a linear growth condition, and estimate \eqref{EstimateSDE} holds, the stochastic integral is a martingale. As a consequence, taking the expectation, we find
\begin{align*}
m(t)&=m(0)+\int_0^t\mathbb E[L_s'\partial_t \psi(s,X_s)]ds + \int_0^t \mathbb E[L_s'\partial_x\psi(s,X_s)b(s,X_s,\mathbb E[\bar b(s,X_s)L_s])]ds \\
&\quad + \frac{1}{2}\int_0^t \mathbb E\big[L_s'\text{tr}\big[(\sigma\sigma^{\scriptscriptstyle\intercal})(s,X_s,\mathbb E[\bar\sigma(s,X_s)L_s])\partial_{xx}^2\psi(s,X_s)\big]\big]ds + \mathbb E\bigg[\int_0^t\psi(s,X_s)dL_s'\bigg].
\end{align*}
Notice that the first three integrals above correspond to bounded variation functions. Then, the claim follows if we prove that $\ell(t):=\mathbb E[\int_0^t\psi(s,X_s)dL_s']$, $t\in[0,T]$, has bounded variation. Notice that the total variation of $\ell$ on $[0,T]$ is upper bounded by
\[
\mathbb E\bigg[\int_0^T|\psi(s,X_s)|d|L'|_s\bigg].
\]
Recalling that the total variation of $L_t'$ from $t=0^-$ to $t=T$ is exactly $1$, and also that $\psi$ satisfies a polynomial growth condition, we obtain, for a suitable constant $C$,
\begin{equation}\label{UniformlyBoundedVariation}
\mathbb E\bigg[\int_0^T|\psi(s,X_s)|d|L'|_s\bigg] \le \mathbb E\Big[\sup_{0\leq s\leq T}|\psi(s,X_s)|\Big] \le C\Big(1+\mathbb{E}\Big[\sup_{t \in [0,T]}|X_t|^{r}\Big]\Big).
\end{equation}
By estimate \eqref{EstimateSDE}, we conclude that $\ell$ is a function with bounded variation.
\end{proof}

\begin{lemma}\label{L:Conv_E[XL]}
Suppose that Assumption \ref{assumption:1A} holds. Let $\{L^n\}_n\subset\mathcal V$ be a sequence weakly converging in $\mathbb H^2(\mathbb R)$ to some $L\in\mathcal V$. Let $\psi$ be as in Lemma \ref{L:m_BV}. Then, it holds that, up to a subsequence,
    \[
    \mathbb E[\psi(t,X_t)L_t^n] \rightarrow \mathbb E[\psi(t,X_t)L_t] \quad \text{and} \quad \mathbb E[\psi(t,X_t^n)L_t^n] \rightarrow \mathbb E[\psi(t,X_t)L_t], \qquad \text{for a.e. }t\in[0,T],
    \]
    where $X^n$ is the solution to equation \eqref{MFG SDE} with $L^n$ in place of $L$. In particular, for every $p\geq1$, it holds that
    \[
    \int_0^T\big|\mathbb E[\psi(t,X_t)L_t^n] - \mathbb E[\psi(t,X_t)L_t]\big|^pdt\rightarrow0, \qquad
    \int_0^T\big|\mathbb E[\psi(t,X_t^n)L_t^n] - \mathbb E[\psi(t,X_t)L_t]\big|^pdt\rightarrow0.
    \]
\end{lemma}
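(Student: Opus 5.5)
Weak convergence in $\mathbb H^2$ gives only convergence of integrals in $t$, not convergence at each fixed $t$. The plan is to upgrade this to pointwise a.e. convergence using the bounded-variation structure from Lemma \ref{L:m_BV} and Helly's selection theorem. We treat the first convergence first, with $X$ fixed.

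\emph{Step 1 (fixed $X$).} Set $m_n(t):=\mathbb E[\psi(t,X_t)L^n_t]$ and $m(t):=\mathbb E[\psi(t,X_t)L_t]$, assuming $k=1$ componentwise. By \eqref{UniformBound} and Lemma \ref{L:m_BV} applied with $L'=L^n$, the family $\{m_n\}_n$ is uniformly bounded on $[0,T]$. It also has total variation bounded uniformly in $n$. Indeed, the drift terms in the decomposition of $m_n$ are bounded, by the polynomial growth \eqref{PolGrowthCond} and \eqref{EstimateSDE}, by constants independent of $n$ (here $|L^n|\le1$ and $p'\le q/2$). The term $\ell_n$ is controlled by \eqref{UniformlyBoundedVariation}, whose right-hand side does not depend on $n$.

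By Helly's selection theorem, a subsequence $m_{n_k}$ converges pointwise everywhere on $[0,T]$ to some bounded function $\tilde m$. To identify $\tilde m$, we test against $\varphi\in L^\infty([0,T])$: the process $(t,\omega)\mapsto\varphi(t)\psi(t,X_t)$ belongs to $\mathbb H^2$ because $2r<q$. Weak convergence of $L^n$ then gives $\int_0^T\varphi(t)m_n(t)\,dt\to\int_0^T\varphi(t)m(t)\,dt$. Dominated convergence along the subsequence gives $\int_0^T\varphi\, m_{n_k}\,dt\to\int_0^T\varphi\,\tilde m\,dt$. Hence $\tilde m=m$ a.e., which proves the first claim.

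\emph{Step 2 (varying $X^n$).} We write
\[
\mathbb E[\psi(t,X^n_t)L^n_t]-\mathbb E[\psi(t,X_t)L_t]=\mathbb E[(\psi(t,X^n_t)-\psi(t,X_t))L^n_t]+\big(m_n(t)-m(t)\big).
\]
The second term is handled by Step 1. For the first term we use \eqref{EstimateLL'} with $L'=L^n$ and $p=2$. The right-hand side of \eqref{EstimateLL'} involves exactly $\int_0^T|\mathbb E[\bar b(t,X_t)L_t]-\mathbb E[\bar b(t,X_t)L^n_t]|^2dt$ and the analogous $\bar\sigma$ term. These are quantities of Step 1 type with $\psi=\bar b,\bar\sigma$, which are Lipschitz, hence of linear growth, so $r=1$ is allowed. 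They converge pointwise a.e. along a common subsequence and are uniformly bounded, so dominated convergence makes these integrals tend to $0$. This yields $\mathbb E[\sup_t|X^n_t-X_t|^2]\to0$.

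Then $\sup_t\big|\mathbb E[(\psi(t,X^n_t)-\psi(t,X_t))L^n_t]\big|\to0$. To see this, use $|\partial_x\psi|\le K(1+|x|^{p'-1})$ and the mean value theorem, then Hölder's inequality with the uniform $q$-moment bounds \eqref{EstimateSDE}. This requires $2(p'-1)\le q$, which holds since $p'\le q/2$. The subsequences chosen for the different $\psi$'s are nested diagonally.

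\emph{Step 3.} The $L^p$ integral statements follow from the pointwise a.e. convergence and the uniform bound \eqref{UniformBound} (uniform in $n$, also for $X^n$) by dominated convergence.

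The main obstacle is Step 1: we need to verify that the variation bound in Lemma \ref{L:m_BV} is genuinely uniform in $n$, and to identify the Helly limit through weak convergence. The latter requires the test processes $\varphi\psi(\cdot,X)$ to lie in $\mathbb H^2$, which is where the condition $r<q/2$ is used.
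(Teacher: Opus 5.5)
Your proof is correct. Step 1 is the same as the paper's: you take the uniform bound and uniform variation from Lemma~\ref{L:m_BV}, apply Helly's selection theorem, and identify the limit by testing against $L^\infty([0,T])$ and using weak convergence of $L^n$.

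The routes differ in Step 2. The paper does not split the difference. It runs Helly's theorem a second time, on $t\mapsto\mathbb E[\psi(t,X^n_t)L^n_t]$, whose variation is again uniformly bounded via \eqref{EstimateSDE} for $X^n$. It then identifies the limit by showing $\psi(\cdot,X^n_\cdot)\to\psi(\cdot,X_\cdot)$ strongly in $\mathbb H^2$ through Vitali's theorem, which uses only continuity and growth of $\psi$, and pairing this strongly convergent sequence with the weakly convergent $L^n$. You instead isolate the perturbation $\mathbb E[(\psi(t,X^n_t)-\psi(t,X_t))L^n_t]$. You remove it uniformly in $t$ using the mean value theorem, the bound on $\partial_x\psi$ in \eqref{PolGrowthCond}, and H\"older, which gives the quantitative estimate $\sup_t|\cdot|\le C\,\mathbb E[\sup_t|X^n_t-X_t|^2]^{1/2}$. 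This avoids the second Helly extraction and the weak--strong pairing. The derivative bound it needs is already among the hypotheses on $\psi$, and the paper needs it too for its second uniform-variation estimate.

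Two small remarks.

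First, when you apply Step 1 to $\psi=\bar b,\bar\sigma$, linear growth alone is not enough. Lemma~\ref{L:m_BV} also requires $\bar b,\bar\sigma\in\mathcal C^{1,2}$ with \eqref{PolGrowthCond}, which is Assumption~\ref{assumption: 1C} rather than Assumption~\ref{assumption:1A}. The paper relies on this implicitly as well.

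Second, your Step 3 yields the $L^p(0,T)$ convergence only along the extracted subsequence. The paper gets it for the whole sequence by observing that every subsequence of $\{L^n\}_n$ still converges weakly to $L$, so it has a further subsequence along which the integrals vanish. This is the form in which the lemma is invoked later, for instance in Corollary~\ref{C:Gamma^1}, and adding that one line fixes it.
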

\begin{proof}
We prove the lemma in the case $k=1$, as the general case can be proved proceeding component-wise. We begin by proving the claim with $X$ fixed. Let $m(t)=\mathbb E[\psi(t,X_t)L_t]$, $t\in[0,T]$. Let us also define the sequence of functions $\{m^n\}_n$ where $m^n(t)=\mathbb{E}[\psi(t,X_t)L^n_t]$, $t\in[0,T]$. By Lemma \ref{L:m_BV} we know that each $m^n$, as well as $m$, has bounded variation. We also know by \eqref{EstimateSDE}, \eqref{UniformBound}, \eqref{UniformlyBoundedVariation} that they are uniformly bounded and they have a uniformly bounded variation. Then, by Helly's selection theorem there exists a subsequence $\{m^{n_h}\}_h$ and a function $\tilde{m}$ of bounded variation such that $m^{n_h}$ converges to $\tilde{m}$ pointwise almost everywhere.\\
Now, consider a function $F\in \text{L}^\infty([0,T])$. We have $F(t)m^{n_h}(t) \to F(t)\tilde{m}(t)$ a.e. and moreover $|F(t)m^{n_h}(t)|\le C||F||_{\infty} \in \text{L}^1([0,T])$. By Lebesgue's dominated convergence theorem, we deduce that
\[
\lim_{h \to \infty} \int_0^TF(t)\mathbb{E}[\psi(t,X_t)L^{n_h}_t]dt=\int_0^TF(t)\tilde{m}(t)dt.
\]
Since $F\in \text{L}^\infty([0,T])$, it holds that $F\psi(\cdot,X_\cdot)$ is in $\mathbb H^2(\mathbb R)$. Then, since $\{L^n\}_n$ weakly converges to $L$ in $\mathbb H^2(\mathbb R)$, we obtain
\begin{align*}
\lim_{n \to \infty}\int_0^T\mathbb{E}[F(t)\psi(t,X_t)L^n_t]dt&=\lim_{n \to \infty}\mathbb{E}\bigg[\int_0^TF(t)\psi(t,X_t)L^n_tdt\bigg]\\
&=\mathbb{E}\bigg[\int_0^TF(t)\psi(t,X_t)L_tdt\bigg]=\int_0^TF(t)\mathbb{E}[\psi(t,X_t)L_t]dt.
\end{align*}
By uniqueness of the limit, $\int_0^TF(t)(\mathbb{E}[\psi(t,X_t)L_t]-\tilde{m}_t)=0$. Since this is true for all $F \in \text{L}^\infty([0,T])$, we conclude that
\[
\tilde{m}(t)=\mathbb{E}[\psi(t,X_t)L_t], \qquad \text{for a.e. }t\in[0,T].
\]
Hence, $m^{n_h}(t)=\mathbb{E}[\psi(t,X_t)L^{n_h}_t] \to \mathbb{E}[\psi(t,X_t)L_t]$  for a.e. $t\in[0,T]$.\\
Finally, the last claim (the case with $X$ fixed) follows easily applying Lebesgue's dominated convergence theorem to show that every subsequence has a further subsequence converging to zero.\\
Let us now consider the case with $X^n$ and $X$. We define $m(t)=\mathbb E[\psi(t,X_t)L_t]$ and $m^n(t)=\mathbb{E}[\psi(t,X_t^n)L^n_t]$, $t\in[0,T]$. As before, we deduce that there exists a subsequence $\{m^{n_h}\}_h$ and a function $\tilde{m}$ of bounded variation such that $m^{n_h}$ converges to $\tilde{m}$ pointwise almost everywhere.\\
Now, consider a function $F\in \text{L}^\infty([0,T])$. We have $F(t)m^{n_h}(t) \to F(t)\tilde{m}(t)$ a.e. and moreover $|F(t)m^{n_h}(t)|\le C||F||_{\infty} \in \text{L}^1([0,T])$. By Lebesgue's dominated convergence theorem, we deduce that
\[
\lim_{h \to \infty} \int_0^TF(t)\mathbb{E}[\psi(t,X_t^{n_h})L^{n_h}_t]dt=\int_0^TF(t)\tilde{m}(t)dt.
\]
On the other hand, by \eqref{EstimateLL'} and the claim with $X$ fixed of this lemma, we have that $X^n$ converges to $X$ in $\mathbb S^q$, with $q>2$, whenever $L^n$ weakly converges to $L$ in $\mathbb H^2(\mathbb R)$. Since $\psi$ is continuous and satisfies a polynomial growth condition, moreover estimate \eqref{EstimateSDE} holds, we deduce by Vitali's convergence theorem that $\psi(\cdot,X_\cdot^n)$ strongly converges to $\psi(\cdot,X_\cdot)$ in $\mathbb H^2(\mathbb R)$. Since $F\in \text{L}^\infty([0,T])$, this immediately implies that $F\psi(\cdot,X_\cdot^n)$ strongly converges to $F\psi(\cdot,X_\cdot)$ in $\mathbb H^2(\mathbb R)$. Then, by the continuity of the inner product in the Hilbert space $\mathbb H^2(\mathbb R)$ when one component converges weakly and the other converges strongly, it holds that
\begin{align*}
\lim_{n \to \infty}\int_0^T\mathbb{E}[F(t)\psi(t,X_t^n)L^n_t]dt&=\lim_{n \to \infty}\mathbb{E}\bigg[\int_0^TF(t)\psi(t,X_t^n)L^n_tdt\bigg]\\
&=\mathbb{E}\bigg[\int_0^TF(t)\psi(t,X_t)L_tdt\bigg]=\int_0^TF(t)\mathbb{E}[\psi(t,X_t)L_t]dt.
\end{align*}
By uniqueness of the limit, $\int_0^TF(t)(\mathbb{E}[\psi(t,X_t)L_t]-\tilde{m}_t)=0$. Since this is true for all $F \in \text{L}^\infty([0,T])$, we conclude that
\[
\tilde{m}(t)=\mathbb{E}[\psi(t,X_t)L_t], \qquad \text{for a.e. }t\in[0,T].
\]
Hence, $m^{n_h}(t)=\mathbb{E}[\psi(t,X_t^{n_h})L^{n_h}_t] \to \mathbb{E}[\psi(t,X_t)L_t]$  for a.e. $t\in[0,T]$.\\
Finally, as before the last claim (the case with $X^n$ and $X$) follows easily applying Lebesgue's dominated convergence theorem to show that every subsequence has a further subsequence converging to zero.
\end{proof}

\begin{proposition}\label{P:Stability}
Suppose that Assumptions \ref{assumption:1A} and \ref{assumption:1B} hold. Let $L,L'\in\mathcal V$. Consider the corresponding solutions $X,X'\in\mathbb S^2$ of equation \eqref{MFG SDE} and $(Y,Z,A),$ $(Y',Z',A')\in\mathbb S^2\times\mathbb H^2\times\mathbb A^2$ of equation \eqref{RBSDE}, respectively. Then, there exists a constant $c$, depending only on $X_0$, $T$, $K$, $\phi$, such that
    \begin{align}\label{estimateDeltaYZA}
    &\mathbb E\bigg[\sup_{0\leq t\leq T}|\Delta Y_t|^2 + \int_0^T |\Delta Z_t|^2dt + \sup_{0\leq t\leq T}|\Delta A_t|^2\bigg] \notag \\
    &\leq c\,\mathbb E\bigg[\int_0^T\big|f(t,X_t,\mathbb E[\bar f(t,X_t)L_t]) - f(t,X_t',\mathbb E[\bar f(t,X_t)L_t])\big|^2dt\bigg] \\
    &\quad+ c\int_0^T\big|\mathbb E[\bar f(t,X_t)L_t] - \mathbb E[\bar f(t,X_t')L_t']\big|^2dt + c\,\mathbb E\big[|\xi_T-\tilde\xi_T'|^2\big] + c\sqrt{\mathbb E\Big[\sup_{0\leq t\leq T}|\xi_t-\tilde\xi_t'|^2\Big]} \notag \\
    &\quad+ c\bigg|\mathbb E\bigg[\int_{0^-}^T\phi(T-s)dL_s\bigg] - \mathbb E\bigg[\int_{0^-}^T\phi(T-s)dL_s'\bigg]\bigg|^2 \notag \\
    &\quad+ c\sup_{0\leq t\leq T}\bigg|\mathbb E\bigg[\int_{0^-}^T\phi(t-s)dL_s\bigg] - \mathbb E\bigg[\int_{0^-}^T\phi(t-s)dL_s'\bigg]\bigg|, \notag
    \end{align}
    where $\Delta Y_t=Y_t-Y_t'$, $\Delta Z_t=Z_t-Z_t'$, $\Delta A_t=A_t-A_t'$, $\xi_t=h(t,X_t,\mathbb E[\int_{0^-}^T\phi(t-s)dL_s])$, $\tilde\xi_t'=h(t,X_t',\mathbb E[\int_{0^-}^T\phi(t-s)dL_s])$, for every $t\in[0,T]$.
\end{proposition}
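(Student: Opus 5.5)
This is the standard a priori stability estimate for reflected BSDEs in the style of El Karoui et al.; we adapt it to the present coefficients. Write the two equations with drivers $f_t:=f(t,X_t,\mathbb E[\bar f(t,X_t)L_t])$ and $f'_t:=f(t,X'_t,\mathbb E[\bar f(t,X'_t)L'_t])$, terminal values $\xi_T$ and $\xi'_T$, and obstacles $\xi$ and $\xi'$, where $\xi'_t=h(t,X'_t,\mathbb E[\int_{0^-}^T\phi(t-s)dL'_s])$.

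\textbf{Step 1 (Itô's formula).} We apply Itô's formula to $|\Delta Y_t|^2$ between $t$ and $T$, which gives
\[
|\Delta Y_t|^2+\int_t^T|\Delta Z_s|^2ds=|\xi_T-\xi'_T|^2+2\int_t^T\Delta Y_s(f_s-f'_s)ds+2\int_t^T\Delta Y_s\,d(\Delta A_s)-2\int_t^T\Delta Y_s\Delta Z_s\,dW_s.
\]

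\textbf{Step 2 (reflection term).} Using the two Skorokhod conditions and the constraints $Y\ge\xi$ and $Y'\ge\xi'$, we get
\[
\int_t^T\Delta Y_s\,d\Delta A_s=\int_t^T(\xi_s-Y'_s)dA_s+\int_t^T(\xi'_s-Y_s)dA'_s\le\int_t^T(\xi_s-\xi'_s)\,d(A_s-A'_s),
\]
which is bounded by $\sup_s|\xi_s-\xi'_s|\,(A_T+A'_T)$.

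\textbf{Step 3 (bounds on $\Delta Z$ and $\Delta Y$).} For the driver term we use Young's inequality, $2\Delta Y(f-f')\le|\Delta Y|^2+|f-f'|^2$, followed by Gronwall's inequality. For the stochastic integral we use Burkholder–Davis–Gundy to pass to the supremum. This yields
\[
\mathbb E\Big[\sup_t|\Delta Y_t|^2+\int_0^T|\Delta Z|^2\Big]\le c\,\mathbb E\Big[|\xi_T-\xi'_T|^2+\int_0^T|f-f'|^2\Big]+c\,\mathbb E\big[(A_T+A'_T)^2\big]^{1/2}\,\mathbb E\Big[\sup_t|\xi_t-\xi'_t|^2\Big]^{1/2}.
\]
The factor $\mathbb E[(A_T+A'_T)^2]$ is bounded uniformly in $L,L'\in\mathcal V$. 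This follows from the standard a priori estimate for RBSDEs together with the polynomial growth of $f(\cdot,\cdot,0)$ and $h(\cdot,\cdot,0)$, the Lipschitz property of $f$ in $m$ and of $h$ in $w$, the bound $|\mathbb E[\int\phi\,dL]|\le\|\phi\|_\infty$, and the moment bound \eqref{EstimateSDE} with $2r<q$. So the uniform constant depends only on $X_0,T,K,\phi$, as claimed.

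\textbf{Step 4 (bound on $\Delta A$).} From the equation, $\Delta A_t=\Delta Y_0-\Delta Y_t-\int_0^t(f-f')ds+\int_0^t\Delta Z\,dW$. This gives $\mathbb E[\sup_t|\Delta A_t|^2]\le c\big(\mathbb E[\sup|\Delta Y|^2]+\mathbb E\int|f-f'|^2+\mathbb E\int|\Delta Z|^2\big)$, so $\Delta A$ is controlled by the quantities already estimated.

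\textbf{Step 5 (splitting into the terms of \eqref{estimateDeltaYZA}).} We decompose $f-f'$ by inserting $f(t,X'_t,\mathbb E[\bar f(t,X_t)L_t])$. The first piece is exactly the first term on the right of the statement. The second piece is at most $K|\mathbb E[\bar f(t,X_t)L_t]-\mathbb E[\bar f(t,X'_t)L'_t]|$ by the Lipschitz property of $f$ in $m$. Similarly, $\xi_t-\xi'_t=(\xi_t-\tilde\xi'_t)+(\tilde\xi'_t-\xi'_t)$, and by the Lipschitz property of $h$ in $w$,
\[
|\tilde\xi'_t-\xi'_t|\le K\Big|\mathbb E\Big[\int_{0^-}^T\phi(t-s)dL_s\Big]-\mathbb E\Big[\int_{0^-}^T\phi(t-s)dL'_s\Big]\Big|.
\]
The same bound at $t=T$ gives the squared $\phi(T-\cdot)$ term for the terminal values. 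Finally, using $\sqrt{a+b}\le\sqrt a+\sqrt b$, the square root of the obstacle supremum splits into the term $c\sqrt{\mathbb E[\sup|\xi-\tilde\xi'|^2]}$ plus $c$ times the supremum of the deterministic $\phi$-difference, without a square. This matches the last line of \eqref{estimateDeltaYZA}.

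\textbf{Main obstacle.} The delicate point is Step 2 combined with the uniform bound on $A_T$. This is why the estimate has only square-root dependence on the obstacle difference, and the integrability of $A_T$ must be checked carefully so that the constant does not depend on $L$.
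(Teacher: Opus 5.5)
Your proposal is correct and follows essentially the same route as the paper. The paper cites Proposition 3.6 of El Karoui et al.\ for the estimate you derive by hand in Steps 1--3, and its uniform bound $\psi_T\le c$ plays the role of your uniform bound on $\mathbb E[(A_T+A_T')^2]$. It then performs the same Lipschitz splitting in $m$ and $w$ and recovers $\sup_t|\Delta A_t|$ from the equation, exactly as in your Steps 4--5.
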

\begin{proof}
    By Proposition 3.6 in \cite{el1981aspects} it follows that there exists a constant $c\geq0$ such that (in the sequel, we denote by $c$ a non-negative constant, depending only on $X_0$, $T$, $K$, $\phi$, which may change from line to line)
\begin{align*}
    &\mathbb E\bigg[\sup_{0\leq t\leq T}|\Delta Y_t|^2 + \int_0^T |\Delta Z_t|^2dt + |\Delta A_T|^2\bigg] \leq c\,\mathbb E\big[|\xi_T-\xi_T'|^2\big] + c\sqrt{\mathbb E\Big[\sup_{0\leq t\leq T}\big|\xi_t - \xi_t'\big|^2\Big]}\sqrt{\psi_T} \\
    &+ c\,\mathbb E\bigg[\int_0^T\big|f(t,X_t,\mathbb E[\bar f(t,X_t)L_t]) - f(t,X_t',\mathbb E[\bar f(t,X_t')L_t'])\big|^2dt\bigg],
\end{align*}
where $\xi_t'=h(t,X_t',\mathbb E[\int_{0^-}^T\phi(t-s)dL_s'])$, for $t\in[0,T]$, and
\begin{align*}
    \psi_T &= \mathbb E\bigg[\xi_T^2 + (\xi_T')^2 + \int_0^T \big(\big|f(t,X_t,\mathbb E[\bar f(t,X_t)L_t])\big|^2 + \big|f(t,X_t',\mathbb E[\bar f(t,X_t')L_t'])\big|^2\big)dt \\
    &\quad + \sup_{0\leq t\leq T}\big(\big|\xi_t^+\big|^2 + \big|(\xi_t')^+\big|^2\big)\bigg].
\end{align*}
From the polynomial growth conditions on $f$, $\bar f$, $h$, the Lipschitz continuity of $f$ with respect to its last argument, estimate \eqref{EstimateSDE}, and the boundedness of $L$ and $L'$, it follows that
\[
\psi_T\leq c.
\]
By the Lipschitz property of $f$ and $h$ in their last argument, we obtain
\begin{align}\label{estimateDeltaYZA_proof}
    &\mathbb E\bigg[\sup_{0\leq t\leq T}|\Delta Y_t|^2 + \int_0^T |\Delta Z_t|^2dt + |\Delta A_T|^2\bigg] \notag \\
    &\leq c\,\mathbb E\bigg[\int_0^T\big|f(t,X_t,\mathbb E[\bar f(t,X_t)L_t]) - f(t,X_t',\mathbb E[\bar f(t,X_t)L_t])\big|^2dt\bigg] \\
    &\quad+ c\int_0^T\big|\mathbb E[\bar f(t,X_t)L_t] - \mathbb E[\bar f(t,X_t')L_t']\big|^2dt \notag \\
    &\quad+ c\mathbb E\bigg[\bigg|h\bigg(T,X_T,\mathbb E\bigg[\int_{0^-}^T\phi(T-s)dL_s\bigg]\bigg) - h\bigg(T,X_T',\mathbb E\bigg[\int_{0^-}^T\phi(T-s)dL_s\bigg]\bigg)\bigg|^2\bigg] \notag \\
    &\quad+ c\sqrt{\mathbb E\bigg[\sup_{0\leq t\leq T}\bigg|h\bigg(t,X_t,\mathbb E\bigg[\int_{0^-}^T\phi(t-s)dL_s\bigg]\bigg) - h\bigg(t,X_t',\mathbb E\bigg[\int_{0^-}^T\phi(t-s)dL_s\bigg]\bigg)\bigg|^2\bigg]} \notag \\
    &\quad+ c\bigg|\mathbb E\bigg[\int_{0^-}^T\phi(T-s)dL_s\bigg] - \mathbb E\bigg[\int_{0^-}^T\phi(T-s)dL_s'\bigg]\bigg|^2 \notag \\
    &\quad+ c\sup_{0\leq t\leq T}\bigg|\mathbb E\bigg[\int_{0^-}^T\phi(t-s)dL_s\bigg] - \mathbb E\bigg[\int_{0^-}^T\phi(t-s)dL_s'\bigg]\bigg|. \notag
\end{align}
Now, by equation \eqref{RBSDE} we have
\[
    A_t = Y_0-Y_t- \int_0^t f(s,X_s,\mathbb E[\bar f(s,X_s)L_s])ds + \int_0^t(Z_s,dW_s)
\]
and similarly for $A'$. Hence
\begin{align*}
    &\mathbb E\big[\sup_{0\leq t\leq T}|\Delta A_t|^2\big] \leq c\,\mathbb E\bigg[\sup_{0\leq s\leq T}|\Delta Y_s|^2 \\
    &+ \bigg(\int_0^T \big|f(s,X_s,\mathbb E[\bar f(s,X_s)L_s]) - f(s,X_s',\mathbb E[\bar f(s,X_s')L_s'])\big|ds\bigg)^2 + \sup_{0\leq t\leq T}\bigg|\int_0^t(\Delta Z_s,dW_s)\bigg|^2\bigg].
\end{align*}
Using the Lipschitz property of $f$ in its last argument, by \eqref{estimateDeltaYZA_proof} we conclude that \eqref{estimateDeltaYZA} holds true.
\end{proof}

\begin{corollary}\label{C:Gamma^1}
Suppose that Assumptions \ref{assumption:1A}, \ref{assumption:1B} and \ref{assumption: 1C} hold. Let $L\in\mathcal V$ and consider the corresponding solution $(Y,Z,A)\in\mathbb S^2\times\mathbb H^2\times\mathbb A^2$ of system \eqref{RBSDE}. Let also $\{L^n\}_n\subset\mathcal V$ be a sequence weakly converging in $\mathbb H^2(\mathbb R)$ to $L$. For each $n$, let $(Y^n,Z^n,A^n)\in\mathbb S^2\times\mathbb H^2\times\mathbb A^2$ be the solution of system \eqref{RBSDE} with $L^n$ in place of $L$. Then, it holds that $Y^n \to Y$ in $\mathbb{S}^2$, $Z^n \to Z$ in $\mathbb{H}^2$, $A^n \to A$ in $\mathbb{S}^2$.
\end{corollary}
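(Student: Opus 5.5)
The plan is to apply Proposition \ref{P:Stability} with $L' = L^n$ and show that every term on its right-hand side tends to zero. Before doing so, I will establish forward convergence: $X^n \to X$ in $\mathbb S^p$ for every $p \le q$.

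\emph{Forward convergence.} By estimate \eqref{EstimateLL'}, with $L'=L^n$ and $X'=X^n$, $\mathbb E[\sup_t|X_t-X^n_t|^p]$ is bounded by $\int_0^T|\mathbb E[\bar b(t,X_t)L_t]-\mathbb E[\bar b(t,X_t)L^n_t]|^pdt$ plus the analogous $\bar\sigma$ term. These vanish by Lemma \ref{L:Conv_E[XL]}, in the version with $X$ fixed and $\psi=\bar b,\bar\sigma$; the growth and derivative hypotheses hold by Assumptions \ref{assumption:1A} and \ref{assumption: 1C}. Since Lemma \ref{L:Conv_E[XL]} only gives convergence along subsequences, I will argue throughout via "every subsequence has a further subsequence" and conclude for the full sequence at the end.

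\emph{Right-hand side of \eqref{estimateDeltaYZA}.} I treat the terms in order.
\begin{itemize}
\item[(a)] The term with $|f(t,X_t,m_t)-f(t,X^n_t,m_t)|^2$, where $m_t=\mathbb E[\bar f(t,X_t)L_t]$ is fixed, tends to zero. Assumption \ref{assumption: 1C}.2 gives continuity of $f$ in $x$ uniformly in $(t,m)$, so the integrand tends to $0$ in $dt\otimes d\mathbb P$-measure. The polynomial growth of $f(t,x,0)$, together with the Lipschitz property in $m$ and the bound $|m_t|\le C$, gives uniform integrability via \eqref{EstimateSDE} with $2r<q$. Vitali's theorem then closes the argument.
\item[(b)] The term $\int_0^T|\mathbb E[\bar f(t,X_t)L_t]-\mathbb E[\bar f(t,X^n_t)L^n_t]|^2dt$ tends to zero by the second statement of Lemma \ref{L:Conv_E[XL]} with $\psi=\bar f$.
\item[(c)] The $\phi$-terms, both the one at time $T$ and the supremum over $t$, tend to zero by Lemma \ref{L:UnifConv_phi}.
\item[(d)] The obstacle terms $\mathbb E|\xi_T-\tilde\xi^n_T|^2$ and $\mathbb E[\sup_t|\xi_t-\tilde\xi^n_t|^2]$ involve $h$ evaluated at $X$ versus $X^n$ with the same deterministic third argument $w(t)=\mathbb E[\int\phi(t-s)dL_s]$, which is bounded and continuous in $t$.
\end{itemize}

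\emph{Main obstacle: the supremum in (d).} Here $h$ is only jointly continuous, not uniformly continuous in $x$. I will first pass to a further subsequence with $\sup_t|X^n_t-X_t|\to0$ a.s. On that event, for fixed $\omega$, the points $(t,X_t(\omega),w(t))$ with $t\in[0,T]$ form a compact set, because $X(\omega)$ and $w$ are continuous. The points $(t,X^n_t(\omega),w(t))$ stay in a compact neighborhood of it. Uniform continuity of $h$ on that compact neighborhood yields $\sup_t|h(t,X_t,w(t))-h(t,X^n_t,w(t))|\to0$ a.s.

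To pass to the expectation I need uniform integrability of $\sup_t|h(t,X^n_t,w(t))|^2$. I will get it from $|h(t,x,w)|\le K(1+|x|^r)+K|w|$ and the uniform bound \eqref{EstimateSDE} with exponent $p=2r'$ for some $r<r'\le q/2$, which is available because $r<q/2$. Dominated or Vitali convergence then gives convergence of the expectation, and the terminal term $\mathbb E|\xi_T-\tilde\xi^n_T|^2$ is handled identically.

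\emph{Conclusion.} Combining (a)–(d) in \eqref{estimateDeltaYZA}, every subsequence has a further subsequence along which $(Y^n,Z^n,A^n)\to(Y,Z,A)$ in $\mathbb S^2\times\mathbb H^2\times\mathbb S^2$. Since the limit is the same for every subsequence, the whole sequence converges, which proves the claim.
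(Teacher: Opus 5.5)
Your proposal is correct and follows essentially the same route as the paper. You apply Proposition~\ref{P:Stability} with $L'=L^n$, obtain $X^n\to X$ from \eqref{EstimateLL'} and Lemma~\ref{L:Conv_E[XL]}, dispose of the mean-field and $\phi$-terms via Lemmas~\ref{L:Conv_E[XL]} and~\ref{L:UnifConv_phi}, and handle the $f$- and $h$-terms by continuity plus Vitali's theorem. Your pathwise compactness argument for $\mathbb E[\sup_t|\xi_t-\tilde\xi^n_t|^2]$ uses the joint continuity of $h$, which is needed because the third argument $w(t)$ varies with $t$, together with a uniform moment bound of order $2r'>2r$; this usefully fills in a step that the paper only sketches.
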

\begin{proof}
Denote $\Delta Y_t=Y^n_t-Y_t$, $\Delta Z_t=Z^n_t-Z_t$, $\Delta A_t=A^n_t-A_t$, for every $t\in[0,T]$. Then, by Proposition \ref{P:Stability} it holds that
\begin{align}\label{Gamma^1_proof}
    &\mathbb E\bigg[\sup_{0\leq t\leq T}|\Delta Y_t|^2 + \int_0^T |\Delta Z_t|^2dt + \sup_{0\leq t\leq T}|\Delta A_t|^2\bigg] \notag \\
    &\leq c\,\mathbb E\bigg[\int_0^T\big|f(t,X_t,\mathbb E[\bar f(t,X_t)L_t]) - f(t,X_t^n,\mathbb E[\bar f(t,X_t)L_t])\big|^2dt\bigg] \\
    &\quad+ c\int_0^T\big|\mathbb E[\bar f(t,X_t)L_t] - \mathbb E[\bar f(t,X_t^n)L_t^n]\big|^2dt \notag \\
    &\quad+ c\,\mathbb E\bigg[\bigg|h\bigg(T,X_T,\mathbb E\bigg[\int_{0^-}^T\phi(T-s)dL_s\bigg]\bigg) - h\bigg(T,X_T^n,\mathbb E\bigg[\int_{0^-}^T\phi(T-s)dL_s^n\bigg]\bigg)\bigg|^2\bigg] \notag \\
    &\quad+ c\sqrt{\mathbb E\bigg[\sup_{0\leq t\leq T}\bigg|h\bigg(t,X_t,\mathbb E\bigg[\int_{0^-}^T\phi(t-s)dL_s\bigg]\bigg) - h\bigg(t,X_t^n,\mathbb E\bigg[\int_{0^-}^T\phi(t-s)dL_s\bigg]\bigg)\bigg|^2\bigg]} \notag \\
    &\quad+c\bigg|\mathbb E\bigg[\int_{0^-}^T\phi(T-s)dL_s\bigg] - \mathbb E\bigg[\int_{0^-}^T\phi(T-s)dL_s^n\bigg]\bigg|^2 \notag \\
    &\quad+ c\sup_{0\leq t\leq T}\bigg|\mathbb E\bigg[\int_{0^-}^T\phi(t-s)dL_s\bigg] - \mathbb E\bigg[\int_{0^-}^T\phi(t-s)dL_s^n\bigg]\bigg|. \notag
\end{align}
Now, by Lemma \ref{L:UnifConv_phi} and Lemma \ref{L:Conv_E[XL]}, we see that the second and the last two terms on the right-hand side of \eqref{Gamma^1_proof} vanish. Regarding the other three terms, recall that by \eqref{EstimateLL'} and Lemma \ref{L:Conv_E[XL]} we have that $X^n$ converges to $X$ in $\mathbb S^q$, with $q>2$, whenever $L^n$ weakly converges to $L$ in $\mathbb H^2(\mathbb R)$. Since $x\mapsto f(t,x,m)$ is continuous, for every fixed $(t,m)$, and satisfies a polynomial growth condition of order $r<q/2$, moreover estimate \eqref{EstimateSDE} holds, we deduce by Vitali's convergence theorem that the first term in the right-hand side of \eqref{Gamma^1_proof} converges to zero as $n\rightarrow\infty$. Similarly, since $(t,x)\mapsto h(t,x,w)$ is continuous, for every fixed $w$, we again deduce that the third and fourth terms in \eqref{Gamma^1_proof} converge to zero as $n\rightarrow\infty$.
\end{proof}

\noindent We now address the closure of the graph of $\Gamma$, for which we require the following technical result.

\begin{lemma}\label{L:WeakConv}
Let $\{L^n\}_n\subset\mathcal V$ be a sequence weakly converging in $\mathbb H^2(\mathbb R)$ to some $L\in\mathcal V$. Let $I=\{I_t\}_{0\leq t\leq T}$ be a real-valued It\^o process:
    \[
    I_t = I_0 + \int_0^t F_s ds + \int_0^t G_s dW_s, \qquad 0\leq t\leq T,
    \]
    with $F\colon[0,T]\times\Omega\rightarrow\mathbb R$, $G\colon[0,T]\times\Omega\rightarrow\mathbb R^m$ being progressively measurable and bounded, and with $I_0$ being a deterministic constant. We set $I_{0^-}:=I_0$. Then, it holds that $\int_{0^-}^T I_tdL_t^n$ converges to $\int_{0^-}^T I_tdL_t$ weakly in $\mathcal{L}^2(\mathcal{F}_T)$, that is
    \[
    \mathbb E\bigg[\eta\int_{0^-}^T I_tdL_t^n\bigg] \rightarrow \mathbb E\bigg[\eta\int_{0^-}^T I_tdL_t\bigg],
    \]
    for every real-valued random variable $\eta$ in $\mathcal{L}^2(\mathcal{F}_T)$.
\end{lemma}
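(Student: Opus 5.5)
Our approach is to move the $dL^n$-integral onto $L^n$ itself by integration by parts, and then exploit the weak convergence of $L^n$ in $\mathbb H^2(\mathbb R)$ through martingale representation of the test variable $\eta$. Since $L^n_{0^-}=1$, $L^n_T=0$, $I_{0^-}=I_0$, and $I$ is continuous, we have pathwise
\[
\int_{0^-}^T I_t\,dL_t^n = I_TL_T^n - I_{0}L^n_{0^-} - \int_0^T L_t^n\,dI_t = -I_0 - \int_0^T L_t^n F_t\,dt - \int_0^T L_t^n G_t\,dW_t ,
\]
where there is no covariation term because $L^n$ has finite variation and $I$ is continuous. The same identity holds for $L$. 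It therefore suffices to show, for every $\eta\in\mathcal L^2(\mathcal F_T)$, that
\[
\mathbb E\Big[\eta\int_0^T L^n_tF_t\,dt\Big]\to\mathbb E\Big[\eta\int_0^T L_tF_t\,dt\Big]
\qquad\text{and}\qquad
\mathbb E\Big[\eta\int_0^T L^n_tG_t\,dW_t\Big]\to\mathbb E\Big[\eta\int_0^T L_tG_t\,dW_t\Big].
\]
The constant term $-I_0\,\mathbb E[\eta]$ is common to both sides.

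\emph{Stochastic integral term.} By the martingale representation theorem in the filtration generated by $X_0$ and $W$, we write $\eta = \mathbb E[\eta|\mathcal F_0] + \int_0^T (H_t,dW_t)$ with $H\in\mathbb H^2(\mathbb R^m)$. Since $L^n$ is bounded and $G$ is bounded, $\int_0^\cdot L^nG\,dW$ is a square-integrable martingale with zero mean conditionally on $\mathcal F_0$. The Itô isometry then gives
\[
\mathbb E\Big[\eta\int_0^T L^n_tG_t\,dW_t\Big] = \mathbb E\Big[\int_0^T L^n_t\,(G_t,H_t)\,dt\Big].
\]
Because $(G,H)\in\mathbb H^2(\mathbb R)$ (as $G$ is bounded), this converges to the corresponding expression with $L$ by weak convergence.

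\emph{Drift term.} This is the step requiring care, since $\eta$ is $\mathcal F_T$-measurable rather than adapted. By Fubini, $\mathbb E[\eta\int_0^T L^n_tF_t\,dt] = \mathbb E[\int_0^T L^n_t F_t\,\eta\,dt]$. Since $L^n_t$ is $\mathcal F_t$-measurable, we may replace $\eta$ by the càdlàg version of the martingale $\eta_t:=\mathbb E[\eta|\mathcal F_t]$, which gives $\mathbb E[\int_0^T L^n_t F_t\eta_t\,dt]$. By Doob's inequality, $F\eta_\cdot$ is progressively measurable with $\mathbb E\int_0^T|F_t\eta_t|^2dt\le T\|F\|_\infty^2\mathbb E[\eta^2]<\infty$, so it belongs to $\mathbb H^2(\mathbb R)$. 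Weak convergence of $L^n$ to $L$ then yields the limit.

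The main obstacle is exactly the conditioning step above, namely making the integrands into elements of $\mathbb H^2$ (progressively measurable). The remaining ingredients, justifying integration by parts at the jump at $0^-$ with the stated conventions and the martingale property, are routine given boundedness of $L^n$, $F$ and $G$.
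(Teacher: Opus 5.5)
Your proof is correct and follows essentially the same route as the paper: integration by parts to move the integral onto $L^n$, martingale representation of $\eta$ to turn the stochastic-integral term into $\mathbb E[\int_0^T L^n_t(G_t,H_t)\,dt]$, and weak convergence in $\mathbb H^2(\mathbb R)$ against fixed integrands. The only cosmetic difference is in the drift term: you reach $\mathbb E[\int_0^T L^n_tF_t\,\mathbb E[\eta|\mathcal F_t]\,dt]$ via Fubini and the tower property, while the paper obtains the same expression from It\^o's product rule applied to $\eta\int_0^T L^n_tF_t\,dt$; also, your $\mathbb H^2$ bound there follows from conditional Jensen rather than Doob's inequality, which is immaterial.
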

\begin{proof}
    By the integration by parts formula, we find
    \begin{align*}
    I_TL_T^n &= I_0L_{0^-}^n + \int_0^T L_{t^-}^ndI_t + \int_{0^-}^T I_tdL_t^n \\
    &= I_0L_{0^-}^n + \int_0^T L_{t}^n F_tdt + \int_0^T L_{t^-}^nG_tdW_t + \int_{0^-}^T I_tdL_t^n.
    \end{align*}
    Recalling that $L_{0^-}^n=1$ and $L_T^n=0$, we obtain
    \[
    \int_{0^-}^T I_tdL_t^n = - I_0 - \int_0^T L_{t}^n F_tdt - \int_0^T L_{t^-}^nG_tdW_t.
    \]
    Since $\eta\in\mathcal{L}^2(\mathcal{F}_T)$ and the filtration $\{\mathcal F_t\}_{t\in[0,T]}$ is the augmentation of the filtration generated by $X_0$ and $W$, there exists a process $H\in\mathbb H^2$ such that
    \[
    \eta = \mathbb E[\eta|X_0] + \int_0^T H_tdW_t.
    \]
    Using again the integration by parts formula, we find
    \[
    \eta\bigg(\int_0^T L_t^nF_tdt\bigg) = \int_0^T\bigg(\mathbb E[\eta|X_0] + \int_0^t H_sdW_s\bigg)L_t^nF_tdt + \int_0^T\bigg(\int_0^t L_s^nF_sds\bigg)H_tdW_t
    \]
    and
    \begin{align*}
    \eta\bigg(\int_0^T L_{t^-}^nG_tdW_t\bigg) &= \int_0^T\bigg(\mathbb E[\eta|X_0] + \int_0^t H_sdW_s\bigg)L_{t^-}^nG_tdW_t + \int_0^T\bigg(\int_0^t L_{s^-}^nG_sdW_s\bigg)H_tdW_t \\
    &\quad + \int_0^TL_{t^-}^n(G_t,H_t)dt.
    \end{align*}
    Hence
    \begin{align}\label{eta_int_IdL^n}
    \mathbb E\bigg[\eta\int_{0^-}^T I_tdL_t^n\bigg] &= - \mathbb E[\eta I_0] - \mathbb E\bigg[\int_0^T\bigg(\mathbb E[\eta|X_0] + \int_0^t H_sdW_s\bigg)L_{t^-}^nF_tdt\bigg] \\
    &\quad - \mathbb E\bigg[\int_0^TL_{t^-}^n(G_t,H_t)dt\bigg]. \notag
    \end{align}
    Similarly, considering the process $L$ in place of $L^n$, and proceeding along the same lines as above, we obtain
    \begin{equation}\label{eta_int_IdL}
    \begin{split}
    \mathbb E\bigg[\eta\int_{0^-}^T I_tdL_t\bigg] &= - \mathbb E[\eta I_0] - \mathbb E\bigg[\int_0^T\bigg(\mathbb E[\eta|X_0] + \int_0^t H_sdW_s\bigg)L_{t^-}F_tdt\bigg] \\
    &\quad - \mathbb E\bigg[\int_0^TL_{t^-}(G_t,H_t)dt\bigg].
    \end{split}
    \end{equation}
    Since $F$ and $G$ are bounded, and $H\in\mathbb H^2$, the stochastic processes
    \[
    t\mapsto \bigg(\mathbb E[\eta|X_0] + \int_0^t H_sdW_s\bigg)F_t, \qquad\qquad t\mapsto (G_t,H_t)
    \]
    belong to $\mathbb H^2(\mathbb R)$. Then, by the weak convergence of $\{L^n\}_n$ to $L$, we can pass to the limit in \eqref{eta_int_IdL^n} obtaining
    \[
    \begin{split}
    \lim_{n\rightarrow\infty}\mathbb E\bigg[\eta\int_{0^-}^T I_tdL_t^n\bigg] &= - \mathbb E[\eta I_0] - \mathbb E\bigg[\int_0^T\bigg(\mathbb E[\eta|X_0] + \int_0^t H_sdW_s\bigg)L_{t^-}F_tdt\bigg] \\
    &\quad - \mathbb E\bigg[\int_0^TL_{t^-}(G_t,H_t)dt\bigg].
    \end{split}
    \]
    Then, the claim follows from \eqref{eta_int_IdL}.
\end{proof}
\paragraph{Closedness of the graph and main existence result.}
Using the technical results from the previous paragraph, we are now able to show the closedness of the graph of the set-valued map $\Gamma$.
\begin{proposition}[\textit{Closedness of the graph of the set-valued map $\Gamma$}]\label{P:Gamma}
Suppose that Assumptions \ref{assumption:1A}, \ref{assumption:1B} and \ref{assumption: 1C} hold. The  graph of the set-valued map $\Gamma\colon\mathcal V\rightarrow 2^{\mathcal V}$ is closed.
\end{proposition}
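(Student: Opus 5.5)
Since $\mathcal V$ is weakly compact and metrizable (Lemma \ref{L:Compact} and Remark \ref{remark: V metrizable}), closedness of the graph can be checked along sequences. Take $L^n\rightharpoonup L$ and $\hat L^n\rightharpoonup\hat L$ weakly in $\mathbb H^2(\mathbb R)$, with $\hat L^n\in\Gamma(L^n)$. By Lemma \ref{L:Compact}, $L,\hat L\in\mathcal V$. It remains to show that $\hat L$ satisfies the two conditions in \eqref{hatL} with respect to $(Y,\xi,A)$, the objects associated with $L$.

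Since $\hat L$ is non-increasing and $Y-\xi\ge0$, $A\ge0$, both integrals $\int_{0^-}^T(Y_t-\xi_t)d\hat L_t$ and $\int_{0^-}^T A_td\hat L_t$ are $\le0$ almost surely. It therefore suffices to prove that their expectations are $\ge 0$, i.e.\ equal to zero. For this I write, for instance,
\[
\mathbb E\Big[\int_{0^-}^T A_t\,d\hat L_t\Big]=\mathbb E\Big[\int_{0^-}^T (A_t-A_t^n)\,d\hat L^n_t\Big]+\mathbb E\Big[\int_{0^-}^T A_t\,d(\hat L_t-\hat L_t^n)\Big],
\]
using that $\int A^n d\hat L^n=0$. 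The first term is bounded by $\mathbb E[\sup_t|A_t-A^n_t|]$, because the total variation of $\hat L^n$ is $1$. It tends to zero by Corollary \ref{C:Gamma^1}. The same argument handles $Y-\xi$: here I also need $\xi^n\to\xi$ uniformly in $L^2$. This follows from Lipschitz continuity of $h$ in $w$, Lemma \ref{L:UnifConv_phi}, and the convergence of $X^n\to X$ in $\mathbb S^q$ (from \eqref{EstimateLL'} and Lemma \ref{L:Conv_E[XL]}) combined with Vitali's theorem, as in the proof of Corollary \ref{C:Gamma^1}.

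The main obstacle is the second term, $\mathbb E[\int_{0^-}^T I_t\,d\hat L^n_t]\to\mathbb E[\int_{0^-}^T I_t\,d\hat L_t]$ for a fixed continuous process $I$ ($I=A$ or $I=Y-\xi$). Lemma \ref{L:WeakConv} only gives this, with $\eta=1$, for Itô processes with bounded coefficients. Because every $\hat L^n$ has total variation $1$, the map $I\mapsto \mathbb E[\int I\,d\hat L^n]$ is $1$-Lipschitz in the norm $\mathbb E[\sup_t|I_t|]$, uniformly in $n$. So I will approximate $I$ in $\mathbb S^2$ (hence in $\mathbb E\sup$) by Itô processes with bounded progressively measurable coefficients and a deterministic initial value, and then use an $\varepsilon/3$ argument.

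Constructing this approximation is the delicate point. First I would truncate and mollify in time: with a continuous $I$ and $J^\delta_t=\frac1\delta\int_{t-\delta}^t I_s\,ds$ (setting $I_s:=I_0$ for $s<0$), $J^\delta$ is absolutely continuous with derivative $(I_t-I_{t-\delta})/\delta$, and $J^\delta\to I$ uniformly by continuity. After truncating $I$ to $[-M,M]$, this derivative is bounded. The remaining issue is the initial value $J_0=I_0$, which may be random (though $A_0=0$). For $Y_0-\xi_0$ it is $\mathcal F_0=\sigma(X_0)$-measurable. I would handle this either by extending Lemma \ref{L:WeakConv} to $\mathcal F_0$-measurable bounded $I_0$, whose proof goes through unchanged since the term $\mathbb E[\eta I_0]$ does not depend on $n$, or by using $L_{0^-}=1$ so that the contribution of $I_0$ is the same for every $n$. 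Combining the three estimates yields $\mathbb E[\int I\,d\hat L]=0$ in both cases, so $\hat L\in\Gamma(L)$.
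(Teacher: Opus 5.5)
Your proof is correct, but it takes a genuinely different route from the paper in the key limit step.

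\emph{The paper's route.} It bounds $\bigl|\int_{0^-}^T(Y_t-\xi_t)\,d\hat L^n_t\bigr|$ pathwise and gets almost sure convergence to $0$ along a subsequence. It then identifies the limit pathwise. A Koml\'os-type lemma (Lemma 3.5 of Kabanov) gives almost sure weak convergence of Ces\`aro means of the random measures $-d\hat L^{n_k}$. The limiting random measure is shown to coincide with $-d\hat L$ by testing, via Lemma~\ref{L:WeakConv}, against a countable convergence-determining class of smooth deterministic functions. Finally this convergence is evaluated at the continuous path $Y(\omega)-\xi(\omega)$.

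\emph{Your route.} You exploit $Y-\xi\geq0$ and $A\geq0$. Both integrals are then nonpositive, so it suffices to show that their expectations vanish. This reduces everything to convergence of the linear functionals $I\mapsto\mathbb E[\int_{0^-}^T I_t\,d\hat L^n_t]$. You obtain that from Lemma~\ref{L:WeakConv} with $\eta=1$, combined with a density argument (truncation and backward time-averaging). The density argument is uniform in $n$ because every $\hat L^n$, and $\hat L$, has total variation $1$.

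\emph{Checking the details.} The averaged process is adapted and has a bounded, progressively measurable derivative. It approximates $I$ in $\mathbb E[\sup_t|\cdot|]$ by dominated convergence; this uses $\mathbb E[\sup_t|I_t|]<\infty$, which holds for $A$, $Y$ and $\xi$ (the latter by the growth bound with $2r<q$). Its $\mathcal F_0$-measurable initial value $J_0$ is harmless, since $\int_{0^-}^T J_0\,d\hat L^n_t=-J_0$ for every $n$ and for $\hat L$; so your option (b) works directly, and option (a) also goes through. The first term in your decomposition is handled exactly as you say by Corollary~\ref{C:Gamma^1}, Lemma~\ref{L:UnifConv_phi} and Vitali's theorem.

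\emph{What each route buys.} Yours is more elementary: no Koml\'os lemma, Ces\`aro averaging or convergence-determining class. The price is that it relies on the sign of the integrands. The paper's argument gives a stronger, sign-free conclusion: the Ces\`aro averages of $-d\hat L^{n_k}$ converge almost surely weakly to $-d\hat L$. Consequently $\int I\,d\hat L^{n}\to 0$ almost surely forces $\int I\,d\hat L=0$ almost surely for any continuous $I$, whatever its sign.
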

\begin{proof}
By Remark \ref{remark: V metrizable}, it is enough to prove sequential closedness. Let $L\in\mathcal V$ and consider the corresponding solution $(X,Y,Z,A)\in\mathbb{S}^2\times\mathbb S^2\times\mathbb H^2\times\mathbb A^2$ of \eqref{MFG SDE}-\eqref{RBSDE}. Let also $\{L^n\}_n\subset\mathcal V$ and, for each $n$, let  $(X^n,Y^n,Z^n,A^n)\in\mathbb{S}^2\times\mathbb S^2\times\mathbb H^2\times\mathbb A^2$ be the solution of \eqref{MFG SDE}-\eqref{RBSDE}, with $L^n$ in place of $L$. Now, for each $n$, let $\hat L^n\in\Gamma(L^n)$. Suppose that $L^n$ weakly converges to $L$ in $\mathbb H^2(\mathbb R)$, and similarly that $\hat{L}^n$ weakly converges to $\hat{L}$. We want to show that $\hat L\in\Gamma(L)$, that is
\[
\int_{0^-}^T(Y_t-\xi_t)d\hat{L}_t=0 \qquad \text{and} \qquad \int_{0^-}^TA_td\hat{L}_t=0.
\]
We report the proof of the first equality, as the other can be proved along the same lines.\\
By Corollary \ref{C:Gamma^1}, it holds that $(Y^n,Z^n,A^n) \rightarrow (Y,Z,A)$ in $\mathbb S^2\times\mathbb H^2\times\mathbb S^2$. Moreover, since $\hat{L}^n \in \Gamma(L^n)$, we know that $\int_{0^-}^T(Y^n_t-\xi_t^n)d\hat{L}^n_t=0$ and $\int_{0^-}^TA^n_td\hat{L}^n_t=0$. Now, we have
\begin{align*}
&\bigg|\int_{0^-}^T(Y_t-\xi_t)d\hat{L}^n_t\bigg| \\
&=\bigg|\int_{0^-}^T(Y_t-Y^n_t)d\hat{L}^n_t+\int_{0^-}^T(\xi_t^n-\xi_t)d\hat{L}^n_t +\int_{0^-}^T(Y_t^n-\xi_t^n)d\hat{L}^n_t\bigg| \\
&=\bigg|\int_{0^-}^T(Y_t - Y^n_t)d\hat{L}^n_t+\int_{0^-}^T(\xi_t^n-\xi_t)d\hat{L}^n_t\bigg| \\
&\le \int_{0^-}^T|Y^n_t-Y_t|d|\hat{L}^n|_t + K\int_{0^-}^T \bigg|\mathbb{E}\bigg[\int_{0^-}^T\phi(t-s)dL_s\bigg] - \mathbb{E}\bigg[\int_{0^-}^T\phi(t-s)dL_s^n\bigg]\bigg|d|\hat{L}^n|_t \\
&\quad +\int_{0^-}^T\bigg|h\bigg(t,X_t^n,\mathbb{E}\bigg[\int_{0^-}^T\phi(t-s)dL_s\bigg]\bigg)-h\bigg(t,X_t,\mathbb{E}\bigg[\int_{0^-}^T\phi(t-s)dL_s\bigg]\bigg)\bigg|d|\hat{L}^n|_t,
\end{align*}
where $d|\hat{L}^n|_t$ is the measure of the total variation of $\hat{L}^n$. Since the total variation of $\hat{L}^n_t$ is bounded by 1, we obtain
\begin{align*}
\bigg|\int_{0^-}^T(Y_t-\xi_t)d\hat{L}^n_t\bigg| \le \sup_{t \in [0,T]}|Y^n_t-Y_t| + K\sup_{0\leq t\leq T}\bigg|\mathbb{E}\bigg[\int_{0^-}^T\phi(t-s)dL_s^n\bigg] - \mathbb{E}\bigg[\int_{0^-}^T\phi(t-s)dL_s\bigg]\bigg| \\
+ \sup_{0\leq t\leq T}\bigg|h\bigg(t,X_t^n,\mathbb{E}\bigg[\int_{0^-}^T\phi(t-s)dL_s\bigg]\bigg)-h\bigg(t,X_t,\mathbb{E}\bigg[\int_{0^-}^T\phi(t-s)dL_s\bigg]\bigg)\bigg|.
\end{align*}
Recalling that $Y^n\rightarrow Y$ in $\mathbb S^2$, there exists a subsequence such that $\sup_t|Y_t^n-Y_t|$ converges almost surely to zero. Moreover, the second term above vanishes thanks to Lemma \ref{L:UnifConv_phi}. Furthermore, regarding the last term above, recall that by \eqref{EstimateLL'} and Lemma \ref{L:Conv_E[XL]} we have that $X^n$ converges to $X$ in $\mathbb S^q$, with $q>2$, whenever $L^n$ weakly converges to $L$ in $\mathbb H^2(\mathbb R)$. Since $(t,x)\mapsto h(t,x,w)$ is continuous, for every fixed $w$, and satisfies a polynomial growth condition, moreover estimate \eqref{EstimateSDE} holds, we deduce by Vitali's convergence theorem that the last term above converges to zero as $n\rightarrow\infty$ almost surely, up to a subsequence. In conclusion, up to a subsequence, the following almost sure convergence holds true:
\begin{equation}\label{WeakConvCesaro-1}
\lim_{n\rightarrow+\infty}\int_{0^-}^T(Y_t-\xi_t)d\hat{L}^n_t = 0.
\end{equation}
Now, let $\hat\nu^n(dt):=d|\hat{L}^n|_t$ be the measure of the total variation of $\hat{L}^n$, that is $\hat\nu^n(\{0\}):=1-\hat{L}_0^n$ and $\hat\nu^n((t,T]):=\hat L_t^n$, $t\in[0,T]$. By Lemma 3.5 of \cite{kabanov1999hedging}, there exist a random probability measure $\bar{\hat\nu}$ and a subsequence of $\{\hat{\nu}^n\}_n$, denoted by $\{\hat{\nu}^{n_k}\}_k$, such that the following Cesàro convergence holds: a.s. 
\begin{equation}\label{WeakConvCesaro0}
\int_{[0,T]}\phi(t)\bar{\hat{\nu}}^{n_{k}}(dt) \rightarrow \int_{[0,T]}\phi(t)\bar{\hat{\nu}}(dt), \qquad \forall \phi \in \mathcal{C}([0,T]),
\end{equation}
where $\bar{\hat{\nu}}^{n_k}(dt):=\frac{1}{k}\sum_{j=1}^k\hat{\nu}^{n_j}(dt)$, so that $\int_{[0,T]}\phi(t)\bar{\hat{\nu}}^{n_k}(dt)=\frac{1}{k}\sum_{j=1}^k\int_{[0,T]}\phi(t)\hat{\nu}^{n_j}(dt)$, and $\mathcal C([0,T])$ is the set of continuous functions from $[0,T]$ to $\mathbb R$. Convergence \eqref{WeakConvCesaro0} corresponds to the almost sure weak convergence of the random probability measures $\bar{\hat{\nu}}^{n_k}$ towards the random probability measure $\bar{\hat{\nu}}$. Notice that such a convergence holds almost surely and that the null set where it does not hold does not depend on $\phi$. Now, define $\bar{\hat L}\in\mathcal V$ as follows: $\bar{\hat L}_{0^-}:=1$ and $\bar{\hat L}_t:=\bar{\hat\nu}((t,T])$, for $t\in[0,T]$. Then, \eqref{WeakConvCesaro0} can be rewritten as
\begin{equation}\label{WeakConvCesaro1}
\int_{0^-}^T\phi(t)d\bar{\hat L}^{n_{k}}_t \rightarrow \int_{0^-}^T\phi(t)d\bar{\hat L}_t, \qquad \forall \phi \in \mathcal{C}([0,T]),
\end{equation}
where $\bar{\hat{L}}^{n_k}_t:=\frac{1}{k}\sum_{j=1}^k\hat{L}^{n_j}_t$, so that $\int_{0^-}^T\phi(t)d\bar{\hat{L}}^{n_k}_t=\frac{1}{k}\sum_{j=1}^k\int_{0^-}^T\phi(t)d\hat{L}^{n_j}_t$. Let us prove that $\bar{\hat L}$ is equal to $\hat L$ in $\mathbb H^2(\mathbb R)$. Recall that the weak convergence of probability measures on $[0,T]$ in \eqref{WeakConvCesaro1} can be written replacing $\mathcal C([0,T])$ by a countable convergence determining class $\{\phi^m\}_m\subset\mathcal C^\infty([0,T])$, see e.g. Chapter 3, Theorem 4.5, in \cite{ethier1986}. Then, by \eqref{WeakConvCesaro1} we obtain: a.s.
\[
\int_{0^-}^T\phi^m(t)d\bar{\hat{L}}^{n_k}_t \rightarrow \int_{0^-}^T\phi^m(t)d\bar{\hat{L}}_t, \qquad \text{for every }m.
\]
By the boundedness of $\phi^m$, and the fact that $|\bar{\hat{L}}^{n_k}|\leq1$, we deduce by Lebesgue's dominated convergence theorem that
\begin{equation}\label{WeakConvCesaro_m}
\lim_{k\rightarrow+\infty}\mathbb E\bigg[\eta\int_{0^-}^T\phi^m(t)d\bar{\hat{L}}^{n_k}_t\bigg] = \mathbb E\bigg[\eta\int_{0^-}^T\phi^m(t)d\bar{\hat{L}}_t\bigg],
\end{equation}
for every real-valued random variable $\eta$ in $\mathcal{L}^2(\mathcal{F}_T)$. Now, since $\phi^m\in\mathcal C^\infty([0,T])$, $I=\{\phi^m(t)\}_{t\in[0,T]}$ is an It\^o process satisfying the assumptions of Lemma \ref{L:WeakConv}. Therefore, $\int_{0^-}^T \phi^m(t)d\hat L_t^{n_k}$ converges to $\int_{0^-}^T \phi^m(t)d\hat L_t$ weakly in $\mathcal{L}^2(\mathcal{F}_T)$, that is
\[
\lim_{k\rightarrow+\infty}\mathbb E\bigg[\eta\int_{0^-}^T  \phi^m(t)d\hat L_t^{n_k}\bigg] = \mathbb E\bigg[\eta\int_{0^-}^T  \phi^m(t)d\hat L_t\bigg],
\]
for every real-valued random variable $\eta$ in $\mathcal{L}^2(\mathcal{F}_T)$. Then, by Cauchy's limit theorem on Cesàro means, it follows that
\[
\lim_{k\rightarrow+\infty}\mathbb E\bigg[\eta\int_{0^-}^T  \phi^m(t)d\bar{\hat L}_t^{n_k}\bigg] = \mathbb E\bigg[\eta\int_{0^-}^T  \phi^m(t)d\hat L_t\bigg].
\]
By \eqref{WeakConvCesaro_m} we deduce that a.s. $\int_{0^-}^T\phi^m(t) d\hat L_t=\int_{0^-}^T\phi^m(t) d\bar{\hat L}_t$, for every $m$ (since $\{\phi^m\}_m$ is countable the null set where the equality does not hold can be taken independent of $m$). Recalling that $\{\phi^m\}_m$ is convergence determining, we conclude that $\hat L$ and $\bar{\hat L}$ are equal.\\
To conclude the proof, fix an $\omega\in\Omega$ such that \eqref{WeakConvCesaro1} holds and consider the continuous function $\phi_t:=Y_t(\omega)-\xi_t(\omega)$. Then, for such a $\phi$ the convergence \eqref{WeakConvCesaro1} holds. Since \eqref{WeakConvCesaro1} holds for almost every $\omega\in\Omega$, we obtain, a.s.
\[
\int_{0^-}^T(Y_t-\xi_t)d\bar{\hat{L}}^{n_{k}}_t \rightarrow \int_{0^-}^T(Y_t-\xi_t)d\bar{\hat{L}}_t.
\]
By \eqref{WeakConvCesaro-1} and Cauchy's limit theorem on Cesàro means, we obtain, a.s.,
\[
\int_{0^-}^T(Y_t-\xi_t)d\bar{\hat{L}}^{n_k}_t \rightarrow 0.
\]
In conclusion, we find
\[
\int_{0^-}^T(Y_t-\xi_t)d\hat{L}_t=0.
\]
\end{proof}
We now give the main existence result of a solution of the MKV-RFBSDE system \eqref{MFG SDE}-\eqref{MFRBSDE}.
\begin{theorem}[\textit{Existence of a solution of the MKV-RFBSDE system}]\label{thm:main}
Suppose that Assumptions \ref{assumption:1A}, \ref{assumption:1B} and \ref{assumption: 1C} hold. There exists a quintuple $(X,Y,Z,A,L) \in \mathbb{S}^2\times\mathbb{S}^2\times\mathbb{H}^2\times\mathbb{A}^2\times\mathcal{V}$ solution to system \eqref{MFG SDE}-\eqref{MFRBSDE}.
\end{theorem}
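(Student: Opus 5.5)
The plan is to apply the Kakutani--Fan--Glicksberg fixed-point theorem (Corollary 17.55 in \cite{AliprantisBorder2006}) to the set-valued map $\Gamma\colon\mathcal V\rightarrow 2^{\mathcal V}$, with $\mathcal V$ viewed as a subset of $\mathbb H^2(\mathbb R)$ endowed with the weak topology. I first check the hypotheses, all of which were established above.
\begin{enumerate}[(a)]
\item $\mathbb H^2(\mathbb R)$ with its weak topology is a Hausdorff locally convex space, since it is a Hilbert space.
\item $\mathcal V$ is non-empty and convex, and it is weakly compact by Lemma \ref{L:Compact}.
\item For every $L\in\mathcal V$, $\Gamma(L)$ is non-empty by Lemma \ref{L:Non-Empty}. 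It is convex because the two conditions in \eqref{hatL} are linear in $\hat L$ and $\mathcal V$ is convex.
\item The graph of $\Gamma$ is closed by Proposition \ref{P:Gamma}. Sequential closedness suffices here, because by Remark \ref{remark: V metrizable} the weak topology on $\mathcal V$ is metrizable.
\end{enumerate}
The theorem also asks that $\Gamma$ have closed values; this follows from (d) by taking constant sequences $L^n\equiv L$.

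The fixed-point theorem then yields some $L\in\mathcal V$ with $L\in\Gamma(L)$. For this $L$, Proposition \ref{P:SDE} gives the unique $X\in\mathbb S^2$ solving \eqref{MFG SDE}. Proposition \ref{P:ExistUniq} gives the unique $(Y,Z,A)=(Y^L,Z^L,A^L)\in\mathbb S^2\times\mathbb H^2\times\mathbb A^2$ solving \eqref{RBSDE}. In both equations the mean field terms $\mathbb E[\bar b(s,X_s)L_s]$, $\mathbb E[\bar\sigma(s,X_s)L_s]$, $\mathbb E[\bar f(s,X_s)L_s]$ and the quantities $\zeta$, $\xi$ are computed from this same $L$.

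The first three lines of \eqref{MFRBSDE} are exactly \eqref{RBSDE}. The last two Skorokhod conditions,
\[
\int_{0^-}^T(Y_t-\xi_t)\,dL_t=0,\qquad \int_{0^-}^T A_t\,dL_t=0,
\]
are precisely the statement $L\in\Gamma(L)$, applied with $\hat L=L$ in \eqref{hatL}. Hence $(X,Y,Z,A,L)$ solves \eqref{MFG SDE}-\eqref{MFRBSDE}.

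There is no genuine obstacle left, since the hard analytic work sits in Proposition \ref{P:Gamma} and its supporting lemmas. The one point needing care is consistency: the $(Y,Z,A)$ used in the definition of $\Gamma(L)$ must be the solution driven by the fixed point $L$ itself, both through $X$ and through the mean field terms. Uniqueness in Propositions \ref{P:SDE} and \ref{P:ExistUniq} guarantees this.
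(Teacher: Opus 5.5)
Your proposal is correct and follows the paper's proof essentially verbatim. Both apply the Kakutani--Fan--Glicksberg theorem to $\Gamma$ on the weakly compact convex set $\mathcal V$, using Lemma \ref{L:Compact}, Lemma \ref{L:Non-Empty} and Proposition \ref{P:Gamma}, and then build the quintuple from the fixed point via Propositions \ref{P:SDE} and \ref{P:ExistUniq}. Your extra remarks are correct details that the paper leaves implicit: closed values follow from the closed graph, and uniqueness ensures that $(Y,Z,A)$ is the solution driven by the fixed point $L$ itself.
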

\begin{proof}
Let $\Gamma\colon\mathcal V\rightarrow 2^{\mathcal V}$ be the set-valued map defined as follows: given $L\in\mathcal V$ then $\hat L\in\Gamma(L)$ if $\hat L\in\mathcal V$ and it holds that
\[
\int_{0^-}^T (Y_t-\xi_t)d\hat L_t=0 \qquad \text{ and } \qquad \int_{0^-}^T A_td\hat L_t=0,
\]
where $(Y,Z,A)\in\mathbb S^2\times\mathbb H^2\times\mathbb A^2$ is the unique solution of equation \eqref{RBSDE} corresponding to process $L$ (recall that the existence and uniqueness of $(Y,Z,A)$ follow from Proposition \ref{P:ExistUniq}). Then, by Lemma \ref{L:Compact}, Lemma \ref{L:Non-Empty}, Proposition \ref{P:Gamma} we see that we can apply the Kakutani--Fan--Glicksberg fixed-point theorem (see e.g. Corollary 17.55 in \cite{AliprantisBorder2006}) to the set-valued map $\Gamma$. It follows that there exists a fixed point $L\in\mathcal V$: $L\in\Gamma(L)$. This yields the existence of a quintuple $(X,Y,Z,A,L) \in\mathbb{S}^2\times\mathbb{S}^2\times\mathbb{H}^2\times\mathbb{A}^2\times\mathcal{V}$ solution to \eqref{MFG SDE}-\eqref{MFRBSDE}.
\end{proof}

\begin{remark}[\textit{Extension to the non-Markovian case}]\label{R:non-Markov}
The results reported in this section can be extended to the non-Markovian (path-dependent) setting, where the coefficients $b, \bar b, \sigma, \bar\sigma, f, \bar f, h$ do not depend on $x\in\mathbb R^d$ but on a path $x \in C([0,T];\mathbb R^d)$ (endowed with the supremum norm $\|\cdot\|_\infty$). We refer to $x_{\cdot \wedge t}$ as the path stopped at time $t$. Such an extension requires modifying Assumptions \ref{assumption:1A}, \ref{assumption:1B} and \ref{assumption: 1C} as follows.
\begin{itemize} 
\item The Lipschitz continuity of $b$, $\sigma$, $\bar b$, $\bar\sigma$ is formulated with respect to the supremum norm (here we report the Lipschitz property for $b$):
    \[
    |b(t,x,m)-b(t,x',m')|\leq K\big(\|x_{\cdot\wedge t}-x'_{\cdot\wedge t}\|_\infty + |m-m'|\big).
    \]
\item The polynomial growth condition of $f$ and $h$ becomes
    \[
    |f(t,x,0)|+|h(t,x,0)|+|\bar f(t,x)|\leq K\left(1+\|x_{\cdot\wedge t}\|_\infty^r\right).
    \]
\item The functions $\bar b,\bar\sigma,\bar f\colon[0,T]\times C([0,T];\mathbb R^d)\rightarrow\mathbb R^k$ are assumed of class $\mathcal{C}^{1,2}$ in the sense of functional It\^o calculus (see, e.g., \cite{dupire2019}). This entails that these functionals admit a continuous horizontal derivative and twice-continuous vertical derivatives. This regularity guarantees that the stochastic processes $(\bar b(t,X_{\cdot\wedge t}))_t$, $(\bar\sigma(t,X_{\cdot\wedge t}))_t$, $(\bar f(t,X_{\cdot\wedge t}))_t$ remain semimartingales, as a consequence of the functional It\^o formula. We also require that these functions, together with their derivatives, satisfy a polynomial growth condition as in \eqref{PolGrowthCond}, with $|x|$ replaced by $\|x_{\cdot\wedge t}\|_\infty$.
\end{itemize}
In such a non-Markovian setting, equations \eqref{MFG SDE} and \eqref{MFRBSDE} read as follows:
\[
X_t=X_0+\int_0^tb\big(s,X_{\cdot\wedge s},\mathbb E\big[\bar b(s,X_{\cdot\wedge s})L_s\big]\big)ds+\int_0^t\sigma\big(s,X_{\cdot\wedge s},\mathbb E\big[\bar\sigma(s,X_{\cdot\wedge s})L_s\big]\big)dW_s, \quad 0\leq t\leq T
\]
and
\[
\begin{cases}
\vspace{2mm}Y_t=\zeta+\int_t^T f\big(s,X_{\cdot\wedge s},\mathbb E\big[\bar f(s,X_{\cdot\wedge s})L_s\big]\big)ds + A_T-A_t - \int_t^T(Z_s,dW_s), \qquad 0\leq t\leq T, \\
\vspace{2mm}Y_t \geq \xi_t, \qquad 0\leq t\leq T, \\
\vspace{2mm}\int_{0}^T (Y_t-\xi_t) dA_t=0, \\
\vspace{2mm}\int_{0^-}^T(Y_t-\xi_t) dL_t=0, \\
\int_{0^-}^T A_t dL_t=0,
\end{cases}
\]
\[
\zeta:=g\bigg(X_{\cdot\wedge T},\mathbb E\bigg[\int_{0^-}^T\phi(T-s)dL_s\bigg]\bigg), \quad \xi_t:= h\bigg(t,X_{\cdot\wedge t},\mathbb E\bigg[\int_{0^-}^T\phi(t-s)dL_s\bigg]\bigg), \quad 0\leq t\leq T.
\]
\end{remark}

\subsection{Uniqueness}
\label{SubS:Uniqueness}

Suppose that Assumptions \ref{assumption:1A}, \ref{assumption:1B} and \ref{assumption: 1C} hold. By Theorem \ref{thm:main} we know that there exists a 5-tuple $(X,Y,Z,A,L)\in\mathbb{S}^2\times\mathbb S^2\times\mathbb H^2\times\mathbb A^2\times\mathcal V$ solution to system \eqref{MFG SDE}-\eqref{MFRBSDE}. In the present section we investigate the uniqueness of the 5-tuple $(X,Y,Z,A,L)$, for which we need the following additional assumption.

\begin{assumption}
\label{assum:uniquness}\quad
   \begin{enumerate}[i)]
    \item Suppose that $b=b(t,x)$ and $\sigma=\sigma(t,x)$ do not depend on their last argument, so that $X$ satisfies the following stochastic differential equation on $[0,T]$:
    \begin{equation}
    \label{eq:SDE without L}
    X_t=X_0+\int_0^tb(s,X_s)ds+\int_0^t\sigma(s,X_s)dW_s.
    \end{equation}
    \item The following monotonicity condition holds: for all $L, L' \in \mathcal{V}$,
    \[
    \begin{split}
    &\mathbb{E}\biggl[\int_0^T\big(f(t,X_t,\mathbb{E}[\bar{f}(t,X_t)L_t])-f(t,X_t,\mathbb{E}[\bar{f}(t,X_t)L'_t])\big)(L_t-L'_t)dt\\
    &-\int_{0^-}^T\bigg(h\bigg(t,X_t,\mathbb{E}\bigg[\int_{0^-}^T\phi(t-s)dL_s\bigg]\bigg)-h\bigg(t,X_t,\mathbb{E}\bigg[\int_{0^-}^T\phi(t-s)dL'_s\bigg]\bigg)\bigg)d(L-L')_t\bigg] \le 0,
    \end{split}
    \]
    with $X$ satisfying \eqref{eq:SDE without L}. Moreover, equality holds if and only if $L_t=L_t'$ for all $t \in [0,T]$ almost surely.
\end{enumerate} 
\end{assumption}

\begin{theorem}\label{T:Uniqueness}
   Suppose that Assumptions \ref{assumption:1A}, \ref{assumption:1B}, \ref{assumption: 1C} and \ref{assum:uniquness} hold. Let $(X,Y,Z,A,L)\in\mathbb S^2\times\mathbb{S}^2\times\mathbb{H}^2\times\mathbb{A}^2\times\mathcal V$ and $(X,Y',Z',A',L')\in\mathbb S^2\times\mathbb{S}^2\times\mathbb{H}^2\times\mathbb{A}^2\times\mathcal V$ be two solutions of system \eqref{eq:SDE without L}-\eqref{MFRBSDE}. Then, $(Y,Z,A,L)$ and $(Y',Z',A',L')$ are equal in $\mathbb{S}^2\times\mathbb{H}^2\times\mathbb{A}^2\times\mathcal V$.
\end{theorem}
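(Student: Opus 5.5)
The plan is a Lasry--Lions type argument. First I identify, for a fixed environment, the expected reward of an arbitrary randomized strategy in terms of the RBSDE solution. Then I show that the Skorokhod conditions make the solution's own $L$ a maximizer. Finally I combine the two optimality inequalities with the monotonicity condition in Assumption \ref{assum:uniquness}. Since $b,\sigma$ do not depend on the mean field term, $X$ is the same for both solutions, which is what makes the comparison possible.

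\textbf{Step 1 (value identity).} Fix a solution $(X,Y,Z,A,L)$ and write $f^L_t:=f(t,X_t,\mathbb E[\bar f(t,X_t)L_t])$ and $\xi^L_t$ for the corresponding obstacle. For an arbitrary $\tilde L\in\mathcal V$, I apply integration by parts to $Y\tilde L$ on $[0^-,T]$, using that $Y$ is continuous, $\tilde L_{0^-}=1$, $\tilde L_T=0$ and $dY_t=-f^L_tdt-dA_t+Z_tdW_t$. This gives
\[
\int_{0^-}^T Y_t\,d\tilde L_t=-Y_0+\int_0^T\tilde L_tf^L_t\,dt+\int_0^T\tilde L_{t}\,dA_t-\int_0^T\tilde L_{t^-}Z_t\,dW_t .
\]
The stochastic integral is a true martingale because $|\tilde L|\le1$ and $Z\in\mathbb H^2$. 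Taking expectations yields
\[
\mathbb E\Big[\int_{0^-}^T Y_td\tilde L_t\Big]=-\mathbb E[Y_0]+\mathbb E\Big[\int_0^T\tilde L_tf^L_tdt\Big]+\mathbb E\Big[\int_0^T\tilde L_tdA_t\Big].
\]

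\textbf{Step 2 (optimality of $L$).} Define the reward $J(\tilde L;L):=\mathbb E[\int_0^T\tilde L_tf^L_tdt-\int_{0^-}^T\xi^L_td\tilde L_t]$. Since $Y\ge\xi^L$ and $d\tilde L\le0$, we have $\int Y\,d\tilde L\le\int\xi^L d\tilde L$. Moreover $\int\tilde L\,dA\ge0$. Step 1 then gives $J(\tilde L;L)\le\mathbb E[Y_0]$.

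For $\tilde L=L$, the condition $\int_{0^-}^T(Y-\xi^L)dL=0$ turns the first inequality into an equality. For the second, integration by parts for $AL$, with $A$ continuous, $A_0=0$ and $L_T=0$, gives $0=\int L\,dA+\int A\,dL$. The condition $\int_{0^-}^TA\,dL=0$ therefore yields $\int L\,dA=0$. Hence $J(L;L)=\mathbb E[Y_0]\ge J(L';L)$, and symmetrically $J(L';L')\ge J(L;L')$.

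\textbf{Step 3 (monotonicity).} Adding the two inequalities gives
\[
\mathbb E\Big[\int_0^T(f^L_t-f^{L'}_t)(L_t-L'_t)dt-\int_{0^-}^T(\xi^L_t-\xi^{L'}_t)\,d(L-L')_t\Big]\ge0 .
\]
Because $X$ is common to both solutions, this is exactly the expression in Assumption \ref{assum:uniquness}(ii), which is $\le0$. It is therefore $=0$, and the equality clause gives $L=L'$. Once $L=L'$, the RBSDE \eqref{RBSDE} has identical data for both solutions, so Proposition \ref{P:ExistUniq} gives $(Y,Z,A)=(Y',Z',A')$.

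The main obstacle is Step 1. The integration by parts must be handled carefully at $0^-$ and at jump times of $\tilde L$: one needs $Y$ and $A$ continuous so that no covariation terms appear and $\int\tilde L_{t^-}dA_t=\int\tilde L_t\,dA_t$. One must also check that all terms are integrable, namely $\sup_t|Y_t|,\sup_t|\xi_t|\in\mathcal L^2$ and $\mathbb E[A_T]<\infty$, so that expectations can be split.
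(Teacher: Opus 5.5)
Your proposal is correct and is essentially the paper's argument. The paper obtains the two inequalities $\bar\Lambda(L,L)\ge\bar\Lambda(L,L')$ and $\bar\Lambda(L',L')\ge\bar\Lambda(L',L)$ by citing Theorem \ref{T:BSDE_MFG}, adds them, and concludes with Assumption \ref{assum:uniquness}(ii) and Proposition \ref{P:ExistUniq}, exactly as in your Step 3. Your Steps 1--2 inline the forward direction of Theorem \ref{T:BSDE_MFG}, i.e.\ the identity \eqref{E[Y_t]} at $t=0$. Because you read the bound $J(\tilde L;L)\le\mathbb E[Y_0]$ directly off that identity, you avoid the detour through the randomized representation \eqref{rand}, which makes your version self-contained.
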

\begin{proof}
    By Definition \ref{defn:OSequilibrium} and Theorem \ref{T:BSDE_MFG}, we have
    \[
    \begin{split}
    &\mathbb{E}\bigg[\int_0^Tf(t,X_t,\mathbb{E}[\bar{f}(t,X_t)L_t])(L_t-L'_t)dt-\int_{0^-}^Th\bigg(t,X_t,\mathbb{E}\bigg[\int_{0^-}^T\phi(t-s)dL_s\bigg]\bigg)d(L-L')_t\bigg] \ge 0, \\ &\mathbb{E}\bigg[\int_0^Tf(t,X_t,\mathbb{E}[\bar{f}(t,X_t)L'_t])(L'_t-L_t)dt-\int_{0^-}^Th\bigg(t,X_t,\mathbb{E}\bigg[\int_{0^-}^T\phi(t-s)dL'_s\bigg]\bigg)d(L'-L)_t\bigg] \ge 0.
    \end{split}
    \]
    Adding these two inequalities, we get
    \[
    \begin{split}&\mathbb{E}\bigg[\int_0^T\big(f(t,X_t,\mathbb{E}[\bar{f}(t,X_t)L_t])-f(t,X_t,\mathbb{E}[\bar{f}(t,X_t)L'_t])\big)(L_t-L'_t)dt\\
    &-\int_{0^-}^T\bigg(h\bigg(t,X_t,\mathbb{E}\bigg[\int_{0^-}^T\phi(t-s)dL_s\bigg]\bigg)-h\bigg(t,X_t,\mathbb{E}\bigg[\int_{0^-}^T\phi(t-s)dL'_s\bigg]\bigg)\bigg)d(L-L')_t\bigg]\ge 0.
    \end{split}
    \]
    Hence, by the monotonicity condition in item ii) of Assumption \ref{assum:uniquness}, it follows that the previous expectation is identically equal to zero. Using the strict part of item ii) in Assumption \ref{assum:uniquness}, we deduce that $L=L'$. Thus, $(Y,Z,A)$ and $(Y',Z',A')$ solve the same reflected backward stochastic differential equation \eqref{RBSDE}, so that from Proposition \ref{P:ExistUniq} we conclude that they coincide.
\end{proof}

\section{Properties of the process $L$ satisfying the Skorokhod conditions and link between $Y$ and randomized stopping}
\label{S:ProcessL}

In this section, we establish several properties of the process $L$ satisfying the two new Skorokhod conditions. In particular, these conditions lead to new results for classical optimal stopping problems without mean field interactions. Although optimality conditions for stopping times, corresponding to pure strategies, are well established (see, for instance, \cite{el2006aspects}), analogous conditions for randomized stopping strategies appear to be absent from the literature.
We prove that the two Skorokhod-type conditions arising in system~\eqref{MFG SDE}-\eqref{MFRBSDE} characterize the optimality of randomized stopping strategies (see Theorem~\ref{equiv}). We further show that these conditions yield bounds on any process $L$ satisfying them, expressed in terms of processes associated with pure strategies (see Theorem ~\ref{Prop:L}). Finally, for a given $L$, we establish an equality between the $Y$-component of the reflected BSDE and the value of an optimization problem over $\hat{L} \in \mathcal{V}$.

Suppose that Assumptions \ref{assumption:1A}, \ref{assumption:1B} and \ref{assumption: 1C} hold. By Theorem \ref{thm:main} we know that there exists a 5-tuple $(X,Y,Z,A,L)\in\mathbb{S}^2\times\mathbb S^2\times\mathbb H^2\times\mathbb A^2\times\mathcal V$ solution to system \eqref{MFG SDE}-\eqref{MFRBSDE}. Let
\begin{align}
\tau_{\min}&=\inf\big\{t\in[0,T]\colon Y_t=\xi_t\big\}, \label{tau_min} \\
\tau_{\max}&=\inf\big\{t\in[0,T]\colon A_t>0\big\}, \label{tau_max}
\end{align}
with $\inf\emptyset=T$, where we recall that $\xi_t= h\big(t,X_t,\mathbb E\big[\int_{0^-}^T\phi(t-s)dL_s\big]\big)$, for all $0\leq t\leq T$. Set also
\[
L_t^{\min} = \mathbf 1_{t<\tau_{\min}}, \qquad\qquad L_t^{\max} = \mathbf 1_{t<\tau_{\max}},
\]
for $0\leq t\leq T$.

\begin{theorem}[\textit{Bounds on the process $ L$}]\label{Prop:L}
Suppose Assumption \ref{assumption:1A} and \ref{assumption:1B} hold. Let $\hat L \in \mathcal V$, let $\hat X$ be the corresponding solution of \eqref{MFG SDE}, and let $(\hat Y, \hat Z, \hat A)$ be the corresponding solution to the reflected BSDE \eqref{RBSDE}. Let $L \in \mathcal V$ satisfying the two new Skorokhod conditions.
Then, it holds that, $\mathbb P$-a.s.,
\[
L_t^{\min} \leq L_t \leq L_t^{\max}, \qquad 0\leq t\leq T,
\]
or, equivalently, almost surely,
\[
L_t = 1, \quad t<\tau_{\min}, \qquad\qquad L_t=0, \quad t\geq\tau_{\max},
\]
where $\tau_{\min}$ and $\tau_{\max}$ are given by \eqref{tau_min} and \eqref{tau_max} respectively with $(\hat X, \hat Y, \hat Z, \hat A)$ instead of $(X,Y,Z,A)$.
\end{theorem}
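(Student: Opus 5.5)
We argue pathwise, fixing $\omega$ outside a null set on which all of the following hold: $\hat Y,\hat\xi,\hat A$ are continuous; $\hat Y\ge\hat\xi$; the two Skorokhod conditions for $L$ (with $(\hat Y,\hat\xi,\hat A)$) hold; and $L$ is càdlàg, non-increasing, $[0,1]$-valued, with $L_{0^-}=1$ and $L_T=0$. The two Skorokhod conditions concern nonnegative integrands against the nonpositive measure $dL$. Hence each of them says that the positive measure $-dL$ on $[0,T]$, which includes the atom $1-L_0$ at $0$, gives zero mass to the sets $\{t:\hat Y_t>\hat\xi_t\}$ and $\{t:\hat A_t>0\}$, respectively. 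The whole proof then reduces to identifying these sets in terms of $\tau_{\min}$ and $\tau_{\max}$.

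For the lower bound, note that by the definition of $\tau_{\min}$ and the continuity of $\hat Y-\hat\xi$, we have $\hat Y_t>\hat\xi_t$ for every $t\in[0,\tau_{\min})$. Therefore $-dL([0,t])=0$ for every $t<\tau_{\min}$, that is, $1-L_t=0$, so $L_t=1=L^{\min}_t$ for $t<\tau_{\min}$. If $\tau_{\min}=T$ because the infimum is empty, the same reasoning still applies on $[0,T)$. For $t\ge\tau_{\min}$ we have $L^{\min}_t=0\le L_t$ trivially.

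For the upper bound, we use the continuity of $\hat A$ together with $\hat A_0=0$ and the fact that $\hat A$ is non-decreasing. On the event $\tau_{\max}<T$, by the definition of the infimum, for every $t>\tau_{\max}$ there exist points $s\in(\tau_{\max},t)$ with $\hat A_s>0$. Since $\hat A$ is non-decreasing, this gives $\hat A>0$ on the whole interval $(\tau_{\max},T]$. The second Skorokhod condition then yields $-dL((\tau_{\max},T])=0$, i.e. $L_{\tau_{\max}}-L_T=0$, so $L_t=L_T=0$ for all $t\ge\tau_{\max}$. On the event $\tau_{\max}=T$, the claim reduces to $L_T=0$, which holds because $L\in\mathcal V$. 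In both cases $L_t\le L^{\max}_t$.

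Finally, we note the equivalence of the two formulations: since $L\in[0,1]$, the inequality $L^{\min}\le L$ is exactly the statement that $L=1$ before $\tau_{\min}$, and $L\le L^{\max}$ is exactly the statement that $L=0$ from $\tau_{\max}$ on. The only delicate point is the bookkeeping at the boundary, namely the atom at $0^-$ and whether $\tau_{\max}$ itself belongs to the support. This is handled by using the strict positivity of $\hat A$ only on $(\tau_{\max},T]$ and by right-continuity of $L$ at $\tau_{\max}$. We do not need $\tau_{\min}\le\tau_{\max}$, although it also follows from the Skorokhod condition for $\hat A$.
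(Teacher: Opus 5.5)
Your proof is correct, but it takes a different route from the paper's. You work directly with the positive random measure $-dL$ on $[0,T]$, including the atom $1-L_0$ at $0$. The two Skorokhod conditions say this measure charges neither $\{\hat Y>\hat\xi\}$ nor $\{\hat A>0\}$. You then show $[0,t]\subset\{\hat Y>\hat\xi\}$ for $t<\tau_{\min}$ and $(\tau_{\max},T]\subset\{\hat A>0\}$, using only the definitions of the infima, $\hat A_0=0$, and the continuity and monotonicity of $\hat A$. (Continuity of $\hat Y-\hat\xi$ is not even needed for the strict inequality before $\tau_{\min}$.)

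The paper instead decomposes $L$ into its quantile stopping times $\tau_r=\inf\{s:L_s\le r\}$. It uses $\mathbf 1_{\tau_r\le u}=\mathbf 1_{L_u\le r}$ to rewrite the constraints as $\int_0^1(\hat Y_{\tau_r}-\hat\xi_{\tau_r})\,dr=0$ and $\int_0^1\hat A_{\tau_r}\,dr=0$. It then upgrades the resulting $dr\otimes d\mathbb P$-a.e.\ statement to every $r\in[0,1)$ via right-continuity of $r\mapsto\tau_r$, deduces $\tau_{\min}\le\tau_r\le\tau_{\max}$, and integrates $\mathbf 1_{t<\tau_r}=\mathbf 1_{r<L_t}$ over $r$.

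Your argument is shorter and fully pathwise. It needs neither the change-of-variables identity nor the upgrade from a.e.\ $r$ to every $r$, and it makes the boundary bookkeeping explicit (the atom at $0^-$ and right-continuity of $L$ at $\tau_{\max}$). The paper's argument yields a bit more structure: every quantile time $\tau_r$ is itself a pure stopping time with $\hat Y_{\tau_r}=\hat\xi_{\tau_r}$ and $\hat A_{\tau_r}=0$. In other words, $L$ is a mixture of optimal pure strategies lying in $[\tau_{\min},\tau_{\max}]$. This same decomposition is reused in Theorem~\ref{equalvalues} and Lemma~\ref{L:IndepEnlarg}.
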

\begin{proof}
For $r \in [0,1]$,
define 
$$G(r,\cdot):=\inf\{s\in[0,T]\colon L_s \leq r\}.$$
Let $\tau_r:=G(r, \cdot)$, which is a stopping time with respect to $\{\mathcal F_t\}_{t\in[0,T]}$ because of the right-continuity and adaptedness of $L$.
For every $u \in [0,T]$ we have the following equality:
\begin{align}\label{chst}
\mathbf{1}_{G(r,\cdot) \leq u}=\mathbf{1}_{L_u \leq r}.
\end{align}
Set $\hat \xi_t:=h(t,\hat X_t, \mathbb E[\int_{0^-}^T\phi(t-s)d\hat L_s])$, from the two constraints
\[
\int_{0^-}^T(\hat Y_t-\hat \xi_t)dL_t=0 \qquad \text{ and } \qquad  \int_{0^-}^T\hat A_tdL_t=0,
\]
we deduce by \eqref{chst} that
\[
\int_0^1 (\hat Y_{\tau_r}-\hat \xi_{\tau_r})dr=0 \qquad \text{ and } \qquad  \int_0^1 \hat A_{\tau_r}dr=0.
\]
Since $\hat Y_t\geq\hat \xi_t$ and $\hat A_t \geq 0$, for $0\leq t\leq T$, we derive that, $dr\otimes d\mathbb P$-a.e.,
$$\hat Y_{\tau_r}-\hat \xi_{\tau_r}=0, \qquad \hat A_{\tau_r}=0.$$
By right-continuity of $r \to \tau_r$, together with continuity of $t \to \hat Y_t-\hat \xi_t$ and $t \to \hat A_t$, we obtain, $\mathbb P$-a.s., for all $r\in[0,1)$,
$$\hat Y_{\tau_r}-\hat \xi_{\tau_r}=0,\qquad \hat A_{\tau_r}=0.$$
Therefore, by classical results, we deduce that 
$$ \tau_{\min} \leq \tau_r \leq \tau_{\max}, \qquad \text{for all } r \in [0,1).$$ This implies that $\mathbf{1}_{t<\tau_{\min}} \leq \mathbf{1}_{t<\tau_{r}} = \mathbf{1}_{r<L_t} \leq \mathbf{1}_{t<\tau_{\max}}$, $\mathbb P$-a.s., $t \in [0,T]$, $r \in [0,1)$. Therefore 
\[
\mathbf{1}_{t<\tau_{\min}} \leq \int_0^1\mathbf{1}_{r<L_t}dr \leq \mathbf{1}_{t<\tau_{\max}},\qquad \mathbb P\text{-a.s.},\,\,  \forall t \in [0,T].
\]
Hence $\mathbf{1}_{t<\tau_{\min}} \leq L_t \leq \mathbf{1}_{t<\tau_{\max}}$, $\mathbb P$-a.s., $t \in [0,T]$.
\end{proof}

\noindent For every $t\in[0,T]$, let $\mathcal{T}([t,T])$ be the set of $[t,T]$-valued stopping times with respect to the filtration $\{\mathcal F_s\}_{s\in[0,T]}$. 

We recall that by Theorem \ref{thm:main} we know that, under Assumptions \ref{assumption:1A}, \ref{assumption:1B}, \ref{assumption: 1C}, there exists a 5-tuple $(X,Y,Z,A,L)\in\mathbb{S}^2\times\mathbb S^2\times\mathbb H^2\times\mathbb A^2\times\mathcal V$ solution to system \eqref{MFG SDE}-\eqref{MFRBSDE}. Now, given $L \in \mathcal{V}$, from Proposition 2.3 in \cite{el1981aspects} the following probabilistic representation for $Y$ holds:
\begin{equation}\label{Y_t=esssup_tau}
Y_t=\underset{\tau \in \mathcal{T}([t,T])}{\text{ess\,sup}}\,\mathbb{E}\biggl[\int_t^\tau f(s,X_s, \mathbb{E}[\bar{f}(s,X_s)L_s])ds+h\bigg(\tau, X_\tau,\mathbb{E}\bigg[\int_{0^-}^T\phi(\tau-s)dL_s\bigg]\bigg)\bigg|\mathcal{F}_t\biggr],
\end{equation}
for all $0\leq t\leq T$. Then, we have the following result.
\begin{theorem}[\emph{Link between $Y$ and optimal stopping in randomized strategies}]\label{equalvalues}
Suppose that Assumptions \ref{assumption:1A} and \ref{assumption:1B} hold. Then, for every $L \in \mathcal V$ and the associated solution $(X,Y,Z,A) \in \mathbb S^2 \times \mathbb S^2 \times \mathbb H^2 \times \mathbb A^2$ to system \eqref{MFG SDE}-\eqref{MFRBSDE}, it holds that
\begin{align}\label{rand}
Y_t=\underset{\hat L \in \mathcal{V}_t}{\textup{ess\,sup}}\,\mathbb{E}\bigg[\int_t^T f(s,X_s, \mathbb{E}[\bar{f}(s,X_s)L_s])\hat L_sds-\int_{t^-}^Th\bigg(s, X_s,\mathbb{E}\bigg[\int_{0^-}^T\phi(s-r)dL_r\bigg]\bigg)d\hat L_s\bigg|\mathcal{F}_t\bigg],
\end{align}
for all $0\leq t\leq T$, where $\mathcal{V}_t=\{ \hat L \in \mathcal{V}\colon\hat L_{t^-}=1\}$. 
\end{theorem}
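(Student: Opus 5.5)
Fix $L\in\mathcal V$ and $t\in[0,T]$. To lighten notation, write $f_s:=f(s,X_s,\mathbb E[\bar f(s,X_s)L_s])$ and $\xi_s:=h(s,X_s,\mathbb E[\int_{0^-}^T\phi(s-r)dL_r])$. For $\hat L\in\mathcal V_t$ set
\[
J_t(\hat L):=\mathbb E\bigg[\int_t^T f_s\hat L_sds-\int_{t^-}^T\xi_sd\hat L_s\,\bigg|\,\mathcal F_t\bigg].
\]
The environment $L$ is frozen, so this is a classical (non mean field) problem. The plan is to prove two inequalities: $J_t(\hat L)\le Y_t$ for every $\hat L\in\mathcal V_t$, and equality for one particular $\hat L$. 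The natural candidate for equality is the pure strategy $\hat L_s=\mathbf 1_{s<\tau_t}$ with
\[
\tau_t:=\inf\{s\ge t\colon Y_s=\xi_s\}\wedge T.
\]
For this choice, $J_t(\hat L)=\mathbb E[\int_t^{\tau_t}f_sds+\xi_{\tau_t}\mid\mathcal F_t]$. By \eqref{Y_t=esssup_tau} and the classical optimality of the first hitting time (Proposition 2.3 in \cite{el1981aspects}), this equals $Y_t$. Hence the essential supremum is at least $Y_t$.

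\textbf{Upper bound.} Fix $\hat L\in\mathcal V_t$ and apply integration by parts to $Y_s\hat L_s$ on $[t^-,T]$. This uses $\hat L_{t^-}=1$, $\hat L_T=0$, and the continuity of $Y$, so there is no bracket term:
\[
0=Y_T\hat L_T=Y_t+\int_t^T\hat L_{s^-}dY_s+\int_{t^-}^TY_sd\hat L_s.
\]
Substituting $dY_s=-f_sds-dA_s+Z_sdW_s$ and rearranging gives
\[
Y_t=\int_t^Tf_s\hat L_sds+\int_t^T\hat L_sdA_s-\int_t^T\hat L_{s^-}Z_sdW_s-\int_{t^-}^TY_sd\hat L_s.
\]
Here $\hat L_{s^-}$ could be replaced by $\hat L_s$ in the $ds$ and $dA$ integrals, since $A$ is continuous. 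Next, take conditional expectations given $\mathcal F_t$. The stochastic integral has zero conditional mean because $\hat L$ is bounded and $Z\in\mathbb H^2$. Then use $\hat L_s\ge0$, $dA\ge0$, and, since $-d\hat L\ge0$ and $Y\ge\xi$, the bound $-\int Y\,d\hat L\ge-\int\xi\,d\hat L$. This yields the identity
\[
Y_t=J_t(\hat L)+\mathbb E\bigg[\int_t^T\hat L_sdA_s-\int_{t^-}^T(Y_s-\xi_s)d\hat L_s\,\bigg|\,\mathcal F_t\bigg]\ \ge\ J_t(\hat L).
\]
Taking the essential supremum over $\hat L\in\mathcal V_t$ gives the upper bound. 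The same identity recovers equality for the candidate above, with no need to cite the classical result: for $\hat L=\mathbf 1_{\cdot<\tau_t}$, the $dA$ term vanishes because $A$ is constant on $[t,\tau_t]$ by the Skorokhod condition, and the $d\hat L$ term is $-(Y_{\tau_t}-\xi_{\tau_t})=0$.

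\textbf{Main obstacle.} The delicate part is purely technical bookkeeping, not a conceptual difficulty.
\begin{itemize}
\item Handling the integral from $t^-$: the jump of $\hat L$ at time $t$ must be counted. This is why $\mathcal V_t$ is defined by $\hat L_{t^-}=1$ rather than $\hat L_t=1$.
\item Integrability: every conditional expectation must be well defined. This follows from $Y,\xi\in\mathbb S^2$ (via \eqref{EstimateSDE} and the growth conditions in Assumption \ref{assumption:1B}) together with the fact that the total variation of $\hat L$ is at most $1$.
\item Membership of the candidate: $\mathbf 1_{\cdot<\tau_t}$ must belong to $\mathcal V_t$. This holds because $\tau_t\ge t$ and $\tau_t\le T$, the latter since $Y_T=\xi_T$.
\end{itemize}
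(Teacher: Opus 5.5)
Your proof is correct, but it takes a different route from the paper's. For the lower bound the paper, like you, uses pure strategies: it embeds every $\mathbf 1_{\cdot<\tau}$, $\tau\in\mathcal T([t,T])$, into $\mathcal V_t$ and invokes the Snell-envelope representation \eqref{Y_t=esssup_tau}. The upper bound is where the two arguments differ.

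The paper never uses the RBSDE dynamics there. It writes an arbitrary $\hat L\in\mathcal V_t$ as a mixture of pure stopping times, $\tau_r=\inf\{s\ge t\colon \hat L_s\le r\}$ with $\hat L_s=\int_0^1\mathbf 1_{\tau_r>s}\,dr$. By the change-of-variables formula this gives $J_t(\hat L)=\int_0^1\mathbb E[\int_t^{\tau_r}f_s\,ds+\xi_{\tau_r}\mid\mathcal F_t]\,dr$, and each integrand is at most $Y_t$ by \eqref{Y_t=esssup_tau}.

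You instead run a verification argument. Integrating $Y\hat L$ by parts against the RBSDE produces the conditional form of the identity \eqref{E[Y_t]}. The paper derives that identity only afterwards, in unconditional form, in the proof of Theorem \ref{equiv}.

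What each route buys:
\begin{itemize}
\item The paper's argument isolates the purely optimal-stopping fact that randomization cannot beat pure stopping. It uses only the Snell-envelope (supermartingale) property of $Y$, and it is the template later reused in Lemma \ref{L:IndepEnlarg}. Its price is the measurability of $(r,\omega)\mapsto\tau_r$ needed for Fubini.
\item Your argument is self-contained given the RBSDE and exhibits the optimizer $\mathbf 1_{\cdot<\tau_t}$ without appeal to classical results. The explicit nonnegative gap $\mathbb E[\int_t^T\hat L_s\,dA_s-\int_{t^-}^T(Y_s-\xi_s)\,d\hat L_s\mid\mathcal F_t]$ already contains the optimality characterization of Theorem \ref{equiv}. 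So you essentially obtain both results in one computation.
\end{itemize}
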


\begin{proof}
Let us first show that 
$$Y_t \leq \underset{\hat L \in \mathcal{V}_{t}}{\text{ess\,sup}}\,\mathbb{E}\biggl[\int_t^T f(s,X_s, \mathbb{E}[\bar{f}(s,X_s)L_s])\hat L_sds-\int_{t^-}^Th\bigg(s, X_s,\mathbb{E}\bigg[\int_{0^-}^T\phi(s-r)dL_r\bigg]\bigg)d\hat L_s\bigg|\mathcal{F}_t\biggr].$$
Fix $\tau \in \mathcal{T}([t,T])$. Define the process $L^\tau_s:=\mathbf{1}_{s <\tau}$, for $s\in[0,T]$. It is clear that $L^{\tau} \in \mathcal{V}_{t}$ and that using $L^\tau$ we recover the conditional expectation on the right-hand side of \eqref{Y_t=esssup_tau}. This shows the validity of the above inequality.\\
Let us now show the reverse inequality.
Fix $t\in[0,T]$ and $\hat L \in \mathcal{V}_{t}$. For $r \in [0,1]$ and $\omega\in\Omega$,
define 
\begin{equation}\label{tau_r}
G(r,\omega):=\inf\{s \geq t:\,\, \hat L_s(\omega) \leq r\}.
\end{equation}
Let $\tau_r(\omega)=G(r,\omega)$, $\omega\in\Omega$. Notice that $\tau_r$ is a stopping time because of the right continuity of $\hat L$ (see Lemma 4.8 in \cite{revuz2013continuous}). Moreover, for every $r\in[0,1]$, $u\in[t,T]$, we have the following equality
\begin{align}
\mathbf{1}_{G(r,\cdot) \leq u}=\mathbf{1}_{\hat L_u \leq r}.
\end{align}
In particular, we get
\[
\begin{split}
\mathbb{E}\bigg[\int_t^Tf(s,X_s,\mathbb{E}[\bar{f}(s,X_s)L_s])\hat L_sds\bigg| \mathcal{F}_t\bigg]&=\mathbb{E}\bigg[\int_t^Tf(s,X_s,\mathbb{E}[\bar{f}(s,X_s)L_s])\bigg(\int_0^1\mathbf{1}_{\tau_r >s}dr\bigg)ds\bigg|\mathcal{F}_t\bigg]\\
&=\int_0^1\mathbb{E}\bigg[\int_t^{\tau_r}f(s,X_s,\mathbb{E}[\bar{f}(s,X_s)L_s])ds\bigg|\mathcal{F}_t\bigg]dr
\end{split}
\]
and, by Proposition 4.9 in \cite{revuz2013continuous}
\[ \mathbb{E}\bigg[\int_{t^-}^Th\bigg(s,X_s,\mathbb E\bigg[\int_{0^-}^T\phi(s-u)dL_u\bigg]\bigg)(-d\hat{L}_s)\bigg|\mathcal{F}_t\bigg]=\int_0^1\mathbb E \bigg[h\bigg(\tau_r,X_{\tau_r}, \mathbb E\bigg[\int_{0^-}^T\phi(\tau_r-u)dL_u\bigg]\bigg)\bigg|\mathcal{F}_t\bigg]dr.\]
Since $\tau_r$ is a stopping time for every $r \in [0,1]$, by \eqref{Y_t=esssup_tau} we get
\[
\mathbb{E}\bigg[\int_t^{\tau_r}f(s,X_s,\mathbb{E}[\bar{f}(s,X_s)L_s])ds+h\bigg(\tau_r,X_{\tau_r},\mathbb{E}\bigg[\int_{0^-}^T\phi(\tau_r-u)dL_u\bigg]\bigg)\bigg|\mathcal F_t\bigg]\le Y_t.
\]
Thus,
\[
\begin{split}
\mathbb{E}\bigg[\int_t^Tf(s,X_s, \mathbb{E}[\bar{f}(s,X_s)L_s])\hat L_sds-\int_{t^-}^Th\bigg(s,X_s,\mathbb{E}\bigg[\int_{0^-}^T\phi(s-u)dL_u\bigg]\bigg)&d\hat L_s\bigg|\mathcal{F}_t\bigg] \\
&\le \int_0^1Y_tdr=Y_t.
\end{split}
\]
From the arbitrariness of $\hat L$, we can pass to the essential supremum and get the reverse inequality.
\end{proof}

\noindent We now establish the following \textit{necessary} and \textit{sufficient} conditions of optimality of a process $\hat{L} \in \mathcal{V}_t$ for $Y_t$ given by \eqref{rand}.
\begin{theorem}[\textit{Equivalence between the Skorokhod conditions and optimality of $\hat{L}$ for \eqref{rand}}]\label{equiv} Suppose that Assumptions \ref{assumption:1A} and \ref{assumption:1B} hold. Let $L\in\mathcal V$, let $X$ be the solution of \eqref{MFG SDE}, and let $(Y,Z,A)$ be the corresponding solution to the reflected BSDE \eqref{RBSDE}. Let $Y$ also satisfy the representation \eqref{rand}. Set $\xi_t:=h(t,X_t,\mathbb E[\int_{0^-}^T\phi(t-s)dL_s])$. A process $\hat{L} \in \mathcal{V}_t$ is optimal  for \eqref{rand} if and only if \begin{align}\label{Skorot}
\int_{t^-}^T (Y_s-\xi_s)d\hat{L}_s=0;\qquad\qquad\,\,\int_{t^-}^T A^t_sd\hat{L}_s=0, \quad \text{a.s.}
\end{align}
with $A^t_s:=A_s-A_t$ for $s \geq t$.
\end{theorem}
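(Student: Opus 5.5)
The approach is to compute the gap between $Y_t$ and the reward of an arbitrary $\hat L\in\mathcal V_t$ through an integration-by-parts identity. Write $f_s:=f(s,X_s,\mathbb E[\bar f(s,X_s)L_s])$, and let
\[
J_t(\hat L):=\mathbb E\bigg[\int_t^T f_s\hat L_s\,ds-\int_{t^-}^T\xi_s\,d\hat L_s\,\bigg|\,\mathcal F_t\bigg].
\]
I will show that
\begin{equation*}
Y_t-J_t(\hat L)=\mathbb E\bigg[-\int_{t^-}^T (Y_s-\xi_s)\,d\hat L_s+\int_t^T \hat L_{s}\,dA_s\,\bigg|\,\mathcal F_t\bigg].
\end{equation*}

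To derive it, apply the product rule to $Y_s\hat L_s$ on $[t^-,T]$. We have $\hat L_{t^-}=1$ and $\hat L_T=0$, $Y$ is continuous, and $dY_s=-f_s\,ds-dA_s+Z_s\,dW_s$. This gives
\[
0-Y_t=\int_t^T\hat L_{s^-}\,dY_s+\int_{t^-}^T Y_s\,d\hat L_s
=-\int_t^T f_s\hat L_s\,ds-\int_t^T\hat L_{s^-}\,dA_s+\int_t^T\hat L_{s^-}Z_s\,dW_s+\int_{t^-}^T Y_s\,d\hat L_s .
\]
Because $A$ is continuous and $\hat L$ has only countably many jumps, $\hat L_{s^-}$ may be replaced by $\hat L_s$ in both the $ds$ and $dA$ integrals. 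The stochastic integral is a true martingale since $\hat L$ is bounded and $Z\in\mathbb H^2$, so it vanishes under $\mathbb E[\cdot\,|\,\mathcal F_t]$. Rearranging and subtracting $\int_{t^-}^T\xi_s\,d\hat L_s$ yields the identity.

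Next I rewrite the second term. Integrating by parts between $A^t$ and $\hat L$ on $[t,T]$, using $A^t_t=0$ and $\hat L_T=0$, gives
\[
\int_t^T\hat L_s\,dA_s=-\int_{t^-}^T A^t_s\,d\hat L_s .
\]
Hence
\[
Y_t-J_t(\hat L)=\mathbb E\bigg[\int_{t^-}^T (Y_s-\xi_s)\,d(-\hat L)_s+\int_{t^-}^T A^t_s\,d(-\hat L)_s\,\bigg|\,\mathcal F_t\bigg].
\]
Both integrands are nonnegative, since $Y\ge\xi$ and $A^t\ge0$ on $[t,T]$ because $A$ is non-decreasing. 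Both integrators are nonnegative measures, since $\hat L$ is non-increasing. This already reproves $Y_t\ge J_t(\hat L)$, consistent with Theorem~\ref{equalvalues}.

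For the equivalence, take "optimal" to mean $J_t(\hat L)=Y_t$ a.s., which is the essential supremum attained by \eqref{rand}. If \eqref{Skorot} holds, both integrals vanish a.s., so $J_t(\hat L)=Y_t$. Conversely, if $J_t(\hat L)=Y_t$ a.s., the conditional expectation of a nonnegative sum is zero. Taking expectations shows the sum is zero a.s., and since each term is nonnegative, each vanishes a.s. This is exactly \eqref{Skorot}, after restoring the sign convention $d\hat L=-d(-\hat L)$.

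The main obstacle is the bookkeeping at $t^-$, i.e. the atom of $-d\hat L$ at $t$. That atom is $1-\hat L_t$ and it carries payoff $\xi_t$. In the product rule this jump appears as $Y_t(\hat L_t-1)$, which must be matched with the $\int_{t^-}$ convention, using $Y_{t^-}=Y_t$ by continuity. One must also justify replacing $\hat L_{s^-}$ by $\hat L_s$ against $ds$ and $dA$, and check that $A^t$ has no contribution at $t$ since $A^t_t=0$. The martingale property of $\int\hat L_{s^-}Z_s\,dW_s$ follows from $Z\in\mathbb H^2$ and $|\hat L|\le1$.
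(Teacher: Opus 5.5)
Your proof is correct and follows the paper's argument. You integrate $Y\hat L$ by parts to write the gap between $Y_t$ and the reward as $\int_t^T \hat L_s\,dA_s-\int_{t^-}^T (Y_s-\xi_s)\,d\hat L_s$, integrate $A^t\hat L$ by parts, and use nonnegativity of both terms. The only difference is that you keep the identity conditional on $\mathcal F_t$, which gives $Y_t\ge J_t(\hat L)$ directly and makes the converse immediate. The paper instead works with unconditional expectations and invokes the a.s.\ inequality from \eqref{rand} to upgrade equality in mean to a.s.\ equality.
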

\begin{proof}
Fix $0 \leq t \leq T$. Let $\Lambda_t\colon\mathcal V\times\mathcal V_t\rightarrow \mathcal L^2(\mathcal F_t)$ be the map defined as follows:
\begin{align*}
\Lambda_t(L,\hat L)&:=\mathbb{E}\biggl[\int_t^T f(s,X_s, \mathbb{E}[\bar{f}(s,X_s)L_s])\hat L_s ds \\
&\quad-\int_{t^-}^T h\bigg(s, X_s,\mathbb{E}\bigg[\int_{0^-}^T\phi(s-r)dL_r\bigg]\bigg)d\hat L_s\biggr|\mathcal{F}_t\biggr]. \notag
\end{align*}
We show that
\begin{equation}\label{E[Y_t]}
\mathbb E[Y_t] = \mathbb{E}\left[\Lambda_t(L,\hat{ L})\right] + \mathbb E\bigg[\int_t^T \hat{ L}_{s^-} dA_s-\int_{t^-}^T (Y_s-\xi_s)d\hat{ L}_s\bigg]. 
\end{equation}
Indeed, by the integration by parts formula, we have
\[
Y_T \hat{ L}_T - Y_t \hat{ L}_{t^-} = \int_{t^-}^T Y_s d\hat{ L}_s + \int_t^T \hat{ L}_{s^-} dY_s.
\]
Recalling that $\hat{ L}_T = 0$ and $\hat{ L}_{t^-} = 1$, and using the backward stochastic differential equation \eqref{MFRBSDE} satisfied by $Y$, we obtain
\begin{equation}\label{eq:Y_0}
-Y_t = \int_{t^-}^T Y_s d\hat{ L}_s - \int_t^T f(s, X_s, \mathbb{E}[\bar{f}(s, X_s)L_s])\hat{ L}_s ds + \int_t^T \hat{ L}_{u^-} Z_u dW_u - \int_t^T \hat{ L}_{u^-} dA_u.
\end{equation}
Since $\hat L$ is bounded and $Z\in\mathbb H^2$, the stochastic integral is a martingale, then \eqref{E[Y_t]} follows taking the expectation in the above equality. Assume that $\hat{ L} \in \mathcal{V}_t$ is an optimal control. Then, from \eqref{E[Y_t]}, we deduce that 
\begin{align}
\mathbb E\bigg[\int_t^T \hat{ L}_{s^-} dA_s-\int_{t^-}^T (Y_s-\xi_s)d\hat{ L}_s\bigg]=0.
\end{align}
The two random variables $\int_t^T \hat{L}_s\,d A_s$ and $-\int_{t^-}^T (Y_s-\xi_s)\,d\hat{L}_s$ are both non-negative, so that both are equal to zero almost surely. Finally, applying the integration by parts formula to \(A_s^t\hat{L}_s\), yields
\[
A_T^t\hat{L}_T - A_{t^-}^t\hat{L}_{t^-}
=
\int_{t^-}^T A_s^t\,d\hat{L}_s
+
\int_t^T \hat{L}_{s^-}\,dA_s^t.
\]
Since \(\hat{L}_T=0\), \(A_{t^-}^t:=A_t^t=0\), and \(\int_t^T \hat{L}_{u^-}\,dA_u^t=0\) (noting that $dA_s^t=dA_s$), it follows that $\int_{t^-}^T A_u^t\,d\hat{L}_u = 0$. Thus, the conditions \eqref{Skorot} are satisfied.
Assume now that the converse holds. From \eqref{E[Y_t]}, and using similar arguments as above, we deduce that 
\begin{equation}
\mathbb E[Y_t] = \mathbb{E}\left[\Lambda_t(L,\hat{ L})\right]. 
\end{equation}
Since from equation \eqref{rand} we have $Y_t \geq \Lambda_t(L, \hat{ L})$ a.s. we conclude, using the above relation, that 
$$Y_t=\Lambda_t(L, \hat{ L})\,\ \text{ a.s.},$$ i.e. $\hat{L} \in \mathcal{V}_t$ is an optimal control.
\end{proof}

\section{Existence and extremal equilibria via Tarski's fixed-point theorem}
\label{S:Tarski}

In this section, we study the existence of solutions to thereflected forward-backward McKean--Vlasov stochastic differential equation~\eqref{MFG SDE}-\eqref{MFRBSDE} using a second approach based on Tarski's fixed-point theorem~\cite{Tarski1955}. We also develop learning algorithms for the \textit{minimal} and \textit{maximal} solutions of system~\eqref{MFG SDE}-\eqref{MFRBSDE} and establish their convergence. As shown below, our approach based on reflected BSDEs provides a simpler alternative for proving the monotonicity of the relevant correspondences, without relying on Topkis' theorem, as is commonly done in the existing literature.

\paragraph{Preliminaries.} We begin by introducing a lattice structure on $\mathcal V$.\\
Let $L, L' \in \mathcal{V}$ and set (recall that we always work with the representative processes $L,L'\in\mathcal V$ which are $[0,1]$-valued, adapted, non-increasing, c\`adl\`ag, and $L_T=L_T'=0$)
\[
L \le_{\mathcal{V}} L' \qquad \text{if and only if} \qquad L_t \le L'_t, \quad \text{for all }t\in[0,T],\;\text{almost surely}.
\]
We also define
\begin{equation}
\label{complete lattice cond}
    (L \wedge L')_t:=L_t \wedge L'_t \,\, \text{ and } \,\, (L \vee L')_t:=L_t \vee L'_t, \qquad \text{for all }t\in[0,T].
\end{equation}
Then, the set $(\mathcal{V},\le_{\mathcal{V}})$ is a partially ordered set and the operations $\wedge,\vee$ provide a lattice structure on $\mathcal{V}$, which is compatible with the order relation $\leq_{\mathcal{V}}$. Moreover, the essential infimum/supremum of an arbitrary family $\{L^i\}_i$ of càdlàg processes in $\mathcal V$ admits a càdlàg modification $L$ that belongs to $\mathcal V$. This implies that $(\mathcal{V},\le_\mathcal{V})$ is a complete lattice.\\
In the present section, we impose the following assumptions.
\begin{assumption}
\label{assum:Tarski_1}
    Let $K$ be a non-negative constant  and $r \in [1, q/2]$.
    \begin{itemize}
        \item [i)] The functions $f,h$ are measurable. Moreover, we suppose that for every $R >0$ there exists a constant $C_R>0$ such that
        \begin{align*}
        |f(t,x,m)|&\leq K(1+|x|+|m|^r)\\
        |h(t,x,w)| & \le C_R(1+|x|),\\
        |\bar f(t,0)| &\le K,
        \end{align*}
        for any $(t,x,m,w) \in [0,T] \times \mathbb{R}^d\times \mathbb{R}^k\times \mathbb{R}$, with $|w|\leq R$.
        \item [ii)] $h$ is a continuous function and $\phi \in \mathcal C^1([-T,T])$.
        \item [iii)] There exists a constant $K \ge 0$ such that
        \begin{align*}
            |f(t,x,m)-f(t,x',m)| &\leq K |x-x'|,\\
            |\bar{f}(t,x)-\bar{f}(t,x')| &\leq K|x-x'|,
        \end{align*}
        for all $(t,m) \in [0,T] \times \mathbb{R}^k$, $x,x' \in \mathbb{R}^d$.
    \end{itemize}
\end{assumption}
\noindent Notice that under Assumption \ref{assumption:1A} and Assumption \ref{assum:Tarski_1}.i)-ii), for every $L\in\mathcal V$ there exists a unique solution $X\in\mathbb S^2$ to equation \eqref{MFG SDE} (with input process $L$) and a unique solution $(Y,Z,A)\in\mathbb S^2\times\mathbb H^2\times\mathbb A^2$ to equation \eqref{RBSDE} (with input process $L$).
\begin{assumption}
\label{assum:Tarski_2}
\quad
    \begin{enumerate}[i)]
        \item The functions $\bar{b}:[0,T] \times \mathbb{R}^d\to \mathbb{R}^k$ and $\bar{f}\colon[0,T] \times \mathbb{R}^d\to \mathbb{R}^k$ are component-wise non-negative. Moreover, we suppose that $\phi'\colon[-T,T]\rightarrow\mathbb R$ is non-negative. Finally, we assume that $\sigma=\sigma(t)$, i.e. $\sigma$ only depends on the time variable.
        \item Let $b=(b_1,\ldots,b_d)$, $\bar b=(\bar b_1,\ldots,\bar b_k)$ and $\bar f=(\bar f_1,\ldots,\bar f_k)$. The following monotonicity properties hold:
        \begin{align*}
            b_i(t,x,m)&\le b_i(t,x',m'), \\
            \bar b_j(t,x)&\le\bar b_j(t,x'), \\
            f(t,x,m)&\le f(t,x',m'), \\
            \bar f_j(t,x)&\le \bar f_j(t,x'), \\
            h(t,x,w)&\le h(t,x',w'),
        \end{align*}
        for all $i=1,\ldots,d$, $j=1,\ldots,k$, $t\in[0,T]$, $x,x'\in\mathbb R^d$, $m,m'\in\mathbb R^k$, $w,w'\in\mathbb R$, with $x\leq x'$ (component-wise), $m\leq m'$ (component-wise), $w\leq w'$.
        \item  Let $L, L' \in \mathcal{V}$. Suppose that there exist the corresponding solutions $X,X'$ to equation \eqref{MFG SDE} and $(Y,Z,A),(Y',Z',A')$ to equation \eqref{RBSDE}. We assume that whenever $L \leq_{\mathcal{V}} L'$ then it holds that, almost surely, \[
        Y_t-h\bigg(t,X_t,\mathbb{E}\bigg[\int_{0^-}^T\phi(t-s)dL_s\bigg]\bigg) \leq Y'_t-h\bigg(t,X'_t,\mathbb{E}\bigg[\int_{0^-}^T\phi(t-s)dL'_s\bigg]\bigg),  \quad 0\leq t\leq T.
        \]
    \end{enumerate}
\end{assumption}
\begin{remark}
    Notice that Assumption \ref{assum:Tarski_2}-iii) is not empty in the case of the obstacle dependent on $L$. Let $d=k=1$, suppose that the state process $X$ is independent of $L$ and the functions involved have the following forms:
    \[
        h(t,x,w)=x+w,\qquad
        \phi(t)=\textup{e}^t\qquad
        f(t,x,m)=\lambda m, \qquad
        \bar{f}(t,x)=1,
    \]
    for some $\lambda>0$. Then, by It\^o's formula,
    \[
    \xi_t:=h\bigg(t,X_t, \mathbb{E}\bigg[\int_{0^-}^T\phi(t-s)dL_s\bigg]\bigg)=X_t+\textup{e}^t\mathbb{E}\bigg[-1+\int_0^T\textup{e}^{-s}L_sds\bigg].
    \]
    To verify the monotonicity of the gap $Y_t-\xi_t$, we recall, from Proposition 2.3 of \cite{el1981aspects}, that
    \[
    Y_t-\xi_t=\underset{\tau \in \mathcal{T}([t,T])}{\textup{ess\,sup}}\,\mathbb{E}\bigg[\lambda\int_{t}^\tau\mathbb{E}[L_s]ds+(\textup{e}^\tau-\textup{e}^t)\mathbb{E}\bigg[-1+\int_0^T\textup{e}^{-s}L_sds\bigg]+X_\tau-X_t\Big|\mathcal{F}_t\bigg]:=\underset{\tau \in \mathcal{T}([t,T])}{\textup{ess\,sup}}\,R(\tau, L).
    \]
    As a consequence, since $\lambda>0$ and $\tau \geq t$, if $L \leq_{\mathcal{V}} L'$ we have that $R(\tau, L) \leq R(\tau, L')$ almost surely, for any stopping time $\tau \in\mathcal{T}([t,T])$. Taking the essential supremum over $\tau\in\mathcal{T}([t,T])$ on both sides, yields
    \[
    Y_t-\xi_t=  \underset{\tau \in \mathcal{T}([t,T])}{\textup{ess\,sup}}\,R(\tau, L)\leq \underset{\tau \in \mathcal{T}([t,T])}{\textup{ess\,sup}}\,R(\tau, L')=Y'_t-\xi'_t.
    \]
\end{remark}
\noindent We now give the following assumption, which will be shown to imply the uniqueness of the best response.
\begin{assumption}
\label{assum:Tarski_3}
 Let $L \in \mathcal{V}$. Suppose that there exist a solution $X$ to equation \eqref{MFG SDE} (with input process $L$) and a solution $(Y,Z,A)$ to equation \eqref{RBSDE} (with input process $L$). Set $\xi_t:=h(t, X_t, \mathbb E[\int_{0^-}^{T} \phi(t-s)dL_s])$, for $t\in[0,T]$. We assume that
        \[
        \tau_{\min}=\tau_{\max},
        \]
        where $\tau_{\min}=\inf\{t\in[0,T]\colon Y_t=\xi_t\}$ and $\tau_{\max}=\inf\{t\in[0,T]\colon A_t>0\}$, with $\inf\emptyset=T$.
\end{assumption}
\noindent Assumption \ref{assum:Tarski_3} implies the following uniqueness result of the best response.

\begin{lemma}\label{L:UniqBestResponse}
Suppose that Assumptions \ref{assumption:1A}, \ref{assum:Tarski_1}.i)-ii) and \ref{assum:Tarski_3} hold. For every $L \in \mathcal V$, there exists a unique process $\hat L\in\mathcal V$ satisfying $\int_{0^-}^T(Y_t-\xi_t)d\hat{L}_t=0$ and $\int_{0^-}^TA_td\hat{L}_t=0$, where $(Y,Z,A)$ solution of \eqref{RBSDE} (with input process $L$) and $\xi_t=h(t,X_t,\mathbb E[\int_{0^-}^T\phi(t-s)dL_s])$. Moreover, such a process $\hat L$ is given by $\hat L_t=\mathbf 1_{t<\tau_{\min}}$, for $t\in[0,T]$.
\end{lemma}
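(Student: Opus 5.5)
Existence and uniqueness both follow from results already in hand; the real content is identifying the squeeze given by Theorem \ref{Prop:L} under Assumption \ref{assum:Tarski_3}.

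\emph{Existence.} Fix $L\in\mathcal V$ and let $X$ and $(Y,Z,A)$ be the associated solutions of \eqref{MFG SDE} and \eqref{RBSDE}, which exist and are unique under Assumptions \ref{assumption:1A} and \ref{assum:Tarski_1}.i)-ii). Take $\hat L_t=\mathbf 1_{t<\tau_{\min}}$, exactly as in the proof of Lemma \ref{L:Non-Empty}. The argument there uses only the continuity of $Y-\xi$, the relation $Y_T=\xi_T$, and the Skorokhod condition $\int_0^T(Y_t-\xi_t)\,dA_t=0$. It gives
\[
\int_{0^-}^T(Y_t-\xi_t)\,d\hat L_t=-(Y_{\tau_{\min}}-\xi_{\tau_{\min}})=0 \qquad\text{and}\qquad \int_{0^-}^TA_t\,d\hat L_t=-A_{\tau_{\min}}=0.
\]
Indeed $A$ is continuous with $A_0=0$, and $A$ can only increase on the contact set, so $A\equiv0$ on $[0,\tau_{\min}]$. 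Hence $\hat L$ satisfies both conditions and lies in $\mathcal V$.

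\emph{Uniqueness.} Let $\tilde L\in\mathcal V$ be any process satisfying the two conditions. Apply Theorem \ref{Prop:L} with $\hat L$ there replaced by the input $L$, so that $\tau_{\min},\tau_{\max}$ are built from $(X,Y,A)$. This yields, $\mathbb P$-a.s.,
\[
\mathbf 1_{t<\tau_{\min}}\le\tilde L_t\le\mathbf 1_{t<\tau_{\max}}, \qquad 0\le t\le T.
\]
Its proof only uses the level-set stopping times $\tau_r$, the nonnegativity of $Y-\xi$ and $A$, and the characterization of $\tau_{\min}$ and $\tau_{\max}$, so it holds under the present assumptions. By Assumption \ref{assum:Tarski_3}, $\tau_{\min}=\tau_{\max}$ a.s., so both bounds coincide and $\tilde L_t=\mathbf 1_{t<\tau_{\min}}$ for all $t$ a.s. In particular, $\tilde L$ and $\hat L$ are the same element of $\mathcal V$.

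\emph{Main obstacle.} The delicate point is the step inside Theorem \ref{Prop:L} that passes from the $dr\otimes d\mathbb P$-a.e. identities $Y_{\tau_r}=\xi_{\tau_r}$ and $A_{\tau_r}=0$ to the bounds $\tau_{\min}\le\tau_r\le\tau_{\max}$ for all $r\in[0,1)$ simultaneously. The lower bound $\tau_r\ge\tau_{\min}$ holds because $\tau_{\min}$ is the first contact time. For the upper bound, $A_{\tau_r}=0$ together with monotonicity of $A$ gives $A\equiv0$ on $[0,\tau_r]$, hence $\tau_r\le\tau_{\max}$. This requires care with the convention $\inf\emptyset=T$: if $A\equiv0$ on $[0,T]$, then $\tau_{\max}=T$ and the bound is trivial.

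Finally, the upper bound $\tilde L_t\le\mathbf 1_{t<\tau_{\max}}$ is consistent at $t=\tau_{\max}$. On $\{\tau_{\max}<T\}$, the right-hand side vanishes at $t=\tau_{\max}$, so $\tilde L$ must already be $0$ there. Since $\tilde L$ is right-continuous and non-increasing, this is exactly the jump to $0$ at $\tau_{\min}=\tau_{\max}$ produced by $\hat L$.
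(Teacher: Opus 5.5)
Your proof is correct and follows the paper's argument: you take $\hat L_t=\mathbf 1_{t<\tau_{\min}}$ for existence, and for uniqueness you use the bounds $\mathbf 1_{t<\tau_{\min}}\le\tilde L_t\le\mathbf 1_{t<\tau_{\max}}$ from Theorem \ref{Prop:L}, which coincide under Assumption \ref{assum:Tarski_3}. The only (harmless) difference is that you get $A_{\tau_{\min}}=0$ from the Skorokhod condition $\int_0^T(Y_t-\xi_t)\,dA_t=0$, as in Lemma \ref{L:Non-Empty}, whereas the paper uses $\tau_{\min}=\tau_{\max}$; your version shows that the existence part does not need Assumption \ref{assum:Tarski_3}.
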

\begin{proof}
Let $\hat L_t=\mathbf 1_{t<\tau_{\min}}$, for $t\in[0,T]$. We first verify that $\hat L$ satisfies the two constraints. To this end, notice that the measure $-d\hat L_t$ is the Dirac measure at $\tau_{\min}$, namely $-d\hat L_t=\delta_{\tau_{\min}}(dt)$. Then, regarding the first constraint, we have
\[
\int_{0^-}^T(Y_t-\xi_t)d\hat L_t = - (Y_{\tau_{\min}}-\xi_{\tau_{\min}}) = 0,
\]
where the second equality follows from the continuity of $Y$ and $\xi$, and also from the definition of $\tau_{\min}$. Concerning the second constraint, we have
\[
\int_{0^-}^TA_td\hat L_t = - A_{\tau_{\min}}.
\]
By Assumption \ref{assum:Tarski_3}, we know that $\tau_{\min}=\tau_{\max}$, which yields $A_{\tau_{\min}}=A_{\tau_{\max}}$. Since $A_{\tau_{\max}}=0$, the second constraint follows.\\
Finally, from the same proof as in Theorem \ref{Prop:L} we deduce that, given a process $\tilde L\in\mathcal V$ satisfying the two constraints $\int_{0^-}^T(Y_t-\xi_t)d\tilde{L}_t=0$ and $\int_{0^-}^TA_td\tilde{L}_t=0$, it holds that
\[
\tilde L_t = 1, \quad t<\tau_{\min}, \qquad\qquad \tilde L_t=0, \quad t\geq\tau_{\max}.
\]
Since by Assumption \ref{assum:Tarski_3} we know that $\tau_{\min}=\tau_{\max}$, the claim follows.
\end{proof}

\begin{remark}\label{R:Assumption1-i)}
    We report sufficient conditions for the validity of Assumption \ref{assum:Tarski_3}. In this remark we suppose that Assumptions \ref{assumption:1A} and \ref{assumption:1B} hold, as well as that $b=b(t,x)$, $\sigma=\sigma(t,x)$ and $h=h(t,x)$ are independent of their last argument, so in particular $X$ satisfies the stochastic differential equation
    \begin{equation}
    \label{eq:SDE without L_remark}dX_t=b(t,X_t)dt+\sigma(t,X_t)dW_t.
    \end{equation}
    Let $F(t,x)=f(t,x,\mathbb E[\bar f(t,X_t)L_t])$, for all $(t,x)\in[0,T]\times\mathbb R^d$. In the present Markovian setting, there exists a continuous function $v\colon[0,T]\times\mathbb R^d\rightarrow\mathbb R$ satisfying $Y_t=v(t,X_t)$, for $t\in[0,T]$ (see e.g. Lemma 8.4 in \cite{el1981aspects}). Then, we define the continuation region $\mathcal C=\{(t,x)\in[0,T]\times\mathbb R^d\colon v(t,x)>h(t,x)\}$ and the stopping region $\mathcal S=\{(t,x)\in[0,T]\times\mathbb R^d\colon v(t,x)=h(t,x)\}$. Notice that $\tau_{\min}$ can be equivalently written as $\tau_{\min}=\inf\{t\in[0,T]\colon (t,X_t)\in\mathcal S\}$. Now, consider the following set of assumptions.
    \begin{assumption}
    \label{assum:uniq_opt_stop}\quad
    \begin{enumerate}[\upshape a)]
        \item Let $X=\{X_t\}_{t\in[0,T]}$ be the solution to equation \eqref{eq:SDE without L_remark}; then, for every $t\in[0,T]$, we assume that $X_t$ has a distribution absolutely continuous with respect to the Lebesgue measure $dx$ on $\mathbb R^d$. For a sufficient condition, see Theorem 2.3.1 in \cite{nualart2006malliavin}.
        \item $h\in C^{1,2}([0,T]\times\mathbb R^d)$ with derivatives satisfying a polynomial growth condition.
        \item $(dt\otimes dx)(\partial\mathcal C)=0$.
        \item There exists a constant $\ell>0$ such that
        \begin{equation}\label{Lh}
        \mathcal Lh(t,x) + F(t,x) \leq - \ell, \qquad (t,x)\in\mathcal S,
        \end{equation}
        where
        \[
        \mathcal L h(t,x) = \partial_t h(t,x) + (b(t,x),\partial_x h(t,x)) + \frac{1}{2}\textup{tr}\big(\sigma(t,x)\sigma^{\scriptscriptstyle\intercal}(t,x)\partial_{xx}^2h(t,x)\big).
        \]
        \item $\tau_{\min}=\inf\{t\in[0,T]\colon (t,X_t)\in\textup{int}\,\mathcal S\}$ with $\inf \emptyset := \infty$, where $\textup{int}\,\mathcal S$ is the interior of the set $\mathcal S$.
    \end{enumerate}
    \end{assumption}
    \noindent We claim that under the above set of assumptions the equality $\tau_{\min}=\tau_{\max}$ holds. To this end, set $\xi_t=h(t,X_t)$, for $t\in[0,T]$. Since $h\in C^{1,2}([0,T]\times\mathbb R^d)$, by It\^o's formula, we have
    \[
    d\xi_t = \mathcal Lh(t,X_t)dt + (\partial_x h(t,X_t)\,\sigma(t,X_t),dW_t).
    \]
    Then
    \[
    d(Y_t-\xi_t) = -(F(t,X_t)+\mathcal Lh(t,X_t))dt+(Z_t-\partial_x h(t,X_t)\,\sigma(t,X_t),dW_t)-dA_t.
    \]
    On the other hand, applying Tanaka--Meyer's formula to $(Y-\xi)^+$, yields (notice that the term $\mathbf{1}_{Y_t>\xi_t}dA_t$ is zero as a consequence of the Skorokhod condition)
    \begin{align*}
    d(Y_t-\xi_t)^+ &= -\mathbf{1}_{Y_t>\xi_t}(F(t,X_t)+\mathcal Lh(t,X_t))dt \\
    &\quad +\mathbf{1}_{Y_t>\xi_t}(Z_t-\partial_x h(t,X_t)\,\sigma(t,X_t),dW_t)+\frac{1}{2}d\tilde L_t,
    \end{align*}
    where $\tilde L$ is the local time of the semimartingale $Y-\xi$ at zero. Since $Y-\xi \equiv (Y-\xi)^+$, by combining the two above equalities and by uniqueness of the semimartingale decomposition, we get
    \[
    dA_t+\frac{1}{2}d\tilde{L}_t = -\mathbf{1}_{Y_t=\xi_t}(F(t,X_t)+\mathcal Lh(t,X_t))dt.
    \]
    We notice that, under the above assumptions, the local time $\tilde L$ is indistinguishable from zero (see e.g. Theorem 6 in \cite{jacka1993local}). Hence, we find
    \begin{align}\label{Jacka}
    A_t=-\int_0^t\mathbf{1}_{Y_s=\xi_s}(F(s,X_s)+\mathcal Lh(s,X_s))ds, \qquad 0\leq t\leq T.
    \end{align}
    From the definition of $\tau_{\min}$, we obtain
    \[
    A_t=-\int_{\tau_{\min}}^{t\vee\tau_{\min}}\mathbf{1}_{Y_s=\xi_s}(F(s,X_s)+\mathcal Lh(s,X_s))ds, \qquad 0\leq t\leq T.
    \]
    By \eqref{Lh}, we get
    \[
    A_{\tau_{\max}}\geq\ell\int_{\tau_{\min}}^{\tau_{\max}}\mathbf{1}_{Y_s=\xi_s}ds.
    \]
    Since $A$ is continuous, we have $A_{\tau_{\max}}=0$. Hence, we conclude that
    \begin{equation}\label{tau_min_tau_max}
    \int_{\tau_{\min}}^{\tau_{\max}}\mathbf{1}_{Y_s=\xi_s}ds = 0, \qquad \text{almost surely}.
    \end{equation}
    We now deduce from the above equality that $\tau_{\max}=\tau_{\min}$ almost surely. Let $\omega\in\Omega$ be such that \eqref{tau_min_tau_max} holds and assume by contradiction that $\tau_{\max}(\omega)>\tau_{\min}(\omega)$. Choose $n\ge 1$ such that $\tau_{\min}(\omega) + \frac{1}{n} < \tau_{\max}(\omega)$. By assumption e), there exists $t_n(\omega)$ such that
    \[
    \tau_{\min}(\omega)\le t_n(\omega) < \tau_{\min}(\omega) + \frac{1}{n}
    \quad\text{and}\quad
    \bigg(t_n(\omega), X_{t_n(\omega)}(\omega)\bigg)\in \operatorname{int}\mathcal S.
    \]
    In particular, $t_n(\omega) < \tau_{\max}(\omega)$.
    Since $\operatorname{int}\mathcal S$ is open, there exists $r(\omega)>0$ such that
    \[
    \Big(\big(t_n(\omega)-r(\omega),\,t_n(\omega)+r(\omega)\big)\cap[0,T]\Big)\ \times\
    \Big\{x\in\mathbb{R}^d\colon |x-X_{t_n(\omega)}(\omega)|<r(\omega)\Big\}
    \subset \mathcal S.
    \]
    Because the path $t\mapsto X_t(\omega)$ is continuous, there exists $\delta(\omega)\in(0,r(\omega))$ such that
    for all $s\in[t_n(\omega),\,t_n(\omega)+\delta(\omega)]$,
    \[
    |X_s(\omega)-X_{t_n(\omega)}(\omega)|<r(\omega)
    \quad\text{and}\quad
    |s-t_n(\omega)|<r(\omega).
    \]
    Hence, for all $s\in[t_n(\omega),\,t_n(\omega)+\delta(\omega)]$ we have $(s,X_s(\omega))\in\mathcal S$, so that $Y_s(\omega) = v(s,X_s(\omega)) = h(s,X_s(\omega)) = \xi_s(\omega)$. Therefore
    \[
    \int_{\tau_{\min}(\omega)}^{\tau_{\max}(\omega)} \mathbf{1}_{Y_s(\omega)=\xi_s(\omega)}\,ds
    \ge
    \int_{t_n(\omega)}^{\min\{t_n(\omega)+\delta(\omega),\,\tau_{\max}(\omega)\}} 1\,ds
    =
    \min\{\delta(\omega),\,\tau_{\max}(\omega)-t_n(\omega)\}
    > 0,
    \]
    because $t_n(\omega)<\tau_{\max}(\omega)$.
    This contradicts \eqref{tau_min_tau_max}. Hence $\tau_{\max}(\omega)=\tau_{\min}(\omega)$ for every $\omega$ such that \eqref{tau_min_tau_max} holds. It follows that $\tau_{\max} = \tau_{\min}$ almost surely.
\end{remark}

We introduce the best reply set-valued map $\Gamma\colon\mathcal V\rightarrow 2^{\mathcal V}$. We write it as follows $\Gamma=\Gamma_2\circ\Gamma_1$, where $\Gamma_1\colon\mathcal V\rightarrow\mathbb S^2\times\mathbb S^2\times\mathbb H^2\times\mathbb A^2$ and $\Gamma_2\colon\Gamma_1(\mathcal V)\rightarrow 2^{\mathcal V}$. The map $\Gamma_1$ is defined as follows: given $L\in\mathcal V$, then $\Gamma_1(L):=(X,Y,Z,A)$, with $X \in \mathbb{S}^2$ solving equation \eqref{MFG SDE} (with input process $L$) and $(Y,Z,A) \in \mathbb{S}^2\times\mathbb H^2\times\mathbb A^2$ solving the reflected backward stochastic differential equation \eqref{RBSDE} (with input process $L$). On the other hand, given $(X,Y,Z,A)\in\Gamma_1(\mathcal V)$, $\hat L\in\Gamma_2((X,Y,Z,A))$ if $\hat L\in\mathcal V$ and the two constraints \eqref{hatL} are satisfied.  

We introduce now several order relations which will be used in this section. For  $X, X' \in \mathbb{S}^2$, we set
\[
X \leq_{\mathbb{S}^2} X' \quad \text{if and only if} \quad X^i_t \leq X'^{,i}_t, \,\,\, \text{for all } i \in \{1,\dots,d\} \,\, \text{for all } t \in [0,T] \text{ almost surely},
\]
that is the inequality is understood component-wise, since $X, X'$ are stochastic processes in $\mathbb{R}^d$.

We also define the following order on $\mathbb{A}^2$: for $A, A' \in \mathbb{A}^2$, we set
\[
dA \leq_{\mathbb{A}^2} dA' \quad \text{if and only if} \quad A_t - A_s \leq A'_t - A'_s \ \text{a.s. for all } 0 \leq s \leq t \leq T.
\]
Note that the sets $(\mathbb{S}^2, \leq_{\mathbb{S}^2})$ and $(\mathbb{A}^2, \leq_{\mathbb{A}^2})$ are partially ordered sets. To alleviate the notation, we define $S:=(X,Y,Z,A)$. 
We introduce the following orders on the set of solutions $\mathcal S$ of system \eqref{RBSDE}. For $S=(X,Y,Z,A) \in \mathcal{S}$ and $S'=(X',Y',Z',A') \in \mathcal{S}$, we say that $S \leq_{\mathcal{S}} S'$ (resp. $S \leq_{\Sigma} S'$) if $X \leq_{\mathbb{S}^2} X'$ and $Y \leq_{\mathbb{S}^2} Y'$ (resp. if $X \leq_{\mathbb{S}^2} X'$, $Y \leq_{\mathbb{S}^2} Y'$, and $dA \geq_{\mathbb{A}^2} dA'$). The set $(\mathcal{S}, \leq_{\mathcal{S}})$ (resp. $(\mathcal{S}, \leq_{\Sigma})$) is a partially ordered set.

Finally we introduce an order relation on $R$, the set of solutions of system \eqref{MFG SDE}-\eqref{MFRBSDE}. Given two solutions $(X,Y,Z,A,L)$ and $(\bar{X}, \bar{Y}, \bar{Z},\bar{A}, \bar{L})$, if $L \leq_{\mathcal{V}} \bar{L}$, set $(X,Y,Z,A, L) \leq_R ( \bar{X},\bar{Y}, \bar{Z}, \bar{A}, \bar{L})$. With such an order, the set of solutions of system \eqref{MFG SDE}-\eqref{MFRBSDE} is a partially ordered set.
\begin{definition}
\label{def:max_min_solution}
 $(\tilde{X},\tilde{Y},\tilde{Z},\tilde{A},\tilde{L}) \in \mathbb{S}^2\times\mathbb{S}^2\times\mathbb{H}^2\times\mathbb{A}^2\times\mathcal V$ is a maximal (resp. minimal) solution of the system \eqref{MFG SDE}-\eqref{MFRBSDE} if for any other solution $(X,Y,Z,A,L)$ we have $(X,Y,Z,A,L) \leq_R (\tilde{X},\tilde{Y},\tilde{Z},\tilde{A},\tilde{L})$ (resp. $(\tilde{X},\tilde{Y},\tilde{Z},\tilde{A},\tilde{L})\leq_R (X,Y,Z,A,L)$).
\end{definition}
\paragraph{Properties of the best response maps.}
Before proceeding with the  presentation of the main results of this section, we need a preliminary lemma.
\begin{lemma}[Comparison result on $X$]\label{L:X_le_X'}
    Suppose that Assumptions \ref{assumption:1A}, \ref{assum:Tarski_2}.i)-ii) hold. Let $L, L' \in \mathcal{V}$ and consider the corresponding solutions $X,X'$ to equation \eqref{MFG SDE}. If $L \leq_{\mathcal{V}} L'$ then $X \leq_{\mathbb{S}^2} X'$.
\end{lemma}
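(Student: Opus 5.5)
The argument is a comparison theorem for SDEs with additive noise, where the mean-field terms are handled through the ordering $L\le_{\mathcal V}L'$. By Assumption \ref{assum:Tarski_2}.i), $\sigma=\sigma(t)$, so the noise term $\int_0^t\sigma(s)dW_s$ is the same for $X$ and $X'$ and cancels in the difference. Hence $\Delta_t:=X_t-X'_t$ satisfies the pathwise ODE
\[
\Delta_t=\int_0^t\Big(b\big(s,X_s,m_s\big)-b\big(s,X'_s,m'_s\big)\Big)ds,\qquad m_s:=\mathbb E[\bar b(s,X_s)L_s],\quad m'_s:=\mathbb E[\bar b(s,X'_s)L'_s],
\]
with $\Delta_0=0$. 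The claim is therefore a deterministic comparison for a quasi-monotone (cooperative) system, coupled through the deterministic inputs $m,m'$. I would avoid arguing directly on this coupled system and instead use a Picard iteration that preserves the order.

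Set $X^{(0)}=X'^{(0)}=X_0$. For $n\ge 0$, let $X^{(n+1)}$ solve
\[
dX^{(n+1)}_t=b\big(t,X^{(n+1)}_t,\mathbb E[\bar b(t,X^{(n)}_t)L_t]\big)dt+\sigma(t)dW_t,
\]
and define $X'^{(n+1)}$ analogously with $L'$ and $X'^{(n)}$. The induction step is as follows. Suppose $X^{(n)}\le_{\mathbb S^2}X'^{(n)}$. Since $\bar b\ge0$ is componentwise non-decreasing and $0\le L\le L'$, we get $0\le\bar b(t,X^{(n)}_t)L_t\le\bar b(t,X'^{(n)}_t)L'_t$ componentwise, hence $m^{(n)}_t\le m'^{(n)}_t$. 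The equations for $X^{(n+1)}$ and $X'^{(n+1)}$ are then ordinary SDEs with the same additive noise and drifts $\beta(t,x)=b(t,x,m^{(n)}_t)\le\beta'(t,x)=b(t,x,m'^{(n)}_t)$. Each drift is Lipschitz in $x$, and by Assumption \ref{assum:Tarski_2}.ii) each component $b_i$ is non-decreasing in all of $x$ (in particular in the off-diagonal components, which gives quasi-monotonicity) and in $m$.

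The pathwise comparison for $D=X^{(n+1)}-X'^{(n+1)}$ goes through $D^+$. For each $i$,
\[
D^i_t=\int_0^t\big(\beta_i(s,X^{(n+1)}_s)-\beta'_i(s,X'^{(n+1)}_s)\big)ds .
\]
On $\{D^i_s>0\}$ I bound
\[
\beta_i(s,X^{(n+1)}_s)-\beta'_i(s,X'^{(n+1)}_s)\le b_i\big(s,X^{(n+1)}_s,m'^{(n)}_s\big)-b_i\big(s,X'^{(n+1)}_s,m'^{(n)}_s\big).
\]
Next I compare with the point $\tilde x:=X'^{(n+1)}_s\vee X^{(n+1)}_s$ taken componentwise except in the $i$-th component, where I keep $X^{(n+1),i}_s$. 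Monotonicity of $b_i$ in $x$ together with Lipschitz continuity then yields the upper bound $K|D^+_s|$. Since $D^i$ is absolutely continuous, $(D^i_t)^+=\int_0^t\mathbf 1_{D^i_s>0}\,dD^i_s$, so $\sum_i(D^i_t)^+\le Kd\int_0^t\sum_i(D^i_s)^+ds$, and Gronwall gives $D^+\equiv0$. This proves $X^{(n+1)}\le_{\mathbb S^2}X'^{(n+1)}$.

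Finally, the iteration must converge to the McKean--Vlasov solutions. The map $Y\mapsto$ solution driven by $\mathbb E[\bar b(\cdot,Y_\cdot)L_\cdot]$ is a contraction in $\mathbb S^2$ on $[0,T]$ with an exponentially weighted norm (Assumption \ref{assumption:1A}, $|L|\le1$, and the same BDG/Gronwall estimates as in Proposition \ref{P:SDE}; alternatively iterate to obtain an $(CT)^n/n!$ bound). So $X^{(n)}\to X$ and $X'^{(n)}\to X'$ in $\mathbb S^2$, and along an a.s.\ convergent subsequence the componentwise inequality passes to the limit for all $t$, since the paths are continuous. The main obstacle I expect is the componentwise comparison step, specifically checking that Assumption \ref{assum:Tarski_2}.ii) provides exactly the quasi-monotonicity needed for $d>1$. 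If only the diagonal structure mattered, the $\tilde x$ construction handles it, because monotonicity in all components of $x$ is assumed.
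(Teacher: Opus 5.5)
Your argument is correct, but it is organized differently from the paper's proof.

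The paper never decouples the mean-field interaction. It works directly with the two McKean--Vlasov solutions and applies the chain rule to $|(X_t-X'_t)^+|^2$, which is possible because $X-X'$ has finite variation when $\sigma=\sigma(t)$. It then takes expectations and splits the drift difference by inserting the intermediate inputs $\mathbb E[\bar b(s,X_s)L'_s]$ and $\mathbb E[\bar b(s,X_s\vee X'_s)L'_s]$. The two middle pieces are non-positive, by monotonicity of $b$ in $m$, monotonicity of $\bar b$ in $x$, and $0\le L\le L'$. The last piece is controlled by $K^2\big(\mathbb E[|(X_s-X'_s)^+|]\big)^2\le K^2\,\mathbb E[|(X_s-X'_s)^+|^2]$, so a single Gronwall estimate on $\mathbb E[|(X_t-X'_t)^+|^2]$ closes the proof.

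You instead freeze the measure argument. This reduces the mean-field part to the one-line implication $X^{(n)}\le_{\mathbb S^2}X'^{(n)}\Rightarrow m^{(n)}\le m'^{(n)}$. You then prove a pathwise comparison for ordinary SDEs with deterministic inputs, and recover the claim from convergence of the Picard scheme. Your contraction estimate is fine, and its limit is the solution of \eqref{MFG SDE} by uniqueness, since $\sigma$ does not depend on $m$.

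The paper's route is shorter and needs no approximation step. Yours is modular, since any comparison theorem for the frozen equation can be plugged in, and it fits the monotone-iteration logic of Section~\ref{S:Tarski}.

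Your $\tilde x$ device also makes explicit a point the paper glosses over. The paper bounds $(X_s-X'_s)^+\cdot\big(b(s,X_s,\cdot)-b(s,X'_s,\cdot)\big)$ by $C_K|(X_s-X'_s)^+|^2$ citing only Lipschitz continuity. For $d>1$ this bound needs precisely the monotonicity of $b_i$ in the off-diagonal components of $x$, through $b_i(x)\le b_i(x\vee x')$.
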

\begin{proof}
    We begin noting that, since $\sigma=\sigma(t)$, the process $X_t-X_t'$ is a finite variation (absolutely continuous) process. Applying the standard chain rule to $|(X_t-X'_t)^+|^2$ (here the positive part is applied component-wise to the vector $X_t-X_t'$), and taking the expectation, we find (we denote by $C_K$ a non-negative constant only depending on the Lipschitz constant $K$ of $b$ and $\bar b$)
    \[
    \begin{split}
    &\mathbb{E}[|(X_t-X'_t)^+|^2]= 2\mathbb{E}\bigg[\int_0^t(X_s-X'_s)^+\big(b(s,X_s,\mathbb{E}[\bar{b}(s,X_s)L_s])-b(s,X'_s,\mathbb{E}[\bar{b}(s,X'_s)L'_s])\big)ds\bigg]\\
    &= 2\mathbb{E}\bigg[\int_0^t(X_s-X'_s)^+\big(b(s,X_s,\mathbb{E}[\bar{b}(s,X_s)L_s])-b(s,X_s',\mathbb{E}[\bar{b}(s,X_s)L_s])\\
    &\,\,\,\,\,\,\,\,\,\,\,\,+b(s,X_s',\mathbb{E}[\bar{b}(s,X_s)L_s])-b(s,X_s',\mathbb{E}[\bar{b}(s,X_s)L'_s])\\
 &\,\,\,\,\,\,\,\,\,\,\,\,+b(s,X_s',\mathbb{E}[\bar{b}(s,X_s)L_s'])-b(s,X_s',\mathbb{E}[\bar{b}(s,X_s\vee X_s')L'_s])\\
    &\,\,\,\,\,\,\,\,\,\,\,\,+b(s,X_s',\mathbb{E}[\bar{b}(s,X_s\vee X_s')L'_s])-b(s,X'_s,\mathbb{E}[\bar{b}(s,X'_s)L'_s])\big)ds\bigg]\\
    &\le C_K\mathbb{E}\bigg[\int_0^t|(X_s-X'_s)^+|^2ds\bigg]+C_K\mathbb{E}\bigg[\int_0^t(X_s-X'_s)^+|\mathbb{E}[\bar{b}(s,X_s\vee X_s')L'_s]-\mathbb{E}[\bar{b}(s,X'_s)L'_s]|ds\bigg]\\
    &\le C_K\mathbb{E}\bigg[\int_0^t|(X_s-X'_s)^+|^2ds\bigg]+C_K\int_0^t\mathbb{E}[(X_s-X'_s)^+]\mathbb{E}[|X_s\vee X_s'-X'_s|]ds\\
    &\le C_K\mathbb{E}\bigg[\int_0^t|(X_s-X'_s)^+|^2ds\bigg],
    \end{split}
    \]
    where we used the Lipschitz continuity of $b$ and $\bar b$ in Assumption \ref{assumption:1A}, moreover in the first inequality we used that $b(s,X_s',\mathbb{E}[\bar{b}(s,X_s)L_s])\leq b(s,X_s',\mathbb{E}[\bar{b}(s,X_s)L'_s])$ and $b(s,X'_s,\mathbb{E}[\bar{b}(s, X_s)L'_s]) \leq b(s, X'_s,\mathbb{E}[\bar{b}(s,X_s\vee X'_s)L'_s])$, since $L \leq_{\mathcal{V}} L'$ by assumption and $X \leq_{\mathbb{S}^2} X \vee X'$. Thus, by Gronwall's lemma, we deduce that $(X_t-X'_t)^+=0$, for all $t \in [0,T]$ almost surely. This allows us to conclude that $X \leq_{\mathbb{S}^2} X'$.
\end{proof}

\noindent We start by establishing a monotonicity property of $\Gamma_1$.
\begin{proposition}[Monotonicity of $\Gamma_1$]
\label{P:increasing maps}
    Under Assumptions \ref{assumption:1A}, \ref{assum:Tarski_1}.i)-ii) and \ref{assum:Tarski_2}.i)-ii), the map $\Gamma_1$ is non-decreasing: for any $L, L' \in \mathcal{V}$ with $L \leq_{\mathcal{V}} L'$, it holds that $\Gamma_1(L) \leq_{\mathcal{S}} \Gamma_1(L')$. Suppose in addition that $b$, $\sigma$ are independent of $m$, and $h$ is independent of $w$. Then $\Gamma_1$ is non-decreasing in  the following sense: $\Gamma_1(L) \leq_{\Sigma} \Gamma_1(L')$.
\end{proposition}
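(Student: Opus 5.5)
The plan is to treat the forward and backward components separately and then combine them through comparison for reflected BSDEs. Fix $L\leq_{\mathcal V}L'$ and write $\Gamma_1(L)=(X,Y,Z,A)$ and $\Gamma_1(L')=(X',Y',Z',A')$.

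\textbf{Forward component.} Lemma \ref{L:X_le_X'} applies directly under Assumptions \ref{assumption:1A} and \ref{assum:Tarski_2}.i)-ii). It gives $X\leq_{\mathbb S^2}X'$.

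\textbf{Driver.} Since $\bar f\geq0$ componentwise, $\bar f$ is non-decreasing in $x$, and $0\leq L_t\leq L'_t$, we get
\[
\bar f(t,X_t)L_t\leq \bar f(t,X'_t)L'_t
\]
componentwise. Taking expectations gives $\mathbb E[\bar f(t,X_t)L_t]\leq \mathbb E[\bar f(t,X'_t)L'_t]$. Because $f$ is non-decreasing in $(x,m)$, the drivers are ordered pathwise:
\[
f\big(t,X_t,\mathbb E[\bar f(t,X_t)L_t]\big)\leq f\big(t,X'_t,\mathbb E[\bar f(t,X'_t)L'_t]\big).
\]

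\textbf{Obstacle.} By integration by parts, exactly as in Lemma \ref{L:UnifConv_phi},
\[
\mathbb E\Big[\int_{0^-}^T\phi(t-s)dL_s\Big]=-\phi(t)+\mathbb E\Big[\int_0^TL_s\phi'(t-s)ds\Big].
\]
Since $\phi'\geq0$, this quantity is non-decreasing in $L$. As $h$ is non-decreasing in $(x,w)$, it follows that $\xi_t\leq\xi'_t$ for all $t$, including $\zeta\leq\zeta'$ at $t=T$.

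\textbf{Comparison for $Y$.} The comparison theorem for reflected BSDEs (Theorem 4.1 in \cite{el1981aspects}) then yields $Y\leq Y'$. Alternatively, one can use the representation \eqref{Y_t=esssup_tau}: for each stopping time $\tau$ the reward under $L$ is dominated by the reward under $L'$, and taking the essential supremum gives $Y_t\leq Y'_t$. The growth conditions in Assumption \ref{assum:Tarski_1} ensure the needed integrability, and continuity of $Y,Y'$ upgrades the inequality to all $t$ almost surely. This proves $\Gamma_1(L)\leq_{\mathcal S}\Gamma_1(L')$.

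\textbf{The $\leq_\Sigma$ statement.} Now $X=X'$, since $b,\sigma$ do not depend on $m$, and $\xi=\xi'$, since $h$ does not depend on $w$. Only the drivers differ, with $F_t\leq F'_t$ where $F,F'$ denote the two drivers above. We need $dA\geq dA'$, i.e. that a larger driver requires less pushing. This is the comparison result for the increasing processes of reflected BSDEs with the same obstacle, found in the literature on reflected BSDEs (e.g. El Karoui et al.); I expect this to be the main obstacle.

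For a self-contained argument, I would first try the Markovian/semimartingale identity used in Remark \ref{R:Assumption1-i)}, namely $dA_t=\mathbf 1_{Y_t=\xi_t}(F_t+\mathcal L h)^-dt$ formally. It shows that $A$ increases only on the contact set, at rate $-(F+\mathcal Lh)$ there. Since $Y\leq Y'$ and the obstacle is common, $\{Y'=\xi\}\subset\{Y=\xi\}$, and on the common contact set the rate for $A'$ is smaller because $F'\geq F$. Together these give $dA'\leq dA$.

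In general the obstacle need not be a semimartingale. I would then approximate via penalization: $A^n_t=n\int_0^t(Y^n_s-\xi_s)^-ds$. Comparison for standard BSDEs gives $Y^n\leq Y'^n$, hence $(Y^n-\xi)^-\geq(Y'^n-\xi)^-$ pointwise, so $A^n_t-A^n_s\geq A'^n_t-A'^n_s$. Passing to the limit, using uniform convergence of $A^n$ to $A$ in $\mathbb S^2$ from \cite{el1981aspects}, yields $dA\geq_{\mathbb A^2}dA'$. This penalization route is the one I would commit to.
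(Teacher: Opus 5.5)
Your proof is correct. The forward comparison, the ordering of the drivers, obstacles and terminal values, and the use of the reflected-BSDE comparison theorem to get $Y\le Y'$ are the same as in the paper's argument.

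The genuine difference is in the $\leq_\Sigma$ part. The paper simply invokes Theorem 4.2 of \cite{peng2005smallest}, a comparison result for the increasing processes of reflected BSDEs with a common obstacle. You instead prove that statement yourself by penalization, and your argument is sound:
\begin{itemize}
\item Since $X=X'$ and $h$ does not depend on $w$, the two penalized equations share the terminal value $h(T,X_T)$ and the obstacle $\xi$.
\item Their drivers $F_s+n(y-\xi_s)^-$ and $F'_s+n(y-\xi_s)^-$ are Lipschitz in $y$ and ordered, so standard comparison gives $Y^n\le Y'^n$.
\item Since $y\mapsto(y-\xi_s)^-$ is non-increasing, the increments of $A^n$ dominate those of $A'^n$.
\item The $\mathbb S^2$-convergence of the penalization scheme (El Karoui et al.\ \cite{el1981aspects}, applicable because $\xi$ is continuous and square integrable) passes this to the limit for each pair $s\le t$. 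Continuity then makes it hold simultaneously for all pairs.
\end{itemize}

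This is essentially the standard proof behind the cited theorem. What you gain is a self-contained argument that shows exactly why a common obstacle and common terminal value are needed, which is the role of the extra independence assumptions on $b,\sigma,h$. The paper's citation is shorter.

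The heuristic local-time identity you mention first is not needed. It would also require extra structure: a semimartingale obstacle and a vanishing local time, as in the paper's sufficient conditions for $\tau_{\min}=\tau_{\max}$. Committing to penalization is the right choice.
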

\begin{proof}
    Let $L,L' \in \mathcal{V}$ such that $L \leq_{\mathcal{V}} L'$. We consider the corresponding solutions $X,X'$ to equation \eqref{MFG SDE} and $(Y,Z,A),(Y',Z',A')$ to system \eqref{RBSDE}. We have proved in the previous lemma that $X \leq_{\mathbb{S}^2} X'$.\\
    Since $\phi'$ is non-negative, using integration by parts formula, we get, for every $t\in[0,T]$,
    \begin{align*}
    \mathbb{E}\bigg[\int_{0^-}^T\phi(t-s)dL_s\bigg]&=\mathbb{E}\bigg[-\phi(t)+\int_0^TL_s\phi'(t-s)ds\bigg] \\
    &\leq \mathbb{E}\bigg[-\phi(t)+\int_0^TL_s'\phi'(t-s)ds\bigg]=\mathbb{E}\bigg[\int_{0^-}^T\phi(t-s)dL'_s\bigg].
    \end{align*}
Let $\xi_t:=h\bigg(t,X_t,\mathbb{E}\bigg[\int_{0^-}^T\phi(t-s)dL_s\bigg]\bigg)$ and $\xi_t':=h\bigg(t,X'_t,\mathbb{E}\bigg[\int_{0^-}^T\phi(t-s)dL'_s\bigg]\bigg)$, for $t\in[0,T]$. Then, by the previous computation and Assumption \ref{assum:Tarski_2}-ii), we have
    \[
    \begin{split}\xi_t-\xi_t'&=h\bigg(t,X_t,\mathbb{E}\bigg[\int_{0^-}^T\phi(t-s)dL_s\bigg]\bigg)-h\bigg(t,X_t,\mathbb{E}\bigg[\int_{0^-}^T\phi(t-s)dL'_s\bigg]\bigg)\\
    &\quad+h\bigg(t,X_t,\mathbb{E}\bigg[\int_{0^-}^T\phi(t-s)dL'_s\bigg]\bigg)-h\bigg(t,X'_t,\mathbb{E}\bigg[\int_{0^-}^T\phi(t-s)dL'_s\bigg]\bigg)\le 0.
    \end{split}
    \]
    Furthermore, define $f_t:=f(t,X_t,\mathbb{E}[\bar{f}(t,X_t)L_t])$ and $f_t':=f(t,X'_t,\mathbb{E}[\bar{f}(t,X'_t)L'_t])$, for $t \in  [0,T]$. Since $L \leq_{\mathcal{V}} L'$, Lemma \ref{L:X_le_X'} implies $X \leq_{\mathbb{S}^2} X'$. Consequently, given the non-negativity (Assumption \ref{assum:Tarski_2}-i)) and monotonicity (Assumption \ref{assum:Tarski_2}-ii)) of $\bar{f}$, the non-negativity of the processes $L$ and $L'$ and the fact that $L \leq_{\mathcal{V}} L'$, it follows that
    \[
    \bar{f}(t,X_t)L_t \leq \bar{f}(t,X'_t)L'_t \quad \forall t \in [0,T] \,\,\, \text{a.s}
    \]
    Thus by Assumption \ref{assum:Tarski_2}-ii), we can conclude that 
    $f_t \leq f'_t$ for all $t \in  [0,T]$ almost surely. Finally, applying the comparison principle from \cite{el1981aspects}, we can conclude that $Y \leq_{\mathbb{S}^2} Y'$, which together with $X \leq_{\mathbb{S}^2} X'$, yields $\Gamma_1(L)\leq_{\mathcal{S}} \Gamma_1(L')$.
\noindent Under the additional assumption that $b$ and $\sigma$ are independent of $m$, and that $h$ is independent of $w$, by using Theorem 4.2 in \cite{peng2005smallest}, we obtain $dA \geq_{\mathbb{A}^2} dA'$, which further implies that $\Gamma_1(L) \leq_{\Sigma} \Gamma_1(L')$.
\end{proof}
\noindent Using the same arguments as in Theorem \ref{Prop:L}, we have that, for all $S \in \mathcal{S}$, $\Gamma_2(S) \subset \{L \in \mathcal{V}: L^{\text{min},S} \leq_{\mathcal{V}} L \leq_{\mathcal{V}} L^{\text{max},S}\}$,
with $L^{\text{min},S}:=\mathbf{1}_{t <\tau_{\text{min},S}}$ and $L^{\text{max},S}:=\mathbf{1}_{t <\tau_{\text{max},S}}$,
where
$$\tau_{\text{min},S}:=\inf\{ t\in[0,T]: Y_t^{S}=\xi_t^{S}\}$$
and
$$\tau_{\text{max},S}:=\inf\{ t\in[0,T]: A_t^{S}>0\}.$$
Denote by 
$$\mathbb{V}(S):= \{L \in \mathcal{V}: L^{\text{min},S} \leq_{\mathcal{V}} L \leq_{\mathcal{V}} L^{\text{max},S}\}.$$
\begin{lemma}[Properties of $\Gamma_2(S)$]\label{gamma2}
Suppose that Assumptions \ref{assumption:1A}, \ref{assum:Tarski_1}.i)-ii) and \ref{assum:Tarski_2} hold. Then
\begin{enumerate}[i)]
\item For all $S \in \mathcal{S}$, define the map $\underline{R} (S):=\Tilde{L}^S$, with $\Tilde{L}^S_t:=\essinf\nolimits_{\Gamma_2(S)} L_t $. Then $\underline{R}(S) \in \Gamma_2(S)$.
\item The map $\underline{R}$ is increasing with respect to $S$ in the following sense: if $S^1 \leq_{\mathcal{S}} S^2$ then $\underline{R}(S^1)\leq_{\mathcal{V}} \underline{R}(S^2)$.
\end{enumerate}
Assume now that $b$, $\sigma$ are independent of $m$ and $h$ is independent of $w$. Then 
\begin{enumerate}[iii)]
\item For all $S \in \mathcal{S}$, define the map $\overline{R}(S) := \bar{L}^S$, with $\bar{L}^S_t := \esssup_{\Gamma_2(S)} L_t$. Then $\overline{R}(S) \in \Gamma_2(S)$.
\item[iv)] The map $\overline{R}$ is increasing with respect to $S$ in the following sense: if $S^1 \leq_{\Sigma} S^2$, then $\overline{R}(S^1) \leq_{\mathcal{V}} \overline{R}(S^2)$.
\end{enumerate}
\end{lemma}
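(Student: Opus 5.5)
The plan is to identify the extremal elements of $\Gamma_2(S)$ explicitly, using the bounds of Theorem~\ref{Prop:L}, and then read off monotonicity from the monotonicity of the stopping times defining them.

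\textbf{Step 1: extremal elements of $\Gamma_2(S)$.} By the argument of Theorem~\ref{Prop:L}, every $L\in\Gamma_2(S)$ satisfies $L^{\min,S}\le_{\mathcal V} L\le_{\mathcal V} L^{\max,S}$. By Lemma~\ref{L:Non-Empty}, $L^{\min,S}=\mathbf 1_{t<\tau_{\min,S}}$ belongs to $\Gamma_2(S)$. Hence the essential infimum $\tilde L^S$ coincides (up to indistinguishability, after taking the càdlàg modification) with $L^{\min,S}$. This gives i).

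For iii), I would show that $L^{\max,S}\in\Gamma_2(S)$, which yields $\bar L^S=L^{\max,S}$. The condition $\int A\,dL^{\max,S}=-A_{\tau_{\max,S}}=0$ holds by continuity of $A$. The delicate part is $Y_{\tau_{\max,S}}=\xi_{\tau_{\max,S}}$.
\begin{itemize}
\item If $\tau_{\max,S}<T$, then $A$ increases on every right neighbourhood of $\tau_{\max,S}$. The Skorokhod condition $\int(Y-\xi)\,dA=0$ then gives points $t_n\downarrow\tau_{\max,S}$ with $Y_{t_n}=\xi_{t_n}$, and continuity yields $Y_{\tau_{\max,S}}=\xi_{\tau_{\max,S}}$.
\item If $\tau_{\max,S}=T$, the identity $Y_T=\xi_T$ holds directly.
\end{itemize}
Thus $L^{\max,S}\in\Gamma_2(S)$. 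This argument does not seem to need the extra independence assumptions, which are used only in iv).

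\textbf{Step 2: monotonicity for ii).} Let $S^1\le_{\mathcal S}S^2$, where $S^i=\Gamma_1(L^i)$ for inputs with $L^1\le_{\mathcal V}L^2$, as arises in the iteration. I need $\tau_{\min,S^1}\le\tau_{\min,S^2}$. Assumption~\ref{assum:Tarski_2}.iii) gives $Y^1-\xi^1\le Y^2-\xi^2$, with both sides nonnegative. Hence whenever $Y^2_t=\xi^2_t$ we also have $Y^1_t=\xi^1_t$, so the contact set of $S^2$ is contained in that of $S^1$. This gives $\tau_{\min,S^1}\le\tau_{\min,S^2}$, hence $\mathbf 1_{t<\tau_{\min,S^1}}\le\mathbf 1_{t<\tau_{\min,S^2}}$, which is ii).

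The main obstacle is that $\le_{\mathcal S}$ only compares $X$ and $Y$, not the gap $Y-\xi$. I would therefore use Assumption~\ref{assum:Tarski_2}.iii), implicitly tied to the ordered inputs $L$, to pass from the order on $S$ to the order on the gaps. If $\xi$ does not depend on $L$, the same conclusion follows from $Y^1-Y^2\le0$ together with the monotonicity of $h$ in $x$ only after invoking iii).

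\textbf{Step 3: monotonicity for iv).} Let $S^1\le_\Sigma S^2$, so that $dA^1\ge_{\mathbb A^2}dA^2$. Then $A^2_t-A^2_0\le A^1_t-A^1_0$ for all $t$, so $A^1_t=0$ implies $A^2_t=0$. Hence $\{A^1>0\}\supseteq\{A^2>0\}$ pathwise for $t>0$, which gives $\tau_{\max,S^1}\le\tau_{\max,S^2}$ and therefore $L^{\max,S^1}\le_{\mathcal V}L^{\max,S^2}$. This step is straightforward once Step~1 identifies $\overline R(S)=L^{\max,S}$.
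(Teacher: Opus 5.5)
Your proof is correct and follows essentially the paper's argument: you identify $\underline R(S)=L^{\min,S}$ and $\overline R(S)=L^{\max,S}$ through the sandwich bounds of Theorem~\ref{Prop:L}, then obtain monotonicity from the inclusion of contact sets (restricting, exactly as the paper does, to $S^i=\Gamma_1(L^i)$ with $L^1\le_{\mathcal V}L^2$ so that Assumption~\ref{assum:Tarski_2}.iii) applies) and from the comparison of the $A$ processes. The only minor difference is that you check $L^{\max,S}\in\Gamma_2(S)$ directly from the Skorokhod condition on $A$, whereas the paper cites the optimality of the maximal stopping time (Theorem 2.43 in \cite{el2006aspects}); your remark that iii) does not use the extra independence assumptions is consistent with the paper's proof.
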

\begin{proof}
i) Fix $S \in \mathcal{S}$.  Denote the process $L^S$ as follows: for all $t \in [0,T]$, $L_t^S:=\essinf_{\mathbb{V}(S)} L_t $. Then, since $\Gamma_2(S) \subset \mathbb{V}(S)$, we have, 
\begin{align}\label{eq1}
\Tilde{L}^S \geq_{\mathcal{V}} L^S.\,\ 
\end{align}
By the complete lattice property of the set $\mathcal{V}$, since $\mathbb{V}(S) \subset \mathcal{V}$ and by definition of $\mathbb{V}(S)$, we have
\begin{align}\label{eq2}
L^{\text{min},S} \leq _{\mathcal{V}} L^{S}.
\end{align}
Now, we have that 
$L^{\text{min},S} \in \Gamma_2(S)$ by the same arguments as in the proof of Lemma \ref{L:UniqBestResponse}. Therefore, 
\begin{align}\label{eq3}
L^{\text{min},S} \geq_{\mathcal{V}} \Tilde{L}^S.
\end{align}
From \eqref{eq1}, \eqref{eq2}, \eqref{eq3}, we finally deduce that, for all $t \in [0,T]$,
\begin{align}
L_t^{\text{min},S}=\Tilde{L}^S_t,\,\, \text{a.s.},
\end{align}
which leads to $\underline{R}(S) \in \Gamma_2(S)$.

ii) Let $S:=(X,Y,Z,A),  S':=(X',Y',Z',A') \in \Gamma_1(\mathcal V)$ such that $S$ is associated to $L$ and $S'$ to $L'$, such that $L \leq_{\mathcal{V}} L'$. By the previous point i),  we know that $\underline{R}(S)=L^{\text{min},S}$ and $\underline{R}(S')=L^{\text{min},S'}$.
Moreover, it holds that $ L^{\text{min},S}_t=\mathbf{1}_{t<\tau_{\text{min},S}}$ and $ L^{\text{min},S'}_t=\mathbf{1}_{t <\tau'_{\text{min}, S'}}$. We recall that $\tau_{\text{min},S}=\inf\{t \in [0,T]: Y_t=\xi_t\}$ and $\tau_{\text{min}, S'}=\inf\{t \in [0,T]: Y'_t=\xi'_t\}$, with $\xi$ and $\xi'$ as in the proof of Lemma \ref{P:increasing maps}.
  Fix $\omega \in \Omega.$ Define
    \[
    \Xi(\omega)=\{t \in [0,T]: Y_t(\omega)=\xi_t(\omega)\} \qquad \text{and} \qquad \Xi'(\omega)=\{t \in [0,T]: Y'_t(\omega)=\xi'_t(\omega)\}.
    \]
    Let $t \in \Xi'(\omega)$, then $Y'_t(\omega)=\xi'_t(\omega)$ and we also know that $Y_t(\omega) \le Y'_t(\omega)$. By Assumption \ref{assum:Tarski_2}, we know that $Y_t(\omega)-\xi_t(\omega) \leq Y'_t(\omega)-\xi'_t(\omega)=0$. Since $Y_t(\omega) \geq \xi_t(\omega)$, we conclude that $Y_t(\omega)=\xi_t(\omega)$. This implies that $t \in \Xi(\omega)$. Hence $\Xi'(\omega) \subseteq \Xi(\omega)$. Since $\tau_{\text{min},S}(\omega)=\inf \Xi(\omega)$ and $\tau_{\text{min},S'}(\omega)=\inf \Xi'(\omega)$, by the properties of the infimum we conclude that $\tau_{\text{min}}(\omega) \le \tau'_{\text{min}}(\omega)$, i.e. $L^{\text{min},S} \leq_{\mathcal{V}} L^{\text{min},S'}$.

iii) Recalling that $L^{\max, S}$ is optimal (see Theorem 2.43 in \cite{el2006aspects}), the proof follows the same steps as in item i) 

iv) Let $S:=(X,Y,Z,A),  S':=(X',Y',Z',A') \in \mathcal{S}$ such that $S$ is associated to $L$, $S'$ to $L'$, with $L \leq_{\mathcal{V}} L'$. 
Recall that, by item $iii)$ we have $\overline{R}(S)=L^{\text{max},S}$ and $\overline{R}(S')=L^{\text{max},S'}$.
By Proposition \ref{P:increasing maps}, we deduce that $A'_t \leq A_t$ a.s. for all $0 \leq t \leq T$, which implies that
$$  L^{\text{max},S} \leq_{\mathcal{V}}  L^{\text{max},S'}.$$ The conclusion follows.
\end{proof}

\begin{lemma}[Properties of $\Gamma$]
\label{L: properties of Gamma}
Suppose that Assumptions \ref{assumption:1A}, \ref{assum:Tarski_1}.i)-ii) and \ref{assum:Tarski_2} hold. Then
\begin{enumerate}[i)]
\item For all $L \in \mathcal{V}$, define the map 
$\underline{\mathcal{R}}(L) := \tilde{L}^L$, 
with $\tilde{L}^L_t := \essinf_{\Gamma(L)} \hat{L}_t$. 
Then $\underline{\mathcal{R}}(L) \in \Gamma(L)$.
\item The map $\underline{\mathcal{R}}$ is increasing with respect to $L$, i.e., if $L^1 \leq_{\mathcal{V}} L^2$ then $\underline{\mathcal{R}}(L^1) \leq_{\mathcal{V}} \underline{\mathcal{R}}(L^2)$.
\end{enumerate}
Assume, in addition, that $b$ and $\sigma$ are independent of $m$ and $h$ is independent of $w$. Then:
\begin{enumerate}[iii)]
\item For all $L \in \mathcal{V}$, define the map 
$\overline{\mathcal{R}}(L) := \bar{L}^L$, 
with $\bar{L}^L_t := \esssup_{\Gamma(L)} \hat{L}_t$. 
Then $\overline{\mathcal{R}}(L) \in \Gamma(L)$.
\item[iv)] The map $\overline{\mathcal{R}}$ is increasing with respect to $L$, i.e., if $L^1 \leq_{\mathcal{V}} L^2$ then $\overline{\mathcal{R}}(L^1) \leq_{\mathcal{V}} \overline{\mathcal{R}}(L^2)$.
\end{enumerate}
\end{lemma}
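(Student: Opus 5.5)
The plan is to reduce everything to Lemma \ref{gamma2} through the factorization $\Gamma=\Gamma_2\circ\Gamma_1$. Since $\Gamma_1$ is single-valued, for every $L\in\mathcal V$ we have $\Gamma(L)=\Gamma_2(\Gamma_1(L))$. Hence the essential infimum over $\Gamma(L)$ coincides with the essential infimum over $\Gamma_2(S)$ with $S=\Gamma_1(L)$, which gives
\[
\underline{\mathcal R}(L)=\underline R(\Gamma_1(L)),\qquad \overline{\mathcal R}(L)=\overline R(\Gamma_1(L)).
\]
Item i) then follows directly from Lemma \ref{gamma2} i). We will also record explicitly that $\underline{\mathcal R}(L)=L^{\min,\Gamma_1(L)}=\mathbf 1_{t<\tau_{\min}}$, as shown in that proof: $L^{\min,S}\in\Gamma_2(S)$ by the argument of Lemma \ref{L:UniqBestResponse}, and every element of $\Gamma_2(S)$ dominates it by Theorem \ref{Prop:L}. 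Item iii) follows in the same way from Lemma \ref{gamma2} iii), which under the extra independence assumptions gives $\overline{\mathcal R}(L)=L^{\max,\Gamma_1(L)}\in\Gamma(L)$. A small point to check is that the essential infimum and supremum admit càdlàg versions lying in $\mathcal V$. This is true because they coincide with the indicator processes just described.

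For ii), take $L^1\le_{\mathcal V}L^2$. Proposition \ref{P:increasing maps} gives $\Gamma_1(L^1)\le_{\mathcal S}\Gamma_1(L^2)$, and Lemma \ref{gamma2} ii) then yields $\underline R(\Gamma_1(L^1))\le_{\mathcal V}\underline R(\Gamma_1(L^2))$. Here we should be careful: the proof of Lemma \ref{gamma2} ii) does not use only the order $\le_{\mathcal S}$. It also uses Assumption \ref{assum:Tarski_2}.iii), the monotonicity of the gap $Y-\xi$, which is stated for pairs of solutions generated by ordered inputs $L\le_{\mathcal V}L'$. Our pair $\Gamma_1(L^1),\Gamma_1(L^2)$ is exactly of this form, so the inclusion of contact sets $\Xi^2(\omega)\subseteq\Xi^1(\omega)$ holds, giving $\tau^1_{\min}\le\tau^2_{\min}$ and hence the claim. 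I expect this to be the main subtlety: Lemma \ref{gamma2} is phrased for general $S\in\mathcal S$, but it must be applied only to elements of $\Gamma_1(\mathcal V)$ coming from ordered inputs.

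For iv), under the extra assumptions Proposition \ref{P:increasing maps} gives $\Gamma_1(L^1)\le_\Sigma\Gamma_1(L^2)$, in particular $dA^1\ge_{\mathbb A^2}dA^2$. Since $A^1_0=A^2_0=0$, this means $A^2_t\le A^1_t$ for all $t$, so $\{A^1_t=0\}\subseteq\{A^2_t=0\}$ and therefore $\tau^1_{\max}\le\tau^2_{\max}$ almost surely. Lemma \ref{gamma2} iv) then gives $L^{\max,\Gamma_1(L^1)}\le_{\mathcal V}L^{\max,\Gamma_1(L^2)}$, which is $\overline{\mathcal R}(L^1)\le_{\mathcal V}\overline{\mathcal R}(L^2)$. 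All inequalities hold for all $t$ simultaneously, because the processes involved are continuous or càdlàg indicators.
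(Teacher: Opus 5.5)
Your proposal is correct and follows the paper's proof. Like the paper, you reduce everything to Lemma \ref{gamma2} via $\Gamma(L)=\Gamma_2(\Gamma_1(L))$, $\underline{\mathcal R}(L)=\underline R(\Gamma_1(L))$ and $\overline{\mathcal R}(L)=\overline R(\Gamma_1(L))$, together with the monotonicity of $\Gamma_1$ from Proposition \ref{P:increasing maps}. Your two extra remarks are accurate and spell out what the paper's one-line proof leaves implicit: Lemma \ref{gamma2} ii) relies on Assumption \ref{assum:Tarski_2}.iii), which applies to solutions generated by ordered inputs, and item iv) needs the order $\le_\Sigma$ rather than just $\le_{\mathcal S}$.
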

\begin{proof}
The proof follows directly from the proof of Lemma \ref{gamma2}, by using that 
$\underline{\mathcal{R}}(L)=\underline{R}(S^L)$ (resp. $\overline{\mathcal{R}}(L)=\overline{R}(S^L)$) and that $\Gamma_2(S^L)=\Gamma(L)$, with $S^L=\Gamma_1(L)$, and from Lemma \ref{P:increasing maps}, which gives that if  $L^1 \leq_{\mathcal{V}} L^2$ then $S^{L^1} \leq_{\mathcal{S}} S^{L^2}$.
\end{proof}

\subsection{Existence of Extremal Solutions to System~\eqref{MFG SDE}-\eqref{MFRBSDE}}
In this subsection, we first establish the existence of minimal and maximal solutions to system~\eqref{MFG SDE}-\eqref{MFRBSDE}. We then investigate properties of the solution set under suitable assumptions and construct learning algorithms whose convergence is subsequently proved.

\begin{theorem}[\textit{Existence of extremal solutions and properties of the of solutions.}]\label{T:Tarski}
    Under Assumptions \ref{assumption:1A}, \ref{assum:Tarski_1}.i)-ii) and \ref{assum:Tarski_2}, the following statement holds true.
    \begin{enumerate}[i)]
        \item
        The set of solutions to system \eqref{MFG SDE}-\eqref{MFRBSDE} is non-empty. In particular, there exists a minimal solution $(\tilde X^{\text{min}},\tilde Y^{\text{min}},\tilde Z^{\text{min}},\tilde A^{\text{min}},\tilde L^{\text{min}})$. If, in addition, we suppose that $b$ and $\sigma$ are independent of $m$, and that $h$ is independent of $w$, then there also exists a maximal solution $(\tilde X^{\text{max}},\tilde Y^{\text{max}},\tilde Z^{\text{max}},\tilde A^{\text{max}},\tilde L^{\text{max}})$.
        \item Under the additional Assumption \ref{assum:Tarski_3} we have that the set of solutions to system \eqref{MFG SDE}-\eqref{MFRBSDE} is a complete lattice.
    \end{enumerate}
   \end{theorem}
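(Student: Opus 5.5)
The plan is to apply Tarski's fixed-point theorem to the single-valued selections $\underline{\mathcal R}$ and $\overline{\mathcal R}$ of the best-response correspondence $\Gamma$ on the complete lattice $(\mathcal V,\le_{\mathcal V})$.

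\emph{Existence of a minimal solution.} By Lemma \ref{L: properties of Gamma} i)--ii), $\underline{\mathcal R}\colon\mathcal V\to\mathcal V$ is well defined, takes values in $\Gamma(L)$, and is non-decreasing. Since $(\mathcal V,\le_{\mathcal V})$ is a complete lattice, Tarski's theorem shows that the fixed-point set of $\underline{\mathcal R}$ is a non-empty complete lattice. In particular, it has a least element
\[
\tilde L^{\min}=\inf\{L\in\mathcal V\colon \underline{\mathcal R}(L)\le_{\mathcal V} L\}.
\]
Any fixed point $L=\underline{\mathcal R}(L)$ lies in $\Gamma(L)$. Taking $X$ from \eqref{MFG SDE} and $(Y,Z,A)$ from \eqref{RBSDE} with input $L$, the conditions \eqref{hatL} then give a solution of \eqref{MFG SDE}--\eqref{MFRBSDE}, so the solution set is non-empty.

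To see that $\tilde L^{\min}$ is minimal among all solutions, not just among fixed points of $\underline{\mathcal R}$, take any solution $(X,Y,Z,A,L)$. Then $L\in\Gamma(L)$, and $\underline{\mathcal R}(L)$ is the essential infimum over $\Gamma(L)$, so $\underline{\mathcal R}(L)\le_{\mathcal V} L$. Hence $L$ belongs to the set in the Tarski infimum formula, which gives $\tilde L^{\min}\le_{\mathcal V} L$. This is exactly $\le_R$-minimality of the associated quintuple.

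\emph{Existence of a maximal solution.} Under the extra independence of $b,\sigma$ from $m$ and of $h$ from $w$, Lemma \ref{L: properties of Gamma} iii)--iv) makes $\overline{\mathcal R}$ a non-decreasing selection of $\Gamma$. The greatest fixed point is
\[
\tilde L^{\max}=\sup\{L\colon L\le_{\mathcal V}\overline{\mathcal R}(L)\}.
\]
Every solution $L$ satisfies $L\le_{\mathcal V}\overline{\mathcal R}(L)$, because $L\in\Gamma(L)$ and $\overline{\mathcal R}(L)$ is the essential supremum over $\Gamma(L)$. Therefore $L\le_{\mathcal V}\tilde L^{\max}$, which gives maximality.

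\emph{Part ii).} Under Assumption \ref{assum:Tarski_3}, Lemma \ref{L:UniqBestResponse} shows that $\Gamma(L)=\{\mathbf 1_{t<\tau_{\min}}\}$ is a singleton. So $\Gamma=\underline{\mathcal R}$ is a single-valued non-decreasing map, and the solution set is in order-preserving bijection with its fixed-point set via $L$. The $X,Y,Z,A$ components are uniquely determined by $L$ through Propositions \ref{P:SDE} and \ref{P:ExistUniq}. Tarski's theorem states that this fixed-point set is a complete lattice, though not necessarily a sublattice of $\mathcal V$. The order $\le_R$ is defined through $L$ alone, so the solution set inherits the complete lattice structure.

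\emph{Main obstacle.} The delicate point is making sure Tarski applies genuinely on $\mathcal V$. This requires that arbitrary infima and suprema, taken as essential infima or suprema with càdlàg modifications, stay in $\mathcal V$; the paper asserts this in the preliminaries, and I would take it as given. One must also check that $\le_{\mathcal V}$ is antisymmetric on equivalence classes, which holds because of right-continuity. A secondary point is that $\underline{\mathcal R}(L)\in\Gamma(L)$ is needed for fixed points to be solutions; this was already established in Lemma \ref{gamma2} by identifying $\underline{\mathcal R}(L)=L^{\min,S}$.
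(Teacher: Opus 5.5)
Your proposal is correct and follows essentially the paper's route. Both apply Tarski to the monotone selections $\underline{\mathcal R}$ and $\overline{\mathcal R}$ on $(\mathcal V,\le_{\mathcal V})$. Extremality among all solutions then comes from the inclusion of $\{L\colon L\in\Gamma(L)\}$ in the set of pre-fixed points of $\underline{\mathcal R}$ (resp. post-fixed points of $\overline{\mathcal R}$). Part ii) uses that $\Gamma$ is single-valued under Assumption \ref{assum:Tarski_3}, together with the order isomorphism $L\mapsto(X,Y,Z,A,L)$. Your version is slightly more direct: you observe that the least fixed point of $\underline{\mathcal R}$ is itself a solution lying below every solution, whereas the paper also checks $\essinf\{L\colon L\in\Gamma(L)\}\le_{\mathcal V}L^\star$, a step you do not need.
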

\begin{proof}
i). We know that $(\mathcal{V}, \leq_{\mathcal{V}})$ is a complete lattice (see (\ref{complete lattice cond})). By Lemma \ref{L: properties of Gamma}, for $L \in \mathcal{V}$ we have that $\underline{\mathcal{R}}(L) \in \Gamma(L)$ (resp. $\overline{\mathcal{R}}(L) \in \Gamma(L)$), $\underline{\mathcal{R}}$ (resp. $\overline{\mathcal{R}}$)  is increasing in the order $\leq_{\mathcal{V}}$.

To simplify notation, for a subset $\mathcal{A} \subseteq \mathcal{V}$, we define $\essinf_{L \in \mathcal{A}} L:= (\essinf_{L \in \mathcal{A}} L_t)_{t \in [0,T]}$, which is by convention  identified with its c\`adl\`ag representative.

By Tarski's theorem \cite{Tarski1955}, we get that the set of fixed points of $\underline{\mathcal{R}}$ (resp. $\overline{\mathcal{R}}$) is a non-empty, complete lattice. Since any such fixed point $L$ uniquely determines the 4-tuple $(X,Y,Z,A)$, we deduce that the set $R$ of solutions to system \eqref{MFG SDE}-\eqref{MFRBSDE} is non-empty.

We also get by Tarski's theorem that the set of fixed points of $\underline{\mathcal{R}}$ has an infimum which is attained, i.e. there exists $L^\star \in \{L:L= \underline{\mathcal{R}}(L)\}$ such that $L^\star=\essinf \{L:L= \underline{\mathcal{R}}(L)\}$. Moreover, the set of pre-fixed points of $\{L: \underline{\mathcal{R}}(L)\leq_{\mathcal{V}} L\}$ has an infimum. By Tarski's theorem these two coincide. 
 We have that the set of fixed points of $\underline{\mathcal{R}}$ is contained in $\mathcal{A}:=\{L \in \mathcal{V}: L \in \Gamma(L)\}$, thus $\essinf \mathcal{A}\leq_{\mathcal{V}} \essinf\{L:L=\underline{\mathcal{R}}(L)\}=L^\star$. Moreover, $\mathcal{A} \subseteq \{L: \underline{\mathcal{R}}(L)\leq_{\mathcal{V}} L\}$, thus $L^\star=\essinf\{L:\underline{\mathcal{R}}(L) \leq_{\mathcal{V}}L\}\leq_{\mathcal{V}}\essinf\mathcal{A}$. We can conclude that $\essinf\mathcal{A}=L^\star$. Hence we can conclude that $L^\star$ is the minimal fixed point and that the MKV-RFBSDE system admits a \textit{minimal} solution. Under the additional assumptions on $b,\sigma$ and $h$, the same arguments can be used to deduce the existence of a maximal solution.\\
ii) Under the uniqueness assumption on the best response, we apply the same argument as above to the map $\Gamma:\mathcal{V} \to \mathcal{V}$, and by Tarski we derive that $\mathcal{A}$ is a non-empty complete lattice.
Let pr a map which associates to each fixed point $L$ of $\Gamma$ a solution of the system \eqref{MFG SDE}-\eqref{MFRBSDE}, i.e. $\text{pr}: L \to (X,Y,Z,A,L)$. Since pr  is an order-preserving isomorphism, it preserves the lattice structure (see Lemma 2.27 and Theorem 2.31 \cite{davey2002introduction}). Hence, as the set of fixed points of $\Gamma$ is a complete lattice, it follows that the set of solutions of the MKV-RFBSDE \eqref{MFG SDE}-\eqref{MFRBSDE} is a complete lattice as well. Furthermore, $\text{pr}(\essinf L)=\inf \text{pr}(L)$ and $\text{pr}(\esssup L)=\sup \text{pr}(L)$ (see Lemma 2.27 \cite{davey2002introduction}), which implies that there exists a minimal (resp. maximal solution) to the MKV-RFBSDE \eqref{MFG SDE}-\eqref{MFRBSDE} given by $(\tilde X^{min},\tilde Y^{min},\tilde Z^{min},\tilde A^{min},\tilde L^{min})$ 
(resp. $(\tilde X^{max},\tilde Y^{max},\tilde Z^{max},\tilde A^{max},\tilde L^{max})$).
\end{proof}

\paragraph{Learning algorithms.} We define inductively two sequences of processes as follows:
\begin{enumerate}[1)]
    \item Set $\underline{L}^0=0$ and, for $n\ge1$, let $\underline{L}^n=\underline{\mathcal{R}}(\underline{L}^{n-1})$. Then, for $n\geq1$, define $(\underline{X}^n, \underline{Y}^n, \underline{Z}^n, \underline{A}^n)$ as the solution of the following system:
    \begin{equation}\label{Sequence_underline}
    \begin{cases}
    d\underline{X}^n_t=b(t,\underline{X}^n_t,\mathbb{E}[\bar{b}(t,\underline{X}^n_t)\underline{L}^{n-1}_t])dt+\sigma(t)dW_t\\
    -d\underline{Y}^n_t=f(t,\underline{X}^n_t, \mathbb{E}[\bar{f}(t,\underline{X}^n_t)\underline{L}^{n-1}_t])dt-\underline{Z}^n_tdW_t+d\underline{A}^n_t, \\
    \underline X_0^n=X_0, \quad \underline Y_T^n=h\bigg(T,\underline{X}^n_T,\mathbb{E}\bigg[\int_{0^-}^T\phi(T-s)d\underline{L}^{n-1}_s\bigg]\bigg), \\
    \underline{Y}^n_t \ge h\bigg(t,\underline{X}^n_t,\mathbb{E}\bigg[\int_{0^-}^T\phi(t-s)d\underline{L}^{n-1}_s\bigg]\bigg), \\
    \int_0^T\bigg(\underline{Y}^n_t-h\bigg(t,\underline{X}^n_t,\mathbb{E}\bigg[\int_{0^-}^T\phi(t-s)d\underline{L}^{n-1}_s\bigg]\bigg)\bigg)d\underline{A}^n_t=0.
    \end{cases}
    \end{equation}
    \item Set $\overline{L}^0=1_{[0,T)}$ and, for $n\ge1$, let $\overline{L}^n=\overline{\mathcal{R}}(\overline{L}^{n-1})$. Then, for $n\geq1$, define $(\overline{X}^n, \overline{Y}^n, \overline{Z}^n, \overline{A}^n)$ as the solution to the following system:
    \begin{equation}\label{Sequence_overline}
    \begin{cases}
    d\overline{X}^n_t=b(t,\overline{X}^n_t)dt+\sigma(t)dW_t\\
    -d\overline{Y}^n_t=f(t,\overline{X}^n_t, \mathbb{E}[\bar{f}(t,\overline{X}^n_t)\overline{L}^{n-1}_t])dt-\overline{Z}^n_tdW_t+d\overline{A}^n_t, \\
    \overline X_0^n=X_0, \quad \overline Y_T^n=h(T,\overline{X}^n_T), \\
    \overline{Y}^n_t \ge h(t,\overline{X}^n_t), \\
    \int_0^T\big(\overline{Y}^n_t-h(t,\overline{X}^n_t)\big)d\overline{A}^n_t=0.
    \end{cases}
    \end{equation}
\end{enumerate}
\begin{theorem}[\textit{Convergence of the learning algorithms}]\label{T:converging sequences}
    Under Assumptions \ref{assumption:1A}, \ref{assumption:1B}, \ref{assum:Tarski_1} and \ref{assum:Tarski_2} the following statements hold true.
    \begin{enumerate}
    \item[i)] The sequence $\{\underline{L}^n\}_n$ in \eqref{Sequence_underline} is non-decreasing, $\underline{L}^{n+1} \geq_{\mathcal{V}} \underline{L}^n$, and $\underline{L}^n$ converges to $\tilde{L}^{\text{min}}$ in $\mathbb{H}^2$.
    \item [ii)] The sequence $(\underline{X}^n, \underline{Y}^n, \underline{Z}^n, \underline{A}^n)$ is non-decreasing, it converges to $(\tilde{X}^{min}, \tilde{Y}^{min}, \tilde{Z}^{min}, \tilde{A}^{min})$ in $\mathbb{S}^2\times \mathbb{S}^2\times\mathbb{H}^2\times\mathbb{S}^2$.
    \end{enumerate}
    Assume, in addition, that $b, \sigma$ are independent of $m$ and $h$ is independent of $w$. Then
    \begin{enumerate}
        \item [iii)] The sequence $\{\overline{L}^n\}_n$ in \eqref{Sequence_overline} is non-increasing, $\overline{L}^{n+1} \leq_{\mathcal{V}} \overline{L}^n$, and $\overline{L}^n$ converges to  $\tilde{L}^{\text{max}}$ in $\mathbb{H}^2$.
    \item [iv)] The sequence $(\overline{X}^n, \overline{Y}^n, \overline{Z}^n, \overline{A}^n)$ is non-increasing, it converges to $(\tilde{X}^{max}, \tilde{Y}^{max}, \tilde{Z}^{max}, \tilde{A}^{max})$ in $\mathbb{S}^2\times \mathbb{S}^2\times\mathbb{H}^2\times\mathbb{S}^2$.
    \end{enumerate}
\end{theorem}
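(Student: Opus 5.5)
We treat items i)--ii); items iii)--iv) follow symmetrically, with the roles of $\underline{\mathcal R}$, $\essinf$ and $\leq_{\mathcal S}$ replaced by $\overline{\mathcal R}$, $\esssup$ and $\leq_\Sigma$. The argument is a Kleene-type iteration for the monotone map of Lemma \ref{L: properties of Gamma}, combined with the stability estimates of Section \ref{S:Kakutani}.

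\emph{Monotonicity.} We argue by induction. Since $\underline L^0=0$ is the bottom element of $\mathcal V$, we have $\underline L^0\leq_{\mathcal V}\underline L^1$. If $\underline L^{n-1}\leq_{\mathcal V}\underline L^n$, then Lemma \ref{L: properties of Gamma}.ii) gives $\underline L^n=\underline{\mathcal R}(\underline L^{n-1})\leq_{\mathcal V}\underline{\mathcal R}(\underline L^n)=\underline L^{n+1}$. For the state components, note that $(\underline X^n,\underline Y^n,\underline Z^n,\underline A^n)=\Gamma_1(\underline L^{n-1})$. Proposition \ref{P:increasing maps} then yields $\underline X^n\leq\underline X^{n+1}$ and $\underline Y^n\leq\underline Y^{n+1}$.

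Next we check that the iterates stay below the minimal fixed point. Let $\tilde L^{\min}$ be the minimal solution from Theorem \ref{T:Tarski}, which is the least fixed point of $\underline{\mathcal R}$. Induction again gives $\underline L^n\leq_{\mathcal V}\tilde L^{\min}$, since $\underline L^n=\underline{\mathcal R}(\underline L^{n-1})\leq\underline{\mathcal R}(\tilde L^{\min})=\tilde L^{\min}$.

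\emph{Identification of the limit.} Because $(\underline L^n_t)$ is pointwise non-decreasing and bounded by $1$, it has a monotone limit. Its right-continuous regularization $L^\infty$ belongs to $\mathcal V$ (argue as in Lemma \ref{L:Compact}) and satisfies $L^\infty\leq_{\mathcal V}\tilde L^{\min}$. By monotone/dominated convergence, $\underline L^n\to L^\infty$ strongly, hence also weakly, in $\mathbb H^2$. We want to show $L^\infty\in\Gamma(L^\infty)$. Proposition \ref{P:Gamma} gives closedness of the graph: we have $\underline L^{n}\in\Gamma(\underline L^{n-1})$ with $\underline L^{n-1}\to L^\infty$ and $\underline L^n\to L^\infty$. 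This uses Assumptions \ref{assumption:1A}--\ref{assumption:1B}. It also needs the regularity of Assumption \ref{assumption: 1C}, which enters only through Lemma \ref{L:Conv_E[XL]}; here we have \emph{strong} convergence of $\underline L^n$, so we can bypass Lemma \ref{L:Conv_E[XL]} and use estimate \eqref{EstimateLL'} directly with the Lipschitz property of $\bar b,\bar\sigma,\bar f$. Thus $L^\infty$ induces a solution of the system. By minimality of $\tilde L^{\min}$ among all solutions (Theorem \ref{T:Tarski}, where $\essinf\mathcal A=L^\star$), we get $\tilde L^{\min}\leq_{\mathcal V}L^\infty$. Hence $L^\infty=\tilde L^{\min}$ and $\underline L^n\to\tilde L^{\min}$ in $\mathbb H^2$.

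\emph{Convergence of the remaining components.} Item ii) follows from Proposition \ref{P:SDE}, estimate \eqref{EstimateLL'} and Corollary \ref{C:Gamma^1}, or directly from Proposition \ref{P:Stability}. Applying these with input $\underline L^{n-1}\to\tilde L^{\min}$ gives $\underline X^n\to\tilde X^{\min}$ in $\mathbb S^2$, and $(\underline Y^n,\underline Z^n,\underline A^n)\to(\tilde Y^{\min},\tilde Z^{\min},\tilde A^{\min})$ in $\mathbb S^2\times\mathbb H^2\times\mathbb S^2$.

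\emph{Upper iteration.} For iii)--iv), start from the top element $\overline L^0=\mathbf 1_{[0,T)}$. Use Lemma \ref{L: properties of Gamma}.iii)--iv) and $\leq_\Sigma$, so that the $A$-components are also monotone (via Theorem 4.2 of \cite{peng2005smallest}). The non-increasing limit of $\overline L^n$ is then identified with $\tilde L^{\max}$ in the same way.

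\emph{Main obstacle.} The delicate step is passing to the limit in the Skorokhod conditions, i.e.\ the closed-graph argument, under the present assumptions. Two points need care. First, the right-continuous regularization of a monotone limit of càdlàg processes may differ from the pointwise limit at countably many times. Second, the graph is closed only along weakly convergent pairs $(L^n,\hat L^n)$. We would first try to reuse the Cesàro argument of Proposition \ref{P:Gamma} verbatim; monotonicity actually makes it simpler, since $-d\underline L^n$ then converges weakly almost surely without averaging.
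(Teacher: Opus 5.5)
Your proposal is correct and follows the paper's proof in substance: monotone iteration of $\underline{\mathcal R}$ from the bottom element, strong $\mathbb H^2$ convergence of the monotone limit, stability estimates (Propositions \ref{P:SDE} and \ref{P:Stability}, using strong convergence so that Assumption \ref{assumption: 1C} is not needed), passage to the limit in the two Skorokhod conditions as in Proposition \ref{P:Gamma}, and identification with the minimal solution. The differences are minor. You squeeze the limit between the iterates and the already-constructed $\tilde L^{\min}$, whereas the paper, in its Step 4, compares the iterates with an arbitrary solution. You also observe that monotonicity of $\{\underline L^n\}_n$ gives almost sure weak convergence of $-d\underline L^n$ without Cesàro averaging, which is a valid simplification.
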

\begin{proof}
We only provide the proof of items i)-ii), as iii)-iv) follows by the same arguments. We split the proof of item i) into four steps.\\
\textsc{Step 1}. In this step, we show the monotonicity of the sequence $\{\underline{L}^n\}_n$ by induction. Since $\underline{L}^0=0$, we have, using the monotonicity of $\underline{\mathcal{R}}$,
\[
0=\underline{L}^0\leq_{\mathcal V}\underline{\mathcal{R}}(\underline{L}^0)=\underline{L}^1
\]
Now, suppose that $\underline{L}^{n-1}\leq_{\mathcal{V}}\underline{L}^{n}$. Applying the map $\overline{\mathcal{R}}$ to both sides, we get
\[
\underline{L}^{n}=\underline{\mathcal{R}}(\underline{L}^{n-1})\leq_{\mathcal{V}}\underline{\mathcal{R}}(\underline{L}^{n}) =\underline{L}^{n+1}.
\]
This shows that the sequence $\{\underline{L}^n\}_n$ is a non-decreasing sequence, bounded from above by $\mathbf 1_{[0,T)}$. We then define the process
\begin{equation}
\label{eq:limit L}
\tilde{L}=\sup_n \underline{L}^n = \lim_n\underline{L}^n.
\end{equation}
By Lebesgue's dominated convergence theorem we have that $\underline{L}^n$ converges strongly in $\mathbb H^2(\mathbb R)$ to $\tilde{L} \in \mathcal{V}$.

\vspace{2mm}

\noindent\textsc{Step 2}.
In this step we prove that $(\underline{X}^n,\underline{Y}^n, \underline{Z}^n, \underline{A}^n)$ converges to the unique solution of the following system:
\[
\begin{cases}
    d\tilde{X}_t=b(t,\tilde{X}_t,\mathbb{E}[\bar{b}(t,\tilde{X}_t)\tilde{L}_t])dt+\sigma(t)dW_t\\
    -d\tilde{Y}_t=f(t,\tilde{X}_t,\mathbb{E}[\bar{f}(t,\tilde{X}_t)\tilde{L}_t])dt-\tilde{Z}_tdW_t+d\tilde{A}_t, \\
    \tilde X_0=X_0, \quad \tilde Y_T=h\bigg(T,\tilde{X}_T,\mathbb{E}\bigg[\int_{0^-}^T\phi(T-s)d\tilde{L}_s\bigg]\bigg), \\
    \tilde{Y}_t \ge h\bigg(t,\tilde{X}_t,\mathbb{E}\bigg[\int_{0^-}^T\phi(t-s)d\tilde{L}_s\bigg]\bigg), \\
    \int_0^T\bigg(\tilde{Y}_t-h\bigg(t,\tilde{X}_t,\mathbb{E}\bigg[\int_{0^-}^T\phi(t-s)d\tilde{L}_s\bigg]\bigg)\bigg)d\tilde{A}_t=0,
\end{cases}
\]
with $\tilde{L}\in\Gamma_2\big((\tilde{X},\tilde{Y}, \tilde{Z}, \tilde{A})\big)$, i.e. $\tilde{L}\in\Gamma(\tilde{L})$.\\
We begin by showing the convergence of the sequence $\{\underline{X}^n\}_n$. By Proposition \ref{P:SDE}, we have, for some non-negative constant $C_2$,
\[
\mathbb{E}\bigg[\sup_{0 \le t \le T}|\tilde{X}_t-\underline{X}^n_t|^2\bigg] \le C_2\int_0^T\bigg|\mathbb{E}[\bar{b}(t,\tilde{X}_t)\tilde{L}_t]-\mathbb{E}[\bar{b}(t,\tilde{X}_t)\underline{L}^{n-1}_t]\bigg|^2dt.
\]
By \eqref{eq:limit L}, Assumption \ref{assumption:1A}, Proposition \ref{P:SDE}, and Lebesgue's dominated convergence theorem, we get
\[
\begin{split}
\int_0^T\bigg|\mathbb{E}[\bar{b}(t,\tilde{X}_t)\tilde{L}_t]-\mathbb{E}[\bar{b}(t,\tilde{X}_t)&\underline{L}^{n-1}_t]\bigg|^2dt \le \int_0^T\mathbb{E}[|\bar{b}(t,\tilde{X}_t)|^2]\mathbb{E}[|\tilde{L}_t-\underline{L}^{n-1}_t|^2]dt \underset{n\rightarrow\infty}{\longrightarrow} 0.
\end{split}
\]
Hence, we conclude that
\begin{equation}
\label{eq:limit X tarski}
\lim_{n}\mathbb{E}\bigg[\sup_{0 \le t \le T}|\tilde{X}_t-\underline{X}^n_t|^2\bigg]=0.
\end{equation}
Let us now investigate the convergence of $\{(\underline Y^n,\underline Z^n,\underline A^n)\}_n$. By Proposition \ref{P:Stability}, we have the following stability result for our system
\[
\begin{split}
    &\mathbb{E}\bigg[\sup_{0 \le t \le T}|\underline{Y}^n_t-\tilde{Y}_t|^2+\int_0^T|\underline{Z}^n_t-\tilde{Z}_t|^2dt+\sup_{0 \le t \le T}|\underline{A}^n_t-\tilde{A}_t|^2\bigg]\\
    & \le c\mathbb{E}\bigg[\int_0^T|f(t,\underline{X}^n_t,\mathbb{E}[\bar{f}(t,\underline{X}^n_t)\underline{L}^{n-1}_t])-f(t,\tilde{X}_t,\mathbb{E}[\bar{f}(t,\underline{X}^n_t)\underline{L}^{n-1}_t])|^2dt\bigg]\\
    &\,\,\,\,\,\,\,\,\,+c\int_0^T|\mathbb{E}[\bar{f}(t,\underline{X}^n_t)\underline{L}^{n-1}_t]-\mathbb{E}[\bar{f}(t,\tilde{X}_t)\tilde{L}_t]|^2dt\\
    &\,\,\,\,\,\,\,\,\,+c\,\mathbb{E}\bigg[\bigg|h\bigg(T,\underline{X}^n_T,\mathbb{E}\bigg[\int_{0^-}^T\phi(T-s)d\underline{L}^{n-1}_s\bigg]\bigg)-h\bigg(T, \tilde{X}_T,\mathbb{E}\bigg[\int_{0^-}^T\phi(T-s)d\underline{L}^{n-1}_s\bigg]\bigg)\bigg|^2\bigg]\\
    &\,\,\,\,\,\,\,\,\,+c\sqrt{\mathbb{E}\bigg[\sup_{0 \le t \le T}\bigg|h\bigg(t,\underline{X}^n_t,\mathbb{E}\bigg[\int_{0^-}^T\phi(t-s)d\underline{L}^{n-1}_s\bigg]\bigg)-h\bigg(t, \tilde{X}_t,\mathbb{E}\bigg[\int_{0^-}^T\phi(t-s)d\underline{L}^{n-1}_s\bigg]\bigg)\bigg|^2\bigg]}\\
    &\,\,\,\,\,\,\,\,\,+c\bigg|\mathbb{E}\bigg[\int_{0^-}^T\phi(T-s)d\underline{L}^{n-1}_s\bigg]-\mathbb{E}\bigg[\int_{0^-}^T\phi(T-s)d\tilde{L}_s\bigg]\bigg|^2 \\
    &\,\,\,\,\,\,\,\,\,+c\sup_{0 \le t \le T}\bigg|\mathbb{E}\bigg[\int_{0^-}^T\phi(t-s)d\underline{L}^{n-1}_s\bigg]-\mathbb{E}\bigg[\int_{0^-}^T\phi(t-s)d\tilde{L}_s\bigg]\bigg|.
\end{split}
\]
We now show that the terms above converge to zero as $n\rightarrow\infty$. We begin with the first term. By the Lipschitz continuity of $f$, we get
\[
\begin{split}
    \mathbb{E}\bigg[\int_0^T|f(t,\underline{X}^n_t,\mathbb{E}[\bar{f}(t,\underline{X}^n_t)\underline{L}^{n-1}_t])-f(t,\tilde{X}_t,\mathbb{E}[\bar{f}(t,\underline{X}^n_t)\underline{L}^{n-1}_t])|^2dt\bigg]\le K^2\mathbb{E}\bigg[\sup_{0 \le t \le T}|\underline{X}^n_t-\tilde{X}_t|^2\bigg].
\end{split}
\]
Then, by limit (\ref{eq:limit X tarski}) we see that such a term converges to zero as $n\rightarrow\infty$.\\
Regarding the second term, we have
\[
\begin{split}
    &\int_0^T|\mathbb{E}[\bar{f}(t,\underline{X}^n_t)\underline{L}^{n-1}_t]-\mathbb{E}[\bar{f}(t,\tilde{X}_t)\tilde{L}_t]|^2dt \\
    &\le2 \int_0^T\bigg(|\mathbb{E}[\bar{f}(t,\underline{X}^n_t)\underline{L}^{n-1}_t]-\mathbb{E}[\bar{f}(t,\tilde{X}_t)\underline{L}^{n-1}_t]|^2+|\mathbb{E}[\bar{f}(t,\tilde{X}_t)\underline{L}^{n-1}_t]-\mathbb{E}[\bar{f}(t,\tilde{X}_t)\tilde{L}_t]|^2\bigg)dt\\
    &\leq2K^2\int_0^T\bigg(|\mathbb{E}[|\underline{X}^n_t-\tilde{X}_t|\underline{L}^{n-1}_t]|^2+(\mathbb{E}[|\bar{f}(t,\tilde{X}_t)||\underline{L}^{n-1}_t-\tilde{L}_t|])^2\bigg)dt,
\end{split}
\]
where the last inequality follows from the Lipschitz property of $\bar f$. Then, by \eqref{eq:limit L} and \eqref{eq:limit X tarski} we see that the second term converges to zero as $n\rightarrow\infty$.\\
Concerning the third and fourth terms, they can be treated analogously. For this reason, we only report the proof of the convergence for the fourth term. Using the continuity of $h$, by Lebesgue's dominated convergence theorem, we get
\[
\begin{split}
\sqrt{\mathbb{E}\bigg[\sup_{0 \le t \le T}\bigg|h\bigg(t,\underline{X}^n_t,\mathbb{E}\bigg[\int_{0^-}^T\phi(t-s)d\underline{L}^{n-1}_s\bigg]\bigg)-h\bigg(t, \tilde{X}_t,\mathbb{E}\bigg[\int_{0^-}^T\phi(t-s)d\underline{L}^{n-1}_s\bigg]\bigg)\bigg|^2\bigg]} \to 0
\end{split}
\]
then the claim follows from (\ref{eq:limit X tarski}). Finally, the last two terms above vanish thanks to Lemma \ref{L:UnifConv_phi}.\\

\noindent\textsc{Step 3} It remains to prove that $\tilde{L}\in\Gamma_2\big((\tilde{X},\tilde{Y}, \tilde{Z}, \tilde{A})\big)=\Gamma(\tilde{L})$, namely that the following conditions hold:
\[
\int_{0^-}^T \bigg(\tilde Y_t-h\bigg(t,\tilde{X}_t,\mathbb{E}\bigg[\int_{0^-}^T\phi(t-s)d\tilde{L}_s\bigg]\bigg)\bigg)d\tilde L_t=0 \qquad \text{ and } \qquad \int_{0^-}^T \tilde A_td\tilde L_t=0.
\]
By construction of the sequence $\underline{L}^n=\Gamma_2\big((\underline{X}^n,\underline{Y}^n, \underline{Z}^n, \underline{A}^n,\underline L^{n-1})\big)$, for $n\geq1$, namely 
\[
\int_{0^-}^T\bigg(\underline{Y}^n_t-h\bigg(t,\underline{X}^n_t,\mathbb{E}\bigg[\int_{0^-}^T\phi(t-s)d\underline{L}^{n-1}_s\bigg]\bigg)\bigg)d\underline{L}^n_t=0 \qquad \text{ and } \qquad \int_{0^-}^T\underline{A}^n_td\overline{L}^n_t=0.
\]
We have already proved in \textsc{Step 1} that $\underline{L}^n$ converges strongly to $\tilde{L}$. Since we are in a Hilbert space, this implies that $\underline{L}^n$ converges weakly to $\tilde{L}$. Moreover, by \textsc{Step 2} we have that $\underline{X}^n \to \tilde{X}$ in $\mathbb{S}^2$ and $(\underline{Y}^n, \underline{Z}^n, \underline{A}^n) \to (\tilde{Y}, \tilde{Z}, \tilde{A})$ in $\mathbb{S}^2\times\mathbb{H}^2\times\mathbb{S}^2$. Now, we have
\[
\begin{split}
&\bigg|\int_{0^-}^T\!\!\!\bigg(\underline{Y}^n_t-h\bigg(t,\underline{X}^n_t,\mathbb{E}\bigg[\int_{0^-}^T\!\phi(t-s)d\underline{L}^{n-1}_s\bigg]\bigg)\!\bigg)d\underline{L}^{n}_t-\int_{0^-}^T\!\!\!\bigg(\tilde{Y}_t-h\bigg(\!t,\tilde{X}_t,\mathbb{E}\bigg[\int_{0^-}^T\!\phi(t-s)d\tilde{L}_s\bigg]\bigg)\!\bigg)d\tilde{L}_t\bigg|\\
&=\bigg|\int_{0^-}^T\!\!\!(\underline{Y}^n_t-\tilde{Y}_t)d\underline{L}^n_t+\int_{0^-}^T\!\!\!\bigg(h\bigg(\!t,\tilde{X}_t,\mathbb{E}\bigg[\int_{0^-}^T\!\phi(t-s)d\tilde{L}_s\bigg]\bigg)-h\bigg(\!t,\underline{X}^n_t,\mathbb{E}\bigg[\int_{0^-}^T\!\phi(t-s)d\underline{L}^{n-1}_s\bigg]\bigg)\!\bigg)d\underline{L}^n_t\\
&\quad+\int_{0^-}^T\bigg(\tilde{Y}_t-h\bigg(t,\tilde{X}_t,\mathbb{E}\bigg[\int_{0^-}^T\phi(t-s)d\tilde{L}_s\bigg]\bigg)\bigg)d(\underline{L}^n_t-\tilde{L}_t)\bigg|\\
&\le\int_{0^-}^T|\underline{Y}^n_t-\tilde{Y}_t|d|\underline{L}^n|_t+K\int_{0^-}^T|\underline{X}^n_t-\tilde{X}_t|d|\underline{L}^n|_t \\
&\quad+K\int_{0^-}^T\bigg|\mathbb{E}\bigg[\int_{0^-}^T\phi(t-s)d\tilde{L}_s\bigg]-\mathbb{E}\bigg[\int_{0^-}^T\phi(t-s)d\underline{L}^{n-1}_s\bigg]\bigg|d|\underline{L}^n|_t\\
&\quad+\bigg|\int_{0^-}^T\bigg(\tilde{Y}_t-h\bigg(t,\tilde{X}_t,\mathbb{E}\bigg[\int_{0^-}^T\phi(t-s)d\tilde{L}_s\bigg]\bigg)\bigg)d(\tilde{L}_t-\underline{L}^n_t)\bigg|\\
&\le \sup_{0 \le t \le T}|\underline{Y}^n_t-\tilde{Y}_t|+K\sup_{0 \le t \le T}|\underline{X}^n_t-\tilde{X}_t| \\
&\quad+K\sup_{0 \le t \le T}\bigg|\mathbb{E}\bigg[\int_{0^-}^T\phi(t-s)d\tilde{L}_s\bigg]-\mathbb{E}\bigg[\int_{0^-}^T\phi(t-s)d\underline{L}^{n-1}_s\bigg]\bigg|\\
&\quad+\bigg|\int_{0^-}^T\bigg(\tilde{Y}_t-h\bigg(t,\tilde{X}_t,\mathbb{E}\bigg[\int_{0^-}^T\phi(t-s)d\tilde{L}_s\bigg]\bigg)\bigg)d(\underline{L}^n_t-\tilde{L}_t)\bigg|.
\end{split}
\]
Recalling that $\underline{Y}^n \to \tilde{Y}$ in $\mathbb{S}^2$ (resp. $\underline{X}^n \to \tilde{X}$ in $\mathbb{S}^2$), there exists a subsequence such that $\sup_t|\underline{Y}^n_t-\tilde{Y}_t|$ (resp. $\sup_t|\underline{X}^n_t-\tilde{X}_t|$) converges almost surely to zero. The third term above vanishes thanks to Lemma \ref{L:UnifConv_phi}. For the last term, it is sufficient to replicate the same technique used in the proof of Proposition \ref{P:Gamma}. In conclusion, we find that
\[
\int_{0^-}^T\bigg(\underline{Y}^n_t-h\bigg(t,\underline{X}^n_t,\mathbb{E}\bigg[\int_{0^-}^T\phi(t-s)d\underline{L}^{n-1}_s\bigg]\bigg)\bigg)d\underline{L}^n_t \to \int_{0^-}^T\bigg(\tilde{Y}_t-h\bigg(t,\tilde{X}_t,\mathbb{E}\bigg[\int_{0^-}^T\phi(t-s)d\tilde{L}_s\bigg]\bigg)\bigg)d\tilde{L}_t.
\]
Then, by uniqueness of the limit,
\[
\int_{0^-}^T\bigg(\tilde{Y}_t-h\bigg(t,\tilde{X}_t,\mathbb{E}\bigg[\int_{0^-}^T\phi(t-s)d\tilde{L}_s\bigg]\bigg)\bigg)d\tilde{L}_t=0.
\]
Similarly for $\int_{0^-}^T\underline{A}^n_td\underline{L}^n_t \to \int_{0^-}^T\tilde{A}_td\tilde{L}_t$, concluding that $\tilde{L}$ satisfies the two claimed constraints, i.e. $\tilde{L}\in\Gamma(\tilde{L})$.
\medskip
\\
\textsc{Step 4}. From \textsc{Step 2} and \textsc{Step 3},  we know that $(\tilde{X},\tilde{Y}, \tilde{Z}, \tilde{A}, \tilde{L})$ is a solution of system \eqref{MFG SDE}-\eqref{MFRBSDE}. We now prove that $(\tilde{X},\tilde{Y}, \tilde{Z}, \tilde{A}, \tilde{L})$ is the minimal solution of system \eqref{MFG SDE}-\eqref{MFRBSDE} (Definition \ref{def:max_min_solution}). Let $(X,Y,Z,A,L)$ be another solution, then we have $L\in \Gamma(L)$. By definition of $\underline{L}^0$, we have $ \underline{L}^0\leq_{\mathcal{V}} L$. By monotonicity of $\overline{\mathcal{R}}$, we obtain $\underline{L}^1 = \underline{\mathcal{R}}(\underline{L}^0)\leq_{\mathcal{V}}\underline{\mathcal{R}}(L)\leq_{\mathcal V} L$. Therefore, iterating the map $\underline{\mathcal{R}}$, we have $\underline{L}^n \leq_{\mathcal{V}}L$, for any $n \in \mathbb{N}$. Taking the limit as in (\ref{eq:limit L}), we conclude that $\tilde L \leq_{\mathcal{V}} L$. Moreover, since $\tilde L \leq_{\mathcal{V}} L$, by monotonicity of $\Gamma_1$ (Proposition \ref{P:increasing maps}), we have $\tilde X \leq_{\mathbb{S}^2} X$ and $\tilde Y \leq_{\mathbb{S}^2} Y$.\\
Thus, $(\tilde{X},\tilde{Y}, \tilde{Z}, \tilde{A}, \tilde{L})$ is the minimal solution of system \eqref{MFG SDE}-\eqref{MFRBSDE}. By uniqueness of the minimal solution (Definition \ref{def:max_min_solution}), we conclude that $\tilde{L}=\tilde{L}^{min}$.
\end{proof}

\section{Connection with mean field games of optimal stopping in mixed strategies}
\label{S:Connection}
In the present Section, we investigate the relation between the MKV-RFBSDE system \eqref{MFG SDE}-\eqref{MFRBSDE} and a mean field game of optimal stopping in mixed strategies, which is articulated in the following way.

For $L, \hat L \in \mathcal V$, let $\bar{\Lambda}\colon\mathcal V^2\rightarrow \mathbb R$ be given by (recall that $g(x,w)=h(T,x,w)$):
\begin{align}\label{Reward}
\bar{\Lambda}(L,\hat L)&:=\mathbb{E}\biggl[\int_0^T f(s,X_s, \mathbb{E}[\bar{f}(s,X_s)L_s])\hat L_s ds \\
&\quad-\int_{0^-}^T h\bigg(s, X_s,\mathbb{E}\bigg[\int_{0^-}^T\phi(s-r)dL_r\bigg]\bigg)d\hat L_s\biggr], \notag
\end{align}
with $X$ solving equation \eqref{MFG SDE} (with input process $L$).

\begin{definition}[\textit{OS-MFG equilibria in randomized strategies}]\label{defn:OSequilibrium}
The process $\hat L\in\mathcal V$ is a mean field optimal stopping equilibrium if it holds that
\[
\bar{\Lambda}(\hat L,\hat L') \le \bar{\Lambda}(\hat L,\hat L), \qquad \text{for all }\hat L'\in\mathcal V,
\]
where $\bar{\Lambda}$ is defined in \eqref{Reward}.
\end{definition}
\begin{theorem}[\textit{Equivalence between solutions to system \eqref{MFG SDE}-\eqref{MFRBSDE} and OS-MFG equilibria in randomized strategies}]\label{T:BSDE_MFG}

Suppose Assumptions \ref{assumption:1A}, \ref{assumption:1B}, \ref{assumption: 1C} (resp. \ref{assumption:1A}, \ref{assum:Tarski_1}.i)-ii) and \ref{assum:Tarski_2}) hold. Let $(X,Y,Z,A,L)\in\mathbb{S}^2\times\mathbb S^2\times\mathbb H^2\times\mathbb A^2\times\mathcal V$ be a solution of \eqref{MFG SDE}-\eqref{MFRBSDE}. Then $L$ is a mean field optimal stopping equilibrium (Definition \ref{defn:OSequilibrium}).\\
Conversely, let $\hat{L}\in\mathcal V$ be a mean field optimal stopping equilibrium. Then, $(\hat X,\hat Y,\hat Z,\hat A,\hat{L})$ with $\hat X\in\mathbb S^2$ solution to equation \eqref{MFG SDE} (with input process $\hat L$) and $(\hat Y,\hat Z,\hat A)\in\mathbb S^2\times\mathbb H^2\times\mathbb A^2$ solution to equation \eqref{RBSDE} (with input process $\hat L$) is a solution of system \eqref{MFG SDE}-\eqref{MFRBSDE}, namely the following two constraints are satisfied:
\[
\int_{0^-}^T \bigg(\hat Y_t-h\bigg(t,\hat X_t,\mathbb{E}\bigg[\int_{0^-}^T\phi(t-s)d\hat L_s\bigg]\bigg)\bigg)d\hat L_t=0 \qquad \text{ and } \qquad \int_{0^-}^T \hat A_td\hat L_t=0.
\]
\end{theorem}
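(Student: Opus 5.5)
The plan is to reduce both directions to the optimality characterization at time $t=0$, namely Theorem \ref{equalvalues} and Theorem \ref{equiv} with $t=0$. The key observation is that $\mathcal V_0=\mathcal V$, since every $\hat L\in\mathcal V$ satisfies $\hat L_{0^-}=1$. Moreover, $\mathcal F_0$ is generated by $X_0$ (augmented), so $\Lambda_0(L,\hat L')$ is an $\mathcal F_0$-measurable random variable with $\mathbb E[\Lambda_0(L,\hat L')]=\bar\Lambda(L,\hat L')$. Both sets of hypotheses (Assumptions \ref{assumption:1A}, \ref{assumption:1B}, \ref{assumption: 1C}, or Assumptions \ref{assumption:1A}, \ref{assum:Tarski_1}.i)-ii), \ref{assum:Tarski_2}) guarantee well-posedness of \eqref{MFG SDE} and \eqref{RBSDE} for each input $L$. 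They also give the representation \eqref{Y_t=esssup_tau}, which is all that Theorems \ref{equalvalues} and \ref{equiv} use, so the same argument applies in both settings.

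\emph{Direct implication.} Let $(X,Y,Z,A,L)$ solve \eqref{MFG SDE}-\eqref{MFRBSDE}. The two Skorokhod conditions are exactly \eqref{Skorot} with $t=0$, because $A^0=A$ when $A_0=0$. By Theorem \ref{equiv} with $\hat L=L$, the process $L$ is optimal for \eqref{rand} at $t=0$, that is, $Y_0=\Lambda_0(L,L)$ a.s. For any $\hat L'\in\mathcal V=\mathcal V_0$, Theorem \ref{equalvalues} gives $\Lambda_0(L,\hat L')\le Y_0$ a.s. Taking expectations yields $\bar\Lambda(L,\hat L')\le \mathbb E[Y_0]=\bar\Lambda(L,L)$, which is Definition \ref{defn:OSequilibrium}.

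\emph{Converse.} Let $\hat L$ be an equilibrium and $(\hat X,\hat Y,\hat Z,\hat A)$ the associated solutions. By Lemma \ref{L:Non-Empty}, whose proof uses only well-posedness, there is $\hat L^\star=\mathbf 1_{t<\tau_{\min}}\in\mathcal V$ satisfying the two Skorokhod conditions for the input $\hat L$. By Theorem \ref{equiv}, $\hat L^\star$ is optimal, so $\Lambda_0(\hat L,\hat L^\star)=\hat Y_0$ and $\bar\Lambda(\hat L,\hat L^\star)=\mathbb E[\hat Y_0]$. The equilibrium property then gives $\mathbb E[\hat Y_0]\le\bar\Lambda(\hat L,\hat L)$. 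The identity \eqref{E[Y_t]} at $t=0$ (with $L$ replaced by $\hat L$ and the test process also $\hat L$) gives
\[
\mathbb E[\hat Y_0]=\bar\Lambda(\hat L,\hat L)+\mathbb E\bigg[\int_0^T\hat L_{s^-}d\hat A_s-\int_{0^-}^T(\hat Y_s-\hat\xi_s)d\hat L_s\bigg].
\]
Both terms inside the last expectation are non-negative, so $\mathbb E[\hat Y_0]\ge\bar\Lambda(\hat L,\hat L)$. Hence equality holds and both non-negative random variables vanish a.s. The first Skorokhod condition follows directly. For the second, integration by parts on $\hat A\hat L$, using $\hat L_T=0$ and $\hat A_0=0$, gives $\int_{0^-}^T\hat A_td\hat L_t=-\int_0^T\hat L_{t^-}d\hat A_t=0$, exactly as in the proof of Theorem \ref{equiv}.

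The main obstacle is one point of care: Theorem \ref{equiv} gives pathwise optimality ($Y_0=\Lambda_0$ a.s.), while the equilibrium notion is in expectation. The plan sidesteps this by working throughout with the expectation identity \eqref{E[Y_t]} together with the sign of its remainder terms, rather than needing $\mathcal F_0$-conditional optimality. I would also check that the martingale term vanishes in expectation under the Tarski-type assumptions. This needs $\hat Z\in\mathbb H^2$ and $\hat L$ bounded, both of which hold.
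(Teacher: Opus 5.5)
Your proposal is correct and follows the paper's proof. Both directions are reduced to Theorems \ref{equalvalues} and \ref{equiv} at $t=0$. In the converse, the pure strategy $\mathbf 1_{t<\tau_{\min}}$ gives $\mathbb E[\hat Y_0]\le\bar\Lambda(\hat L,\hat L)$, and equality in \eqref{E[Y_t]} then forces the two Skorokhod conditions. The only small deviation is that you obtain $\mathbb E[\hat Y_0]\ge\bar\Lambda(\hat L,\hat L)$ from the non-negative remainder in \eqref{E[Y_t]}, whereas the paper uses the essential-supremum representation of Theorem \ref{equalvalues}; your route is slightly more direct but equivalent.
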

\begin{proof}

\vspace{2mm}

\noindent Let $(X,Y,Z,A,L)\in\mathbb{S}^2\times\mathbb S^2\times\mathbb H^2\times\mathbb A^2\times\mathcal V$ be a solution of \eqref{MFG SDE}-\eqref{MFRBSDE}. To alleviate notation, set $\xi_t:=h(t, X_t,\mathbb{E}[\int_{0^-}^T\phi(t-s)dL_s])$, for $t\in[0,T]$. Since $L$ satisfies the Skorokhod conditions $\int_{0^-}^T(Y_t-\xi_t)dL_t=0$ and $\int_{0^-}^T A_t dL_t=0$, by Theorem \ref{equiv},  we deduce that
\[
\mathbb E[Y_0] = \bar{\Lambda}(L,L) = \sup_{\tilde L\in\mathcal V}\bar{\Lambda}(L,\tilde L).
\]
\vspace{2mm}
Conversely, let $\hat{L} \in \mathcal{V}$ be a mean field optimal stopping equilibrium, and let $(\hat X,\hat Y,\hat Z,\hat A)$ solve \eqref{MFG SDE} and \eqref{RBSDE} (with input process $\hat L$). Set $\hat\xi_t:=h(t, \hat X_t,\mathbb{E}[\int_{0^-}^T\phi(t-s)d\hat L_s])$, for $t\in[0,T]$. By Theorem \ref{equalvalues} at $t=0$, we have
\[
\hat Y_0\geq\mathbb{E}\bigg[\int_0^T f(t,\hat X_t, \mathbb{E}[\bar{f}(t,\hat X_t)\hat L_t])\tilde L_tds-\int_{0^-}^T\hat \xi_t d\tilde L_t\bigg|\mathcal{F}_0\bigg], \qquad \mathbb P\text{-a.s., for all }\tilde L\in\mathcal V.
\]
Taking the expectation, we find
\[
\mathbb E[\hat Y_0] \geq \bar{\Lambda}(\hat L,\tilde L), \qquad \forall\,\tilde L\in\mathcal V.
\]
From the arbitrariness of $\tilde L$, this shows that $\mathbb E[\hat Y_0]\geq\sup_{\tilde L\in\mathcal V}\bar{\Lambda}(\hat L,\tilde L)$. On the other hand, recalling that \(\hat{L}\) is a mean field optimal stopping equilibrium, we know that $\sup_{\tilde L\in\mathcal V}\bar\Lambda(\hat L,\tilde L)=\bar\Lambda(\hat L,\hat L)$. Consequently, we deduce that $\mathbb E[\hat Y_0]\geq\bar{\Lambda}(\hat L,\hat L)$. To establish the reverse inequality, let $L^\star_t=\mathbf 1_{t < \tau^\star}$, where $\tau^\star=\inf\{t:\hat Y_t=\hat \xi_t\}$. By Theorem \ref{equiv}, $\mathbb E[\hat Y_0]=\bar \Lambda(\hat L, L^\star)$ and since $\hat L$ is an equilibrium, $\mathbb E[\hat Y_0]=\bar \Lambda(\hat L, L^\star) \leq \bar \Lambda(\hat L, \hat L)$. Combining these two inequalities, we get $\mathbb E[\hat Y_0]=\bar \Lambda(\hat L, \hat L)$. Then, by the same arguments as in Theorem \ref{equiv} at time $t=0$,  it follows that $\int_{0^-}^T (\hat{Y}_t-\hat{\xi}_t)\,d\hat{L}_t = 0$ and $\int_{0^-}^T \hat{A}_t\,d\hat{L}_t = 0$. Therefore, $(\hat X,\hat Y,\hat Z,\hat A, \hat{L})$  is a solution of \eqref{MFG SDE}-\eqref{MFRBSDE} in $\mathbb{S}^2\times\mathbb S^2\times\mathbb H^2\times\mathbb A^2\times\mathcal V$.
\end{proof}
\paragraph{Existence and properties of OS-MFG equilibria in randomized strategies.}
Using the results established above for solutions to system~\eqref{MFG SDE}-\eqref{MFRBSDE}, together with the equivalence result in Theorem \ref{T:BSDE_MFG}, we derive below the following results on OS-MFG equilibria
in randomized strategies.
\begin{proposition}[\textit{Existence of OS-MFG equilibria in randomized strategies}]
    Under Assumptions \ref{assumption:1A}, \ref{assumption:1B} and \ref{assumption: 1C}, the set of mean field optimal stopping equilibria is non-empty.
\end{proposition}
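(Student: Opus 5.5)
The statement follows by chaining two earlier results. First, I invoke Theorem~\ref{thm:main}. Under Assumptions~\ref{assumption:1A}, \ref{assumption:1B} and \ref{assumption: 1C}, it provides a quintuple $(X,Y,Z,A,L)\in\mathbb S^2\times\mathbb S^2\times\mathbb H^2\times\mathbb A^2\times\mathcal V$ solving the system \eqref{MFG SDE}-\eqref{MFRBSDE}. That existence came from the Kakutani--Fan--Glicksberg theorem applied to $\Gamma$, and it rested on Lemmas~\ref{L:Compact} and \ref{L:Non-Empty} and Proposition~\ref{P:Gamma}.

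Second, I apply the direct implication of Theorem~\ref{T:BSDE_MFG}, which holds under exactly the same assumptions. It says that the $L$-component of any solution of \eqref{MFG SDE}-\eqref{MFRBSDE} is a mean field optimal stopping equilibrium in the sense of Definition~\ref{defn:OSequilibrium}. Concretely, the two Skorokhod conditions combined with Theorem~\ref{equiv} at $t=0$ give
\[
\mathbb E[Y_0]=\bar\Lambda(L,L)=\sup_{\tilde L\in\mathcal V}\bar\Lambda(L,\tilde L),
\]
so $\bar\Lambda(L,\hat L')\le\bar\Lambda(L,L)$ for every $\hat L'\in\mathcal V$. Hence $L$ belongs to the set of equilibria, and that set is non-empty.

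There is no real obstacle, since the argument is a direct chaining of these results. The only point I would check is that the equilibrium payoff $\bar\Lambda$ in \eqref{Reward} is built with the same forward process $X$ that solves \eqref{MFG SDE} with input $L$. It is, because the fixed point in Theorem~\ref{thm:main} is taken on $L$, and $X$ is then determined uniquely by Proposition~\ref{P:SDE}. So the quintuple produced by Theorem~\ref{thm:main} is exactly the object to which Theorem~\ref{T:BSDE_MFG} applies.
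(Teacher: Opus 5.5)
Your proposal is correct and matches the paper's proof: it obtains a solution of \eqref{MFG SDE}--\eqref{MFRBSDE} from Theorem~\ref{thm:main} and then applies Theorem~\ref{T:BSDE_MFG} to conclude that its $L$-component is a mean field optimal stopping equilibrium. The paper cites the full one-to-one correspondence, but, as you note, only the direct implication (solution $\Rightarrow$ equilibrium) is needed.
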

\begin{proof}
    By Theorem \ref{thm:main}, we know that system \eqref{MFG SDE}-\eqref{MFRBSDE} has a solution $(X,Y,Z,A,L) \in \mathbb{S}^2\times\mathbb{S}^2\times\mathbb{H}^2\times\mathbb{A}^2\times\mathcal{V}$. Moreover, by Theorem \ref{T:BSDE_MFG} we have a one-to-one relation between the solutions of system \eqref{MFG SDE}-\eqref{MFRBSDE} and mean field optimal stopping equilibria. Thus, the set of mean field optimal stopping equilibria is non-empty.
\end{proof}
\begin{remark}
\label{remark: relation MKRBSDE - MFG equilibria}
    By Theorem \ref{T:BSDE_MFG} we show that there is a one-to-one relation between the solution to system \eqref{MFG SDE}-\eqref{MFRBSDE} and the set of mean field optimal stopping equilibrium (Definition \ref{defn:OSequilibrium}). We can define a projection map $\Pi:\mathbb{S}^2\times\mathbb{S}^2\times\mathbb{H}^2\times\mathbb{A}^2\times\mathcal{V} \to \mathcal{V}$ which is a bijection between the sets
    \[
    R=\bigg\{\text{Solution to system } \eqref{MFG SDE}-\eqref{MFRBSDE}\bigg\} \xrightarrow{\Pi} \bigg\{\text{mean field optimal stopping equilibria}\bigg\}=\mathcal{M}.
    \] 
\end{remark}
\begin{remark}
\label{remark:monotonicity p}
Let $(X,Y,Z,A,L)$ and $(X',Y',Z',A',L')$ be two solutions to the system \eqref{MFG SDE}-\eqref{MFRBSDE} such that $(X,Y,Z,A,L) \leq_R (X',Y',Z',A',L')$.
Then 
\[
L=\Pi(X,Y,Z,A,L) \leq_{\mathcal{V}} \Pi(X',Y',Z',A',L')=L'.
\]
Then the map $\Pi$ is order-preserving isomorphism.
\end{remark}
\begin{proposition}[\textit{Properties of the set of OS-MFG equilibria in randomized strategies}]
    Under Assumptions \ref{assumption:1A}, \ref{assum:Tarski_1}.i)-ii), \ref{assum:Tarski_2} and \ref{assum:Tarski_3} the set of mean field optimal stopping equilibria $\mathcal{M}$ is a non-empty complete lattice (compatible with the order relation $\leq_{\mathcal V}$). In particular, under Assumptions \ref{assumption:1A}, \ref{assum:Tarski_1}.i)-ii) and \ref{assum:Tarski_2},  there exist a minimal equilibrium $\tilde{L}^{min}$ and a maximal equilibrium $\tilde{L}^{max}$.
\end{proposition}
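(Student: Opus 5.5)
The plan is to transfer the order-theoretic results of Section~\ref{S:Tarski} from the solution set $R$ of system \eqref{MFG SDE}-\eqref{MFRBSDE} to the set $\mathcal M$ of equilibria. The tool is the projection $\Pi$ of Remark~\ref{remark: relation MKRBSDE - MFG equilibria}. Both directions of Theorem~\ref{T:BSDE_MFG} hold under Assumptions \ref{assumption:1A}, \ref{assum:Tarski_1}.i)-ii) and \ref{assum:Tarski_2}. So $\Pi\colon R\to\mathcal M$, $(X,Y,Z,A,L)\mapsto L$, is well defined and surjective. It is injective because $L$ uniquely determines $(X,Y,Z,A)$, via Proposition~\ref{P:SDE} and the well-posedness of \eqref{RBSDE} noted after Assumption~\ref{assum:Tarski_1}. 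Non-emptiness of $\mathcal M$ then follows at once from Theorem~\ref{T:Tarski}.i).

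Next I check that $\Pi$ is an order isomorphism between $(R,\leq_R)$ and $(\mathcal M,\leq_{\mathcal V})$. By definition of $\leq_R$, we have $(X,Y,Z,A,L)\leq_R(X',Y',Z',A',L')$ if and only if $L\leq_{\mathcal V}L'$. Hence $\Pi$ and $\Pi^{-1}$ are both order-preserving; this is Remark~\ref{remark:monotonicity p} together with its converse, which is immediate. An order isomorphism carries suprema and infima of arbitrary subsets to suprema and infima (Lemma 2.27 in \cite{davey2002introduction}). Therefore, under the additional Assumption~\ref{assum:Tarski_3}, the complete lattice structure of $R$ from Theorem~\ref{T:Tarski}.ii) transfers to $\mathcal M$ with the order $\leq_{\mathcal V}$. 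Concretely, $\sup_{\mathcal M}$ of a family $\{L^i\}_i\subset\mathcal M$ is $\Pi$ of the $R$-supremum of $\{\Pi^{-1}(L^i)\}_i$. Alternatively, one can argue directly: under Assumption~\ref{assum:Tarski_3}, Lemma~\ref{L:UniqBestResponse} makes $\Gamma$ single-valued and monotone, and $\mathcal M$ is exactly its fixed-point set, so Tarski's theorem applies on $\mathcal V$.

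For the extremal equilibria, I set $\tilde L^{\min}:=\Pi(\tilde X^{\min},\ldots,\tilde L^{\min})$, with the minimal solution taken from Theorem~\ref{T:Tarski}.i). Let $L\in\mathcal M$ be any equilibrium. Then $\Pi^{-1}(L)\in R$, so by minimality $(\tilde X^{\min},\ldots)\leq_R\Pi^{-1}(L)$, which gives $\tilde L^{\min}\leq_{\mathcal V}L$. Since $\tilde L^{\min}$ itself lies in $\mathcal M$ by Theorem~\ref{T:BSDE_MFG}, it is the minimal equilibrium. The maximal equilibrium $\tilde L^{\max}$ is obtained symmetrically. I note that Theorem~\ref{T:Tarski}.i) requires the extra hypotheses that $b,\sigma$ are independent of $m$ and $h$ is independent of $w$; I would either impose these or invoke the lattice part, which already yields a greatest element under Assumption~\ref{assum:Tarski_3}.

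The main obstacle is not the transfer argument but making sure it is legitimate. First, Theorem~\ref{T:BSDE_MFG} must be valid under the Tarski-type assumptions; it is stated with the "resp." alternative, so this holds. Second, the "complete lattice" structure must be understood relative to $\leq_{\mathcal V}$ on $\mathcal M$ itself, not as a sublattice of $\mathcal V$. Suprema in $\mathcal M$ need not coincide with pointwise essential suprema in $\mathcal V$, and I would state this explicitly. Everything else is bookkeeping through the bijection $\Pi$.
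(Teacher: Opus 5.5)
Your proposal follows the paper's own argument: you transfer the non-empty complete-lattice structure of the solution set $R$ (Theorem~\ref{T:Tarski}) to $\mathcal M$ through the order isomorphism $\Pi$ of Remark~\ref{remark:monotonicity p}, which rests on Theorem~\ref{T:BSDE_MFG}, and you take the extremal equilibria to be the projections of the minimal and maximal solutions. Your caveat is also correct: to obtain $\tilde L^{\max}$ the paper's proof implicitly uses either Assumption~\ref{assum:Tarski_3} or the extra hypotheses of Theorem~\ref{T:Tarski}.i) ($b,\sigma$ independent of $m$, $h$ independent of $w$), so the ``in particular'' claim of a maximal equilibrium under the weaker assumptions alone is not justified as written.
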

\begin{proof}
By Theorem \ref{T:Tarski}.ii), the set of solution to the system \eqref{MFG SDE}-\eqref{MFRBSDE} $R$ is a non-empty complete lattice. According to Remark \ref{remark:monotonicity p}, the projection map $\Pi$ is an order-preserving isomorphism. Since an order-preserving isomorphism preserves the lattice structure (see \cite{davey2002introduction}), it follows that $\mathcal{M}$ is also a non-empty complete lattice. Furthermore, the extremal elements of $\mathcal{M}$ are uniquely determined by the projection of the extremal solutions in $R$. Specifically, if $\Sigma^{min}:=(\tilde{X}^{min}, \tilde{Y}^{min}, \tilde{Z}^{min}, \tilde{A}^{min}, \tilde{L}^{min})$ and $\Sigma^{max}:=(\tilde{X}^{max}, \tilde{Y}^{max}, \tilde{Z}^{max}, \tilde{A}^{max}, \tilde{L}^{max})$ are the minimal and maximal solution in $R$, then the minimal and maximal mean field optimal stopping equilibria are given by
\[
\tilde{L}^{min}=\Pi(\Sigma^{min}) \quad \text{and} \quad \tilde{L}^{max}=\Pi(\Sigma^{max}).
\]
\end{proof}

\section{Approximate Nash equilibria for the $N$-player game}
\label{S:Approximate}
In this Section,  we show that, starting from a solution of the mean field system \eqref{MFG SDE}-\eqref{MFRBSDE}, we construct an approximate equilibrium for the associated \(N\)-player optimal-stopping game.
We suppose here that, on the same probability space $(\Omega, \mathcal{F}, \mathbb{P})$, there exists a sequence $\{(W^i_t)_{t \in [0,T]}\}_{i \ge 1}$ of independent $m$-dimensional Brownian motions. We also assume that there exists a sequence $\{X_0^i\}_{i \geq 1}$ of independent and identically distributed $\mathbb R^d$-valued random variables, independent of $\{(W^i_t)_{t \in [0,T]}\}_{i \ge 1}$ and having the same distribution as $X_0$, so in particular $\mathbb E[|X_0^i|^q]<\infty$, for every $i$. For every $N\in\mathbb N$, we denote by $\mathbb F^N=\{\mathcal F_t^N\}_{t\in[0,T]}$ the filtration generated by $\{X_0^i\}_{i=1,\ldots,N}$ and by the Brownian motions $\{(W^i_t)_{t \in [0,T]}\}_{i=1,\ldots,N}$, augmented with the $\mathbb P$-null sets of $\mathcal F$. We also denote by $\mathcal V_{\mathbb F^N}$ the space $\mathcal V$ defined with respect to the filtration $\mathbb F^N$.

We now introduce the $N$-player game. Let $\boldsymbol{L}=(L^1,\ldots,L^N)\in\mathcal V_{\mathbb F^N}^N$ be the strategy profile for the $N$ players. Then, for each $1 \le i \le N$, we define the reward functional of the $i$-th player as follows (recall that $g(x,w)=h(T,x,w)$):
\begin{align}\label{payoff N-player}
\Lambda^{N,i}(L^1, \dots, L^N)&=\mathbb{E}\biggl[\int_0^Tf\biggl(t,X^i_t, \frac{1}{N}\sum_{j=1}^N\bar{f}(t,X^j_t)L^j_t\biggr)L^i_tdt\\
&\quad-\int_{0^-}^Th\biggl(t,X^i_t,\frac{1}{N}\sum_{j=1}^N\int_{0^{-}}^T\phi(t-s)dL^j_s\biggr)dL^i_t\biggr], \notag
\end{align}
where $(X^1_t,\dots, X^N_t)_{0 \le t \le T}$ satisfies the system of $Nd$ stochastic differential equations
\begin{equation}\label{N-player SDE_0}
dX^i_t=b\biggl(t,X^i_t, \frac{1}{N}\sum_{j=1}^N\bar{b}(t,X^j_t)L_t^j\biggr)dt+\sigma\biggl(t,X^i_t, \frac{1}{N}\sum_{j=1}^N\bar{\sigma}(t,X^j_t)L_t^j\biggr)dW^i_t,
\end{equation}
with $t \in [0,T]$ and initial condition $X^i_0$. Under Assumption \ref{assumption:1A}, the $Nd$-dimensional system \eqref{N-player SDE_0} is well-posed since the maps $b$ and $\sigma$ are assumed to be globally Lipschitz. Moreover the processes $L^i$ are bounded and the maps $\bar{b},\bar{\sigma}$ satisfy linear growth in the variable $x$ uniformly in $t \in [0,T]$. Thus, the system has a unique strong solution. Moreover, for $2 \leq p \leq q$, for every $N \geq 1$ and for every $i \in \{1, \dots N\}$ estimate \eqref{EstimateSDE} holds.
Given the unique solution of the forward component consider
\begin{equation}
\label{eq: backward component N player}
    \begin{cases}
        -dY^i_t=f\bigg(t,X^i_t,\frac{1}{N}\sum_{j=1}^N\bar{f}(t,X^j_t)L^j_t\bigg)dt+dA^i_t-\sum_{j=1}^N(Z^{i,j}_t,dW^j_t),\\
        Y^i_t \geq \xi^i_t,\\
        \int_0^T(Y^i_t-\xi^i_t)dA^i_t=0, \qquad Y^i_T=\xi^i_T.
    \end{cases}
\end{equation}
where $\xi^i_t=h\bigg(t,X^i_t,\frac{1}{N}\sum_{j=1}^N\int_{0^-}^T\phi(t-s)dL^j_s\bigg)$. Under Assumption \ref{assumption:1B} (resp. \ref{assum:Tarski_1}.i)-ii)), the backward component \eqref{eq: backward component N player} is well-posed since the obstacle $\xi^i$ is well defined and satisfies the standard square integrability conditions. Moreover, since the processes $L^i$ are bounded, $\bar{f}$ has linear growth and $f$ is globally Lipschitz (resp. has polynomial growth), the driver satisfies the usual integrability conditions. Thus, the solution of \eqref{eq: backward component N player} exists and it is unique.\\
We also introduce the reward functional of the limiting problem:
\begin{equation}\label{Lambda}
\Lambda(L)=\mathbb{E}\bigg[\int_0^Tf(t,X_t,\mathbb{E}[\bar{f}(t,X_t)L_t])L_tdt-\int_{0^-}^Th\bigg(t,X_t,\mathbb{E}\bigg[\int_{0^-}^T\phi(t-s)dL_s\bigg]\bigg)dL_t\bigg],
\end{equation}
where $X$ solves equation \eqref{MFG SDE} with input process $L\in\mathcal V$.\\
Firstly we prove the following technical result, which will be useful in the proof of Theorem \ref{T:Approximate} and shows that an independent initial enlargement does not change the optimal stopping value.
\begin{lemma}[\textit{Enlargement of the filtration}]\label{L:IndepEnlarg}
Fix $L \in \mathcal V$. Let $\mathcal H \subset \mathcal F$ be a sigma-field independent of $\mathcal F_T$, and define
\[
\mathcal G_t := \mathcal F_t \vee \mathcal H, \qquad 0 \le t \le T.
\]
Let $\mathcal V_{\mathbb G}$ be the set of equivalence classes of $\mathbb G$-adapted processes $\hat L$ admitting a representative, still denoted $\hat L$, such that $\hat L$ is $[0,1]$-valued, non-increasing, c\`adl\`ag, $\hat L_{0^-}=1$, and $\hat L_T=0$. Then
\[
\sup_{\hat L \in \mathcal V_{\mathbb G}} \bar{\Lambda}(L,\hat L)
=
\sup_{\hat L \in \mathcal V} \bar{\Lambda}(L,\hat L).
\]
\end{lemma}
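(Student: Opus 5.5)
The inequality ``$\geq$'' is immediate. Every $\mathbb F$-adapted process is $\mathbb G$-adapted, so $\mathcal V\subset\mathcal V_{\mathbb G}$. Here $L$ is fixed, hence so are $X$, the running reward $f(s,X_s,\mathbb E[\bar f(s,X_s)L_s])$ and the obstacle $\xi_s=h(s,X_s,\mathbb E[\int_{0^-}^T\phi(s-r)dL_r])$. Consequently $\bar\Lambda(L,\cdot)$ is the same functional on both classes, and the supremum over the larger class is at least the supremum over $\mathcal V$.

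For ``$\leq$'' I would use the RBSDE, as in Theorem~\ref{equiv}. Let $(Y,Z,A)$ solve \eqref{RBSDE} for the fixed $L$ with respect to $\mathbb F$. By Theorem~\ref{equalvalues} at $t=0$ (or Theorem~\ref{equiv}), $\sup_{\hat L\in\mathcal V}\bar\Lambda(L,\hat L)=\mathbb E[Y_0]$. The key observation is that, since $\mathcal H$ is independent of $\mathcal F_T$, $W$ remains a Brownian motion with respect to $\mathbb G$. Indeed, $W_t-W_s$ is independent of $\mathcal F_s\vee\mathcal H$, because $(W_t-W_s,\mathcal F_s)$ is independent of $\mathcal H$ and $W_t-W_s$ is independent of $\mathcal F_s$, which gives joint independence. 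Hence the stochastic integral $\int Z\,dW$ computed in $\mathbb F$ is also a $\mathbb G$-martingale, since $Z\in\mathbb H^2$ is $\mathbb G$-progressive.

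Now fix $\hat L\in\mathcal V_{\mathbb G}$ and repeat the integration-by-parts computation \eqref{eq:Y_0} at $t=0$, with $\hat L_{0^-}=1$ and $\hat L_T=0$:
\[
Y_0=\int_0^T f(s,X_s,\mathbb E[\bar f(s,X_s)L_s])\hat L_s\,ds-\int_{0^-}^T Y_s\,d\hat L_s-\int_0^T\hat L_{s^-}Z_s\,dW_s+\int_0^T\hat L_{s^-}\,dA_s .
\]
This is a pathwise identity plus a stochastic integral. The integrand $\hat L_{s^-}Z_s$ is bounded times $\mathbb H^2$ and $\mathbb G$-predictable, so the last stochastic integral has zero expectation by the $\mathbb G$-Brownian property. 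Using $Y\ge\xi$, $-d\hat L\ge0$ and $dA\ge0$, $\hat L\ge0$, taking expectations gives
\[
\mathbb E[Y_0]=\bar\Lambda(L,\hat L)+\mathbb E\Big[\int_0^T\hat L_{s^-}dA_s-\int_{0^-}^T(Y_s-\xi_s)\,d\hat L_s\Big]\geq\bar\Lambda(L,\hat L).
\]
Taking the supremum over $\hat L\in\mathcal V_{\mathbb G}$ yields ``$\leq$''.

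The main point to check carefully is the $\mathbb G$-martingale property of $\int_0^\cdot\hat L_{s^-}Z_s\,dW_s$, i.e.\ that $W$ is a $\mathbb G$-Brownian motion (proved via the independence argument above, e.g.\ by a $\pi$-system argument on sets $F\cap H$, $F\in\mathcal F_s$, $H\in\mathcal H$). With this in hand, the BDG bound together with $Z\in\mathbb H^2$ and $|\hat L|\le1$ makes the local martingale a true martingale. Alternatively, one could disintegrate over $\mathcal H$, viewing each $\mathcal H$-section of $\hat L$ as an $\mathbb F$-randomized strategy and applying Theorem~\ref{equalvalues}. I expect the RBSDE route to be cleaner, though.
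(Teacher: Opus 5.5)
Your proof is correct, but for the nontrivial inequality it takes a different route from the paper.

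The paper works with $S_t=Y_t+\int_0^t f(s,X_s,\mathbb E[\bar f(s,X_s)L_s])\,ds$. It notes that $S$ is a continuous $\mathbb F$-supermartingale. Since $S_t$ is $\mathcal F_T$-measurable and $\mathcal H$ is independent of $\mathcal F_T$, it gets $\mathbb E[S_t\,|\,\mathcal G_s]=\mathbb E[S_t\,|\,\mathcal F_s]\le S_s$, so $S$ is a $\mathbb G$-supermartingale. It then disintegrates $\hat L\in\mathcal V_{\mathbb G}$ into the $\mathbb G$-stopping times $\tau_r=\inf\{t:\hat L_t\le r\}$ and writes $\bar\Lambda(L,\hat L)=\int_0^1\mathbb E[R_{\tau_r}]\,dr$, where $R_t=\int_0^t f(\cdots)\,ds+\xi_t\le S_t$. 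It concludes by optional sampling.

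You instead carry the integration-by-parts identity \eqref{E[Y_t]} of Theorem~\ref{equiv} over to $\mathbb G$. This costs you the check that $W$ is a $\mathbb G$-Brownian motion, so that $Y$ keeps its decomposition and $\int_0^\cdot\hat L_{s^-}Z_s\,dW_s$ is a centered $\mathbb G$-martingale. Your $\pi$-system argument for this is fine, and it rests on the same independence fact the paper uses.

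What each route buys:
\begin{itemize}
\item The paper's version needs no stochastic integration in the enlarged filtration, only optional sampling of a continuous supermartingale at bounded stopping times.
\item Yours avoids the randomized stopping times. It also gives the exact gap $\mathbb E[Y_0]-\bar\Lambda(L,\hat L)=\mathbb E\bigl[\int_0^T\hat L_{s^-}\,dA_s-\int_{0^-}^T(Y_s-\xi_s)\,d\hat L_s\bigr]$ for $\mathbb G$-strategies. So it shows in passing that the Skorokhod characterization of Theorem~\ref{equiv} survives the enlargement.
\end{itemize}

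One small precision. Theorem~\ref{equalvalues} at $t=0$ is an essential supremum of $\mathcal F_0$-conditional expectations, so on its own it only yields $\sup_{\hat L\in\mathcal V}\bar\Lambda(L,\hat L)\le\mathbb E[Y_0]$. The reverse inequality needs an optimizer such as $L^*_t=\mathbf 1_{t<\tau}$ with $\tau=\inf\{t:Y_t=\xi_t\}$, via Lemma~\ref{L:Non-Empty} plus Theorem~\ref{equiv}. This is exactly how the paper closes the argument, and it is what your parenthetical should point to.
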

\begin{proof}
Fix $L \in \mathcal V$. Since $\mathcal V_{\mathbb G}\supset\mathcal V$, we only have to prove the inequality $\sup_{\hat L \in \mathcal V_{\mathbb G}} \bar{\Lambda}(L,\hat L)
\leq
\sup_{\hat L \in \mathcal V} \bar{\Lambda}(L,\hat L)$.\\
Let $X$ be the solution of equation \eqref{MFG SDE} with input process $L$. Let also $(Y,Z,A)$ be the solution of the reflected backward stochastic differential equation \eqref{RBSDE} with input process $L$. To alleviate notation, set $\xi_t:=h(t, X_t,\mathbb{E}[\int_{0^-}^T\phi(t-s)dL_s])$, for $t\in[0,T]$. Let us begin by proving that $\sup_{\hat L \in \mathcal V_{\mathbb G}} \bar{\Lambda}(L,\hat L)\leq\mathbb E[Y_0]$. To this end, define
\[
R_t := \int_0^t f\big(s, X_s, \mathbb E[\bar f(s,X_s)L_s]\big) ds + h\left(t, X_t, \mathbb E\left[\int_{0^-}^T \phi(t-u)\, dL_u\right]\right), \qquad 0 \le t \le T.
\]
Define
\[
S_t := Y_t + \int_0^t f\big(s, X_s, \mathbb E[\bar f(s,X_s)L_s]\big) ds.
\]
Notice that $S_0=Y_0$, so that we have to prove that $\sup_{\hat L \in \mathcal V_{\mathbb G}} \bar{\Lambda}(L,\hat L)\leq\mathbb E[S_0]$. By \eqref{Y_t=esssup_tau}, we have
\[
S_t
=
\operatorname*{ess\,sup}_{\tau \in \mathcal T([t,T])}
\mathbb E\left[
R_\tau
\,\middle|\, \mathcal F_t
\right].
\]
In particular, it holds that $S_t \ge R_t$, for all $t\in[0,T]$. From the reflected backward stochastic differential equation \eqref{RBSDE} satisfied by $(Y,Z,A)$,
\[
S_t = S_0 + \int_0^t Z_s\, dW_s - A_t,
\]
hence $S$ is a continuous $\mathbb F$-supermartingale. Now, notice that, since $S_t$ is $\mathcal F_t$-measurable, for every $0 \le s \le t$, it holds that
\[
\mathbb E[S_t \mid \mathcal G_s]
=
\mathbb E[S_t \mid \mathcal F_s]
\le S_s.
\]
So $S$ is also a $\mathbb G$-supermartingale. Given $\hat L \in \mathcal V_{\mathbb G}$, for $r \in [0,1]$, define
\[
\tau_r := \inf\{ t \in [0,T] : \hat L_t \le r\}.
\]
Because $\hat L$ is $\mathbb G$-adapted, c\`adl\`ag, and non-increasing, $\tau_r$ is a $\mathbb G$-stopping time. Moreover, for every integrable Borel function $\psi\colon [0,T] \to \mathbb R$, it holds that
\[
\int_0^1 \psi(\tau_r)\, dr
=
-\int_{0^-}^T \psi(t)\, d\hat L_t.
\]
Applying this identity to $\psi(t) = h(t, X_t, \mathbb E[\int_{0^-}^T \phi(t-u)\, dL_u])$, and using also that $\hat L_t=\int_0^1 \mathbf 1_{t < \tau_r}dr$, we get
\begin{align*}
&\int_0^T f\big(t, X_t, \mathbb E[\bar f(t,X_t)L_t]\big) \hat L_t\, dt
-
\int_{0^-}^T h\left(t, X_t, \mathbb E\left[\int_{0^-}^T \phi(t-u)\, dL_u\right]\right) d\hat L_t \\
&=
\int_0^1
\left(
\int_0^{\tau_r} f\big(t, X_t, \mathbb E[\bar f(t,X_t)L_t]\big) dt + h\left(\tau_r, X_{\tau_r}, \mathbb E\left[\int_{0^-}^T \phi(\tau_r-u)\, dL_u\right]\right)
\right) dr \\
&=
\int_0^1 R_{\tau_r}\, dr.
\end{align*}
Taking the expectation, we find
\[
\bar{\Lambda}(L,\hat L)
=
\int_0^1 \mathbb E[R_{\tau_r}]\, dr.
\]
Since $R_t \le S_t$, for all $t$, and $S$ is a $\mathbb G$-supermartingale, for every $r \in [0,1]$,
\[
\mathbb E[R_{\tau_r}]
\le
\mathbb E[S_{\tau_r}]
\le
\mathbb E[S_0].
\]
Therefore, $\bar{\Lambda}(L,\hat L) \le \mathbb E[S_0]$. Since $\hat L \in \mathcal V_{\mathbb G}$ is arbitrary, we conclude that $\sup_{\hat L \in \mathcal V_{\mathbb G}}\bar{\Lambda}(L,\hat L) \le \mathbb E[S_0]$.\\
It remains to prove that $\sup_{\hat L \in \mathcal V}\bar{\Lambda}(L,\hat L)\ge \mathbb E[S_0]=\mathbb E[Y_0]$ (recall that $S_0=Y_0$). Define $\tau = \inf\{t \in [0,T]\colon Y_t = \xi_t\}$ and $L_t^* = \mathbf 1_{t<\tau}$, for $0 \le t \le T$. By the proof of Lemma \ref{L:Non-Empty}, $L^*$ satisfies the two Skorokhod conditions, then by Theorem \ref{equiv}, $L^*$ is optimal. 
Moreover, using \eqref{E[Y_t]} at time $t=0$, we obtain
\[
\mathbb E[Y_0]=\bar{\Lambda}(L,L^*).
\]
Since \(L^*\in\mathcal V\), we obtain the claimed inequality
\[
\mathbb E[S_0]=\mathbb E[Y_0]=\bar{\Lambda}(L,L^*)
\le
\sup_{\hat L \in \mathcal V}\bar{\Lambda}(L,\hat L).
\]
\end{proof}
In this section, for any $N \in \mathbb{N}$, for any set of strategies $\boldsymbol{L}=(L^1, \dots, L^N)\in\mathcal V_{\mathbb F^N}^N$, and for any $k \in \{1, \dots, N\}$, we denote $\boldsymbol{L}^{-k}:=(L^1, \dots, L^{k-1}, L^{k+1}, \dots, L^N) \in \mathcal{V}_{\mathbb F^N}^{N-1}$. Moreover, for any process $L \in \mathcal{V}_{\mathbb F^N}$, we denote $L \otimes\boldsymbol{L}^{-k}:=(L^1, \dots, L^{k-1},L, L^{k+1}, \dots, L^N) \in \mathcal{V}_{\mathbb F^N}^{N}$. We recall that a set of strategies $\hat{\boldsymbol{L}}=(\hat{L}^1, \dots, \hat{L}^N) \in \mathcal{V}_{\mathbb F^N}^N$ is said to be a Nash equilibrium for the $N$-player game if
\[
    \forall i \in \{1,\dots, N\} , \,\,\, \forall L \in \mathcal{V}_{\mathbb F^N}, \qquad \Lambda^{N,i}(\hat{\boldsymbol{L}})\ge\Lambda^{N,i}(L\otimes \hat{\boldsymbol{L}}^{-i}).
\]
We want to construct an approximate Nash equilibrium for the $N$-player game.
\begin{definition}{($\varepsilon$-Nash equilibrium for the $N$-player game).}
Let $\varepsilon > 0$. We say $\hat{\boldsymbol{L}} \in \mathcal{V}_{\mathbb F^N}^N$ is an $\varepsilon$-Nash equilibrium for the $N$-player game \eqref{payoff N-player}-\eqref{N-player SDE_0} if
\[
    \forall i \in \{1,\dots, N\} , \,\,\, \forall L \in \mathcal{V}_{\mathbb F^N}, \qquad \Lambda^{N,i}(\hat{\boldsymbol{L}})\ge \Lambda^{N,i}(L \otimes \hat{\boldsymbol{L}}^{-i})-\varepsilon.
\]
\end{definition}
\noindent Suppose that Assumptions \ref{assumption:1A}, \ref{assumption:1B} and \ref{assumption: 1C} (or \ref{assumption:1A}, \ref{assum:Tarski_1}.i)-ii) and \ref{assum:Tarski_2}) hold. By Proposition \ref{P:SDE} and Theorem \ref{thm:main} (resp. Theorem \ref{T:Tarski} i)) we know that there exists a quintuple $(\hat X,\hat Y,\hat Z,\hat A,\hat L)$ solution to system \eqref{MFG SDE}-\eqref{MFRBSDE}, respectively. Recall that the reference filtration in Theorem \ref{thm:main} (resp. Theorem \ref{T:Tarski} i)) is generated by $X_0$ and by the Brownian motion $\{W_t\}_{t\in[0,T]}$, augmented with the $\mathbb P$-null sets of $\mathcal F$. Then, by Doob's measurability theorem it follows that there exists a measurable map $\hat\psi\colon[0,T]\times\mathbb R^d\times \mathcal C([0,T];\mathbb R^m)\rightarrow[0,1]$ such that the processes $\{\hat L_t\}_{t\in[0,T]}$ and $\{\hat\psi(t,X_0,W_{\cdot\wedge t})\}_{t\in[0,T]}$ are indistinguishable. Then, we denote
\begin{equation}\label{hatL^i}
\hat L^i_t=\hat{\psi}(t,X_0^i,W^i_{\cdot\wedge t}), \qquad 0 \le t \le T, \,\,\, i =1,\ldots,N.
\end{equation}
Then the processes $\hat L^i$ are independent and identically distributed copies of $\hat L$, moreover $\hat L^i \in \mathcal V_{\mathbb F^N}$.

For each integer $N$, we consider the solution $(\hat X^1_t,\dots,\hat X^N_t)_{0 \le t \le T}$ of the system of $N$ stochastic differential equations
\begin{equation}
\label{N-player SDE}
d\hat X^i_t=b\biggl(t,\hat X^i_t, \frac{1}{N}\sum_{j=1}^N\bar{b}(t,\hat X^j_t)\hat{L}_t^j\biggr)dt+\sigma\biggl(t,\hat X^i_t, \frac{1}{N}\sum_{j=1}^N\bar{\sigma}(t,\hat X^j_t)\hat{L}_t^j\biggr)dW^i_t,
\end{equation}
with $t \in [0,T]$ and $\hat X^i_0=X_0^i$. The processes $\hat X^1,\ldots,\hat X^N$ give the dynamics of the states of the $N$ players, when they use the set of strategies $(\hat L^1,\ldots,\hat L^N)$.

We prove now two lemmas which establish two type of estimates between the $i$-th component of the $Nd$-system and the $i$-th independent copy of the McKean--Vlasov SDE.
\begin{lemma}[\textit{Estimates I}]
\label{lemma: order barX-X}
Suppose that Assumptions \ref{assumption:1A}, \ref{assumption:1B} and \ref{assumption: 1C} (or \ref{assumption:1A}, \ref{assum:Tarski_1}.i)-ii) and \ref{assum:Tarski_2}) hold. For every $i \in \{1, \dots, N\}$, let $\hat X^i$ be the solution of \eqref{N-player SDE} and let $\bar{X}^i$ be the solution of the following McKean--Vlasov stochastic differential equation:
\begin{equation}
\label{eq:iid SDE}
\bar X_t^i=X_0^i+\int_0^tb\big(s,\bar X_s^i,\mathbb E\big[\bar b(s,\bar X_s^i)\hat L_s^i\big]\big)ds+\int_0^t\sigma\big(s,\bar X_s^i,\mathbb E\big[\bar\sigma(s,\bar X_s^i)\hat L_s^i\big]\big)dW_s^i,
\end{equation}
for all $0\leq t\leq T$. Then, there exists a constant $C\geq0$, independent of $(\hat L^1,\ldots,\hat L^N)$, $N$, $i$, such that, for every $i=1,\ldots,N$,
\[
\mathbb{E}\bigg[\sup_{0 \le t \le T}|\bar{X}^i_t-\hat X^i_t|^2\bigg]\le \frac{C}{N}\big(1+\mathbb E\big[|X_0|^{2}\big]\big).
\]
\end{lemma}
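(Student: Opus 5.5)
Our approach is the classical propagation-of-chaos coupling argument (in the spirit of Sznitman and of the probabilistic MFG literature), adapted to the fact that the empirical measures are weighted by the randomized strategies $\hat L^j$.

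\emph{Step 1: identify the weights.} By construction \eqref{hatL^i}, $\hat L^i_t=\hat\psi(t,X_0^i,W^i_{\cdot\wedge t})$ and $\bar X^i$ is a strong solution driven by $(X_0^i,W^i)$. Hence the pairs $(\bar X^i,\hat L^i)$, $i=1,\ldots,N$, are i.i.d.\ copies of $(\hat X,\hat L)$. In particular $\mathbb E[\bar b(s,\bar X^i_s)\hat L^i_s]=\mathbb E[\bar b(s,\hat X_s)\hat L_s]$ does not depend on $i$; the same holds for $\bar\sigma$. Estimate \eqref{EstimateSDE} gives uniform moment bounds for $\bar X^i$ and $\hat X^i$, with constants independent of $N$ and of $i$.

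\emph{Step 2: Gronwall decomposition.} We write $\hat X^i_t-\bar X^i_t$ from the two SDEs and use Burkholder--Davis--Gundy on the martingale part and the Lipschitz property of $b,\sigma$ from Assumption \ref{assumption:1A}. This gives
\[
\mathbb E\Big[\sup_{s\le t}|\hat X^i_s-\bar X^i_s|^2\Big]\le C\int_0^t\mathbb E\Big[\sup_{u\le s}|\hat X^i_u-\bar X^i_u|^2\Big]ds+C\int_0^T\mathbb E\big[|D^b_s|^2+|D^\sigma_s|^2\big]ds,
\]
where
\[
D^b_s:=\frac1N\sum_{j=1}^N\bar b(s,\hat X^j_s)\hat L^j_s-\mathbb E[\bar b(s,\bar X^1_s)\hat L^1_s],
\]
and $D^\sigma_s$ is defined in the same way. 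We split $D^b_s$ into two parts:
\[
D^b_s=\frac1N\sum_j\big(\bar b(s,\hat X^j_s)-\bar b(s,\bar X^j_s)\big)\hat L^j_s+\Big(\frac1N\sum_j\bar b(s,\bar X^j_s)\hat L^j_s-\mathbb E[\bar b(s,\bar X^1_s)\hat L^1_s]\Big).
\]
For the first part we use $|\hat L^j|\le1$ and the Lipschitz property of $\bar b$, which bound it by $\frac KN\sum_j|\hat X^j_s-\bar X^j_s|$. Its second moment is then at most $\frac{K^2}{N}\sum_j\mathbb E[\sup_{u\le s}|\hat X^j_u-\bar X^j_u|^2]$.

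\emph{Step 3: symmetry and the $1/N$ rate.} The second part is a centered average of i.i.d.\ square-integrable variables. Its second moment is therefore $\frac1N\mathrm{Var}(\bar b(s,\bar X^1_s)\hat L^1_s)\le\frac1N\mathbb E[|\bar b(s,\bar X^1_s)|^2]$. By the linear growth of $\bar b$ and \eqref{EstimateSDE} with $p=2$, this is at most $\frac CN(1+\mathbb E|X_0|^2)$.

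Exchangeability of the system $(\hat X^j,\bar X^j)_j$ gives that $\phi(t):=\mathbb E[\sup_{s\le t}|\hat X^i_s-\bar X^i_s|^2]$ is independent of $i$. The first part of $D^b$ thus contributes $C\int_0^t\phi(s)ds$, and we obtain
\[
\phi(t)\le C\int_0^t\phi(s)ds+\frac CN\big(1+\mathbb E|X_0|^2\big).
\]
Gronwall's lemma concludes, with $C$ depending only on $T,K$. The constant is independent of the strategies because we only used $|\hat L^j|\le1$ and the i.i.d.\ structure. The same argument works verbatim under the Tarski assumptions, since only Assumption \ref{assumption:1A} is used.

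The main obstacle is making Step 3 rigorous. First, exchangeability must be justified: it follows from pathwise uniqueness of \eqref{N-player SDE} and the symmetric form of \eqref{hatL^i}. Second, the randomized weights $\hat L^j$ must not spoil the i.i.d.\ variance bound. This is exactly why we need the precise Doob-measurable representation of $\hat L$, which couples each weight to its own $(X_0^j,W^j)$.
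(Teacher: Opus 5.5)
Your proposal is correct and follows essentially the paper's proof. Both use Burkholder--Davis--Gundy with the Lipschitz bounds, split the empirical-average error into a Lipschitz part controlled by $\frac1N\sum_j\mathbb E[|\bar X^j_s-\hat X^j_s|^2]$ and a centred i.i.d.\ average of variance $O(1/N)$, and conclude by Gronwall plus symmetry; the only difference is that the paper averages over $i$ before Gronwall and uses symmetry at the end, which it merely asserts, whereas you invoke exchangeability first and sketch why it holds. One slip to fix: in your Step 2 display the integral of $|D^b_s|^2+|D^\sigma_s|^2$ must run over $[0,t]$, not $[0,T]$ (Jensen and BDG do give $[0,t]$), since otherwise the term $C\int_0^t\phi(s)\,ds$ used in Step 3 does not arise and Gronwall cannot be applied.
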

\begin{remark}
    Note that the pair $(\bar X^i, \hat L^i)$ are independent and identically distributed with the same law as $(\hat X, \hat L)$.
\end{remark}
\begin{proof}
Fix $i \in \{1, \dots, N\}$. Applying It\^o's formula to $|\bar{X}^i_t-\hat X^i_t|^2$, and using Jensen and Burkholder--Davis--Gundy inequalities, we find, for some constant $C\geq0$ (in the sequel we denote by $C$ a non-negative constant, independent of $(\hat L^1,\ldots,\hat L^N)$, $N$, $i$, which may change from line to line)
    \[
    \begin{split}
    \mathbb{E}\bigg[\sup_{0\leq s\leq t}|\bar{X}^i_s-\hat X^i_s|^2\bigg]&\le C\int_0^t\mathbb E\bigg[\bigg|b(s,\bar{X}^i_s,\mathbb{E}[\bar{b}(s,\bar{X}^i_s)\hat {L}^i_s])-b\bigg(s,\hat X^i_s,\frac{1}{N}\sum_{j=1}^N\bar{b}(s,\hat X^j_s)\hat{L}^j_s\bigg)\bigg|^2\bigg]ds\\
    &\quad+C\int_0^t\mathbb E\bigg[\bigg|\sigma(s,\bar{X}^i_s,\mathbb{E}[\bar{\sigma}(s,\bar{X}^i_s)\hat{L}^i_s])-\sigma\bigg(s,\hat{X}^i_s,\frac{1}{N}\sum_{j=1}^N\bar{\sigma}(s,\hat{X}^j_s)\hat{L}^j_s\bigg)\bigg|^2\bigg]ds.
    \end{split}
    \]
    By the Lipschitz property of $b$ and $\sigma$ in Assumption \ref{assumption:1A}, we find
    \[
    \begin{split}
    \mathbb{E}\bigg[\sup_{0\leq s\leq t}|\bar{X}^i_s-\hat X^i_s|^2\bigg]&\le  C\int_0^t\mathbb{E}[|\bar{X}^i_s-\hat{X}^i_s|^2]ds+C\int_0^t\mathbb{E}\bigg[\bigg|\mathbb{E}[\bar{b}(s,\bar{X}^i_s)\hat{L}^i_s]-\frac{1}{N}\sum_{j=1}^N\bar{b}(s,\hat{X}^j_s)\hat{L}^j_s\bigg|^2\bigg]ds\\
    &\quad +C\int_0^t\mathbb{E}\bigg[\bigg|\mathbb{E}[\bar{\sigma}(s,\bar{X}^i_s)\hat{L}^i_s]-\frac{1}{N}\sum_{j=1}^N\bar{\sigma}(s,\hat{X}^j_s)\hat{L}^j_s\bigg|^2\bigg]ds.
    \end{split}
    \]
    In particular, we have
    \[
    \begin{split}
    &\mathbb{E}\bigg[\bigg|\mathbb{E}[\bar{b}(s,\bar{X}^i_s)\hat{L}^i_s]-\frac{1}{N}\sum_{j=1}^N\bar{b}(s,\hat{X}^j_s)\hat{L}^j_s\bigg|^2\bigg] \\
    &\le2\mathbb{E}\bigg[\bigg|\mathbb{E}[\bar{b}(s,\bar{X}^i_s)\hat{L}^i_s]-\frac{1}{N}\sum_{j=1}^N\bar{b}(s,\bar{X}^j_s)\hat{L}^j_s\bigg|^2\bigg] +2\mathbb{E}\bigg[\bigg|\frac{1}{N}\sum_{j=1}^N\bar{b}(s,\bar{X}^j_s)\hat{L}^j_s-\frac{1}{N}\sum_{j=1}^N\bar{b}(s,\hat{X}^j_s)\hat{L}^j_s\bigg|^2\bigg] \\
    &\le2\frac{\mathbb E[|\bar{b}(s,\bar{X}^i_s)\hat{L}^i_s-\mathbb E[\bar{b}(s,\bar{X}^i_s)\hat{L}^i_s]|^2]}{N}+\frac{C}{N}\sum_{j=1}^N\mathbb{E}\bigg[\bigg|\bar{X}^j_s-\hat{X}^j_s\bigg|^2\bigg],
   \end{split}
    \]
    where in the last inequality we used that $\bar{b}(s,\bar{X}^1_s)\hat{L}^1_s,\ldots,\bar{b}(s,\bar{X}^N_s)\hat{L}^N_s$ are independent and identically distributed. By the standard estimate as in \eqref{EstimateSDE} and the linear growth of $\bar b$, we see that
    \[
    \mathbb E[|\bar{b}(s,\bar{X}^i_s)\hat{L}^i_s-\mathbb E[\bar{b}(s,\bar{X}^i_s)\hat{L}^i_s]|^2] \ \leq \ C\big(1+\mathbb E\big[|X_0|^2\big]\big).
    \]
    Thus
    \[
    \mathbb{E}\bigg[\bigg|\mathbb{E}[\bar{b}(s,\bar{X}^i_s)\hat{L}^i_s]-\frac{1}{N}\sum_{j=1}^N\bar{b}(s,\hat{X}^j_s)\hat{L}^j_s\bigg|^2\bigg] \leq \frac{C}{N}\big(1+\mathbb E\big[|X_0|^2\big]\big) + \frac{C}{N}\sum_{j=1}^N\mathbb{E}\bigg[\bigg|\bar{X}^j_s-\hat{X}^j_s\bigg|^2\bigg].
    \]
    An analogous estimate holds for the term involving $\sigma$. Hence
    \[
    \mathbb{E}\bigg[\sup_{0\leq s\leq t}|\bar{X}^i_s-\hat{X}^i_s|^2\bigg]\le  C\int_0^t\mathbb{E}[|\bar{X}^i_s-\hat{X}^i_s|^2]ds+\frac{C}{N}\big(1+\mathbb E\big[|X_0|^2\big]\big)+\frac{C}{N}\sum_{j=1}^N\int_0^t\mathbb{E}\bigg[\bigg|\bar{X}^j_s-\hat{X}^j_s\bigg|^2\bigg]ds.
    \]
    Let $v_i(t):=\mathbb{E}[\sup_{0\leq s\leq t}|\bar{X}^i_s-\hat{X}^i_s|^2]$. Then, from the latter inequality we obtain
    \[
    v_i(t)\leq C\int_0^t v_i(s) ds + \frac{C}{N}\big(1+\mathbb E\big[|X_0|^{2}\big]\big) + \frac{C}{N}\sum_{j=1}^N\int_0^t v_j(s)ds.
    \]
    Summing over $i$, and multiplying for $1/N$, we get
    \[
    \frac{1}{N}\sum_{i=1}^N v_i(t)\leq \frac{C}{N}\int_0^t \sum_{i=1}^N v_i(s) ds + \frac{C}{N}\big(1+\mathbb E\big[|X_0|^{2}\big]\big) + \frac{C}{N}\sum_{j=1}^N\int_0^t v_j(s)ds.
    \]
    Applying Gronwall's inequality to the function $\frac{1}{N}\sum_{i=1}^N v_i$, we obtain
    \[
    \frac{1}{N}\sum_{i=1}^N\mathbb{E}\bigg[\sup_{0 \le t \le T}\big|\bar{X}^i_t-\hat{X}^i_t\big|^2\bigg]\le \frac{C}{N}\big(1+\mathbb E\big[|X_0|^{2}\big]\big).
    \]
    Since $\mathbb{E}[\sup_{0 \le t \le T}|\bar{X}^1_t-\hat{X}^1_t|^2]=\cdots=\mathbb{E}[\sup_{0 \le t \le T}|\bar{X}^N_t-\hat{X}^N_t|^2]$, we obtain $\mathbb{E}[\sup_{0 \le t \le T}|\bar{X}^i_t-\hat{X}^i_t|^2]\le \frac{C}{N}\big(1+\mathbb E\big[|X_0|^{2}\big]\big)$, for all $i=1,\dots,N$.
\end{proof}

\noindent For the purpose of comparison, when one player $k$ chooses to deviate using a generic strategy $L\in\mathcal V_{\mathbb F^N}$, the dynamics of the state $U^i$ of player $i \in \{1, \dots, N\}$ are given by
\begin{align}
\label{eq:deviation player}
dU^i_t&=b\bigg(t,U^i_t,\frac{1}{N}\bar{b}(t,U^k_t)L_t+\frac{1}{N}\sum_{j=1, j \ne k}^N\bar{b}(t,U^j_t)\hat {L}^j_t\bigg)dt \\
&\quad +\sigma\bigg(t,U^i_t,\frac{1}{N}\bar{\sigma}(t, U^k_t)L_t+\frac{1}{N}\sum_{j=1, j \ne k}^N\bar{\sigma}(t,U^j_t)\hat {L}^j_t\bigg)dW^i_t, \notag
\end{align}
with $t \in [0,T]$ and $U^i_0=X_0^i$.

\begin{lemma}[\textit{Estimate II}]
\label{lemma:barX-U}
    Suppose that Assumptions \ref{assumption:1A},  \ref{assumption:1B} and \ref{assumption: 1C} (or \ref{assumption:1A}, \ref{assum:Tarski_1}.i)-ii) and \ref{assum:Tarski_2}) hold. For $i \in \{1, \dots, N\}$, let $\bar{X}^i$ and $U^i$ be the solutions of (\ref{eq:iid SDE}) and (\ref{eq:deviation player}).
    Then, there exists a constant $C\geq0$, independent of $(\hat L^1,\ldots,\hat L^N)$, $L$, $N$, $i$, such that, for every $i=1,\ldots,N$,
    \[
    \mathbb{E}\bigg[\sup_{0 \le t \le T}|\bar{X}^i_t-U^i_t|^2\bigg]\le \frac{C}{N}\big(1+\mathbb E\big[|X_0|^{2}\big]\big), \qquad \text{for all }i=1,\ldots,N.
    \]
\end{lemma}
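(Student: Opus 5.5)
The plan is to mimic the proof of Lemma \ref{lemma: order barX-X}, treating the deviation of player $k$ as an extra perturbation of order $1/N$ in the empirical interaction terms. Fix $i$ and apply It\^o's formula to $|\bar X^i_t-U^i_t|^2$, then use Jensen's and Burkholder--Davis--Gundy inequalities together with the Lipschitz property of $b$ and $\sigma$ (Assumption \ref{assumption:1A}). Setting $w_i(t):=\mathbb E[\sup_{0\le s\le t}|\bar X^i_s-U^i_s|^2]$, this gives
\[
w_i(t)\le C\int_0^t w_i(s)\,ds + C\int_0^t \mathbb E\big[|D^b_s|^2+|D^\sigma_s|^2\big]ds,
\]
where
\[
D^b_s:=\mathbb E[\bar b(s,\bar X^i_s)\hat L^i_s]-\tfrac1N\bar b(s,U^k_s)L_s-\tfrac1N\sum_{j\neq k}\bar b(s,U^j_s)\hat L^j_s,
\]
and $D^\sigma_s$ is defined analogously with $\bar\sigma$.

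To bound $D^b_s$, write it as the sum of three terms. The first is $\mathbb E[\bar b(s,\bar X^i_s)\hat L^i_s]-\frac1N\sum_{j=1}^N\bar b(s,\bar X^j_s)\hat L^j_s$. Its second moment is at most $C(1+\mathbb E|X_0|^2)/N$, because the $\bar b(s,\bar X^j_s)\hat L^j_s$ are i.i.d. with the same mean (the Remark after Lemma \ref{lemma: order barX-X}) and have uniformly bounded second moments by \eqref{EstimateSDE} and the linear growth of $\bar b$.

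The second is $\frac1N\sum_{j\neq k}(\bar b(s,\bar X^j_s)-\bar b(s,U^j_s))\hat L^j_s$. By Lipschitz continuity of $\bar b$ and $|\hat L^j|\le1$, its second moment is bounded by $\frac{C}{N}\sum_{j}\mathbb E|\bar X^j_s-U^j_s|^2$.

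The third is the deviation term $\frac1N(\bar b(s,\bar X^k_s)\hat L^k_s-\bar b(s,U^k_s)L_s)$. Since $|L|,|\hat L^k|\le1$ and $\bar b$ has linear growth, its second moment is at most $\frac{C}{N^2}(1+\mathbb E[\sup_t|\bar X^k_t|^2]+\mathbb E[\sup_t|U^k_t|^2])$.

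The main obstacle is bounding $\mathbb E[\sup_t|U^k_t|^2]$ uniformly in $L$ and $N$. I would obtain this from the standard estimate for the $Nd$-system \eqref{eq:deviation player}, which holds exactly as noted after \eqref{N-player SDE_0}. The input processes there are all $[0,1]$-valued, so linear growth of $b,\sigma,\bar b,\bar\sigma$ plus Gronwall applied to $\frac1N\sum_j\mathbb E[\sup|U^j|^2]$ and then to each $U^i$ gives a bound $C(1+\mathbb E|X_0|^2)$ independent of $L$ and $N$. Consequently the third term is $O(N^{-2})\le O(N^{-1})$, and the same decomposition handles $D^\sigma_s$.

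Collecting these bounds yields
\[
w_i(t)\le C\int_0^t w_i\,ds+\frac CN(1+\mathbb E|X_0|^2)+\frac CN\sum_{j=1}^N\int_0^t w_j\,ds.
\]
Averaging over $i$ and applying Gronwall gives $\frac1N\sum_j w_j(T)\le \frac CN(1+\mathbb E|X_0|^2)$. Exchangeability is lost because of the deviating player $k$, so instead of using symmetry I substitute this average bound back into the inequality for each fixed $w_i$. A second Gronwall application to $w_i$ then gives $w_i(T)\le\frac CN(1+\mathbb E|X_0|^2)$ for every $i$, including $i=k$, with $C$ independent of $L$, $\hat L^j$, $N$ and $i$.
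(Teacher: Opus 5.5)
Your proposal is correct and follows the paper's route: the paper's proof says only to proceed along the same lines as Lemma~\ref{lemma: order barX-X}, which is what you do, with the deviation entering as an extra $O(1/N)$ term controlled by a moment bound on $U^k$ that is uniform in $L$. You also note that the final exchangeability step of that proof is no longer available, since player $k$'s generic $L$ breaks the symmetry, and you replace it by re-inserting the averaged bound and applying Gronwall once more to each $w_i$; this is exactly the adjustment the paper's ``same lines'' leaves implicit.
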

\begin{proof}
The proof can be done proceeding along the same lines as in the proof of Lemma \ref{lemma: order barX-X}.
\end{proof}

\noindent In order to construct an $\varepsilon$-Nash equilibrium using the equilibrium strategy of the limiting mean field game, we impose an additional set of assumptions.
\begin{assumption}\label{ass:N_Players}
There exists a constant $K\geq0$ such that
\begin{align*}
|f(t,x,m)-f(t,x',m')|&\le K\big(1+|x|+|x'|\big)\big(|x-x'|+|m-m'|\big), \\
|h(t,x,w)-h(t,x',w')|&\le K\big(1+|x|+|x'|\big)\big(|x-x'|+|w-w'|\big),
\end{align*}    
for all $t \in [0,T]$, $(x,m,w),(x',m',w')\in \mathbb{R}^d\times \mathbb{R}^k\times \mathbb{R}$.
\end{assumption}
We now prove the main Theorem of this Section.
\begin{theorem}[\textit{Approximate Nash equilibria for the $N$-player game}]\label{T:Approximate}
Suppose that Assumptions \ref{assumption:1A}, \ref{assumption:1B}, \ref{assumption: 1C} and  \ref{ass:N_Players} (or \ref{assumption:1A}, \ref{assum:Tarski_1}.i)-ii), \ref{assum:Tarski_2} and \ref{ass:N_Players}) hold. Let $\hat{L}^i\in\mathcal V_{\mathbb F^N}$ be given by \eqref{hatL^i} and let $\hat{\boldsymbol{L}}=(\hat{L}^1, \dots, \hat{L}^N)$. Then for any $\varepsilon>0$, there exists an integer $N_{\varepsilon}$ such that for all $N \ge N_{\varepsilon}$, $\hat{\boldsymbol{L}}$ is an $\varepsilon$-Nash equilibrium for the $N$-player game \eqref{payoff N-player}-\eqref{N-player SDE_0}. That is, for every player $i \in \{1, \dots, N\}$ and every $L \in \mathcal V_{\mathbb F^N}$,
    \begin{equation}
    \label{eq: epsilon Nash}
    \Lambda^{N,i}(\hat{L}^1, \dots, \hat{L}^{i-1}, \hat{L}^i, \hat{L}^{i+1},\dots, \hat{L}^N)\ge \Lambda^{N,i}(\hat{L}^1, \dots, \hat{L}^{i-1}, L, \hat{L}^{i+1}, \dots, \hat{L}^N)-\varepsilon.
    \end{equation}
\end{theorem}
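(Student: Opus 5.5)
We follow the standard three-term decomposition for approximate Nash equilibria, with the mean field game as intermediary. Fix a player $i$ (by exchangeability, $i=1$ suffices) and a deviation $L\in\mathcal V_{\mathbb F^N}$, and write
\[
\Lambda^{N,i}(L\otimes\hat{\boldsymbol L}^{-i})-\Lambda^{N,i}(\hat{\boldsymbol L})
=\big[\Lambda^{N,i}(L\otimes\hat{\boldsymbol L}^{-i})-\bar\Lambda^i(\hat L^i,L)\big]
+\big[\bar\Lambda^i(\hat L^i,L)-\bar\Lambda^i(\hat L^i,\hat L^i)\big]
+\big[\bar\Lambda^i(\hat L^i,\hat L^i)-\Lambda^{N,i}(\hat{\boldsymbol L})\big].
\]
Here $\bar\Lambda^i(\hat L^i,\cdot)$ is the mean field reward \eqref{Reward} computed with state $\bar X^i$ from \eqref{eq:iid SDE}, with mean field terms $\mathbb E[\bar f(t,\bar X^i_t)\hat L^i_t]$ and $\mathbb E[\int\phi(t-s)d\hat L^i_s]$ (which coincide with those of $(\hat X,\hat L)$), and with deviations allowed to be $\mathbb F^N$-adapted.

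\textbf{Middle term.} The filtration $\mathbb F^N$ is $\mathbb F^i$ (generated by $X_0^i,W^i$) enlarged by the independent $\sigma$-field $\mathcal H$ generated by $\{X_0^j,W^j\}_{j\neq i}$. Lemma \ref{L:IndepEnlarg} applied to the copy $(\bar X^i,\hat L^i)$ gives $\bar\Lambda^i(\hat L^i,L)\le\sup_{\tilde L\in\mathcal V}\bar\Lambda(\hat L,\tilde L)$. Since $\hat L$ is an equilibrium, this supremum equals $\bar\Lambda(\hat L,\hat L)=\bar\Lambda^i(\hat L^i,\hat L^i)$ by Theorem \ref{T:BSDE_MFG} and equality in law. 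Hence the middle term is $\le0$.

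\textbf{First and third terms.} Both are bounded uniformly in $L$. For the running cost, Assumption \ref{ass:N_Players}, Cauchy--Schwarz and $|L|\le1$ give a bound
\[
C\,\mathbb E\Big[\big(1+\sup_t|U^i_t|+\sup_t|\bar X^i_t|\big)^2\Big]^{1/2}\Big(\mathbb E\big[\sup_t|U^i_t-\bar X^i_t|^2\big]^{1/2}+\Big(\int_0^T\mathbb E\big|\tfrac1N\textstyle\sum_j\bar f(t,U^j_t)L^j_t-\mathbb E[\bar f(t,\bar X^i_t)\hat L^i_t]\big|^2dt\Big)^{1/2}\Big).
\]
Lemma \ref{lemma:barX-U} controls the state difference by $O(N^{-1/2})$. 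The empirical-mean difference is split in three pieces: the deviating player's contribution, which is $O(1/N)$ by moment bounds; the Lipschitz replacement $U^j\to\bar X^j$, controlled again by Lemma \ref{lemma:barX-U}; and an i.i.d. law-of-large-numbers variance term of order $O(N^{-1/2})$.

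For the terminal/obstacle part, the integrand is taken against $dL$, whose total variation is $1$. So it suffices to control
\[
\mathbb E\big[\sup_t|h(t,U^i_t,w^N_t)-h(t,\bar X^i_t,\bar w_t)|\big],
\]
where $w^N_t=\frac1N\sum_j\int\phi(t-s)dL^j_s$ and $\bar w_t$ is its mean field counterpart. Integration by parts, as in Lemma \ref{L:UnifConv_phi}, gives
\[
w^N_t-\bar w_t=\int_0^T\phi'(t-s)\Big(\tfrac1N\textstyle\sum_jL^j_s-\mathbb E[\hat L_s]\Big)ds+O(1/N).
\]
The uniform-in-$t$ control of this term is the main obstacle. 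Bounding the integral by $\|\phi'\|_\infty\int_0^T|\frac1N\sum_{j\ne i}\hat L^j_s-\mathbb E\hat L_s|\,ds$ removes the supremum over $t$, and the remaining i.i.d. average has $L^2$ norm $O(N^{-1/2})$. Combined with the polynomial-Lipschitz bound on $h$, the moment estimates \eqref{EstimateSDE} with $q\ge4$ (to handle the $(1+|x|+|x'|)$ factors), and Lemma \ref{lemma:barX-U}, this yields $O(N^{-1/2})$.

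The third term is handled identically, using Lemma \ref{lemma: order barX-X} with $\hat X^i$ in place of $U^i$.

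Altogether the deviation gain is at most $C N^{-1/2}(1+\mathbb E|X_0|^q)^{1/q'}$, uniformly in $i$ and $L$. Choosing $N_\varepsilon$ so that this is below $\varepsilon$ proves \eqref{eq: epsilon Nash}.
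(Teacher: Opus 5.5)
Your proposal is correct and follows the paper's proof. Your first and third brackets are exactly the paper's Steps 2 and 1, and the middle bracket is handled as there, by the equilibrium property of $\hat L$ (Theorem \ref{T:BSDE_MFG}) combined with Lemma \ref{L:IndepEnlarg} for $\mathcal H=\sigma(X_0^j,W^j,\ j\neq i)$ and equality in law.

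Your integration-by-parts bound on $\sup_t|w^N_t-\bar w_t|$ treats the obstacle term more cleanly than the paper's Cauchy--Schwarz against $d|\hat L^1|_t$. One caveat: the explicit $O(N^{-1/2})$ rate for replacing $\bar f(t,U^j_t)$ by $\bar f(t,\bar X^j_t)$ assumes $\bar f$ is globally Lipschitz. Under Assumption \ref{assumption: 1C} it is only locally Lipschitz with polynomial growth, so you get convergence uniformly in $L$ (by truncation plus the uniform moment bounds) rather than that rate. Convergence is all the theorem needs.
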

\begin{proof}
By symmetry of the game, we need to prove (\ref{eq: epsilon Nash}) only for $i=1$. For each Brownian motion $W^i$, we consider the following forward-backward system:
    \begin{equation}
        \begin{cases}
        d\bar{X}^i_t=b\big(t,\bar{X}^i_t, \mathbb{E}[\bar{b}(t,\bar{X}^i_t)\hat L_t^i]\big)dt+\sigma\big(t,\bar{X}^i_t, \mathbb{E}[\bar{\sigma}(t,\bar{X}^i_t)\hat L_t^i]\big)dW^i_t,\\
        -d\bar{Y}^i_t=f(t,\bar{X}^i_t,\mathbb{E}[\bar{f}(t,\bar{X}^i_t)\hat L_t^i])dt+d\bar{A}^i_t-\bar{Z}^i_tdW^i_t,\\
        \bar X_0^i=X_0^i, \quad \bar Y_T^i=h\big(T,\bar{X}^i_T,\mathbb{E}[\int_{0^-}^T\phi(T-s)d\hat L_s^i]\big), \\
        \bar{Y}^i_t \ge h\big(t,\bar{X}^i_t,\mathbb{E}[\int_{0^-}^T\phi(t-s)d\hat L_s^i]\big), \quad 0 \le t \le T,\\
        \int_0^T\big(\bar{Y}^i_t-h\big(t,\bar{X}^i_t, \mathbb{E}[\int_{0^-}^T\phi(t-s)d\hat L_s^i]\big)\big)d\bar{A}^i_t=0.
        \end{cases}
    \end{equation}
    We observe that $(\bar{X}^1, \hat{L}^1),\ldots,(\bar{X}^N, \hat{L}^N)$ are independent and identically distributed.
    We denote by $\Lambda$ the optimal payoff of the limiting problem, that is $\Lambda:=\Lambda(\hat L)$, see \eqref{Lambda} (notice that in $\Lambda(\hat L)$ appears $\hat X$ in place $X$, where $\hat X$ is the solution of \eqref{MFG SDE} with $\hat L$ in place of $L$). Since $(\bar{X}^1, \hat{L}^1)$ has the same distribution as $(\hat X,\hat{L})$, we find
    \[ \Lambda=\mathbb{E}\bigg[\int_0^Tf(t,\bar{X}^1_t,\mathbb{E}[\bar{f}(t,\bar{X}^1_t)\hat{L}^1_t])\hat{L}^1_tdt-\int_{0^-}^Th\bigg(t,\bar{X}^1_t,\mathbb{E}\bigg[\int_{0^-}^T\phi(t-s)d\hat{L}^1_s\bigg]\bigg)d\hat{L}^1_t\bigg].
    \]
     We will prove the following two properties.
     \begin{enumerate}[1)]
    \item $\lim_{N \to \infty} \Lambda^{N,1}(\hat{\boldsymbol{L}})= \Lambda$.
    \item It holds that $\lim_{N \to \infty} \sup_{L \in \mathcal{V}_{\mathbb F^N}}|\Lambda^{N,1}(L \otimes \hat{\boldsymbol{L}}^{-1})-\bar\Lambda(\hat L^1,L)|=0$,
    with $\bar\Lambda(\cdot,\cdot)$ defined as in \eqref{Reward}.
    \end{enumerate}
    \textsc{Step 1.} By Lemma \ref{lemma: order barX-X}, we get $\mathbb{E}[\sup_{0 \le t \le T}|\bar{X}^1_t-\hat X^1_t|^2] \longrightarrow 0$ as $N \to \infty$. Using the locally Lipschitz continuity of the coefficients $f$ and $h$, together with the Cauchy-Schwarz inequality, we get (in the sequel we denote by $C$ a non-negative constant, independent of $(\hat L^1,\ldots,\hat L^N)$, $N$, which may change from line to line)
    \begin{align}
    \label{eq:step1}
        &|\Lambda-\Lambda^{N,1}(\hat{\boldsymbol{L}})|\le\mathbb E\bigg[\int_0^T\bigg|\bigg(f(t,\bar{X}^1_t, \mathbb{E}[\bar{f}(t,\bar{X}^1_t)\hat{L}^1_t])-f\bigg(t,\hat X^1_t,\frac{1}{N}\sum_{j=1}^N\bar{f}(t,\hat X^j_t)\hat{L}^j_t\bigg)\bigg)\hat{L}^1_t\bigg|dt\notag\\
        &\quad+\int_{0^-}^T\bigg|\bigg(h\bigg(t,\bar{X}^1_t, \mathbb{E}\bigg[\int_{0^-}^T\phi(t-s)d\hat{L}^1_s\bigg]\bigg)-h\bigg(t,\hat X^1_t,\frac{1}{N}\sum_{j=1}^N\int_{0^-}^T\phi(t-s)d\hat{L}^j_s\bigg)\bigg)\bigg|d|\hat{L}^1|_t\bigg]\notag\\
        &\le C\int_0^T\mathbb{E}\bigg[\bigg(1+|\bar{X}^1_t|^2+|\hat X^1_t|^2\bigg)\bigg]^{1/2}\mathbb{E}\bigg[|\bar{X}^1_t-\hat X^1_t|^2+\bigg|\mathbb{E}[\bar{f}(t,\bar{X}^1_t)\hat{L}^1_t]-\frac{1}{N}\sum_{j=1}^N\bar{f}(t,\hat X^j_t)\hat{L}^j_t\bigg|^2\bigg]^{1/2}dt\notag\\
        &\quad+C\,\mathbb E\bigg[\int_{0^-}^T\bigg(1+|\bar{X}^1_t|^2+|\hat X^1_t|^2\bigg)d|\hat{L}^1|_t\bigg]^{1/2}\mathbb E\bigg[\int_{0^-}^T\bigg(|\bar{X}^1_t-\hat X^1_t|^2+\bigg|\mathbb{E}\bigg[\int_{0^-}^T\phi(t-s)d\hat{L}^1_s\bigg]\notag\\
        &\quad-\frac{1}{N}\sum_{j=1}^N\int_{0^-}^T\phi(t-s)d\hat{L}^j_s\bigg|^2\bigg)d|\hat{L}^1|_t\bigg]^{1/2}.
    \end{align}
    Proceeding along the same lines as in the proof of Lemma \ref{lemma: order barX-X}, we can show that
    \[
    \int_0^T\mathbb{E}\bigg[|\bar{X}^1_t-\hat X^1_t|^2+\bigg|\mathbb{E}[\bar{f}(t,\bar{X}^1_t)\hat{L}^1_t]-\frac{1}{N}\sum_{j=1}^N\bar{f}(t,\hat X^j_t)\hat{L}^j_t\bigg|^2\bigg]^{1/2}dt \longrightarrow 0
    \]
    and
    \[
    \mathbb E\bigg[\int_{0^-}^T\bigg(|\bar{X}^1_t-\hat X^1_t|^2+\bigg|\mathbb{E}\bigg[\int_{0^-}^T\phi(t-s)d\hat{L}^1_s\bigg]-\frac{1}{N}\sum_{j=1}^N\int_{0^-}^T\phi(t-s)d\hat{L}^j_s\bigg|^2\bigg)d|\hat{L}^1|_t\bigg] \longrightarrow 0.
    \]
    This concludes the proof of property 1).
    \medskip
    \\
    \textsc{Step 2.} We now suppose that player 1 deviates by choosing a generic strategy $L \in \mathcal{V}_{\mathbb F^N}$, while each other players $i=2,\ldots,N$ adopt $\hat{L}^i$. Let 
    \begin{align*}
    dU^i_t&=b\bigg(t,U^i_t,\frac{1}{N}\bar{b}(t,U^1_t)L_t+\frac{1}{N}\sum_{j=2}^N\bar{b}(t,U^j_t)\hat{L}^j_t\bigg)dt \\
    &\quad+\sigma\bigg(t,U^i_t,\frac{1}{N}\bar{\sigma}(t,U^1_t)L_t+\frac{1}{N}\sum_{j=2}^N\bar{\sigma}(t,U^j_t)\hat{L}^j_t\bigg)dW^i_t,
    \end{align*}
    with $U^i_0=X_0^i$. By Lemma \ref{lemma:barX-U} we have that $\mathbb{E}[\sup_{0 \le t \le T}|\bar{X}^i_t-U^i_t|^2]\le \frac{C}{N}\big(1+\mathbb E\big[|X_0|^{2}\big]\big)$.\\
    Thus, proceeding along the same lines as in (\ref{eq:step1}), we get
    \[
    \lim_{N \to \infty} \sup_{L \in \mathcal{V}_{\mathbb F^N}}\big|\Lambda^{N,1}(L \otimes \hat{\boldsymbol{L}}^{-1})-\bar\Lambda(\hat L^1,L)\big| = 0,
    \]
    with $\bar\Lambda(\cdot,\cdot)$ defined as in \eqref{Reward}.

    \vspace{2mm}
    
    \noindent Given $\varepsilon>0$, thanks to Step 2, there exists $N_\varepsilon\in\mathbb N$ sufficiently large such that, for any $L \in \mathcal{V}_{\mathbb F^N}$,
    \[
    \Lambda^{N,1}(L\otimes\hat{\boldsymbol{L}}^{-1}) \le \bar\Lambda(\hat L^1,L) + \frac{\varepsilon}{2} \le \Lambda+\frac{\varepsilon}{2}, \qquad \text{for all } N > N_\varepsilon,
    \]
    where the last inequality follows from the optimality of $\hat{L}^1$ and also from Lemma \ref{L:IndepEnlarg} with $\mathcal H$ equal to $\sigma(X_0^j,(W_s^j)_{0\leq s\leq T},j\neq1)$. By Step 1, we have, possibly enlarging $N_\varepsilon$,
    \[
    \Lambda^{N,1}(\hat{\boldsymbol{L}})\ge \Lambda-\frac{\varepsilon}{2}, \qquad \text{for all } N > N_\varepsilon.
    \]
    Combining these two inequalities, we get that, for all $L \in \mathcal{V}_{\mathbb F^N}$,
    \[
    \Lambda^{N,1}(\hat{\boldsymbol{L}})\ge \Lambda-\frac{\varepsilon}{2}\ge \Lambda^{N,1}(L\otimes\hat{\boldsymbol{L}}^{-1})-\varepsilon, \qquad \text{for all } N > N_\varepsilon.
    \]
    \end{proof}

\section{Relation between MKV-RFBSDEs and the PDE\\ approach}
\label{S:PDE}
In this Section, we show a rigorous relation between the system \eqref{MFG SDE}-\eqref{MFRBSDE} and the system of partial differential equations introduced in \cite{bertucci2018optimal}.\\
We suppose that $b=b(t,x)$, $\sigma=\sigma(t,x)$ and $h=h(t,x)$ are independent of their last argument and $h \in W^{1,2}([0,T]\times \mathbb R^d)$ \footnote{We denote with $W^{1,2}([0,T] \times \mathbb R^d)$ the Sobolev space of functions in $L^2([0,T]\times \mathbb R^d)$ with first-order weak derivative in time and second-order weak derivative in space in $L^2([0,T]\times \mathbb R^d)$.}. We also suppose that $k=d$ and $\bar f(t,x)=x$, for every $(t,x)\in[0,T]\times\mathbb R^d$. Since $g(x,w)=h(T,x,w)$, the terminal condition is simply $g(x)=h(T,x)$. Let $(X,Y,Z,A,L) \in \mathbb{S}^2\times\mathbb{S}^2\times\mathbb{H}^2\times\mathbb{A}^2\times\mathcal{V}$ be a solution to system \eqref{MFG SDE}-\eqref{MFRBSDE}. For every $t\in(0,T]$, define the finite measure $m_t$ on $\mathbb R^d$ by
\[
m_t(B) \coloneq \mathbb E\big[\mathbf 1_B(X_t)L_t\big], \qquad \text{for every Borel subset $B$ of }\mathbb R^d, 
\]
and $m_0:=\mu_0=\text{Law}(X_0)$. Set
\[
\bar m(t) \coloneq \mathbb{E}[X_tL_t]=\int_{\mathbb{R}^d}x\,m_t(dx), \qquad t \in [0,T].
\]
In particular, $m_t$ is a sub-probability measure on $\mathbb R^d$.\\
For every $t\in[0,T]$, we denote by $\mathbb{S}_t^2$, $\mathbb{H}_t^2$, $\mathbb{A}_t^2$ the obvious counterparts of $\mathbb{S}^2$, $\mathbb{H}^2$, $\mathbb{A}^2$ on the interval $[t,T]$. For every $(t,x)\in [0,T]\times\mathbb R^d$, let $X^{t,x}$ be the unique $\mathbb{R}^d$-valued solution of the stochastic differential equation
\[
X^{t,x}_s = x+\int_t^sb(r,X^{t,x}_r)dr+\int_t^s\sigma(r,X^{t,x}_r)dW_r, \qquad t\leq s\leq T.
\]
We denote by $\mathcal L$ the second-order differential operator
\[
(\mathcal L u)(t,x) = \sum_{i=1}^db_i(t,x)\frac{\partial u}{\partial x_i}(t,x)+\frac{1}{2}\sum_{i,j=1}^d a_{i,j}(t,x)\frac{\partial^2 u}{\partial x_i\partial x_j}(t,x),
\]
for $u\in \mathcal{C}^{1,2}([0,T]\times\mathbb{R}^d)$, where we set $a=\sigma\sigma^\top$. Its formal adjoint is
\[
\mathcal{L}^* \mu \coloneq -\sum_{i=1}^d \partial_{x_i}\big(b_i \mu\big) + \frac{1}{2}\sum_{i,j=1}^d\partial^2_{x_i x_j}\big(a_{i,j}\mu\big).
\]
For every $(t,x)\in [0,T]\times\mathbb R^d$, we consider the reflected backward stochastic differential equation on $[t,T]$:
\begin{equation}\label{MFRBSDE-PDE}
\begin{cases}
\vspace{2mm}Y^{t,x}_s=h(T,X^{t,x}_T)+\int_s^T f\big(r,X^{t,x}_r,\bar m(r)\big)dr + A^{t,x}_T-A^{t,x}_s - \int_s^T(Z^{t,x}_r,dW_r), \quad t\leq s\leq T, \\
\vspace{2mm}Y^{t,x}_s \geq h(s, X^{t,x}_s), \quad t\leq s\leq T, \\
\vspace{2mm}\int_{t}^T (Y^{t,x}_s-h(s,X^{t,x}_s)) dA^{t,x}_s=0.
\end{cases}
\end{equation}
Notice that under Assumptions \ref{assumption:1A} and \ref{assumption:1B}, it follows that there exists $(Y^{t,x}, Z^{t,x}, A^{t,x}) \in \mathbb{S}_t^2 \times \mathbb{H}_t^2 \times \mathbb{A}_t^2$ solution of \eqref{MFRBSDE-PDE}. We then define
\[
v(t,x) \coloneq Y^{t,x}_t, \qquad (t,x)\in [0,T]\times \mathbb{R}^d.
\]

\begin{theorem}
Assumptions \ref{assumption:1A}, \ref{assumption:1B} and \ref{assumption: 1C} hold. Suppose also that Assumptions \ref{assum:uniq_opt_stop}.a, \ref{assum:uniq_opt_stop}.b and \ref{assum:uniq_opt_stop}.c hold. \\
Then the pair $(v,m)$ has the following properties:
    \begin{itemize}
        \item [i)] The function $v$ is a continuous viscosity solution of $\min (-\partial_tv-\mathcal{L}v-f(t,x,\bar m(t)), v-h)=0$; 
        \item [ii)] $\partial_tm-\mathcal{L}^\star m \le 0$, in $\mathcal{D}'((0,T)\times \mathbb{R}^d)$, and $m(0)=\mu_0$, where $\mu_0$ is the law of $X_0$;
        \item [iii)] $v=h$, at $t=T$;
        \item [iv)] $\partial_tm-\mathcal{L}^\star m=0$, in $\mathcal{D}'(\{v >h\})$;
        \item [v)] $\int_{\{v=h\}}(f(t,x,\bar m(t))+(\partial_t+ \mathcal{L})h(t,x))m_t(dx)dt=0$.
    \end{itemize}
\end{theorem}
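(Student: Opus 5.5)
We treat the mean field term $\bar m(\cdot)$ as a fixed deterministic input, so that \eqref{MFRBSDE-PDE} is a classical Markovian reflected BSDE with driver $F(t,x)=f(t,x,\bar m(t))$. Since $h$ does not depend on $w$ and $\bar f(t,x)=x$, this driver coincides with the one in \eqref{RBSDE} for the given solution. By the Markov property, $Y_s=v(s,X_s)$ almost surely; this is Lemma 8.4 in \cite{el1981aspects}, the same fact used in Remark \ref{R:Assumption1-i)}. Items i) and iii) then follow from the classical El Karoui et al.\ result: $v$ is continuous and is a viscosity solution of the obstacle problem $\min(-\partial_t v-\mathcal L v-F,\,v-h)=0$ with $v(T,\cdot)=h(T,\cdot)$. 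The only extra point to verify is that $t\mapsto f(t,x,\bar m(t))$ is regular enough for that theorem. Here $\bar m$ is of bounded variation by Lemma \ref{L:m_BV} and bounded, and $f$ is continuous in $x$ uniformly in $(t,m)$ by Assumption \ref{assumption: 1C}. If needed, we would approximate $\bar m$ by continuous functions and use viscosity stability.

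For ii) and iv), fix $\varphi\in\mathcal C^\infty_c((0,T)\times\mathbb R^d)$. We apply Itô's formula and integrate by parts on $\varphi(t,X_t)L_t$, as in the proof of Lemma \ref{L:m_BV}. Because $\varphi(T,\cdot)=\varphi(0,\cdot)=0$, taking expectations gives
\[
0=\int_0^T\!\!\int (\partial_t\varphi+\mathcal L\varphi)\,m_t(dx)\,dt+\mathbb E\Big[\int_{0^-}^T\varphi(t,X_t)\,dL_t\Big].
\]
That is, $\langle \partial_t m-\mathcal L^*m,\varphi\rangle=\mathbb E[\int\varphi(t,X_t)dL_t]$. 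This is $\le0$ whenever $\varphi\ge0$, since $dL\le0$, which proves ii). The initial condition $m_0=\mu_0$ holds by definition. For iv), take $\varphi$ supported in the open set $\{v>h\}$. By the first Skorokhod condition, $-dL$ charges only times where $Y_t=\xi_t$, i.e.\ $v(t,X_t)=h(t,X_t)$, so the right-hand side vanishes and the equation holds with equality there.

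For v), we use the decomposition \eqref{Jacka} from Remark \ref{R:Assumption1-i)}. Assumptions \ref{assum:uniq_opt_stop}.a--c, together with Theorem 6 of \cite{jacka1993local}, kill the local time of $Y-\xi$, so
\[
A_t=-\int_0^t\mathbf 1_{\{v=h\}}(s,X_s)\,(F+\partial_t h+\mathcal L h)(s,X_s)\,ds .
\]
The second Skorokhod condition $\int_{0^-}^T A_t\,dL_t=0$, combined with integration by parts and $L_T=0$, $A_0=0$, gives $\int_0^T L_{t^-}\,dA_t=0$. As $A$ is continuous, $L_{t^-}$ may be replaced by $L_t$. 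Taking expectations and using Fubini with the definition of $m_t$ yields $\int_0^T\!\int\mathbf 1_{\{v=h\}}(f+(\partial_t+\mathcal L)h)\,m_t(dx)\,dt=0$, which is v).

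The main obstacle is this last step, because $h$ is only $W^{1,2}$ and not $\mathcal C^{1,2}$. We would justify Itô's formula for $h(t,X_t)$ via Krylov's generalized Itô formula, which requires a nondegeneracy or absolute-continuity property of $X_t$ consistent with Assumption \ref{assum:uniq_opt_stop}.a. Alternatively, we would mollify $h$, pass to the limit, and interpret the identity in the a.e.\ sense. Both routes must be consistent with how \eqref{Jacka} was derived.
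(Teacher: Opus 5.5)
Your proposal is correct and is essentially the paper's proof. Items i) and iii) come from the Markovian results of \cite{el1981aspects}; ii) and iv) come from the same It\^o computation on $\varphi(t,X_t)L_t$ together with the support of $-dL$; and in v), combining \eqref{Jacka} with $\int_0^T L_{t^-}\,dA_t=0$ is the paper's Tanaka--Meyer comparison of $(Y-h)L$ with $(Y-h)^+L$ carried out before multiplying by $L$ (which even gives the identity almost surely, before taking expectations). The ``main obstacle'' you raise does not arise: Assumption \ref{assum:uniq_opt_stop}.b is among the hypotheses and gives $h\in\mathcal C^{1,2}$ with polynomially growing derivatives, so the classical It\^o formula applies exactly as in the derivation of \eqref{Jacka}, and neither Krylov's formula nor mollification is needed.
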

\begin{proof}
Item $iii)$ is immediate from the terminal condition in \eqref{MFRBSDE-PDE}, item $i)$ follows from Lemma 8.4 and Theorem 8.5 in \cite{el1981aspects}. Moreover, by the classical theory of Markovian reflected BSDEs (see, e.g., \cite{el1981aspects}), it holds that
\[
Y_s^{t,x}=v(s,X_s^{t,x}), \qquad t\leq s\leq T.
\]
Since $b=b(t,x)$, $\sigma=\sigma(t,x)$, then $X$ corresponds to $X^{0,X_0}$. From this identification, we also obtain
\[
Y_t=v(t,X_t), \qquad 0\leq t\leq T.
\]
We now show item $ii)$. Let $u \in \mathcal{C}^{1,2}_b([0,T]\times \mathbb{R}^d)$ be non-negative. Applying It\^o's formula to $u(t,X_t)L_t$, we find
\[
d(u(t,X_t)L_t)=u(t,X_t)dL_t+L_{t^-}\big((\partial_tu+\mathcal{L}u)dt+(\nabla u\sigma,dW_t)\big).
\]
Integrating from 0 to $T$ and taking the expectation, we get
\begin{align}\label{eqq}
\mathbb{E}[u(T,X_T)L_T]-\mathbb{E}[u(0,X_0)L_{0^-}]=\mathbb{E}\bigg[\int_{0^-}^Tu(t,X_t)dL_t\bigg]+\mathbb{E}\bigg[\int_0^T(\partial_tu+\mathcal{L}_tu)(t,X_t)L_tdt\bigg].
\end{align}
Recalling that $L_T=0$, $L_{0^-}=1$, $-dL$ is a non-negative measure, and $u$ is non-negative, we obtain
\[
0 \le \mathbb{E}[u(0,X_0)]+\mathbb{E}\bigg[\int_0^T(\partial_tu+\mathcal{L}u)(t,X_t)L_tdt\bigg].
\]
This can be rewritten as
\[
0 \le \int_{\mathbb{R}^d}u(0,x)\mu_0(dx)+\int_0^T\int_{\mathbb{R}^d}(\partial_tu+\mathcal{L}u)(t,x)m_t(dx)dt.
\]
This shows that $\partial_tm-\mathcal{L}^\star m \le 0$, in $\mathcal{D}^{\prime}((0,T) \times \mathbb{R}^d)$.\\
Let us now show item $iv)$. Let $u \in \mathcal{C}^{1,2}_b([0,T]\times\mathbb{R}^d)$ with support in $\{v>h\}$, so that $\{u\neq 0\}$ and $\{v=h\}$ are disjoint sets. By the Skorokhod condition $\int_{0^-}^T(Y_t-h(t,X_t))dL_t=0$ it follows that the topological support of the random measure $-dL_t$ is contained almost surely in $\{t\colon Y_t=h(t,X_t)\}$. Then, we get that, almost surely, $\{t:u(t,X_t) \neq 0\}$ and the topological support of $-dL_t$ are disjoint sets. Hence, by the same computation  as in \eqref{eqq}, we obtain
\[
0=\mathbb{E}[u(0,X_0)]+\mathbb{E}\bigg[\int_0^T(\partial_tu+\mathcal{L}u)(t,X_t)L_tdt\bigg].
\]
This can be written as
\[
0=\int_{\mathbb{R}^d}u(0,x)\mu_0(dx)+\int_0^T\int_{\mathbb{R}^d}(\partial_tu+\mathcal{L}u)(t,x)m_t(dx)dt.
\]
We therefore deduce $\partial_tm-\mathcal{L}^\star m =0$ in $\mathcal{D}'(\{v >h\})$.\\
Finally, we prove item $v)$. To alleviate the notation, we denote $h(t,X_t)$ simply by $h_t$, for $t\in[0,T]$. Applying It\^o's formula to $(Y-h)L$, we get
\begin{equation}
\label{eq:eq1}
\begin{split}
    d\big((Y_t-h_t)L_t\big)&=L_{t^{-}} d(Y_t-h_t)+(Y_t-h_t)dL_t\\
    &=-\big(f(t,X_t, \mathbb{E}[X_tL_t])+\partial_th(t,X_t)+\mathcal{L}h(t,X_t)\big)L_tdt\\
    &\quad+L_{t^{-}}\big(Z_t-\nabla h(t,X_t)\sigma(t,X_t),dW_t\big)-L_{t^{-}}dA_t+(Y_t-h_t)dL_t.
\end{split}
\end{equation}
Now, applying Tanaka-Meyer's formula to $(Y_t-h_t)^+L_t$, we obtain 
\begin{equation}
\begin{split}
\label{eq:eq2}
    d\big((Y_t-h_t)^+L_t\big)&=L_{t^{-}} d(Y_t-h_t)^++(Y_t-h_t)^+dL_t\\
    &=-\mathbf{1}_{Y_t>h_t}\big(f(t,X_t, \mathbb{E}[X_tL_t])+\partial_th(t,X_t)+\mathcal{L}h(t,X_t)\big)L_tdt\\
    &\quad+\mathbf{1}_{Y_t>h_t}L_{t^{-}}\big(Z_t-\nabla h(t,X_t)\sigma(t,X_t),dW_t\big)\\
    &\quad-\mathbf{1}_{Y_t > h_t}L_{t^{-}}dA_t+\frac{1}{2}L_{t^{-}}d\tilde{L}_t^0(Y_t-h_t)+(Y_t-h_t)^+dL_t.
\end{split}
\end{equation}
Observe that
\[
\int_0^T\mathbf{1}_{Y_t=h_t}L_{t^-}dA_t=\int_0^TL_{t^-}dA_t-\int_0^T\mathbf{1}_{Y_t>h_t}L_{t^-}dA_t=0
\]
The first term above vanishes, in fact, using the Skorokhod condition $\int_{0^-}^TA_tdL_t=0$, together with $L_T=0$, $A_0=0$, then from the integration by parts formula we get $0=0+\int_0^TL_{t^-}dA_t$. The second term also vanishes as a consequence of the fact that, from the Skorokhod condition $\int_0^T(Y_t-h_t)dA_t=0$ it follows that the topological support of the random measure $dA_t$ is contained almost surely in $\{t\colon Y_t = h(t,X_t)\}$. Since $Y_t-h_t \equiv (Y_t-h_t)^+$, by combining the two above equalities \eqref{eq:eq1} and \eqref{eq:eq2}, we get
\begin{equation}
\begin{split}
    \mathbb{E}\bigg[\int_0^T \mathbf{1}_{Y_t=h_t}\big(f(t,X_t, \mathbb{E}[X_tL_t])+\partial_th(t,X_t)
    &+\mathcal{L}h(t,X_t)\big)L_tdt\\
    &+\frac{1}{2}\int_0^TL_{t^-}d\tilde{L}^0_t(Y_t-h_t)\bigg]=0.
\end{split}
\end{equation}
By Theorem 6 in \cite{jacka1993local}, the local time $\tilde{L}^0(Y-h)$ is indistinguishable from zero, so the second integral in the above expectation is zero.
Thus, it follows that
\[
\mathbb{E}\bigg[\int_0^T\mathbf{1}_{Y_t=h_t}\big(f(t,X_t,\mathbb{E}[X_tL_t])+\partial_th(t,X_t)+\mathcal{L}h(t,X_t)\big)L_tdt\bigg]=0.
\]
\end{proof}

\small
\bibliographystyle{plain}
\bibliography{references.bib}
\end{document}